\newcommand{\items}{\begin{itemize}[leftmargin=25pt,rightmargin=15pt]
  \setlength\itemsep{2pt}}
\newcommand{\stopitems}{\end{itemize}}
\newtheorem{theorem}{Theorem}[section] 
\newtheorem*{theorem*}{Theorem}
\newtheorem{lemma}[theorem]{Lemma}
\newtheorem*{conjecture*}{Conjecture}
\newtheorem*{question*}{Question}
\newtheorem*{lemma*}{Lemma}
\newtheorem{proposition}[theorem]{Proposition}
\newtheorem{corollary}[theorem]{Corollary}
\newtheorem*{corollary*}{Corollary}
\theoremstyle{definition}
\newtheorem{definition}[theorem]{Definition}
\newtheorem{remark}[theorem]{Remark}
\newtheorem*{remark*}{Remark}
\newtheorem{construction}[theorem]{Construction}
\newtheorem*{example*}{Example}
\newtheorem*{remarks*}{Remarks}
\newtheorem*{addenda*}{Addenda}
\newtheorem*{construction*}{Construction}
\newcommand{\Tc}{\check{T}}
\newcommand{\data}{\varpi} 
\newcommand{\cc}{{\bf c}}  
\newcommand{\bu}{{\bf U}}
\newcommand{\bT}{{\bf T}}
\newcommand{\bW}{{\bf W}}
\newcommand{\bX}{{\bf X}}
\newcommand{\bZ}{{\bf Z}}
\newcommand{\balpha}{\boldsymbol\alpha}
\newcommand{\bbeta}{\boldsymbol\beta}
\newcommand{\sss}{S^2\mkern-1.5mu \times \mkern-1.25mu S^2} 
\newcommand{\SSS}{S^1\mkern-1.5mu \times \mkern-1.25mu S^2} 
\newcommand{\zz}{\bZ} 
\DeclareMathOperator{\psc}{psc}
\DeclareMathOperator{\NS}{NS} 
\newcommand{\zzn}{{\bZ}_{\NS}} 
\newcommand{\zzs }{{\bZ}_S} 
\newcommand{\bztilde}{\widetilde{\bZ}} 
\newcommand{\bzptilde}{\widetilde{\bZ}^p}
\DeclareMathOperator{\Ric}{Ric}
\DeclareMathOperator{\diff}{Diff}
\DeclareMathOperator{\bdiff}{BDiff}
\DeclareMathOperator{\tdiff}{TDiff} 
\DeclareMathOperator{\diffb}{Diff^{\,\flat}}
\DeclareMathOperator{\btdiff}{B\tdiff}
\DeclareMathOperator{\bhomeo}{{BHomeo}^0}
\DeclareMathOperator{\bhome}{BHomeo}
\DeclareMathOperator{\bdiffo}{{BDiff}^{\hspace{.07em}0\hspace{-.05em}}}
\DeclareMathOperator{\Emb}{Emb}
\DeclareMathOperator{\Embtop}{Emb^{\Top}}
\DeclareMathOperator{\home}{Homeo}
\DeclareMathOperator{\diffo}{{Diff}^{\hspace{.07em}0\hspace{-.05em}}}
\DeclareMathOperator{\homeo}{{Homeo}^0}
\DeclareMathOperator{\Top}{Top}  
\newcommand{\R}{\mathbb R}
\newcommand{\Z}{\mathbb Z}
\newcommand{\Q}{\mathbb Q}
\newcommand{\bc}{\mathbb C}
\newcommand{\bq}{\mathbb Q}
\newcommand{\calb}{\mathcal B}
\newcommand{\calh}{\mathcal H}
\newcommand{\calm}{\mathcal M}
\newcommand{\calo}{\mathcal O}
\newcommand{\calp}{\mathcal P}
\newcommand{\calq}{\mathcal Q}
\newcommand{\calz}{\mathcal Z}
\newcommand{\calrp}{{\mathcal R}^+}
\newcommand\Shat{\widehat{\mathcal{S}}}
\newcommand\Htilde{\widetilde{H}}
\def \co{\colon\!} 
\newcommand{\vphi}{\varphi}
\DeclareMathOperator{\diffp}{Diff^+}
\DeclareMathOperator{\BSO}{BSO}
\newcommand{\M}{\mathcal M}
\DeclareMathOperator{\met}{Met}
\DeclareMathOperator{\Aut}{Aut}
\newcommand{\sts}{S^2\widetilde\times S^2}
\newcommand{\cs}{\mathbin{\#}}
\newcommand{\cpone}{\bc P^1}
\newcommand{\cptwo}{\bc P^2}
\newcommand{\cptwobar}{\smash{\overline{\bc P}^2}}
\newcommand{\interior}{\textup{int}}
\newcommand{\Spinc}{\operatorname{Spin^c}}
\newcommand{\vdim}{\operatorname{v-dim}}
\newcommand{\spincs}{\mathfrak{s}}
\newcommand{\unred}[1]{ \ignorespaces}  
\newcommand{\0}{\textup{0}}
\newcommand{\sw}{\operatorname{SW}}
\newcommand{\swu}{\underline{\operatorname{SW}}}
\newcommand{\swc}{\operatorname{\mathbb{SW}}} 
\newcommand{\fsw}{\operatorname{FSW}}
\newcommand{\SW}{Seiberg-Witten\xspace}
\newcommand{\Bigwedge}{\mathord{\adjustbox{valign=B,totalheight=.6\baselineskip}{$\bigwedge$}}}
\title{Families of diffeomorphisms, embeddings, and positive scalar curvature metrics via Seiberg-Witten theory}
\author[Dave Auckly]{Dave Auckly}
\address{Department of Mathematics\newline\indent Kansas State University\newline\indent  Manhattan,
Kansas 66506}
\email{dav@math.ksu.edu}
\author[Daniel Ruberman]{Daniel Ruberman}
\address{Department of Mathematics, MS 050\newline\indent Brandeis
University \newline\indent Waltham, MA 02454}
\email{ruberman@brandeis.edu}
\thanks{The authors were partially supported by NSF Grant DMS-1928930 while in residence at the Simons Laufer Mathematical Sciences Institute (formerly known as MSRI). The second author thanks Kyoto University and the University of Tokyo for their hospitality while portions of this work were carried out.
The first author was partially supported by Simons Foundation grant 585139 and NSF grant DMS-1952755. The second author was partially supported by NSF grant DMS-1952790.}
\subjclass{57K41 (primary), 57R52, 53C21 (secondary)}
\begin{document}
\setlength{\headheight}{12.0pt}.
\begin{abstract}
We construct infinite rank summands isomorphic to $\Z^\infty$ in the higher homotopy and homology groups of the diffeomorphism groups of certain $4$-manifolds. These spherical families become trivial in the homotopy and homology groups of the homeomorphism group; an infinite rank subgroup becomes trivial after a single stabilization by connected sum with $\sss$. The stabilization result gives rise to an inductive construction, starting from non-isotopic but pseudoisotopic diffeomorphisms constructed by the second author in 1998. The spherical families give $\Z^\infty$ summands in the homology of the classifying spaces of specific subgroups of those diffeomorphism groups. 

The non-triviality is shown by computations with family Seiberg-Witten invariants, including a gluing theorem adapted to our inductive construction. As applications, we obtain infinite generation for higher homotopy and homology groups of spaces of embeddings of surfaces and $3$-manifolds in various $4$-manifolds, and for the space of positive scalar curvature metrics on standard PSC $4$-manifolds.  
\end{abstract}
\maketitle
\section{Introduction}
The twin revolutions~\cite{donaldson,freedman} of the early 1980s revealed dramatic differences between the smooth and topological categories in dimension four. Subsequent developments allowed these differences to be quantified; for example the smooth homology cobordism group $\Theta^3_\Z$ is infinitely generated~\cite{furuta:cobordism,fs:instanton} and indeed has an infinite rank summand~\cite{dai-hom-stoffregen-truong:summand} while the topological version is trivial~\cite{freedman}. Similarly, the kernel of the natural map from the smooth concordance group to the topological concordance group has an infinite rank summand~\cite{dai-hom-stoffregen-truong:more,ozs:upsilon}. In this paper, we compare the homotopy and homology groups of the diffeomorphism and homeomorphism groups of $4$-manifolds, and show that there are manifolds so that (in a given degree or range of degrees) the kernels in homotopy and homology have an infinite rank summand. There are similar results for the corresponding homology groups of the classifying spaces ${\rm BG}$ where $G$ is one of a number of subgroups of $\diff(Z)$. (For this paper, we use the notation $\diff(Z)$ for \emph{orientation-preserving} diffeomorphisms, instead of the more typical $\diffp(Z)$.) Using a connection between diffeomorphisms and embeddings (compare~\cite{auckly-ruberman:emb,auckly:internal,lin-mukherjee:surfaces}), we also produce topologically trivial infinite rank summands of the homotopy and homology groups of spaces of embeddings of spheres and $3$-manifolds in related $4$-manifolds, Similarly, we find infinite rank summands in the homotopy and homology groups of spaces of metrics of positive scalar curvature in dimension $4$.

We summarize the general phenomenon by saying that $Z$ admits exotic families of diffeomorphisms or embeddings; families parameterized by a sphere will be called \emph{spherical}. In a companion paper~\cite{auckly-ruberman:diffym} (whose results predate the present work by several years) we construct and detect exotic spherical families using parameterized Yang-Mills gauge theory, as introduced in the first author's paper~\cite{ruberman:isotopy}. In particular~\cite{auckly-ruberman:diffym} gives examples of simply connected $4$-manifolds $Z$ for which $\pi_k(\diffo(Z))$ has arbitrarily high rank free abelian summands that lie in the kernel of the natural map to $\pi_k(\homeo(Z))$. Here the superscript in $\diffo(Z)$ indicates the identity component of $\diff(Z)$, while $\diff(Z,\spincs,\calo)$ denotes diffeomorphisms that preserve a $\Spinc$ structure and orientation $\calo$ of the \SW moduli space, and $\tdiff(Z)$ is the Torelli subgroup consisting of diffeomorphisms that act as the identity on $H_2(Z)$.

 The main result of this paper is that for any $k > 0$ there is a manifold $Z$ for which $\ker[\pi_k(\diffo(Z))  \to \pi_k(\homeo(Z))]$ has $\Z^\infty$ summands that survive in homology.  A number of elaborations and variations of the theorem are discussed below in Remark~\ref{R:main}.  
\begin{theorem}\label{T:infgen}
For any $p > 0$ there are $4$-manifolds $\zz^p$
such that for all $0< j \leq p$ with $j \equiv p \pmod{2}$ the following groups contain $\Z^\infty$ summands:
\begin{enumerate}[topsep=2pt,parsep=2pt,partopsep=1pt]
    \item $\pi_j(\diffo(\zz^p))$ and $\ker\left[\pi_j(\diffo(\zz^p)) \to \pi_j(\homeo(\zz^p))]\right]$\label{homotopy}
    \item $H_j(\diffo(\zz^p))$ and $\ker\left[H_j(\diffo(\zz^p)) \to H_j(\homeo(\zz^p))]\right]$\label{homology}
    \item $H_{j+1}(\btdiff(\zz^p))$ and $\ker\left[H_{j+1}(\btdiff(\zz^p))\to H_{j+1}(\bhome(\zz^p))\right]$.\label{top-triv}
  \end{enumerate}
For $j = p=0$ the groups $\diffo(\zz^p)$ in items (1) and (2) should be replaced by $\diff(\zz^p,\spincs,\calo)$ for a particular choice of $\Spinc$ structure $\spincs$ and homology orientation $\calo$, or indeed by $\tdiff(\zz^p)$. 
The conclusion is that $\pi_0$ and $H_0$ of this group are infinitely generated, and that $H_{1}(\bdiff(\zz^p,\spincs,\calo))$ and $\ker\left[H_{1}(\bdiff(\zz^p,\spincs,\calo))\to H_{1}(\bhome(\zz^p,\spincs,\calo))\right]$ have $\Z^\infty$ summands, with similar statements for $\tdiff(\zz^p)$.
\end{theorem}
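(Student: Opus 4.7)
The plan is to proceed by induction on $p$, with the base case $p=0$ handled by the pairwise non-isotopic but pseudoisotopic diffeomorphisms $f_i \in \diffo(\zz^0)$ constructed by the second author in 1998 and distinguished by the $1$-parameter family Seiberg--Witten invariant of their mapping tori. For the inductive step I would exploit the stabilization principle alluded to in the introduction: given a nontrivial spherical family $\alpha_i \co S^{j-2} \to \diffo(\zz^{p-1})$, after stabilizing by connected sum with $\sss$ the families $\sigma(\alpha_i)$ should admit explicit null-homotopies in $\diffo(\zz^{p-1} \cs \sss)$ coming from Cerf-theoretic or Gluck-twist cancellation. Taking two geometrically distinct null-homotopies $H_i, H_i' \co D^{j-1} \to \diffo(\zz^{p-1}\cs\sss)$ and gluing them along their common boundary yields a new family $\beta_i = H_i \cup H_i' \co S^{j-1} \to \diffo(\zz^{p-1}\cs\sss)$. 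A further doubling/suspension step using an auxiliary ``unit'' summand $E$ carrying fresh basic classes produces the desired $S^j$-family on $\zz^p = \zz^{p-1} \cs E$, which accounts for the parity constraint $j \equiv p \pmod 2$.

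To separate these families in $\pi_j(\diffo(\zz^p))$, I would apply the family Seiberg--Witten invariant $\sw_j$ to the associated $\zz^p$-bundle over $S^{j+1}$ (equivalently, evaluate the family SW of the suspended bundle over $\Sigma S^j$). The crux is a gluing theorem for $\sw_j$ adapted to the inductive construction, expressing $\sw_j(\beta_i)$ as a product of $\sw_{j-2}(\alpha_{i})$ with an elementary wall-crossing or Euler-class contribution coming from the new summand $E$ and the auxiliary index $i$. Letting $i$ range over infinitely many distinct $\Spinc$ structures on $E$ yields a sequence of integer invariants that I would arrange to be linearly independent, producing a $\Z^\infty$ summand in $\pi_j(\diffo(\zz^p))$. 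Topological triviality comes for free: each $\beta_i$ is supported near topologically standard embeddings (spheres and $3$-manifolds whose topological isotopy classes are controlled by Freedman--Quinn), so $\beta_i$ maps to zero in $\pi_j(\homeo(\zz^p))$, giving the kernel statement in item~\eqref{homotopy}.

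Item~\eqref{homology} follows by a Hurewicz-type argument: since $\diffo(\zz^p)$ is an $H$-space, its rational homotopy injects into its rational homology on primitives, and the same $\sw_j$ values pair nontrivially with the Hurewicz images of the $\beta_i$. For item~\eqref{top-triv} I would loop: the transgression $H_j(\tdiff(\zz^p)) \to H_{j+1}(\btdiff(\zz^p))$ carries the $\Z^\infty$ summand one degree upward, and the family SW classes persist as characteristic classes of the universal $\zz^p$-bundle over $\btdiff(\zz^p)$. Verifying that each $\beta_i$ lies in the Torelli subgroup is immediate from the construction, since everything is supported near homologically trivial submanifolds. The degenerate case $j=p=0$ reduces to exhibiting infinitely many pairwise non-pseudoisotopic diffeomorphisms preserving a fixed $\Spinc$ structure and homology orientation, detected by $\sw_0$ of their mapping tori.

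The main obstacle will be the gluing theorem for family Seiberg--Witten invariants tailored to this inductive construction. Standard SW gluing handles a single cylindrical cut in a fixed $4$-manifold; here one must glue \emph{parameterized} moduli spaces over $S^j$, keep track of families of reducibles and the corresponding wall-crossings, and carefully match orientation data across inductive steps so that a clean multiplicative formula $\sw_j(\beta_i) = \sw_{j-2}(\alpha_i)\cdot c_i$ holds. Showing that the invariants $c_i$ can be chosen to realize independent integers (as opposed to merely nonzero $\Z/2$ values) --- and hence that one obtains an honest $\Z^\infty$ summand rather than a $(\Z/2)^\infty$ subgroup --- requires a delicate choice of detecting $\Spinc$ structures on the summand $E$ and is where most of the work will go.
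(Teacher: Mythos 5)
Your outline matches the paper's strategy in broad silhouette (induction on $p$, a 1998-style base case, one-stabilization triviality, a gluing theorem for family SW adapted to the induction, and the $\Z$ versus $\Z_2$ worry), but two of its load-bearing steps are assumed rather than available. First, you take for granted that after a single connected sum with $\sss$ the spherical families admit null-homotopies ``coming from Cerf-theoretic or Gluck-twist cancellation.'' For a single diffeomorphism this is in the spirit of Quinn's stable isotopy theorem, but for families of dimension $\geq 1$ no such general theorem exists (after one stabilization or any number); producing the stable contraction is precisely the hard point. In the paper the family and its stable contraction are built simultaneously by the commutator recursion of Definition~\ref{D:families}: $\balpha^{p+1}[q]=[F^p[q],\bbeta]$ and $F^{p+1}[q]=[F^p[q],F]$, where $F^p[q]$ is the previously constructed contraction and $\bbeta$, $F$ live on the new block; the base case rests on writing the diffeomorphisms as products of reflections in spheres and invoking the $1$-stable isotopy theorem for embedded spheres (Proposition~\ref{P:Rstab}). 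Relatedly, the second ``hemisphere'' of the new sphere is not a second null-homotopy of the same stabilized family but the conjugate of a fixed exotic diffeomorphism on the new summand by the contraction. The same objection applies to your claim that topological triviality ``comes for free'': there is no Freedman--Quinn statement contracting higher-parameter families, and the paper must propagate the topological isotopy $G$ of a single Torelli diffeomorphism through the same commutator scheme ($G^{p+1}[q]=[F^p[q],G]$) to obtain the kernel statements.

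The detection mechanism you propose would also not run as stated. You want the $\Z^\infty$ to arise from infinitely many fresh basic classes on the new summand $E$, each contributing a factor $c_i$ in a multiplicative gluing formula. In the paper the new summand contributes a single isolated \emph{reducible} solution (a wall crossing on $\sss$ for the one-parameter family of data of Lemma~\ref{L:standardsssdata}), and the Suspension Theorem~\ref{anti-hol}, resting on the irreducible-reducible gluing Theorem~\ref{pIRglue}, shows the mod~$2$ invariant is simply preserved under suspension; gluing along infinitely many basic classes of $E$ with irreducible solutions on both sides is a different, unproved gluing problem and is dimensionally inconsistent with having isolated parameterized solutions. The infinitude instead comes from the base case (the log transforms $\bX_q$, detected on the classes $K+2\Sigma\Tc+2\ell\Tc_1$), and the promotion from $\Z_2$ nonvanishing to an honest $\Z^\infty$ summand uses that the families lie in $\tdiff$, so the integer-valued invariants are defined, together with the finiteness of basic classes for each fixed family (Lemma~\ref{L:finite}) to get independence and a splitting. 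Finally, the parity condition $j\equiv p\pmod{2}$ is not the result of a double suspension per inductive step (your indexing would skip the case $j=p$ entirely); it comes from realizing the same manifold as an earlier stage of a tower built from a larger seed, via $\bu\cs_T\bu\cs(\sss)\cong\bW\cs(\sss)$ as in Remark~\ref{R:def-comments}.
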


Some partial results along these lines were previously known. Torelli group, the second author~\cite{ruberman:polyisotopy} used a parameterized version of Donaldson's polynomial invariants to show infinite generation of $\ker[\pi_0(\tdiff) \to \pi_0(\home(Z))]$. The technique here reproduces that result and extends it to some larger subgroups of $\diff$, in particular to $\diff(Z,\spincs,\calo)$. It is an interesting question whether such results on higher homotopy and homology groups could be obtained using Donaldson theory. 

There are many other recent papers~\cite{kato-konno-nakamura:families,konno-lin:instability,konno-taniguchi:boundary,kronheimer-mrowka:dehntwist,lin:dehn} showing that gauge-theoretic methods reveal a good deal about homotopical properties of the diffeomorphism group of a $4$-manifold.  Closest to the current work, Smirnov~\cite{smirnov:loops} constructed loops of infinite order in $\pi_1(\diff(X))$ for many complex surfaces,
Baraglia~\cite{baraglia:k3} showed that $\pi_1(\diff(K3))$ is infinitely generated,  and Lin~\cite{lin:loops} showed the same for $4$-manifolds containing many embedded $2$-spheres of self-intersection $-1$ or $-2$. The generators of the $\Z^\infty$ subgroup constructed by these authors are non-trivial in $\pi_1$ of the homeomorphism group as they are detected by Pontrjagin classes of the vertical tangent bundle. However, this argument does not show their linear independence.  As pointed out by Lin~\cite{lin:loops}, our results show that the result of Bustamante-Krannich-Kupers~\cite{bustamante-krannich-kupers:finiteness} that the higher homotopy groups $\pi_i(\diffo(M^{2n}))$ of the identity component $\diffo(M)$ of $\diff(M)$ are finitely generated when $2n \geq 6$ fails dramatically in dimension $4$.  In addition to papers of the present authors, there are also many other works~\cite{baraglia:surfaces,drouin:embeddings,konno-mallick-taniguchi:knotted-donaldson,konno-mukherjee-taniguchi:codim1,konno-taniguchi:boundary,lin-mukherjee:surfaces} showing exotic phenomena amongst embeddings of surfaces and $3$-manifolds in $4$-manifolds.

The main new inputs that go into the proof of our main result are a recursive construction of spherical families and the computation of their Seiberg-Witten invariants.  The recursive scheme for creating families of diffeomorphisms parameterized by spheres of arbitrary dimensions starts with exotic diffeomorphisms similar to those constructed in~\cite{ruberman:isotopy,ruberman:polyisotopy} as well as~\cite{baraglia:mcg,konno:moduli}.  First, we take advantage of the fact that certain manifolds, distinguished by Seiberg-Witten invariants, become diffeomorphic under a single stabilization by $\sss$. This is used to create non-trivial diffeomorphisms that 
generate a surjection $\pi_0(\diff(\bZ^0,\spincs,\calo)) \to \Z^\infty$ for a particular manifold $\bZ^0$ and $\Spinc$ structure $\spincs$.  The $1$-stabilization result of~\cite{auckly-kim-melvin-ruberman:isotopy,auckly-kim-melvin-ruberman-schwartz:1-stable} is used repeatedly to recursively create elements in the higher homotopy groups in Theorem~\ref{T:infgen}. We used a similar recursive construction in our work on parameterized Donaldson theory~\cite{auckly-ruberman:diffym}. 

To show the independence of these generators, we make use of integer-valued \SW invariants $\sw^{\pi_k}$ and $\sw^{H_k}$ (related to the characteristic classes of~\cite{konno:classes}) and their mod $2$ versions $\sw^{\pi_k,\Z_2}$ and $\sw^{H_k,\Z_2}$. We compute them via a gluing formula that works well with the inductive construction. (Such a strategy works for the Donaldson-type invariants used in~\cite{auckly-ruberman:diffym}.) The gluing result, the Parameterized Irreducible-Reducible Gluing Theorem~\ref{pIRglue}, is a common generalization of the usual blow-up formula~\cite{nicolaescu:swbook} and the combination wall-crossing/blow-up formulas from~\cite{ruberman:isotopy,ruberman:swpos}; see also~\cite{baraglia-konno:gluing}. The theorem computes the mod $2$ family \SW invariants of a connected sum $N_1 \cs N_2$ (with appropriate $\Spinc$ structures and generic data) in the special case where we are gluing a $k$-dimensional family on $N_1$ with only isolated irreducible solutions to an $\ell$-dimensional family on $N_2$ with only isolated reducible solutions.  

The Seiberg-Witten invariants to which the gluing formula applies are only well-defined mod $2$, which would not seem strong enough to produce  $\Z^\infty$ subgroups or summands.  A fortunate coincidence resolves this dilemma: the $\Z_2$ invariants behave well with respect to compositions, and integer-valued \SW invariants are often defined when one composes two diffeomorphisms that have only $\Z_2$ invariants. This coincidence (Lemma~\ref{L:spheredef}) allows us to conclude that there are non-vanishing integer-valued invariants for certain families. To show the infinite generation claimed in Theorem~\ref{T:infgen}, we use a simple vanishing result (Lemma~\ref{L:finite}) for families corresponding to the fact that any $4$-manifold with $b_2^+ > 1$ has a finite number of basic classes.

An intrinsic aspect of our construction is its behavior under stabilization: the families we construct become trivial after a single connected sum with $\sss$ or $\sts = \cptwo \cs \cptwobar$. (For this connected sum to make sense, we need to make sure that all of our families are the identity on a ball.) Results of Quinn~\cite{quinn:isotopy} (see~\cite{gabai-etal:pseudoisotopies} for a corrected version) show that homotopic diffeomorphisms of simply connected manifolds become isotopic after some number of stabilizations, but this is not at all clear for higher-parameter families, even topologically trivial ones. (In particular, the generators of the $\Z^\infty$ in $\pi_1(\diffo(X))$ constructed in~\cite{baraglia:k3,lin:loops,smirnov:loops} remain topologically non-trivial after one stabilization, but the behavior of the full subgroups under stabilization is not known.) The inductive nature of our construction nonetheless allows us to build a stable contraction of our spherical families. 

Similarly, there are no general theorems that can be used to contract higher-parameter families topologically. By~\cite{kreck:isotopy,quinn:isotopy,gabai-etal:pseudoisotopies} elements in $\tdiff(Z)$ are pseudoisotopic to the identity, and are therefore topologically isotopic to the identity.  As for stabilizations, the inductive construction similarly allows us to pass from topological isotopies for a single diffeomorphism to topological contractions of the spherical families we construct. Our inductive construction also allows us to show in Corollary~\ref{C:pseudo} that our families are homotopically trivial in the \emph{block diffeomorphism group}; this is the higher-parameter version of the pseudoisotopy statement for a single diffeomorphism.

A second focus of the present paper
is to expand the range of manifolds that admit exotic spherical families of diffeomorphisms and PSC metrics.  The manifolds appearing in~\cite{auckly-ruberman:diffym} supporting spherical families of diffeomorphisms are not spin, nor are the manifolds in~\cite{konno:family,ruberman:swpos} with exotic PSC metrics. The methods used here allow us to find spin examples. 

Before turning to corollaries and applications, we make a few remarks on the statement of Theorem~\ref{T:infgen}.
\begin{remark}\label{R:main} 
\leavevmode
\begin{enumerate}
    \item The homology elements in item (2) of the theorem are simply the image of the spherical families constructed to prove item (1). In particular, they are in the image of the Hurewicz map. 
\item Simultaneous papers of Baraglia~\cite{baraglia:mcg} and Konno~\cite{konno:moduli} show that in fact the mapping class group $\pi_0(\diff(X^4))$ is infinitely generated for appropriate simply connected $4$-manifolds $X$. Examples of non-simply connected $4$-manifolds with this property were previously exhibited by Budney-Gabai~\cite{budney-gabai:3-balls} and Watanabe~\cite{watanabe:theta}.
\item The manifolds $\zz^p$ may be chosen to be spin or non-spin.  We may take the non-spin manifolds $\bZ^p$ to be diffeomorphic to a connected sum of copies of $\cptwo$ and $\cptwobar$. In the spin case, $\bZ^p$ may be taken to be diffeomorphic to a connected sum of copies of the $K3$ surface and $\sss$. Hence we may take $\bZ^p$ to admit a metric of positive scalar curvature when it is not spin, or if it is spin with signature zero. 
\item As we will discuss in Remark~\ref{R:def-comments}, all of the results discussed in the introduction apply to a broader class of manifolds. In particular, there are manifolds with boundary or with arbitrary fundamental group whose diffeomorphism groups have the same homotopical properties described in our main theorems.
\end{enumerate}
\end{remark}

Fundamental to the recursive construction in  Theorem~\ref{T:infgen} is the fact that the constructed families become trivial after stabilization. Let us formalize this in a definition.
\begin{definition}\label{D:stabilize}
    A spherical family of diffeomorphisms $\alpha:S^k \to \diff(X,B^4)$ is $n$-stably trivial if $\alpha \cs 1_{\cs^n (\sss)}$ is trivial in $\pi_k(\diff(X\cs^n (\sss)))$. If $\alpha$ is $n$-stably trivial for some $n$, we say it is stably trivial.
\end{definition}
A theorem of Quinn~\cite{quinn:isotopy}, with proof corrected in~\cite{gabai-etal:pseudoisotopies}, states that diffeomorphisms of simply connected $4$-manifolds that act trivially on the second homology are stably trivial. However, to our knowledge, there are no general result proving stable triviality of families with $k \geq 1$. In contrast, we have: 
\begin{corollary}\label{C:stabilize}
    The spherical families in Theorem~\ref{T:infgen} are in the image of $\diff(\bZ^p,B^4)$ and a $\Z^\infty$ subgroup of the claimed summand consists of $1$-stably trivial elements. 
\end{corollary}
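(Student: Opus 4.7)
The plan is to verify both assertions by inspecting the inductive construction that proves Theorem~\ref{T:infgen}.

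First I would check that each spherical family is represented by a map into $\diff(\bZ^p,B^4)$. The base-case generators of $\pi_0(\diff(\bZ^0,\spincs,\calo))$ are by construction supported in the complement of a fixed $4$-ball $B^4 \subset \bZ^0$; this localization is in any case needed for the connected-sum operation $-\cs\id_{\sss}$ to be defined, and is standard in constructions of exotic $\pi_0$ elements from Seiberg--Witten theory. Each recursive step enlarges the supported region but keeps it disjoint from the distinguished ball, so the resulting $k$-sphere family lifts through the inclusion $\diff(\bZ^p,B^4) \hookrightarrow \diffo(\bZ^p)$.

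For the $1$-stable triviality I would exploit the inductive structure together with the stabilization results of \cite{auckly-kim-melvin-ruberman:isotopy,auckly-kim-melvin-ruberman-schwartz:1-stable}. Each generator $\alpha_k$ of the claimed $\Z^\infty$ summand is produced by capping off a suitable $(k-1)$-parameter null-homotopy using one additional $\sss$ summand spent in the recursive step. The crucial observation is that after taking one further connected sum with $\sss$, the family $\alpha_k \cs \id_{\sss}$ admits an explicit null-homotopy assembled from the very null-homotopy used to produce $\alpha_k$ together with the isotopy that swaps the two $\sss$ factors. Pushing this bookkeeping through the induction gives a stable contraction of every generator $\alpha_k$ inside $\diff(\bZ^p\cs\sss,B^4)$.

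To finish, I would note that $-\cs\id_{\sss}$ defines a group homomorphism
\[
s \colon \pi_k(\diff(\bZ^p,B^4)) \longrightarrow \pi_k(\diff(\bZ^p\cs\sss,B^4)),
\]
so its kernel is a subgroup. By the previous step it contains each constructed generator, and hence contains the entire $\Z^\infty$ they span. The family Seiberg--Witten invariants used in the proof of Theorem~\ref{T:infgen} detect these generators as linearly independent in $\pi_k(\diffo(\bZ^p))$ and identify them with elements of the claimed summand, producing the asserted $\Z^\infty$ sub-summand of $1$-stably trivial classes. The main obstacle is the second paragraph: one must arrange the recursion so that the $\sss$ factor needed to trivialize $\alpha_k$ at level $k$ coincides, up to isotopy, with an $\sss$ factor already used to build $\alpha_k$ from $\alpha_{k-1}$, so that the number of required stabilizations does not grow with $k$. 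This careful matching of stabilization factors is the technical heart of the argument, and is where a parametric analogue of the Quinn-type stabilization results must be invoked uniformly along the induction.
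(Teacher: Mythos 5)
Your first step is essentially the paper's argument: Proposition~\ref{P:Rstab} lets one take $\balpha^0[q]$ and the stable isotopy $F^0[q]$ to be the identity on a fixed ball $B^4$, and since a commutator of families that are the identity on $B^4$ is again the identity on $B^4$, the recursion in Definition~\ref{D:families} keeps every $\balpha^p[q]$ (and every $F^p[q]$) in $\diff(\bZ^p,B^4)$. So that part matches.

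The second step has a genuine gap. You assert that $\alpha_k\cs 1_{\sss}$ is contracted by combining the null-homotopy used to build $\alpha_k$ with ``the isotopy that swaps the two $\sss$ factors,'' and you then say the technical heart is that ``a parametric analogue of the Quinn-type stabilization results must be invoked uniformly along the induction.'' No such parametric theorem exists -- the paper explicitly notes that for $k\geq 1$ there are no general results giving stable triviality of families -- and a swap of $\sss$ summands does not by itself produce a contraction of the stabilized family, so as written the key step is unsupported. The actual mechanism is that the stable contraction is \emph{constructed simultaneously with the family} in Definition~\ref{D:families}: one sets $\balpha^{p+1}[q]=[F^p[q],\bbeta]$ and $F^{p+1}[q]=[F^p[q],F]$, where $F$ is the $1$-stable isotopy of the single diffeomorphism $\bbeta$ supplied by Proposition~\ref{P:Rstab} (i.e.\ by the $1$-stabilization theorem of Auckly--Kim--Melvin--Ruberman(--Schwartz), applied once, at the level of a single diffeomorphism). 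Because $\balpha^{p+1}[q]\cs 1_{\sss}$ is the commutator $[F^p[q],\bbeta\cs 1_{\sss}]$ and $F$ interpolates $\bbeta\cs 1_{\sss}$ to the identity through diffeomorphisms commuting appropriately with the ends of $F^p[q]$, the commutator $[F^p[q],F]$ is an explicit contraction; only one new stabilization is ever spent, and no parametric Quinn-type input is needed. Your outline correctly senses that the stabilizing $\sss$ must be reused along the induction, but without the commutator formula for $F^{p+1}[q]$ (or an equivalent explicit construction) the claimed $1$-stable triviality is not established.
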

The study of diffeomorphism groups in higher dimensions replaces isotopy by pseudoisotopy (or concordance). The diffeomorphisms showing that $\pi_0(\tdiff(\bZ^0))$ is infinitely generated act trivially on the identity, and so are pseudoisotopic to the identity~\cite{kreck:isotopy,quinn:isotopy}. As observed in~\cite{ruberman:isotopy}, this implies that Cerf's landmark pseudoisotopy theorem~\cite{cerf:stratification} does not hold in dimension $4$.

To study higher-parameter families, one replaces the diffeomorphism group with the \emph{block diffeomorphism group}~\cite{antonelli-burghelea-kahn} $\diffb(Z)$ (using the notation of~\cite{kupers:diffbook}) the geometric realization of the simplicial group whose $n$-simplices are diffeomorphisms $\Delta^n \times Z \to \Delta^n \times Z$. An element $\alpha \in \pi_k(\diff(Z))$ determines a self-diffeomorphism 
\[
\widehat{\alpha}(\theta,x) = (\theta,\alpha(\theta)(x))
\]
of $S^k \times Z$, which in turn is an element of $\pi_k(\diffb(Z))$. This element is trivial if $\widehat{\alpha}$ extends to a self-diffeomorphism of $D^{k+1} \times Z$. The assignment $\alpha \to \widehat{\alpha}$ defines a natural homomorphism $\diff(Z) \to \diffb(Z)$. 

Note that to any $\alpha \in \pi_k(\diff(Z))$, the clutching construction \eqref{E:clutch} builds a bundle $S^{k+1} \times_\alpha Z$. 
\begin{corollary}\label{C:pseudo}
Each $\alpha$ constructed in proving Theorem~\ref{T:infgen}, the corresponding $\widehat\alpha \in \pi_p(\diffb(Z))$ is trivial.
\end{corollary}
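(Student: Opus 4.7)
The proof is by induction on $p$, paralleling the recursive construction of each $\alpha$ in the proof of Theorem~\ref{T:infgen}. The task is to produce a self-diffeomorphism of $D^{p+1} \times \bZ^p$ restricting to $\widehat\alpha$ on the boundary $S^p \times \bZ^p$. For the base case $p=0$, the generators of the $\Z^\infty$ summand in $\pi_0(\diff(\bZ^0,\spincs,\calo))$ may be chosen to lie in $\tdiff(\bZ^0)$. By the pseudoisotopy theorem of Kreck~\cite{kreck:isotopy} and Quinn~\cite{quinn:isotopy}, with the corrections in~\cite{gabai-etal:pseudoisotopies}, every Torelli diffeomorphism of a simply connected closed $4$-manifold is pseudoisotopic to the identity; a pseudoisotopy is precisely a self-diffeomorphism of $I \times \bZ^0 = D^1 \times \bZ^0$ restricting to $\id \sqcup \alpha$ on $S^0 \times \bZ^0$, so $\widehat\alpha$ is trivial in $\pi_0(\diffb(\bZ^0))$.

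For the inductive step, the family $\alpha: S^p \to \diff(\bZ^p)$ is assembled, via the $1$-stabilization construction of~\cite{auckly-kim-melvin-ruberman:isotopy,auckly-kim-melvin-ruberman-schwartz:1-stable}, from a $(p-1)$-parameter family $\beta$ on a predecessor manifold $\bZ^{p-1}$ (related to $\bZ^p$ by one $\sss$-stabilization) together with two smooth disk-extensions $H_{\pm}: D^p \to \diff(\bZ^p)$ of the stabilized family $\beta \cs \id_{\sss}: S^{p-1} \to \diff(\bZ^p)$, arranged so that $\alpha = H_+ \cup_{S^{p-1}} H_-^{-1}$. The inductive hypothesis supplies a self-diffeomorphism $W$ of $D^p \times \bZ^{p-1}$ extending $\widehat\beta$. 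Fiberwise connect-summing $W$ with $\id_{\sss}$ produces a self-diffeomorphism $W'$ of $D^p \times \bZ^p$ extending $\widehat{\beta \cs \id_{\sss}}$; interpolating $W'$ with the two smooth $p$-disks $\widehat{H}_\pm$ across a collar of the boundary sphere yields the desired self-diffeomorphism of $D^{p+1} \times \bZ^p$ that extends $\widehat\alpha$ on $S^p \times \bZ^p$.

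The main technical issue is verifying that the block null-bordism $W'$, which is only a block diffeomorphism and not a fiberwise family, combines coherently with the smooth disk-extensions $\widehat{H}_{\pm}$ to produce a single self-diffeomorphism of $D^{p+1} \times \bZ^p$. This reduces to showing that the $1$-stabilization construction and the relevant fiberwise connected-sum operations are compatible with the simplicial structure of $\diffb$ at each stage; this compatibility is precisely what the recursive construction is designed to provide, and it propagates through the induction to give the corollary. This may be viewed as the higher-parameter analogue of the base-case observation that a pseudoisotopy of a diffeomorphism is exactly a block $1$-simplex in $\diffb$.
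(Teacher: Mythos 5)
Your overall strategy (induct along the recursive construction, with the Kreck--Quinn pseudoisotopy as the $p=0$ case) is in the right spirit, and your base case agrees with the paper's. But the inductive step has a genuine gap, for two reasons. First, it is built on a description of the construction that is not the one used here: the family $\balpha^{p}[q]$ is \emph{not} assembled from two disk-extensions $H_\pm$ of a stabilized $(p-1)$-family on a manifold obtained from $\bZ^{p-1}$ by a single $\sss$-stabilization. Rather (Definition~\ref{D:families}), $\bZ^{p}$ is obtained from $\bZ^{p-1}$ by a torus sum with $\bu$, and $\balpha^{p}[q]$ is the commutator $[F^{p-1}[q],\bbeta]$ of the stable contraction $F^{p-1}[q]$ with a \emph{fixed} diffeomorphism $\bbeta$ supported in the new $\bu$ summand; the restriction of this family to any equatorial $S^{p-1}$ is not $\balpha^{p-1}[q]\cs 1_{\sss}$, so the decomposition $\alpha = H_+\cup_{S^{p-1}}H_-^{-1}$ on which your argument rests is not available. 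Second, even granting some such decomposition, the step you yourself flag as the main issue --- interpolating the block extension $W'$ with the smooth disks $\widehat H_\pm$ ``across a collar'' --- is exactly the content that needs proof, and your argument only asserts that ``the recursive construction is designed to provide'' it. As written, this assumes the conclusion.

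The paper's proof sidesteps all of this because the recursion of Definition~\ref{D:families} constructs the required extension \emph{alongside} the family: together with $\balpha^{p}[q]=[F^{p-1}[q],\bbeta]$ one defines the pseudocontraction $K^{p}[q]=[K^{p-1}[q],F]$, starting from the Kreck--Quinn pseudoisotopy $K^{0}$ (rel a ball) of Proposition~\ref{P:Rstab} and Equation~\ref{E:FGK0}. The commutator of the block map $K^{p-1}[q]$ with the smooth stable isotopy $F$ of $\bbeta$ is well defined and restricts to the identity at one end and to $\widehat{\balpha^{p}[q]}$ at the other, because the supports of the two factors are disjoint at the ends; capping the $t=0$ end with the identity on a small disk times $\bZ^{p}$ gives the self-diffeomorphism of $D^{p+1}\times\bZ^{p}$ extending $\widehat\alpha$. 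If you want to salvage your write-up, replace the $H_\pm$-decomposition and the collar interpolation by this observation: carry the block extension through the commutator construction itself, rather than trying to graft it onto the smooth family afterwards.
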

As in~\cite{kato-konno-nakamura:families}, this leads to interesting bundles over a sphere.
\begin{corollary}\label{C:bundle}
     For each non-trivial $\alpha$ constructed in proving Theorem~\ref{T:infgen}, the associated bundle $S^{p+1} \times_\alpha Z$ satisfies the following:
     \begin{enumerate}
        \item the bundle is smoothly non-trivial;
        \item the bundle is topologically trivial;
        \item the total space of the bundle is diffeomorphic to a product.  
     \end{enumerate}
\end{corollary}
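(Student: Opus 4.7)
The plan is to derive each of the three assertions directly from the clutching presentation of $S^{p+1} \times_\alpha Z$ together with Theorem~\ref{T:infgen} and Corollary~\ref{C:pseudo}. Present the bundle as
\[
S^{p+1} \times_\alpha Z \;=\; (D^{p+1}_+ \times Z) \cup_{\widehat{\alpha}} (D^{p+1}_- \times Z),
\]
two hemispheres glued along $S^p \times Z$ via $\widehat{\alpha}(\theta,x) = (\theta, \alpha(\theta)(x))$.

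For (1), the bundle is by construction classified up to smooth isomorphism by $\alpha \in \pi_p(\diff(Z)) \cong \pi_{p+1}(\bdiff(Z))$; a smooth bundle isomorphism with $S^{p+1} \times Z$ would force $\alpha$ to be null-homotopic, contradicting the fact that the $\alpha$ supplied by Theorem~\ref{T:infgen} is detected by a non-vanishing integer-valued family Seiberg-Witten invariant. For (2), the same clutching argument applied to $\home(Z)$ in place of $\diff(Z)$ shows that the underlying topological bundle is classified by the image of $\alpha$ under the forgetful map $\pi_p(\diff(Z)) \to \pi_p(\home(Z))$; this image vanishes by Theorem~\ref{T:infgen}(1), so the topological bundle is trivial.

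For (3), I would invoke Corollary~\ref{C:pseudo} to obtain a self-diffeomorphism $F \colon D^{p+1} \times Z \to D^{p+1} \times Z$ restricting to $\widehat{\alpha}$ on the boundary $S^p \times Z$. Define
\[
\Phi \colon S^{p+1} \times_\alpha Z \longrightarrow S^{p+1} \times Z
\]
by applying $F$ on the $+$ hemisphere and the identity on the $-$ hemisphere. The two prescriptions agree along the common boundary $S^p \times Z$ precisely because $F|_{S^p \times Z} = \widehat{\alpha}$, so $\Phi$ descends to a well-defined diffeomorphism of the total spaces. The essential obstacle — trivializing $\widehat\alpha$ across $D^{p+1} \times Z$ — is already absorbed into Corollary~\ref{C:pseudo}, whose proof relies on inductive extensions produced at each stage of the recursive construction of the spherical families; once that corollary is granted, the present result is largely dictionary work translating the clutching construction through the three relevant classifying spaces.
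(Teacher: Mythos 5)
Your proposal is correct and follows essentially the same route as the paper: items (1) and (2) are handled by the same clutching-function correspondence and the topological contractibility of $\alpha$, and item (3) is the same gluing argument, with the only cosmetic difference that you cite Corollary~\ref{C:pseudo} for the extension of $\widehat\alpha$ over $D^{p+1}\times Z$, while the paper applies the concordance $K^p$ directly on a collar of $\partial D^{p+1}_+\times Z$ (which is exactly how Corollary~\ref{C:pseudo} is proved).
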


Using methods of algebraic topology, we can deduce some additional results about the homology of the diffeomorphism group directly from Theorem~\ref{T:infgen}. There are two versions, the first giving infinitely generated torsion subgroups in the homology, and the second giving infinitely generated non-torsion subgroups. Via the main theorem from Browder's study~\cite{browder:torsion} of the algebraic topology of H-spaces, we obtain
\begin{corollary}\label{C:Hinf}
For the manifolds $\zz^p$ with $p$ even appearing in Theorem~\ref{T:infgen}, the homology $H_*(\diffo(\zz^p))$ is infinitely generated. 
\end{corollary}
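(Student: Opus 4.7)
The plan is to exploit the H-space structure on $\diffo(\zz^p)$ together with Browder's structure theorem for H-space homology, in order to propagate the single-degree $\Z^\infty$ summand from Theorem~\ref{T:infgen}(2) into non-trivial homology in infinitely many degrees.

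I would first record that $\diffo(\zz^p)$ is a path-connected topological group, hence a connected H-space, so its Pontryagin homology $H_*(\diffo(\zz^p); \Q)$ is a connected graded-commutative Hopf algebra over $\Q$. Browder's main structure theorem from~\cite{browder:torsion} (in the spirit of Hopf and Milnor--Moore) identifies this Hopf algebra as the free graded-commutative algebra on its space of primitives. By Remark~\ref{R:main}(1), the $\Z^\infty$ summand of Theorem~\ref{T:infgen}(2) is the Hurewicz image of the $\Z^\infty$ summand of $\pi_p(\diffo(\zz^p))$ from Theorem~\ref{T:infgen}(1), so its generators are primitive elements of $H_p(\diffo(\zz^p); \Z)$. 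Rationalizing yields infinitely many $\Q$-linearly independent primitive classes $\alpha_1, \alpha_2, \ldots \in H_p(\diffo(\zz^p); \Q)$.

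Since $p$ is even, the $\alpha_i$ serve as polynomial generators in the free graded-commutative structure, and their products span a polynomial subalgebra $\Q[\alpha_1, \alpha_2, \ldots] \subset H_*(\diffo(\zz^p); \Q)$, which is infinite-dimensional in every degree $kp$ for $k \geq 1$. It follows that $H_{kp}(\diffo(\zz^p); \Z)$ has positive $\Q$-rank for infinitely many $k$, so $H_*(\diffo(\zz^p); \Z)$ is infinitely generated as a graded abelian group. The main substantive step is recognizing the Hurewicz images as primitive and invoking Browder's structure theorem in the appropriate form; the hypothesis that $p$ is even is essential, as it is precisely what allows the $\alpha_i$ to be treated as polynomial (rather than exterior) generators and thereby produce non-trivial classes in arbitrarily high degrees.
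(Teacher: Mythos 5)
Your strategy is genuinely different from the paper's: the paper's proof of Corollary~\ref{C:Hinf} is a two-line argument citing Browder's theorem that a connected H-space with \emph{finitely generated} homology has $\pi_2=0$, combined with the fact that Theorem~\ref{T:infgen} makes $\pi_2(\diffo(\zz^p))$ non-zero when $p$ is even (and $\geq 2$); no Hopf-algebra structure theory is invoked at all. Your route is instead a rational argument that propagates the degree-$p$ classes to infinitely many degrees, which is much closer in spirit to the paper's proof of Corollary~\ref{C:HQinf} via Lemma~\ref{L:jianfeng} and~\cite{felix-halperin-thomas:RHT}, and it proves a stronger statement (infinite rank in infinitely many degrees) than the paper's proof of~\ref{C:Hinf} does.

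However, as written your key structural step has a genuine gap. The Pontryagin ring $H_*(\diffo(\zz^p);\Q)$ of a topological group is \emph{not} in general graded-commutative (the group is not homotopy commutative), so it is not a "free graded-commutative algebra on its primitives," and no such structure theorem is in~\cite{browder:torsion}, whose main results concern torsion, Bocksteins, and the $\pi_2$-vanishing criterion the paper actually uses. What is true, and what rescues your argument, is that the coalgebra structure coming from the diagonal is cocommutative; hence by the Milnor--Moore theorem in characteristic zero, $H_*(\diffo(\zz^p);\Q)\cong U(P)$ for the graded Lie algebra $P$ of primitives, and the Hurewicz images $\alpha_i$ of your spherical classes are primitive (Remark~\ref{R:main}(1)) and $\Q$-independent. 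The Poincar\'e--Birkhoff--Witt theorem then shows that the ordered monomials in the even-degree classes $\alpha_i$ are linearly independent in $U(P)$ -- note that the subalgebra they generate need not be polynomial, since brackets $[\alpha_i,\alpha_j]$ may be non-zero -- which is all you need to conclude that $H_{kp}(\diffo(\zz^p);\Q)$ is infinite-dimensional for every $k\geq 1$, and hence that the integral homology is infinitely generated. So the conclusion stands, but you must replace the appeal to commutativity/freeness (and the attribution to Browder) by cocommutativity, Milnor--Moore, and PBW, or alternatively run the loop-space argument of Lemma~\ref{L:jianfeng} as in the proof of Corollary~\ref{C:HQinf}.
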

Browder uses the Hopf algebra structure of $H_*(\diffo(\zz_r^p))$ to construct non-trivial torsion elements in $H_*(\diffo(\zz_r^p))$ in infinitely many degrees, and so the classes he constructs are different from the homology classes that we are detecting. It seems likely that a more strenuous use of the methods from~\cite{browder:torsion} will show that in fact the kernel of the map $H_*(\diffo(\zz^p)) \to H_*(\homeo(\zz^p))$ is infinitely generated.  

A rational version of Corollary~\ref{C:Hinf} was pointed out to us by Jianfeng Lin. He observed that methods of rational homotopy theory~\cite{felix-halperin-thomas:RHT} can be used to deduce the following statement, which is a dramatic demonstration  of the difference between the world of homeomorphisms and diffeomorphisms in dimension four.
\begin{corollary}\label{C:HQinf}
    For the manifolds $\zz^p$ with $p$ even appearing in Theorem~\ref{T:infgen}, 
    \[
    \ker\left[H_*(\diffo(\zz^p);\bq) \to H_*(\homeo(\zz^p);\bq)\right]
    \]
    is infinitely generated in every even degree. 
\end{corollary}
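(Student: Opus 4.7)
The plan is to combine the infinite generation of $\pi_2(\diffo(\zz^p))\otimes\bq$ (from Theorem~\ref{T:infgen}) with the Milnor-Moore and Poincar\'e-Birkhoff-Witt structure of the rational Pontryagin Hopf algebra of an $H$-space. Since $\diffo(\zz^p)$ and $\homeo(\zz^p)$ are connected topological groups, $H_*(-;\bq)$ with the Pontryagin product is a connected graded-cocommutative Hopf algebra over $\bq$. By the Milnor-Moore theorem~\cite{felix-halperin-thomas:RHT}, each such Hopf algebra is isomorphic to the universal enveloping algebra $U(P)$ of its graded Lie algebra of primitives $P$; and by Cartan-Serre, $P$ is canonically identified with $\pi_*(G)\otimes\bq$ via the injective rational Hurewicz map. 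The natural group homomorphism $\phi \colon \diffo(\zz^p) \to \homeo(\zz^p)$ is an $H$-map, so $\phi_*$ on rational homology is the map of universal enveloping algebras induced by $\phi_*|_P$ on rational homotopy.

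For $p$ even with $p \geq 2$, Theorem~\ref{T:infgen} (1) provides an infinite-dimensional subspace $V \subset \pi_2(\diffo(\zz^p))\otimes\bq$ lying in $\ker(\phi_*|_P)$. The PBW filtration on $U(P)$ has associated graded $\mathrm{Sym}(P_{\mathrm{even}}) \otimes \Lambda(P_{\mathrm{odd}})$, and $\phi_*$ respects the filtration, inducing on the associated graded the super-symmetric algebra map $\mathrm{Sym}(\phi_*|_P)$. The kernel of that map contains the ideal generated by $V \subset P_2$, which in degree $2k$ contains the symmetric power $\mathrm{Sym}^k V$, infinite-dimensional for every $k \geq 1$.

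Lifting from the associated graded back to $U(P)$ preserves the dimension of each graded piece, so $\ker[\phi_* \colon H_{2k}(\diffo(\zz^p);\bq) \to H_{2k}(\homeo(\zz^p);\bq)]$ is infinite-dimensional for every $k \geq 1$, proving the corollary. The main delicate point is to confirm that Milnor-Moore and PBW apply in our infinite-rank setting (where $\pi_2 \otimes \bq$ has countably infinite dimension); but both are purely algebraic statements about connected graded Hopf algebras and their universal enveloping algebras over characteristic-zero fields, and hold without any finite-type hypothesis, as recorded in~\cite{felix-halperin-thomas:RHT}.
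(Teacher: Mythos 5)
Your route is genuinely different from the paper's. The paper does not apply Milnor--Moore/PBW to $H_*(\diffo(\zz^p);\bq)$ itself: it takes $A$ and $B$ to be classifying spaces of $\diffo(\zz^p)$ and $\homeo(\zz^p)$, passes to the homotopy fiber $F$ of $A\to B$, represents a basis of $V=\ker[\pi_2(\diffo(\zz^p))\otimes\bq\to\pi_2(\homeo(\zz^p))\otimes\bq]$ by maps $S^3\to F$, multiplies their loops to get a map from a weak product of copies of $\Omega S^3$ into $\Omega F$, and then uses the proof of \cite[Proposition 16.7]{felix-halperin-thomas:RHT} to inject $S(V)$ into $H_*(\diffo(\zz^p);\bq)$ with image in the kernel (Lemma~\ref{L:jianfeng}). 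The point of that detour is that all structure theory is applied to finite-type spaces (loop spaces of spheres and their finite products), whereas $H_*(\diffo(\zz^p);\bq)$ is emphatically not of finite type. Your proposal applies Milnor--Moore, Cartan--Serre and PBW directly to the infinite-type Pontryagin Hopf algebras; this is more direct, but your parenthetical claim that these hold with no finite-type hypothesis ``as recorded in'' \cite{felix-halperin-thomas:RHT} is precisely the point the paper's argument is engineered to sidestep, since the statements there are given for spaces of finite type. The facts you need are true in characteristic zero without finiteness (Milnor--Moore's original algebraic theorem for connected graded cocommutative Hopf algebras, plus a colimit argument for Cartan--Serre), but you should either prove or properly source them at that generality.

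The one step that does not work as written is the passage from the associated graded back to $\phi_*$. For a general filtration-preserving map, the kernel of the associated-graded map can be strictly larger than the kernel of the map (a rank-one map can have vanishing associated graded), so ``$\ker\mathrm{Sym}(\phi_*|_P)$ is infinite-dimensional in degree $2k$'' plus ``$\dim$ of graded pieces is preserved'' does not by itself give infinite-dimensionality of $\ker\phi_*$. The conclusion is nevertheless correct because $\phi_*=U(\phi_*|_P)$ has more structure than an arbitrary filtered map. Two immediate repairs: (i) in characteristic zero the symmetrization map gives a \emph{natural} coalgebra isomorphism $\mathrm{Sym}(P_{\mathrm{even}})\otimes\Lambda(P_{\mathrm{odd}})\cong U(P)$ intertwining $\mathrm{Sym}(\phi_*|_P)$ with $U(\phi_*|_P)$, so the two kernels are isomorphic as graded vector spaces; or, more simply, (ii) since $\phi_*$ is an algebra map, any product $v_1\cdots v_k$ of Hurewicz images of elements of $V\subset H_2(\diffo(\zz^p);\bq)$ lies in $\ker\phi_*$, and the ordered monomials in a basis of $V$ are linearly independent in $U(P)\cong H_*(\diffo(\zz^p);\bq)$ by PBW; note that this second argument only needs the structure theory for the source. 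With that step repaired (and the finite-type caveat addressed), your proof is complete.
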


Corollaries~\ref{C:stabilize}-\ref{C:bundle} will be proved in Section~\ref{S:corollaries}; Corollaries~\ref{C:Hinf} and~\ref{C:HQinf} will be proved in Section~\ref{S:algtop}.

\begin{remark}\label{R:highdim}
It is interesting to speculate on the relation between the results of this paper and those obtained in higher dimensions using smoothing and handlebody theory. Fundamental theorems of high dimensional topology (see for example the surveys~\cite{hatcher:50,randal-williams:mit} and responses to~\cite{ruberman:diffMO}) translate problems about the homotopy properties of diffeomorphism and homeomorphism groups into lifting problems relating the topological and smooth tangent bundles. 
The results on $\pi_0(\diff)$ from~\cite{ruberman:isotopy,ruberman:polyisotopy,ruberman:swpos} are in sharp contrast to what one would expect in higher dimensions, but this is less clear for the higher homotopy groups, because the homotopy groups of $PL_4$ and $TOP_4$ (which govern the tangential information) are unknown.

In a different direction, the fact that the homotopy group elements $\alpha$ become trivial after one stabilization suggests a contrast with the remarkable homological stability results of Galatius and Randal-Williams~\cite{galatius-randal-williams:I,galatius-randal-williams:II}. They show for $n>2$ that for a simply-connected $2n$-manifold with $g$ connected summands $S^n \times S^n$, stabilization induces an isomorphism on the $k\textsuperscript{th}$ homology groups of the classifying spaces of  the diffeomorphism groups as long as $2k \leq g -3$. Theorem~\ref{T:infgen}, combined with Proposition~\ref{P:Rstab}, says that this stability does not hold for the integral or rational homology groups of $\bdiffo$ or $\btdiff$ in dimension $4$.  A recent paper of Konno and Lin~\cite{konno-lin:instability} shows that homology stability does not hold for $\bdiff$. This failure of homology stability is demonstrated by finding elements in the kernel and cokernel of the stabilization map; these elements are all $2$-torsion. Hence it would be of interest to show that homology stability for $\bdiff$ fails with rational coefficients.

The correct context for structural questions about the homology of the Torelli group in other dimensions seems to be \emph{representation stability}, as in~\cite{church-farb:stability}, and it is an interesting question as to how our results fit in to this framework.   

\end{remark}
A direct consequence of Theorem~\ref{T:infgen} is that Konno's characteristic classes~\cite{konno:classes} $\swc\in H^*(\btdiff)$ have non-trivial evaluations on the families we construct. This answers, in the case of the Torelli group, a question raised by Cushing-Moore-Ro\v{c}ek-Sakena in Section 9 of~\cite{cushing-moore-rocek-sakena:diff}. They ask if the generalized Miller-Morita-Mumford classes (or tautological classes, or MMM classes) ~\cite{miller:mcg,morita:characteristic,mumford:classes} in $H^*(\bdiff(X);\Q)$ generate all of the cohomology, as holds in dimension $2$ by the solution to Mumford's conjecture~\cite{madsen-weiss:mumford}. It is natural to ask if this holds true for the pullback of the MMM classes to $H^*(\btdiff(X^n);\Q)$ induced by the inclusion $\tdiff \to \diff$. 
\begin{corollary}\label{C:MMM}
The MMM classes do not generate all of $H^*(\btdiff(X^n);\Q)$. 
\end{corollary}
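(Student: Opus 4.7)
The plan is to exhibit a class in $H^*(\btdiff(\zz^p);\bq)$ that lies outside the subring generated by the MMM classes, by pairing it with a spherical homology class on which every MMM class vanishes but some Konno class does not.

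First, I would apply item (3) of \thmref{infgen} together with Remark~\ref{R:main}(1) to produce a nonzero spherical homology class
\[
\xi \in \ker\left[H_{j+1}(\btdiff(\zz^p);\bq) \to H_{j+1}(\bhome(\zz^p);\bq)\right],
\]
arising as the Hurewicz image of some $\widehat\alpha \in \pi_{j+1}(\btdiff(\zz^p))$. The sentence immediately preceding the corollary then supplies a Konno class $\swc$ of degree $j+1$ with $\langle \swc, \xi\rangle \neq 0$.

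Second, I would observe that each rational MMM class $\kappa_c$ is defined by integrating a monomial in Pontryagin and Euler classes of the vertical tangent bundle of the universal smooth family along the fiber. The same recipe applies to topological fiber bundles via the vertical topological tangent microbundle, whose rational Pontryagin and Euler classes exist by Kirby-Siebenmann. Hence each $\kappa_c \in H^*(\bdiff(\zz^p);\bq)$ is pulled back from a class on $\bhome(\zz^p)$, so it pairs to zero with $\xi$.

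Finally I would exploit sphericality to rule out nonlinear combinations of MMM classes. If $f\co S^{j+1} \to \btdiff(\zz^p)$ represents $\xi$, then for any positive-degree classes $u,v$ with $\deg(u)+\deg(v)=j+1$, one of $f^*u$ or $f^*v$ lies in $H^{<j+1}(S^{j+1};\bq)=0$, so $\langle u\cup v,\xi\rangle=0$. Combining this with the previous paragraph, every polynomial of positive total degree in the MMM classes annihilates $\xi$, while $\swc$ does not; hence $\swc$ is not in the subring generated by MMM classes, and the corollary follows. The main obstacle is the clean verification that the rational MMM classes genuinely factor through $\bhome$; this reduces to the standard Kirby-Siebenmann theory of rational characteristic classes for topological microbundles, which I would cite rather than reprove.
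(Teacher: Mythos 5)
Your argument is correct and follows essentially the same route as the paper: take the spherical class coming from Theorem~\ref{T:infgen} whose image in $\bhome(\zz^p)$ vanishes, observe that every class in the subring generated by the rational MMM classes is pulled back from $\bhome(\zz^p)$ and hence annihilates it, and contrast this with the nonvanishing evaluation of Konno's class via Proposition~\ref{P:eval}. The only difference is that where you sketch the factorization of the MMM classes through $\bhome$ by hand (Kirby--Siebenmann rational Pontryagin classes of the vertical tangent microbundle plus topological fiber integration, with your extra sphericality step to kill products), the paper simply cites Ebert--Randal-Williams~\cite[Theorem 2]{ebert-randal-williams:MMM-TOP}, which packages exactly these points and avoids any low-dimensional subtleties about $\mathrm{TOP}(4)$.
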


\vspace*{1ex}\noindent

As in~\cite{auckly-ruberman:diffym,auckly-ruberman:emb} the construction of our exotic families gives rise to families of embeddings of surfaces and $3$-manifolds. Let us write $\Emb(\Sigma,X)$ and $\Embtop(\Sigma,X)$ for the space of smooth embeddings of a manifold $\Sigma$ in $X$ and locally flat topological embeddings, respectively. Note that there is a forgetful map $\Emb(\Sigma,X) \to \Embtop(\Sigma,X)$. If we want to keep track of the homology class $A$ represented by $\Sigma$, we will write $\Emb_A(\Sigma,X)$. There are several different notions of stabilization for embedded surfaces and families of embedded surfaces. External stabilization may be summarized by $(X,\Sigma)\cs (\sss,\emptyset)$. 
This operation makes sense on the family level for families that agree with a fixed embedding of a disk in $D^4\subset X$.

The next two theorems describe large families of embedded 2 or 3-spheres in manifolds $\bzptilde$. These are simply stabilizations of the manifolds $\zz^p$ appearing in Theorem~\ref{T:infgen}. 
\begin{theorem}\label{T:emb2} 
For any $p \geq 0$ and for all $j \leq p$ with $j \equiv p \pmod{2}$ there are manifolds $\bzptilde$ such that the following groups have $\Z^\infty$ summands:
\begin{enumerate}[topsep=2pt,parsep=2pt,partopsep=1pt]
    \item $\pi_j(\Emb(S^2,\bzptilde))$ and  $\ker\left[\pi_j(\Emb(S^2,\bzptilde))\to \pi_j(\Embtop(S^2,\bzptilde))\right]$
      \item $H_j(\Emb(S^2,\bzptilde))$ and  $\ker\left[H_j(\Emb(S^2,\bzptilde))\to H_j(\Embtop(S^2,\bzptilde))\right]$
   \end{enumerate}
  The $2$-spheres in question can have self-intersection $0$ or $\pm 1$, and a $\Z^\infty$ subgroup becomes trivial after a single external stabilization. The homology class of the sphere can be any primitive and non-characteristic element of $H_2(\bzptilde)$.
\end{theorem}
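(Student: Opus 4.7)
The approach is to construct the embedding families by evaluating the diffeomorphism families of Theorem~\ref{T:infgen} on a chosen sphere that passes through their common support ball, and to detect infinite generation via family Seiberg-Witten invariants of embedded surfaces, computed through the gluing theorem~\ref{pIRglue}.

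First, I would use Theorem~\ref{T:infgen} and Corollary~\ref{C:stabilize} to fix families $\alpha_i\co S^j\to\diff(\bZ^p,B^4)$ ($i\in\bn$) spanning a $\Z^\infty$ summand of $\pi_j(\diff(\bZ^p))$ with a common support ball $B\subset\bZ^p$.  The manifold $\bzptilde$ and sphere $S\subset\bzptilde$ are chosen according to the required self-intersection: for $\pm1$, take $\bzptilde=\bZ^p$ and let $S$ be a $\cpone$ in a $\cptwo$ or $\cptwobar$ summand supplied by Remark~\ref{R:main}(3); for $0$, take $\bzptilde=\bZ^p\cs(\sss)$ and let $S$ be an $S^2\times\{\pt\}$ fiber.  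Adding further stabilizing summands and sliding $S$ across handles realizes any primitive non-characteristic homology class.  After an ambient isotopy we may assume $B\cap S$ is a single disk, so $\tilde\alpha_i(\theta):=\alpha_i(\theta)(S)$ defines a family $\tilde\alpha_i\co S^j\to\Emb(S^2,\bzptilde)$ that agrees with $S$ outside $B$.

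To show $\{[\tilde\alpha_i]\}$ spans a $\Z^\infty$ summand of $\pi_j(\Emb(S^2,\bzptilde))$, I would apply family \SW{} invariants of embedded surfaces in the style of~\cite{baraglia:surfaces,konno-mallick-taniguchi:knotted-donaldson,lin-mukherjee:surfaces}: pick a $\Spinc$ structure $\spincs$ on $\bzptilde$ so that the parameterized moduli space over the clutched bundle $S^{j+1}\times_{\alpha_i}\bzptilde$ decomposes, via~\ref{pIRglue}, into an irreducible locus supported on $\bZ^p$ and a standard reducible locus on the summand meeting $S$.  The $\alpha_i$ already produce nonzero, linearly independent integer-valued invariants in the proof of Theorem~\ref{T:infgen} through Lemmas~\ref{L:spheredef} and~\ref{L:finite}; the same computation reinterpreted as an embedded-surface invariant of $\tilde\alpha_i$ then yields linear independence of the $[\tilde\alpha_i]$ in $\pi_j(\Emb(S^2,\bzptilde))$, and splits off a $\Z^\infty$ summand by the finite-basic-class argument.

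The remaining assertions follow directly: topological triviality of $\tilde\alpha_i$ in $\pi_j(\Embtop(S^2,\bzptilde))$ is inherited from topological triviality of $\alpha_i$ in Theorem~\ref{T:infgen} via parameterized topological isotopy extension; the homology statement is the Hurewicz image of the homotopy statement within a single path component of $\Emb(S^2,\bzptilde)$; and external stabilization $(\bzptilde,S)\cs(\sss,\emptyset)$ pastes an $\sss$ summand disjoint from both $S$ and $B$, so the stabilized embedding family is the evaluation of $\alpha_i\cs 1_{\sss}$, which is $1$-stably trivial on a $\Z^\infty$ subgroup by Corollary~\ref{C:stabilize}.  The principal obstacle will be the family \SW{} computation: one must arrange the $\Spinc$ data and the ambient isotopy of $S$ into $B$ so that~\ref{pIRglue} applies fiberwise and identifies the embedded-surface invariant of $\tilde\alpha_i$ with the diffeomorphism invariant of $\alpha_i$ up to a non-vanishing wall-crossing factor.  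I expect this to require a parallel induction with the proof of Theorem~\ref{T:infgen}, with $\Spinc$ structures chosen so that the reducible locus lies on the summand of $\bzptilde$ containing $S$.
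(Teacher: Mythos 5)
Your construction of the embedding families is not the paper's, and the detection step on which it relies fails. You define $\tilde\alpha_i(\theta)=\alpha_i(\theta)\circ\iota_S$ by evaluating a spherical family of diffeomorphisms $\alpha_i\colon S^j\to\diff(\bzptilde)$ on a fixed sphere $S$. For any such evaluation family the level-preserving diffeomorphism $\Phi(\theta,z)=(\theta,\alpha_i(\theta)(z))$ of $S^j\times\bzptilde$ carries $S^j\times S$ onto the graph of $\tilde\alpha_i$, so the family of pairs $(S^j\times\bzptilde,\Gamma_{\tilde\alpha_i})$ is isomorphic to the product family. Hence the exterior bundle, and the family of closed $4$-manifolds obtained from it by family blowdown or surgery, are trivial bundles, and every family Seiberg--Witten invariant built from them (Konno's classes, which is what the embedded-surface invariants you cite and the invariant $\sw^{\Emb}$ of this paper amount to) vanishes identically on $\tilde\alpha_i$, regardless of how $S$ meets the support of the $\alpha_i$ or which $\Spinc$ structure you choose. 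The clutched bundle $S^{j+1}\times_{\alpha_i}\bzptilde$ you propose to feed into Theorem~\ref{pIRglue} is an invariant of the diffeomorphism family, not of the embedding family; the family of manifolds naturally associated to $\tilde\alpha_i$ is a product, so the ``reinterpretation'' of the diffeomorphism computation as an embedded-surface invariant --- the step you yourself flag as the principal obstacle --- is a genuine gap, not a technicality, and it cannot be repaired by a wall-crossing factor.

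The paper's proof differs exactly where yours breaks. The infinitely many generators come not from evaluating $\balpha^p$ but from exotic spheres $J_q$: the standard $S^2\times\{\pt\}$ (or $\cpone$) in the stabilizing summand, transported into the fixed manifold $\bX_0\cs(\sss)$ by Gompf's stable diffeomorphisms $\vphi_q\colon\bX_q\cs(\sss)\to\bX_0\cs(\sss)$, so that surgering (or blowing down) $J_q$ reinstates the log transform $E(2;2q+1)$. The spherical parameterization is obtained by applying the stable contraction $F_*$ of $\balpha^p\cs 1$ to $J_q$; this does \emph{not} close up to a spherical family in $\diff$, but it does close up on embeddings because $\balpha^p(\theta)\cs 1$ fixes the image of $J_q$, and precisely for that reason the associated pair-family is nontrivial. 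Fiberwise surgery on this family (Construction~\ref{embeddings-to-manifolds}, capping off via Hatcher's theorems on $\diff(S^3)$ and $\diff(\SSS)$) identifies it with the clutched bundle $S^{p+1}\times_{\balpha^p\cs_T 1_{E(2;2q+1)}}(\bZ^p\cs_T E(2;2q+1))$, giving $\sw^{\Emb}(J^{p+1}_q)=\sw^{H_p}(\balpha^p\cs_T 1_{E(2;2q+1)})$, which is then shown to be nonzero and independent as $q$ varies by the computation of Theorem~\ref{T:Zinfty}; topological triviality, concordance, and one-stabilization triviality come from $G_J$, $K_J$, $F_J$ rather than from properties of $\alpha_i$. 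To salvage your outline you would need to replace the evaluation families by families of this kind, whose ambient family of pairs is genuinely twisted.
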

In his 2023 PhD thesis\cite{drouin:embeddings} Josh Drouin has extended Theorem~\ref{T:emb2} to $2$-spheres of arbitrary self-intersection.\\[1ex]

By taking the boundary of a tubular neighborhood of one of the spheres in Theorem~\ref{T:emb2}, we get a similar result for embeddings of certain $3$-manifolds in the same manifolds $\bztilde$. 
\begin{theorem}\label{T:emb3} 
For any $p \geq 0$ and for all $j \leq p$ with $j \equiv p \pmod{2}$ the following groups have $\Z^\infty$ summands:
\begin{enumerate}[topsep=2pt,parsep=2pt,partopsep=1pt]
    \item $\pi_j(\Emb(S^3,\bzptilde))$ and  $\ker\left[\pi_j(\Emb(S^3,\bzptilde))\to \pi_j(\Embtop(S^3,\bzptilde))\right]$
      \item $H_j(\Emb(S^3,\bzptilde))$ and $\ker\left[H_j(\Emb(S^3,\bzptilde))\to H_j(\Embtop(S^3,\bzptilde))\right]$.
    \item $\pi_j(\Emb(\SSS,\bzptilde))$ and  $\ker\left[\pi_j(\Emb(\SSS,\bzptilde))\to \pi_j(\Embtop(\SSS,\bzptilde))\right]$
      \item $H_j(\Emb(\SSS,\bzptilde))$ and $\ker\left[H_j(\Emb(\SSS,\bzptilde))\to H_j(\Embtop(\SSS,\bzptilde))\right]$.
  \end{enumerate}
  In each case, a $\Z^\infty$ subgroup becomes trivial after a single external stabilization.
\end{theorem}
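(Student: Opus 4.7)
The plan is to deduce Theorem~\ref{T:emb3} from Theorem~\ref{T:emb2} by passing from families of embedded $2$-spheres to families of boundaries of their tubular neighborhoods. Given a smoothly embedded $S^2 \subset \bzptilde$ with self-intersection $e$, the boundary of a tubular neighborhood is diffeomorphic to $S^3$ when $e = \pm 1$ and to $\SSS$ when $e = 0$. Since the space of tubular neighborhoods of a fixed embedded sphere is contractible, one obtains a natural map
\[
\tau: \Emb(S^2,\bzptilde) \lto \Emb(\partial N,\bzptilde),
\]
well-defined up to homotopy, where $\partial N$ denotes the diffeomorphism type of the boundary of the neighborhood. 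The same construction in the topological category, using locally flat tubular neighborhoods (which exist in dimension $4$ by work of Freedman-Quinn), produces a compatible map $\tau^{\Top}$ on $\Embtop$.

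The heart of the argument is to show that $\tau_{\ast}$ carries the $\Z^{\infty}$ summand in $\pi_j(\Emb(S^2,\bzptilde))$ furnished by Theorem~\ref{T:emb2} to a subgroup still of infinite rank. Rather than understand $\tau$ abstractly, I would invoke Seiberg-Witten detection directly. Each generator $\alpha$ of our subgroup arises from a spherical family of diffeomorphisms of an ambient closed manifold, and is detected by a family Seiberg-Witten invariant $\sw^{\pi_j}(\alpha)$. This invariant is computed on the total space of a bundle over $S^j$ which, upon excising the family of tubular neighborhoods $N$ of the $S^2$'s, becomes a bundle of manifolds with fiberwise boundary equal to $\tau(\alpha)$. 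Since $N$ is a standard disk bundle over $S^2$, its Seiberg-Witten contribution is independent of $\alpha$, and the family invariant depends only on the $\tau(\alpha)$-family of complements. Hence $\tau(\alpha) = 0$ in $\pi_j(\Emb(\partial N,\bzptilde))$ would force $\sw^{\pi_j}(\alpha) = 0$, and Theorem~\ref{T:emb2} (together with its proof) then lets me conclude that $\tau$ is injective on a $\Z^{\infty}$ subgroup.

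The kernel-to-$\Embtop$ statement follows because $\tau$ and $\tau^{\Top}$ fit into a commutative square with the forgetful maps: any family trivial in $\Embtop(\partial N,\bzptilde)$ bounds a topological family of fillings, which together with the canonical fiberwise disk-bundle extension yields a topological null-homotopy of the original $S^2$ family. The stabilization statement is similar: an external stabilization of the $S^2$ family by $\sss$ induces the corresponding external stabilization of the $\partial N$ family, so the stably trivial $\Z^{\infty}$ subgroup of Theorem~\ref{T:emb2} maps to a stably trivial $\Z^{\infty}$ subgroup here. Items (2) and (4) follow from (1) and (3) by applying the Hurewicz homomorphism to the spherical generators.

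The main obstacle will be making precise the Seiberg-Witten bookkeeping in the second paragraph: one must justify that the closed family invariant depends only on the complementary bundle. This is natural via a neck-stretching degeneration, or by a direct application of the Parameterized Irreducible-Reducible Gluing Theorem~\ref{pIRglue}, but care is required to match parameter spaces and $\Spinc$ structures so that the disk-bundle factor contributes only a universal constant depending on the sphere's self-intersection.
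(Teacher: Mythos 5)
Your overall route coincides with the paper's: the families of embedded $S^3$'s and $\SSS$'s are exactly the boundaries of tubular neighborhoods of the $2$-sphere families of Theorem~\ref{T:emb2}, they are detected by a family Seiberg--Witten invariant of an associated closed family, and topological triviality and $1$-stable triviality are inherited from the $S^2$ case. However, your key step has a genuine gap. You assert that the family invariant of the closed family attached to a $2$-sphere family $\alpha$ ``depends only on the $\tau(\alpha)$-family of complements,'' so that $\tau(\alpha)=0$ forces the invariant to vanish. Knowing the complement family (a bundle of manifolds with fiberwise boundary $S^3$ or $\SSS$) does \emph{not} by itself determine a closed family on which to evaluate $\swc$: one must cap off the boundary bundle, and a priori the capping/gluing data matters. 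What makes an invariant of the $3$-manifold family well defined is precisely that bundles with fiber $S^3$ (resp.\ $\SSS$) extend \emph{uniquely} to bundles with fiber $D^4$ (resp.\ $D^3\times S^1$); this rests on Hatcher's theorems that $\diff(S^3)\simeq O(4)$ and on his computation of $\diff(\SSS)$, and is the content of Construction~\ref{embeddings-to-manifolds} in the paper. Your appeal to neck-stretching or to Theorem~\ref{pIRglue} addresses the computation of the invariant of a \emph{given} closed family (and, as stated, the hypotheses of Theorem~\ref{pIRglue} are not met by taking the disk-bundle neighborhood as the reducible side), but it does not supply the uniqueness of the capping, which is the point at issue. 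Once the canonical capping is in place, your ``factors through $\tau$'' argument becomes the paper's definition of $\sw^{\Emb}(\widetilde{J})$, and the linear independence follows as in the proof of Theorem~\ref{T:Zinfty}.

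Two smaller issues. First, your construction of $\tau^{\Top}$ and of topological fillings requires locally flat normal bundles varying continuously in families; Freedman--Quinn gives normal bundles for a single locally flat surface, but the family statement is not automatic and is unnecessary: the paper obtains topological (and stable smooth) triviality of the $3$-manifold families simply by applying the same \emph{ambient} contractions $G_J$ (resp.\ $F_J$) used for the $2$-sphere families to the boundary of a fixed tubular neighborhood, since these equivalences are given by ambient homeomorphisms/diffeomorphisms. Second, to get the homology statements (2) and (4) you should use that the detecting invariant is a homomorphism on $H_j$ (as in Proposition~\ref{basic} and Definition~\ref{emb-inv-def}); merely pushing spherical generators through the Hurewicz map does not by itself produce a $\Z^\infty$ \emph{summand} in homology.
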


The notion of concordance extends to families of embeddings. Two families of embeddings $J, J'\co S^k\to \Emb(\Sigma,X)$ are concordant if there is a smooth embedding $K\co I\times S^k \times \Sigma \to I\times S^k\times X$ acting trivially on the $S^k$ factor, and satisfying $K(0,\theta,v) = J(\theta)(v)$, and $K(1,\theta,v) = J'(\theta)(v)$, 

\begin{corollary}\label{C:embK}
 The generators of the subgroups from Theorem~\ref{T:emb2} (or Theorem~\ref{T:emb3} that become trivial after one external stabilization are all concordant.    
\end{corollary}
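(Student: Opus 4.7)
The plan is to build the required concordance directly from the null-homotopy of the stabilized diffeomorphism family supplied by \corref{stabilize}.

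Each relevant generator $J$ arises by restricting a diffeomorphism family $\alpha \co S^j \to \diff(\bzptilde, B^4)$ to a fixed embedded submanifold $\Sigma$, with $\Sigma \subset \bzptilde$ lying disjoint from the ball where external stabilization occurs. By \corref{stabilize} there is a null-homotopy $F \co D^{j+1} \to \diff(\bzptilde \cs \sss)$ extending $\alpha \cs 1_{\sss}$, which we normalize so that $F$ at the center of $D^{j+1}$ is the identity. Writing $D^{j+1}$ in its cone parameterization $(I \times S^j)/(\{0\} \times S^j)$ with radial coordinate $t$, the formula $\widetilde K(t, \theta, v) = (t, \theta, F(t, \theta)(v))$ produces an embedding
\[
\widetilde K \co I \times S^j \times \Sigma \hookrightarrow I \times S^j \times (\bzptilde \cs \sss)
\]
over $I \times S^j$. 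This realizes $J$ as concordant to the constant family, but in the stabilized manifold rather than in $\bzptilde$ itself.

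The remaining task is to modify $\widetilde K$ so that its image lands in $I \times S^j \times \bzptilde$. The boundary slices at $t = 0, 1$ already have image in $\bzptilde$ by construction, so only intermediate portions of the image can enter the attached region $I \times S^j \times (\sss \setminus B^4)$. After general position, the intersection with the gluing locus $I \times S^j \times S^3$ is a submanifold of dimension $j+2$ when $\Sigma = S^2$ and $j+3$ when $\Sigma = S^3$ or $\SSS$. The plan is to cap this excursion off by filling in the intersection with an embedded piece inside the adjacent region $I \times S^j \times B^4 \subset I \times S^j \times \bzptilde$ and discarding the excursion into $\sss \setminus B^4$. Since $\sss \setminus B^4$ has a handle decomposition with only $0$- and $2$-handles, the cases $\Sigma = S^2$, $S^3$, and $\SSS$ are treated uniformly by this mechanism.

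The main obstacle is the final ambient modification: we must carry out the cap-off preserving embeddedness and the product structure over $I \times S^j$, which is exactly what forces us to relinquish level-preservation (thereby producing a concordance rather than an isotopy in $\bzptilde$). The intended approach is inductive on $j$: the case $j = 0$ reduces to a classical light-bulb argument upgrading an isotopy in $\bzptilde \cs \sss$ to a concordance in $\bzptilde$, and the recursive construction of the families in the proof of \thmref{infgen}, by composing localized pieces each compatible with stabilization, should bootstrap to arbitrary $j$ via parameterized versions of the same trick.
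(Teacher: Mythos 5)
There is a genuine gap, and it sits exactly at the step you defer. Your plan is to take the stabilized contraction $F$ of $\alpha\cs 1_{\sss}$, form the level-preserving family $\widetilde K(t,\theta,v)=(t,\theta,F(t,\theta)(v))$ in $I\times S^j\times(\bzptilde\cs\sss)$, and then push its image off the $\sss$-summand by general position and ``capping off'' inside a ball. Nothing in the proposal actually produces those caps: after general position the intersection with the gluing locus $I\times S^j\times S^3$ is a $(j+2)$- or $(j+3)$-dimensional family of slices varying over the $(j+1)$-dimensional parameter region, and you would need a \emph{parameterized, embedded} family of null-cobounding pieces in $I\times S^j\times B^4$ compatible over the whole parameter space; no finger-move/light-bulb mechanism is exhibited that does this, and the handle-decomposition remark about $\sss\setminus B^4$ does not address embeddedness of a family. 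This is precisely where the exotic behavior is concentrated — if the push-in could be done level-preservingly you would contradict the nonvanishing of $\sw^{\Emb}$ — so the argument must say exactly how level-preservation is traded away, which it does not. Even the $j=0$ base case (``an isotopy in $\bzptilde\cs\sss$ upgrades to a concordance in $\bzptilde$ by a classical light-bulb argument'') is not a citable classical fact in this form, and the ``bootstrap to arbitrary $j$'' is only asserted.

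The paper's route is different and avoids this entirely: the concordances are built into the recursive construction rather than extracted from stable triviality. The underlying diffeomorphisms act trivially on homology, hence are smoothly pseudoisotopic to the identity (Kreck, Quinn; see Proposition~\ref{P:Rstab} and Remark~\ref{R:FGK-1}), and this gives concordances $K_J$ between the fixed base embeddings $J_q$ and $J_0$ in the unstabilized manifold. Since each spherical family is defined by applying the family of diffeomorphisms $F_*(t,\theta)$ to a fixed embedding, $J_q^{p+1}(t,\theta)(v)=F_*(t,\theta)(J_q(v))$, the formula $K_J^{p+1}(s,t,\theta)(v)=F_*(t,\theta)(K_J(s,v))$ of Definition~\ref{D:fam} is immediately a family concordance from $J_q^{p+1}$ to $J_0^{p+1}$, trivial on the $S^{p+1}$-factor and taking place in $\bzptilde$ itself. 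If you want to keep your approach, you would essentially have to prove a parameterized ``stably isotopic implies concordant'' theorem for families of embeddings, which is a substantial statement in its own right; the pseudoisotopy input makes it unnecessary here.
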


Our results on diffeomorphism groups can be applied, as in~\cite{ruberman:swpos,konno:family} to show that the space $\calrp(Z)$ of Riemannian
metrics with positive scalar curvature (PSC) can have non-trivial homotopy and homology groups in a range of dimensions. The idea is to take advantage of the action of the $\diff(Z)$ on the space of all Riemannian metrics, which preserves the subspace $\calrp(Z)$. We show, using the Weitzenb\"ock formula, that the non-trivial families of diffeomorphisms detected by \SW invariant give rise to infinitely generated homotopy and homology groups of $\calrp(Z)$. This method, by construction, does not shed any light on the higher homotopy and homology groups of the \emph{moduli space} of PSC metrics, $\calrp(Z)/\diff(Z)$ (or the homotopy quotient $\calrp(Z)//\diff(Z)$)  an interesting object of study in its own right. Botvinnik and Watanabe~\cite{botvinnik-watanabe:families} have detected non-trivial elements in the rational homotopy groups of the PSC moduli space using rather different methods.
\begin{theorem}\label{T:psc} 
For any $p \geq 0$
and for all $j \leq p$ with $j \equiv p \pmod{2}$  there are non-spin manifolds $\zzn^p$ and spin manifolds  $\zzs^p$ for which the following groups have $\Z^\infty$ summands:
\begin{enumerate}[topsep=2pt,parsep=2pt,partopsep=1pt]
    \item $\pi_j(\calrp(\zzn^p))$ \text{\ and \ }  $\pi_j(\calrp(\zzs^p))$
      \item $H_j(\calrp(\zzn^p))$ \text{\ and \ }  $H_j(\calrp(\zzs^p))$.
  \end{enumerate}
\end{theorem}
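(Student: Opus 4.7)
Following the approach of~\cite{ruberman:swpos,konno:family}, the plan is to exploit the pullback action of $\diffo(\bZ^p)$ on the space of Riemannian metrics. By Remark~\ref{R:main}(3), one may choose the manifolds $\bZ^p$ from Theorem~\ref{T:infgen} to admit PSC metrics: take $\zzn^p$ to be non-spin (for instance, a connected sum of copies of $\cptwo$ and $\cptwobar$), or $\zzs^p$ to be spin with signature zero. Fix such a PSC metric $g_0$. Since pullback preserves PSC, there is a continuous orbit map $\rho\co\diffo(\bZ^p)\to\calrp(\bZ^p)$, $\phi\mapsto\phi^*g_0$. Composing with the $\Z^\infty$-worth of spherical families $\alpha\co S^j\to\diffo(\bZ^p)$ supplied by Theorem~\ref{T:infgen} yields candidate generators in $\pi_j(\calrp(\bZ^p))$.

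To show these generate a $\Z^\infty$ subgroup, argue by contradiction. Suppose a non-trivial integer combination $\alpha$ of the generators (still detected by a family SW invariant, as in the proof of Theorem~\ref{T:infgen}) becomes null-homotopic under $\rho$. Then the family $\{\alpha(\theta)^*g_0\}_{\theta\in S^j}$ of PSC metrics extends to a family $\{h_\theta\}_{\theta\in D^{j+1}}$ of PSC metrics. Build a family of fiberwise metrics on the clutching bundle $\ztilde = S^{j+1}\times_\alpha\bZ^p\to S^{j+1}$ by using the constant family $g_0$ over one hemisphere and the disk family $\{h_\theta\}$ over the other; with the appropriate clutching convention these agree on the equator $S^j$ and glue to a smooth, globally PSC family of fiberwise metrics on $\ztilde$.

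Now compute the family SW invariant of $\ztilde$ with the $\Spinc$ structure detecting $\alpha$. The Weitzenböck identity combined with the curvature equation $F^+_A = \sigma(\Phi) + \eta$ forces $\Phi\equiv 0$ on each fiber for sufficiently small perturbations $\eta$, ruling out all irreducible solutions in the family. Taking $b_2^+(\bZ^p)$ large enough (one is free to stabilize by further $\sss$ or $\sts$ summands, consistent with the construction of $\bZ^p$ in Theorem~\ref{T:infgen}), a generic small $\eta$ also avoids all reducibles over the $(j{+}1)$-dimensional parameter space. Therefore the family SW invariant of $\ztilde$ must vanish, contradicting its non-triviality from Theorem~\ref{T:infgen}. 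This proves item~(1) of Theorem~\ref{T:psc}; item~(2) follows because the classes in question are spherical, so the same SW invariants pair non-trivially with their Hurewicz images in $H_j(\calrp(\bZ^p))$.

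The main obstacle will be verifying that the above metric construction on $\ztilde$ computes the same family SW invariant used in the proof of Theorem~\ref{T:infgen}. This requires coherent preservation of the $\Spinc$ structure and homology orientation by each $\alpha(\theta)$, so that clutching produces a well-defined family $\Spinc$ bundle on $\ztilde$; it requires the small-generic perturbation argument to be carried out uniformly over $D^{j+1}$ (combining the openness of the Weitzenböck estimate with the density of generic reducible-avoiding perturbations); and it requires that the stabilization freedom in constructing $\bZ^p$ be exercised to make $b_2^+$ large enough for reducible avoidance in each relevant degree $j\leq p$.
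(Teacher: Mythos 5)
Your detection step is sound and is essentially the classical argument of~\cite{ruberman:swpos,konno:family} pushed to higher parameters: if $\theta\mapsto\alpha(\theta)^*g_0$ extended to a disk of PSC metrics, the clutching bundle $S^{j+1}\times_\alpha\zz^p$ would carry a fiberwise PSC family, the Weitzenb\"ock formula (with a perturbation small relative to the uniform scalar-curvature bound over the compact parameter space) kills irreducibles, genericity kills reducibles since $b_2^+(\zz^p)>j+2$, and by Proposition~\ref{P:eval} and the independence-of-data results this contradicts $\sw^{\pi_j}_{\zz^p}(\alpha,\spincs)\neq 0$. Two remarks: the parenthetical ``stabilize by further $\sss$ summands to increase $b_2^+$'' is not available to you --- the families are $1$-stably trivial (Corollary~\ref{C:stabilize}), so stabilizing would destroy exactly the classes you are trying to detect; fortunately no stabilization is needed, since $b_2^+(\zz^p)>p+2$ is already built into the construction.

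The genuine gap is that this contradiction argument proves only that the composite $\Z^\infty\to\pi_j(\diffo(\zz^p))\to\pi_j(\calrp(\zz^p))$ is injective, i.e.\ it produces a $\Z^\infty$ \emph{subgroup}, whereas the theorem asserts a \emph{summand}; and your item (2) is not established at all. A free subgroup of an abelian group need not split off (e.g.\ $2\Z\subset\Z$), so to get a summand you need a homomorphism defined on all of $\pi_j(\calrp(\zz^p))$, resp.\ $H_j(\calrp(\zz^p))$, with infinite-rank free image on your classes. Your bundle-by-bundle vanishing argument assigns numbers only to classes coming from $\diffo$, and your phrase ``the SW invariants pair non-trivially with the Hurewicz images'' presupposes such a pairing on $H_j(\calrp)$, which you never construct; without it you cannot even rule out that the Hurewicz images die. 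This is precisely what the paper supplies: it defines $\sw^{j,\psc}_{\spincs}\colon H_j(\calrp(X))\to\Z$ directly on good chains of PSC metrics (any cycle of PSC metrics has empty moduli space after a small perturbation inside the open set $\calrp$, since the reducible locus has codimension $b_2^+$; one then caps off in $\Pi(X)$ and counts), and proves $\swu^{H_j}(\alpha)=\swu^{j,\psc}(E^g_*\alpha)$ for the orbit map $E^g$ (Lemma~\ref{L:psc-diff}). Because the detecting homomorphism lives on the whole homology group of $\calrp$ and has free image (as in Theorem~\ref{T:Zinfty}), the splitting --- and hence the summand statement in both homotopy and homology --- follows. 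To complete your proof along your own lines you would either need to build this invariant on $H_j(\calrp)$ (at which point your clutching-bundle computation becomes the proof that it restricts to $\swu^{H_j}$), or otherwise exhibit a splitting, which the contradiction argument alone does not give.
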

The manifolds $\bZ^p_{NS}$ are all diffeomorphic to connected sums of copies of $\sss$, and the manifolds $\bZ^p{_S}$ are diffeomorphic to sums of copies of $\cptwo$ and $\cptwobar$. It is known that all of these admit metrics of positive \emph{Ricci} curvature~\cite{sha-yang:ricci-spheres,sha-yang:ricci-4,perelman:positive-ricci}.  Since a metric with positive Ricci curvature automatically has positive scalar curvature, we immediately see that Theorem~\ref{T:psc} applies to $\mathcal{R}^{\Ric >0}$, the space of Riemannian metrics of positive Ricci curvature.
\begin{corollary}\label{C:prc} 
For any $p \geq 0$
and for all $j \leq p$ with $j \equiv p \pmod{2}$  there are non-spin manifolds $\zzn^p$ and spin manifolds  $\zzs^p$ for which the following groups have $\Z^\infty$ summands:
\begin{enumerate}[topsep=2pt,parsep=2pt,partopsep=1pt]
    \item $\pi_j(\mathcal{R}^{\Ric >0}(\zzn^p))$ \text{\ and \ }  $\pi_j(\mathcal{R}^{\Ric >0}(\zzs^p))$
      \item $H_j(\mathcal{R}^{\Ric >0}(\zzn^p))$ \text{\ and \ }  $H_j(\mathcal{R}^{\Ric >0}(\zzs^p))$.
  \end{enumerate}
\end{corollary}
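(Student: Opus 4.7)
The plan is to observe that the families of metrics used to prove Theorem~\ref{T:psc} can be arranged to lie entirely in the subspace $\mathcal{R}^{\Ric>0}(Z) \subset \calrp(Z)$, and that the Seiberg-Witten based detection of non-triviality works equally well on the smaller space. First, by the cited results of Sha--Yang~\cite{sha-yang:ricci-spheres, sha-yang:ricci-4} in the non-spin case and Perelman~\cite{perelman:positive-ricci} in the spin case, both $\zzn^p$ and $\zzs^p$ (being connected sums of $\sss$, or of copies of $\cptwo$ and $\cptwobar$, or of copies of $K3$ and $\sss$) admit Riemannian metrics of positive Ricci curvature. Fix such a metric $g_0$ on the relevant manifold.

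The construction proving Theorem~\ref{T:psc} produces spherical families in $\calrp(Z)$ from the exotic families of diffeomorphisms $\alpha \co S^j \to \diffo(Z)$ of Theorem~\ref{T:infgen} by pullback, $\theta \mapsto \alpha(\theta)^* g_0$. Since the Ricci tensor is a natural Riemannian invariant, each diffeomorphism pullback of a positive Ricci curvature metric is again positive Ricci. Consequently, starting from our chosen $g_0 \in \mathcal{R}^{\Ric>0}(Z)$, the same formula $\theta \mapsto \alpha(\theta)^*g_0$ defines a family $S^j \to \mathcal{R}^{\Ric>0}(Z)$ that maps, under the inclusion $\iota \co \mathcal{R}^{\Ric>0}(Z) \hookrightarrow \calrp(Z)$, to the class already shown to be non-trivial in $\pi_j(\calrp(Z))$ in Theorem~\ref{T:psc}.

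To upgrade the resulting $\Z^\infty$ subgroup of $\pi_j(\mathcal{R}^{\Ric>0}(Z))$ (respectively $H_j$) to a $\Z^\infty$ summand, we note that the $\Z^\infty$ summand of Theorem~\ref{T:psc} is exhibited by a sequence of $\Z$-valued homomorphisms built from the family Seiberg-Witten invariants $\sw^{\pi_j}$ via the Weitzenb\"ock formula. Precomposing these homomorphisms with $\iota_*$ yields homomorphisms $\pi_j(\mathcal{R}^{\Ric>0}(Z)) \to \Z$, whose values on our explicit generators are exactly those from the PSC setting, hence realize $\Z^\infty$ as a quotient. This forces the subgroup generated by our families to split off as a $\Z^\infty$ summand. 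The same argument with the Hurewicz map gives the homology statement.

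There is no serious obstacle; the proof is essentially observational, and relies on two independent ingredients already in hand, namely the existence of the Ricci-positive base metric $g_0$ from~\cite{sha-yang:ricci-spheres,sha-yang:ricci-4,perelman:positive-ricci}, and the fact that the detection apparatus in Theorem~\ref{T:psc} only sees the underlying PSC family (and thus pulls back to $\mathcal{R}^{\Ric>0}$). The only point one should verify is naturality: that the SW--Weitzenb\"ock homomorphisms constructed in the proof of Theorem~\ref{T:psc} are genuine invariants of the underlying family in $\calrp(Z)$ (not of additional data tied to the orbit structure), so that precomposition with $\iota_*$ makes sense and agrees with direct evaluation on the family in $\mathcal{R}^{\Ric>0}(Z)$.
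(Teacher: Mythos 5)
Your proposal is correct and is essentially the paper's own argument: the manifolds of Theorem~\ref{T:psc} are connected sums of $\sss$ (spin, signature zero) or of $\cptwo$ and $\cptwobar$ (non-spin), which admit positive Ricci metrics by~\cite{sha-yang:ricci-spheres,sha-yang:ricci-4,perelman:positive-ricci}; choosing the base metric $g_0$ Ricci-positive, the pulled-back families $\theta\mapsto\alpha(\theta)^*g_0$ lie in $\mathcal{R}^{\Ric>0}$, and the detecting homomorphisms $\swu^{j,\,\psc}$ on $H_j(\calrp)$ restrict along the inclusion to split off the same $\Z^\infty$. One small slip: your parenthetical allowing connected sums of $K3$ and $\sss$ is off (those have nonzero signature, so no PSC and a fortiori no positive Ricci), but it is harmless since the spin manifolds actually used in Theorem~\ref{T:psc} are sums of $\sss$.
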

\begin{remark}
    As for our results on diffeomorphisms, Theorem~\ref{T:psc} shows a large difference with the situation in higher dimensions. The $\Z^\infty$ summands in the homology and homotopy groups of $\calrp(\bZ^p)$ are (by construction) in the image of the homomorphism induced by the natural evaluation map $\diff(\bZ^p) \to \calrp(\bZ^p)$. This would apply to connected sums of sufficiently many copies of $\sss$ or $\cptwo\cs\cptwobar$, both of which have trivial first Pontryagin class. On the other hand, work of Ebert and Randal-Williams~\cite[Theorem F]{ebert-randal-williams:psc-category} says that for $2$-connected manifolds of dimension $d \geq 6$ with vanishing Pontryagin classes, the image of this map on homotopy groups is \emph{finite}. The case of connected sums of $S^n \times S^n$ can (per~\cite{ebert-randal-williams:psc-category}) be deduced from~\cite{botvinnik-ebert-randal-williams:psc-loop}.
\end{remark}
Most of our constructions will be made with simply connected closed manifolds. We record in the following theorem that all of our results hold for more general classes of $4$-manifolds.
\begin{theorem}\label{T:stab-all}
For any finitely-presented group, $\Gamma$, there are manifolds $\bZ^p_\Gamma$ with fundamental group $\Gamma$ satisfying Theorem~\ref{T:infgen},
Corollary~\ref{C:Hinf},
Corollary~\ref{C:HQinf}, Theorem~\ref{T:emb2}, and Theorem~\ref{T:emb3}. The manifolds $\bZ^p_\Gamma$ may be taken to be closed, or to have non-empty boundary, in which case the families of diffeomorphisms will be trivial on the boundary. Moreover,  Theorems~\ref{T:emb2} and Theorem~\ref{T:emb3} extend to give exotic embedded surfaces and 3-manifolds with boundary in $\bZ^p_\Gamma$.
Given any closed, simply-connected $4$-manifold, $X$, one may stabilize it sufficiently many times to arrive at a manifold $\bZ^p_X$ satisfying the same theorems and corollaries.
\end{theorem}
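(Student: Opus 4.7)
The plan is to exploit the fact that every spherical family constructed in the proof of Theorem~\ref{T:infgen} lies in the image of $\diff(\bZ^p,B^4)$, by Corollary~\ref{C:stabilize}. Because such a family is compactly supported outside a fixed standard $4$-ball, it transplants automatically to any $4$-manifold containing a punctured copy of $\bZ^p$ by extending via the identity. Given a finitely-presented group $\Gamma$, I would build a closed $4$-manifold $W_\Gamma$ with $\pi_1(W_\Gamma) = \Gamma$ in the standard way (start with $\cs^g(S^1\times S^3)$ and do surgery on loops representing a presentation), and set $\bZ^p_\Gamma := \bZ^p \cs W_\Gamma$. The simply-connected stabilization statement is the analogous construction with $\bZ^p_X := X \cs \bZ^p$, augmented by sufficiently many $\sss$ summands when needed.

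The main obstacle is preserving non-triviality of the detecting family Seiberg-Witten invariants under the connected sum. I would again invoke the Parameterized Irreducible-Reducible Gluing Theorem~\ref{pIRglue}, this time with $N_1=\bZ^p$ carrying the family with its isolated irreducible solutions and $N_2 = W_\Gamma$ (respectively $X$) equipped with a $\Spinc$ structure and generic perturbation whose moduli space contains only isolated reducibles in a family of the appropriate dimension. Achieving the codimension and transversality hypotheses of \thmref{pIRglue} is where stabilization enters: one may need to replace $W_\Gamma$ by $W_\Gamma\cs (\sss)^{\cs n}$ (and likewise for $X$) to raise $b_2^+$ enough that the reducible locus has the correct codimension and the wall of the parameterized equations meets the parameter family cleanly. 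Once the gluing formula applies, the mod $2$ family invariant of $\bZ^p_\Gamma$ equals the mod $2$ invariant of $\bZ^p$ times a non-zero reducible factor, and Lemma~\ref{L:spheredef} upgrades these mod $2$ invariants to integer-valued invariants on the composed families, so the $\Z^\infty$ conclusions of Theorem~\ref{T:infgen} carry over verbatim. Corollaries~\ref{C:Hinf} and~\ref{C:HQinf} are purely algebraic consequences of Theorem~\ref{T:infgen} and so transfer automatically once the main theorem is known for $\bZ^p_\Gamma$ and $\bZ^p_X$.

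For the boundary version, I would form the connected sum using a ball disjoint from the support of every family in question, then delete a separate open ball (or more generally, a codimension-zero submanifold disjoint from all supports) to produce $\bZ^p_\Gamma$ with $\partial \bZ^p_\Gamma \ne \emptyset$. The families, being the identity near this boundary, represent elements of $\pi_*(\diff(\bZ^p_\Gamma,\partial))$, and any nullhomotopy rel boundary would extend by the identity to a nullhomotopy on the closed cap-off, contradicting the detection established above. Theorems~\ref{T:emb2} and~\ref{T:emb3} transfer along the same lines: evaluating the transplanted diffeomorphism family on a suitable fixed sphere (taken in the interior, in the complement of the support) produces the required $\Z^\infty$ summands in the homotopy and homology of $\Emb(S^i, \bZ^p_\Gamma)$ and $\Embtop(S^i,\bZ^p_\Gamma)$, both closed and with boundary, and furnishes the analogous exotic embedded surfaces and $3$-manifolds with boundary by restricting these embeddings to a tubular neighborhood of the sphere lying inside the punctured manifold.
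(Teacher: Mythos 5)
Your construction step (connected sum with a surgered $\cs^g(S^1\times S^3)$, extension of the families by the identity off a ball) is fine as far as producing $\bZ^p_\Gamma$ with the right fundamental group and transplanted spherical families, but the detection step has a genuine gap, and it is exactly the point where your route diverges from what actually works. Since the transplanted family acts as the identity on the $W_\Gamma$ summand, the family invariant may be computed with data that is \emph{constant} on that side; stretching the neck then shows the parameterized moduli space on $\bZ^p\cs W_\Gamma$ is empty whenever $b_2^+(W_\Gamma)>0$ (the fiberwise index on the $W_\Gamma$ side must be negative for the invariant to be defined at all, so there are no irreducibles there, and generic constant data misses the wall, whose codimension is $b_2^+$, so there are no reducibles either). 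Thus every family Seiberg--Witten invariant of the transplanted family vanishes, and adding $\sss$ summands to $W_\Gamma$ only raises $b_2^+$ and makes this worse --- it does not ``raise $b_2^+$ enough that the wall meets the parameter family cleanly,'' because the identity-extended family supplies \emph{no} parameter directions on the $W_\Gamma$ side (at best a nullhomotopic loop of data, which crosses the wall algebraically zero times). Moreover, the hypotheses of Theorem~\ref{pIRglue} on the reducible side require isolated reducibles with $H^1_{\theta,A,\psi}=0$ together with a parameter family transverse to the wall; for any closed $W_\Gamma$ with $b_1(\Gamma)>0$ the reducibles form a $b_1$-torus and the harmonic $1$-forms sit inside $H^1$ of the deformation complex, so condition (6) of Definition~\ref{irr-red-good} fails outright. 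So the asserted identity ``mod $2$ invariant of $\bZ^p_\Gamma$ equals the mod $2$ invariant of $\bZ^p$ times a non-zero reducible factor'' is not available for your $\bZ^p\cs W_\Gamma$.

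The paper avoids this by changing the \emph{construction} rather than transplanting a finished family: per items (5) and (6) of Remark~\ref{R:def-comments}, one takes a closed symplectic $W$ with $\pi_1(W)=\Gamma$, $b_2^+>1$, containing an $N(2)$ nucleus (Gompf), and replaces the seed manifold $V$ by the torus sum $V\cs_T W$ along the symplectic torus in the nucleus. Taubes' non-vanishing for the canonical class and the log-transform formula show the new seed still has the required mod $2$ basic classes, and since the supports of $F$, $G$, and $K$ are disjoint from the nucleus, the entire inductive construction and the computation of Theorem~\ref{T:Zinfty} (via the Suspension Theorem and Corollary~\ref{glue}) go through verbatim --- no new gluing analysis with a $\pi_1=\Gamma$ reducible side is ever needed. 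The with-boundary statement is obtained by excising a codimension-zero $C\subset W$ disjoint from the nucleus (detection rel boundary by gluing $C$ back, as in Remark~\ref{R:surf-bound}), which is close in spirit to your cap-off argument; and the statement for an arbitrary closed simply connected $X$ comes from varying the building blocks and stabilizing until $X$ decomposes appropriately, not from a connected sum followed by re-detection. If you want to salvage your approach, you would have to put the fundamental group into the seed manifold in a way that preserves its ordinary Seiberg--Witten invariants (which is precisely what the symplectic fiber sum accomplishes), rather than attach it as a connected summand on the reducible side of the gluing theorem.
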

The extensions of Theorem~\ref{T:infgen} and 
Corollary~\ref{C:Hinf} are described in item \eqref{i:variations} of Remark~\ref{R:def-comments}. 
\\[1ex]
{\bf Conventions:} All $4$-manifolds considered in this paper will be simply connected (unless explicitly stated otherwise) and oriented. In Section~\ref{glue-proof}, we will consider manifolds with cylindrical ends~\cite{taubes:l2}; such manifolds (and objects that live on them) will be decorated with a `\, $\hat{}$\ '. 
The Poincar\'e dual of a homology class $A$ will be denote by $\check{A}$. We will denote by $b^+(Q)$ the dimension of a maximal positive definite subspace of a symmetric bilinear form. We will sometimes use brackets to enumerate a set of maps or spaces  and superscripts to indicate a dimension. In particular,  $\balpha^j[q]$ would be the $q^{th}$ member of a collection of maps defined on the $j$-sphere, and $F^j[q]$, $G^j[q]$, and $K^j[q]$ would be $1$-parameter families of such maps. Finally, we find it convenient to discuss \SW invariants either as a function of $\Spinc$ structures denoted by $\sw(X,\spincs)$, or as a function on characteristic cohomology classes defined as 
\[
\sw(X,K) = \sum_{\substack{\spincs \in \Spinc(X), \\
c_1(\spincs) = K}} \sw(X,\spincs).
\]
These notions are equivalent if there is no $2$-torsion in $H^2(X)$ as is mostly the case in this paper.

\textbf{Acknowledgments:}
Conversations with Hokuto Konno were very helpful in the development of this work. We thank Jianfeng Lin for numerous comments, including the statement and references for Lemma~\ref{L:jianfeng}, and Nikolai Saveliev for supplying a useful reference. Conversations and email exchanges with Alexander Kupers and Oscar Randal-Williams helped us to understand the relation of our work with recent work in high dimensions. We also thank Richard Bamler for an interesting exchange that pointed us to Corollary~\ref{C:prc}.

Many of the results in this paper were developed when the authors were in residence at the Fall 2022 SLMath program, \emph{Analytic and Geometric Aspects of Gauge Theory}. We thank SLMath and the program organizers for providing a stimulating environment.

\tableofcontents

\section{Invariants of families}\label{S:inv}
The non-trivial families discussed in this paper will be detected using parameterized Seiberg-Witten 
theory, as suggested in~\cite{donaldson:durham,donaldson:swsurvey} and carried out in~\cite{ruberman:swpos,konno:classes}; compare also~\cite{szymik:cohomotopy-families,li-liu:family,xu:thesis}. There are several different variations corresponding to the three conclusions in Theorem~\ref{T:infgen}, although the underlying idea is similar in each case.  We start the discussion with a brief review of  \SW invariants in the family setting.

\subsection{Family moduli spaces}\label{S:family}
Recall that the Seiberg-Witten equations associated to a $\Spinc$ structure $\spincs$ on $X$ depend on a Riemannian metric $g$ on $X$ and a closed  $2$-form $\eta$.  The \SW moduli space $\calm_{X,\spincs}(g,\eta)$ is defined as the set of gauge equivalence classes of solutions  $[A,\psi]$ to the Seiberg-Witten equations 
\begin{align}
\sqrt{2}(F^+_{A}  - \frac{1}{2}{\bf c}^{-1}(q(\psi) ))&= -i\sqrt{2}\eta^+\label{E:SWcurv}\\
D_A^+\psi
 & = 0. \label{E:SWdirac}
\end{align}
where $\cc$ denotes the isomorphism $i \Lambda^2_+\to \mathfrak{su}(S^+)$ induced by Clifford multiplication. Here the spin bundles are denoted $S^\pm$ and the `$+$' superscript on $2$-forms indicates the projection to the self-dual forms. For convenience, we are following the conventions in \cite{nicolaescu:swbook}.

The equations depend on $g$ through the definition of the Dirac operator $D_A^+$ and also the projection to self-dual forms; the dependence on $\eta$ is clear. For fixed $(g,\eta)$, we can view the left-hand side of these equations as defining a map 
\begin{equation}\label{E:swmap}
   \sw\colon \calb_{X,\spincs} \to \Omega^2_+(X;i \R) \oplus \Gamma(S^-)
\end{equation}
from the gauge equivalence classes of connections and spinors to imaginary self-dual $2$-forms and spinors. 

In addition to families of closed manifolds, we need to consider families of manifolds with a cylindrical end metrically modeled on $[0,\infty)\times S^3$. We will use $\hat X$ to denote the manifold obtained from $X$ by deleting a point together with a metrically cylindrical end. A comprehensive description of the cylindrical end setting for Seiberg-Witten theory may be found in \cite{nicolaescu:swbook}. There are only a few small differences between the $S^3$ cylindrical end case and the closed case. 

\begin{remark}\label{R:sobolev}
    An important point in the underlying analysis is that the \SW equations are defined on spaces of Sobolev connections and spinors, and one uses Sobolev gauge transformations. As this is standard, we will not dwell on this point, but record here our convention that connections and positive spinors should be of class $L^2_k$, so that curvature, negative spinors, and perturbations would then be of class $L^2_{k-1}$, and gauge transformations of class $L^2_{k+1}$. Here $k \geq 2$ is an integer. In the $S^3$-cylindrical end case one must add weights and work with the Sobolev spaces $L^2_{k,\delta}$ for a sufficiently small fixed $\delta$. The fact that a suitable $\delta$ may be chosen follows as in the non-parameterized case, provided the parameter space is compact. The cylindrical case will also require various extended Sobolev spaces. Details are given in Section~\ref{an-frame}, where we deal with some analytical aspects of our work. Until then, we will not indicate the Sobolev completions in our notation.
\end{remark}
Assume that $b_2^+(X) > 0$. For a generic pair $(g,\eta)$ the moduli space $\calm_{X,\spincs}(g,\eta)$ will have no reducible solutions (those with $\psi=0$) and be a smooth manifold of dimension equal to the virtual dimension 
\begin{equation}\label{E:dim}
\vdim(\calm_{X,\spincs}(g,\eta)) = \frac{c_1(\spincs)^2-\sigma(X)}{4} - (1 -b_1(X) + b_2^+(X)).   
\end{equation}
We refer to such a pair $(g,\eta)$ as \emph{good}.
The moduli space is oriented by a choice of orientation $\calo$ for the vector space $H^2_+(X)$. We follow the conventions in~\cite[\S2.2.4]{nicolaescu:swbook}.

The space  of data 
is simply the contractible space
\[
\Pi(X) =  \met(X)\times \ker\left(d:\Omega^2(X)\to \Omega^3(X)\right).
\]
 We   will often abbreviate $(g,\eta)$ to $\data$ and refer to $\data$ as \emph{data}. In the cylindrical end case, one replaces $\Omega^2(X)$ by forms with compact support, denoted $\Omega^2_0(\hat X)$. In the parameterized case one needs to consider functions from the parameter space into the space of data.
For  a map $\data \colon \Xi \to \Pi$, we write $\data(\theta) = \data_\theta = (g,\eta)_\theta$ and define the parameterized moduli space 
\begin{equation}\label{E:msw}
    \calm_{X,\spincs}(\{\data_\theta\}_{\theta\in \Xi}) = \{(\theta,[A,\psi]) \in \Xi\times \bigcup_{\theta\in \Xi}\calm_{X,\spincs}(\data_\theta)\,|\, [A,\psi]\in  \calm_{X,\spincs}(\data_\theta)\}.
\end{equation}
It is more compact to write this as $\calm_{X,\spincs}(\{\data\})$ but the other notation can be helpful in some arguments; we'll pass between the two notations without comment. We use braces around the data to denote the parameterized moduli space and $\calm_{X,\spincs}(\data)$ or $\calm_{X,\spincs}(\data_\theta)$ to denote the unparameterized moduli space associated with a specific data point. Note that projection onto the first factor in \eqref{E:msw} gives a map $p_\Xi\colon \calm_{X,\spincs}\left(\{\data_\theta\}_{\theta\in \Xi}\right) \to \Xi$. There is no reason to expect this to be a fibration; consider the setting when the formal dimension $\vdim(\calm_{X,\spincs}(\data_\theta))$ is negative but $\vdim(\calm_{X,\spincs}(\{\data_\theta\})) \geq 0$. In this situation, the preimage of a generic point $\theta \in \Xi$ will be empty, but the preimage over some points may be non-empty. 
\begin{definition}\label{D:exceptional}
    A point $\theta \in \Xi$ is \emph{exceptional} if $p_\Xi^{-1}(\theta)$ is non-empty. 
\end{definition}
Although logically this makes sense for any virtual dimension, we will only use this terminology when $\vdim(\calm_{X,\spincs}(\data_\theta))$ is negative.


\subsection{Subgroups of \texorpdfstring{$\diff(X)$}{Diff(X)}}\label{s:groups}
Some of the invariants we use require a restriction on which diffeomorphisms are being discussed. The most important of these has to do with the orientation of the moduli space, which influences whether our invariants are defined as integers or only mod $2$.  We will 
assume that $b^1(X) =0$ so that the moduli space $\calm$ is oriented by a choice of orientation $\calo$ for the vector space 
$H^2_+(X;\R)$. We denote by $\diff(X,\spincs)$ the group of diffeomorphisms $f\colon X \to X$ with $f^*\spincs \cong \spincs$, and 
$\diff(X,\spincs,\calo)$ those for which in addition $f^*\calo = \calo$.  Note that both of these groups  contain $\tdiff(X)$, the \emph{Torelli group of $X$}, consisting of diffeomorphisms inducing the identity map on homology, which is in turn contained in the identity component $\diffo(X)$ of $\diff(X)$.

\begin{definition}\label{D:Shat}
    $\Shat^k_X$ is the set of $\Spinc$ structures $\spincs$ on $X$  for which the formal dimension of the \SW moduli space $\calm_X(\spincs)$ is $-(k+1)$.  
\end{definition}
For any $\spincs \in \Shat^k_X$, we will define homomorphisms denoted, respectively, by $\sw^{\pi_k}_{X}(\text{--},\spincs)$ or $\sw^{H_k}_{X}(\text{--},\spincs)$ from the homotopy or homology groups of $G = \diff(X,\spincs)$, $\diff(X,\spincs,\calo)$, or $\diffo(X)$ to $\Z$ or $\Z_2$. When we want to emphasize that the invariant is only defined mod $2$, we will add a further embellishment and write  $\sw^{\pi_k,\Z_2}_{X}(\text{--},\spincs)$.
We will also make use of work of Konno to define similar homomorphisms $\fsw^{\Z}$ and $\fsw^{\Z_2}$ on the homology of the classifying space $BG$. 

All of these homomorphisms are connected in various ways, as we will explain in due course. For instance, the homomorphisms defined on the homology and homotopy groups are, essentially by definition, related by the Hurewicz map. The most interesting relation, described in Proposition~\ref{P:eval}, is between the invariants of $\pi_k(\diffo)$ and $H_{k+1}(\bdiffo)$.  All of the results about infinite generation are gotten by summing over $\Spinc$ structures, to get homomorphisms
\begin{equation}\label{E:swtot}
    \swu^{\pi_k}: pi_k(Diff^0) \to \bigoplus_{\hat{\mathcal{S}}_X^k)} \Z = \Z^\infty
\end{equation}
for $k>0$ with similar (possibly $\Z_2^\infty$-valued) constructions and notations for $k=0$ and for the homological invariants as well.

There is a more general construction~\cite{li-liu:family,konno:classes} of a family moduli space associated to a bundle $X \to E \to \Xi$ where the structure group is a suitable subgroup of $\diff(X)$.  We will mainly be concerned with families supported on trivial bundles, but the basic arguments will apply in this more general setting.

\subsection{Regularity of family moduli spaces}\label{S:regularity}
The standard transversality results~\cite{kronheimer-mrowka:monopole,morgan:swbook,nicolaescu:swbook} for the \SW equations state that if $b_2^+(X) >0$, then for generic data $\data \in \Pi$, the moduli space $\calm_{X,\spincs}(\data)$ contains no reducible solutions and is smooth of dimension equal to the virtual dimension $\vdim(\calm_{X,\spincs}(\data))$ defined in Equation~\ref{E:dim}.
The analogous result for family moduli spaces (even with $S^3$-cylindrical ends also holds~\cite{li-liu:family,konno:classes,baraglia-konno:gluing} by the same proof. In the $S^3$-cylindrical end case one may first consider perturbations in $L^2_{k-1,\delta}$ and then uses the fact that for compact parameter spaces the space of generic maps into the space of data is open to conclude that one may pick the perturbations to have compact support. We state this as a proposition for convenience; one version of the argument is outlined as part of the proof of Proposition~\ref{SPNMID}. 
\begin{proposition}\label{P:generic}
    Suppose that the parameter space $\Xi$ is a manifold and that $b_2^+(X) > \dim(\Xi)$. Then for a generic map $\data \colon \Xi \to \Pi$ the family moduli space
    $\calm_{X,\spincs}(\data)$ contains no reducible solutions and is a smooth manifold of dimension
    \begin{equation}\label{E:Fdim}
\dim(\Xi) + \frac{c_1(\spincs)^2-\sigma(X)}{4}- (1 -b_1(X) + b_2^+(X)).   
\end{equation}
\end{proposition}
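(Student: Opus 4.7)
The plan is to prove this by a standard universal moduli space argument, combined with a separate dimension-counting argument to exclude reducibles. First I would handle reducibles. A reducible solution (one with $\psi = 0$) on $(\hat X,\spincs)$ at data $\data_\theta = (g_\theta,\eta_\theta)$ exists exactly when the self-dual harmonic projection (with respect to $g_\theta$) of the closed $2$-form representing $-i\eta_\theta^+ /\sqrt{2} - \pi c_1(\spincs)$ vanishes. For fixed $g$ this cuts out an affine subspace of $\ker d$ of codimension equal to $b_2^+(X)$, so the locus of reducibles $\mathcal{W}\subset \Pi$ has codimension $b_2^+(X)$. Since $\dim(\Xi) < b_2^+(X)$ by hypothesis, a generic smooth map $\data\colon \Xi \to \Pi$ avoids $\mathcal{W}$, and the family moduli space contains no reducibles.

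Next I would establish smoothness away from reducibles via the Sard--Smale theorem. Consider the universal parameterized moduli space
\begin{equation*}
\mathcal{U}_{X,\spincs} = \{\,(\theta,[A,\psi],\data)\in \Xi\times \calb_{X,\spincs}\times \Pi \, : \, \psi\not\equiv 0,\ \sw(A,\psi;\data)=0,\ \data_\theta = \data\,\}.
\end{equation*}
The key computation is that at any irreducible solution the linearization of the Seiberg--Witten map, allowed to vary in $\eta$ alone, surjects onto the cokernel of the fixed-data deformation complex. This is the usual argument: any element $(\varphi,\xi)$ of the cokernel must be $L^2$-orthogonal to all variations $(d\eta^+,0)$ with $\eta$ compactly supported, so $\varphi$ is harmonic self-dual; unique continuation applied to the Dirac equation, combined with $\psi\not\equiv 0$, then forces $\varphi = 0$, and then $\xi = 0$ from the spinor component of the linearization. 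Therefore $\mathcal{U}_{X,\spincs}$ is a smooth Banach manifold, and projection to the space of maps $C^\infty(\Xi,\Pi)$ is Fredholm of index equal to $\dim(\Xi)$ plus the virtual dimension formula~\eqref{E:dim}. Sard--Smale then gives a residual set of maps $\data\colon \Xi\to \Pi$ that are regular values, and $\calm_{X,\spincs}(\data)$ is smooth of dimension~\eqref{E:Fdim} for any such $\data$.

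For the cylindrical-end case, one works in weighted Sobolev spaces $L^2_{k,\delta}$ as indicated in Remark~\ref{R:sobolev}; compactness of $\Xi$ ensures that a single weight $\delta>0$ can be chosen uniformly, since the finitely many relevant spectral thresholds on $S^3$ are metric-independent. The transversality argument goes through identically with compactly supported perturbations $\eta \in \Omega^2_0(\hat X)$, because test forms of compact support are already enough to conclude that the harmonic self-dual obstruction vanishes. Finally, since the set of regular maps is open and dense in an ambient space of smooth perturbations, and compactly supported perturbations are dense in that space, one can arrange $\data$ to take values in the compactly supported sector.

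I expect the main obstacle, if any, to be the cylindrical-end bookkeeping: verifying that the Fredholm theory and unique continuation go through uniformly over a compact parameter family with a single fixed weight $\delta$, and that the density of compactly supported perturbations survives the parameterization. The rest is a routine parameterized repetition of the closed-manifold transversality argument in~\cite{kronheimer-mrowka:monopole,morgan:swbook,nicolaescu:swbook}, so once the analytical framework of Section~\ref{an-frame} is in hand the proof reduces to invoking Sard--Smale on the universal moduli space together with the wall-avoidance count for reducibles.
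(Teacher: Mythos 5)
Your proposal is correct and follows essentially the same route the paper takes: the paper treats Proposition~\ref{P:generic} as the standard transversality statement, relying on exactly the universal parameterized moduli space with Fredholm projection to $\Pi$ plus map transversality/Sard--Smale described in Section~\ref{s:chains} and outlined (in Kronheimer--Mrowka style) in the proof of Proposition~\ref{SPNMID}, together with the codimension-$b_2^+$ wall argument to exclude reducibles and the openness/density argument for compactly supported perturbations in the cylindrical-end case. The only sketch-level slips (the cokernel form component vanishes because closed perturbations already sweep out the harmonic self-dual directions, while unique continuation is what kills the spinor component, and the Fredholm projection should be set up over $\Pi$ rather than the Fr\'echet mapping space) do not affect the argument.
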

Maps satisfying the conclusion of the Proposition will be called \emph{good}. A slight extension of Proposition~\ref{P:generic} says that if $\Xi$ is a manifold with boundary and $\data \colon \partial\, \Xi \to \Pi$ is good, then a generic extension of $\data$ to $\Xi$
will be good. 

\begin{remark}\label{R:family-orient}
    When the structure group is contained in $\diff(X,\spincs,\calo)$ and $\data$ is a good map defined on an oriented manifold $\Xi$, then $\calm_{X,\spincs}(\data)$ is oriented by the fiber orientation. We adopt the convention that in a smooth fiber bundle $E \to B$, the orientation of the total space $E$ is given by the orientation of $B$ followed by an orientation of the vertical tangent bundle. This convention applies as well to the index bundle. In particular, a family \SW moduli space is oriented by the orientation of the base space $\Xi$ followed by the orientation of the index bundle for the deformation complex of the fiber. 
\end{remark}

\subsection{Invariants of homotopy groups and families}\label{S:homotopy}
The second author's papers~\cite{ruberman:isotopy,ruberman:swpos} used $1$-parameter \SW moduli spaces to define an isotopy invariant for diffeomorphisms of $4$-manifolds, essentially the invariant $\sw^{\pi_0}$ discussed below. The authors jointly observed shortly afterwards that a similar construction would give invariants for higher homotopy groups, although it has taken many years to find examples where these are non-trivial. We give the definition, and then explain how these can be interpreted as an evaluation of characteristic classes on $\bdiff(X)$ defined by Konno~\cite{konno:classes}.  Invariants for higher \emph{homology} groups of $\diff(X)$ require some further constructions, which will be given in Section~\ref{s:hom} below.

\subsubsection{Definition of the homotopy group invariant \texorpdfstring{$\sw^{\pi_k}$}{SWk}}\label{s:swpik} In discussing the homotopy groups $\pi_k(\diff(X))$ for $k>0$, we will implicitly work in the identity component $\diffo(X)$ and use the identity element as a basepoint.  Since the action of $\pi_1(\diff(X))$ on $\pi_k(\diff(X))$ is trivial, the choice of basepoint is not important when $k>0$. For $k=0$, we still use the identity element as basepoint, with the following convention. A diffeomorphism $\alpha\colon X \to X$ naturally determines an element of $\pi_0(\diff(X))$ whose value at $1\in S^0$ is the identity and at $-1$ is $\alpha$.  We will make no notational distinction between the diffeomorphism and its associated $\pi_0$ element.

Given $\data = (g,\eta) \in \Pi$, a map $\alpha\colon \Xi \to \diff(X)$
induces a map $\alpha^*$ from $\Xi$ to $\Pi$ by $\alpha^*(\theta) = (\alpha(\theta)^*g,\alpha(\theta)^*\eta) = \alpha(\theta)^*\data$. 
It is convenient to use this construction in a more general setting where $\data$ varies as well.
\begin{definition}\label{D:pullback}
    For maps $\data\colon \Xi \to \Pi$ and  $\alpha:\Xi \to \diff(X)$, define the pullback $\alpha^*\data \colon \Xi \to \Pi$ by
    \[
    (\alpha^*\data)(\theta) = \alpha(\theta)^*\data(\theta).
    \]
\end{definition}
\noindent
In particular, we can define the parameterized moduli space $\calm_{X,\spincs}(\alpha^*\data)$ for a single $\data\in \Pi$ or more generally  $\calm_{X,\spincs}(\{\alpha^*\data)\}$ for a map $\data$. A key observation is that the Seiberg-Witten moduli spaces satisfy a functorial property, as long as all of the data is pulled back.
\begin{lemma}\label{L:pullback}
    Let $\data\colon \Xi \to \Pi$ define a family moduli space $\calm_{X,\spincs,\calo}(\{\data\})$, and let $\alpha\colon \Xi \to \diff(X)$. Then $\alpha$ induces a canonical isomorphism 
    \[
    \alpha^*\colon \calm_{X,\spincs,\calo}(\{\data\}) \to \calm_{X,\alpha^*\spincs,\alpha^*\calo}(\{\alpha^*\data\}).
    \]
    If $\calm_{X,\spincs,\calo}(\{\data\})$ is smooth, then $\alpha^*$ is orientation preserving or reversing according to whether $\alpha^*\calo$ agrees with $\calo$. 
\end{lemma}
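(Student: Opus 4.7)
The plan is to define the pullback map at the level of configurations, verify it intertwines the \SW equations and gauge action, and then address smoothness and orientations. For each $\theta \in \Xi$, the diffeomorphism $\alpha(\theta)\colon X \to X$ canonically pulls back the $\Spinc$ structure $\spincs$ to $\alpha(\theta)^*\spincs$, together with identifications of the associated spinor and determinant bundles. Pulling back connections and positive spinors gives the candidate map
\[
\alpha^*(\theta, [A,\psi]) \;=\; \bigl(\theta,\, [\alpha(\theta)^*A,\, \alpha(\theta)^*\psi]\bigr).
\]
The core content is the naturality of the Seiberg-Witten map \eqref{E:swmap} under pullback by a diffeomorphism. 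This reduces to three ingredients: the Dirac identity $\alpha(\theta)^* D_A \psi = D_{\alpha(\theta)^*A}(\alpha(\theta)^*\psi)$; compatibility of pullback with Clifford multiplication $\cc$ and the quadratic map $q$; and the fact that $\alpha(\theta)\colon (X,\alpha(\theta)^*g) \to (X, g)$ is an isometry, so the $g$- and $\alpha(\theta)^*g$-self-dual projections on $2$-forms are intertwined by pullback. Combining these shows that $(\alpha(\theta)^*A, \alpha(\theta)^*\psi)$ satisfies \eqref{E:SWcurv}--\eqref{E:SWdirac} with data $\alpha(\theta)^*\data(\theta)$ whenever $(A,\psi)$ does with data $\data(\theta)$. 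Since pullback carries gauge transformations of $\spincs$ to those of $\alpha(\theta)^*\spincs$, the map descends to gauge-equivalence classes.

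Bijectivity is immediate from the two-sided inverse $(\alpha^{-1})^*$. For smoothness, Remark~\ref{R:sobolev} records that pullback by a diffeomorphism is smooth on $L^2_k$ configurations for $k \geq 2$, and smoothness in the parameter $\theta$ follows from that of $\alpha\colon \Xi \to \diff(X)$; hence $\alpha^*$ is a diffeomorphism in the smooth setting.

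For the orientation statement, recall (Remark~\ref{R:family-orient}) that the orientation of the family moduli space is the product of the orientation on $\Xi$ with the fiber orientation, which in turn is read off the determinant line of the deformation complex. Pullback by $\alpha(\theta)$ intertwines the deformation complexes at $[A,\psi]$ and at $[\alpha(\theta)^*A, \alpha(\theta)^*\psi]$, inducing a canonical isomorphism of their determinant lines. The Dirac-index factor of this line carries a canonical complex orientation preserved by pullback, so the sign of the induced map is governed entirely by its action on $H^2_+(X;\R)$, where by definition $\calo$ is carried to $\alpha^*\calo$. Combining this with the identity action on $\Xi$, the parameterized map is orientation-preserving or -reversing exactly as $\alpha^*\calo$ and $\calo$ determine the same or opposite orientation on $H^2_+$. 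The main point demanding care is tracking the sign through the determinant-line identification, including the canonical orientation of the Dirac index factor.
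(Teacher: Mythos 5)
Your proposal is correct and follows essentially the same route as the paper: pulling back $\Spinc$ connections and spinors fiberwise, checking naturality of the equations and gauge action, and reducing the orientation statement to the fiberwise behavior on $H^2_+(X;\R)$. The paper simply compresses your determinant-line/orientation discussion into a citation of the transformation rule for the ordinary \SW invariant in~\cite[Theorem 2.3.5]{nicolaescu:swbook}, so no further changes are needed.
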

The isomorphism is given by pulling back $\Spinc$ connections and section of the spinor bundle; it is easy to see that this is well-defined. Isomorphism here means that not only is the pullback map a homeomorphism, but it induces an isomorphism on the Zariski tangent spaces. In particular, pullback preserves the property of being good. The behavior of $\alpha^*$ with respect to orientations is determined by the effect of $\alpha(\theta)^*$ for any $\theta \in \Xi$ and so is the same as for the  fiberwise \SW invariant. The transformation rule for the usual \SW invariant summarized in~\cite[Theorem 2.3.5]{nicolaescu:swbook}  and yields the statement above.

Now suppose that $\data_0\colon S^k \to \Pi$ is good,  $b^2_+(X) > k+2$, and that the virtual dimension of the Seiberg-Witten moduli space $\calm_{X,\spincs}$ is $-(k+1)$. The assumption on the dimension means that $\calm_{X,\spincs}(\{\data_0\})$ is empty.  Let $\alpha\colon S^k \to \diff(X)$. Since $\Pi$ is contractible, there is a map 
\[\data \colon I \times S^k \to \Pi\  \text{with}\ \data\vert_{\{0\} \times S^k} = \data_0\  \text{and} \ \data\vert_{\{1\} \times S^k} =\alpha^*\data^0.
\]
We can assume that $\data$ is good, which implies that  
\[
\calm_{X, \spincs}(\hat\alpha) = \calm_{X, \spincs}(\{\data_{\hat\alpha(t,\theta)}\}_{(t,\theta)\in I \times S^k}) 
\]
is a compact $0$-dimensional manifold with no elements for which $t= 0$ or $1$.

\begin{definition}\label{D:SW invariant}
$\sw_{X}^{\pi_k}(\alpha, \spincs)$ is the (signed) count of points in $\calm_{X, \spincs}(\hat\alpha)$. For $n=0$, if $\alpha \in \pi_0(\diff(X,\spincs))$ we write $\sw_{X, \spincs}^{\pi_0,\Z_2}(\alpha) \in\Z_2$ or $\sw_{X}^{\pi_0}(\alpha, \spincs,\calo)\in \Z$ depending on whether $\alpha$ preserves the homology orientation $\calo$. 
\end{definition}

We need to check that $\sw_{X}^{\pi_k}(\alpha, \spincs)$ and $\sw_{X, \spincs}^{\pi_0,\Z_2}(\alpha)$ are independent of all choices, namely the initial good data $\data^0$, the choice of homotopy $\data$, and the homotopy class of $\alpha$. The proof of each of these is essentially the same, and follows the proof in~\cite{ruberman:isotopy} with more parameters. We go through the first one in detail to set the stage for later arguments.  
\begin{lemma}\label{L:spheredef}
Let $\alpha\colon S^k\to \diffo(X)$ be smooth, and let $\{\data^0_\theta\}_{\theta\in S^k}$ and $\{\data^1_\theta\}_{\theta\in S^k}$ be good. Let
$\{\data^j_{t,\theta}\}_{(t,\theta)\in I\times S^k}$ for $j=0,1$ be good families in $\Pi$ so that  $\data^j_{1,\theta}=\alpha_\theta^*\data^j_{0,\theta}$. Then
\[
 \#\M(\{\data^0_{t,\theta}\}_{(t,\theta)\in I\times S^k}) = \#\M(\{\data^1_{t,\theta}\}_{(t,\theta)\in I\times S^k}).
\]
The resulting counts $\sw_X^{\pi_k}$ is a homomorphism from 
\begin{itemize}
    \item $\pi_k(\diffo(X))$ to $\Z$ when $k>0$,
    \item $\pi_0(\diff(X,\spincs,\calo))$ to $\Z$,
\end{itemize} 
and $\sw_{X, \spincs}^{\pi_0,\Z_2}$ is a homomorphism from $\pi_0(\diff(X,\spincs))$ to $\Z_2$. When the $\Z$-valued $\sw_X^{\pi_0}$ is defined, its mod $2$ reduction is $\sw_{X, \spincs}^{\pi_0,\Z_2}$.
\end{lemma}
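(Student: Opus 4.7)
First I would establish the independence claim, namely that $\#\M(\{\data^0_{t,\theta}\})$ depends neither on the good initial family $\data^0_\theta$ nor on the good homotopy $\data^0_{t,\theta}$. The strategy is a $1$-parameter cobordism between the two $I\times S^k$-families. Since $\Pi$ is contractible, I can choose a generic path $\{P_s(\theta)\}_{s\in[0,1]}$ of smooth maps $S^k\to\Pi$ from $\data^0_{0,\cdot}$ to $\data^1_{0,\cdot}$, and then, again using contractibility of $\Pi$, build a smooth map
\[
D \colon [0,1]_s \times I_t \times S^k \to \Pi
\]
satisfying $D(j,\cdot,\cdot)=\data^j$ for $j=0,1$, $D(s,0,\theta)=P_s(\theta)$, and most importantly the boundary condition $D(s,1,\theta)=\alpha_\theta^*D(s,0,\theta)$ at every $s$. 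By Proposition~\ref{P:generic} I can arrange that $D$ is good.

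Then $\M_{X,\spincs}(\{D\})$ is a smooth compact oriented $1$-manifold of dimension $(k+2)-(k+1)=1$; computing the signed count of its boundary in the usual way gives
\[
0 = \#\M(\{\data^1\}) - \#\M(\{\data^0\}) + \#\M(\{D|_{t=0}\}) - \#\M(\{D|_{t=1}\}).
\]
By construction, the restriction $D|_{t=1}$ is the $\alpha$-pullback of $D|_{t=0}$, so Lemma~\ref{L:pullback} gives a canonical diffeomorphism between the two $t$-boundary moduli spaces. Because $\alpha$ takes values in the identity component $\diffo(X)$, each $\alpha_\theta$ acts trivially on $\Spinc$-isomorphism classes and on the homology orientation, so this diffeomorphism is orientation preserving, and the two $t$-contributions cancel. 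Hence $\#\M(\{\data^0\})=\#\M(\{\data^1\})$, proving independence. The same cobordism, now with $\alpha$ replaced by a homotopy $\{\alpha_s\}$ of smooth maps $S^k\to\diffo(X)$ and the constraint $D(s,1,\theta)=\alpha_{s,\theta}^*D(s,0,\theta)$, shows the invariant depends only on the homotopy class of $\alpha$.

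For the homomorphism property with $k>0$, I would represent $\alpha+\beta\in\pi_k(\diffo(X))$ by the standard comultiplication: collapse an equator of $S^k$ and apply $\alpha$ on one hemisphere and $\beta$ on the other. Choosing data that agrees with a fixed base datum near the equator, the resulting $I$-family moduli space decomposes as a disjoint union of contributions from the two hemispheres, giving additivity of the count. For $k=0$ and a product $\alpha\beta\in\pi_0(\diff(X,\spincs,\calo))$, I would concatenate paths of data: choose $\data_t$ running on $[0,\tfrac12]$ from $\data_0$ to $\beta^*\data_0$, and on $[\tfrac12,1]$ from $\beta^*\data_0$ to $\alpha^*\beta^*\data_0=(\alpha\beta)^*\data_0$. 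The first subinterval computes $\sw^{\pi_0}_X(\beta,\spincs)$; after pulling the second subinterval back by $\alpha$ and invoking Lemma~\ref{L:pullback}, it computes $\sw^{\pi_0}_X(\alpha,\spincs)$. Summing gives the homomorphism property.

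Finally, the $\Z_2$-valued variant on $\pi_0(\diff(X,\spincs))$ follows from exactly the same cobordism argument with unsigned counts: Lemma~\ref{L:pullback} yields a bijection of the two $t$-boundary moduli spaces regardless of orientation behavior, and a compact $1$-manifold has an even number of boundary points. This simultaneously establishes that whenever the integer-valued $\sw^{\pi_0}_X(\alpha,\spincs,\calo)$ is defined, its mod-$2$ reduction coincides with $\sw^{\pi_0,\Z_2}_{X,\spincs}(\alpha)$. The main obstacle is ensuring that $\M(\{D\})$ is indeed compact and reducible-free; I expect this to follow from requiring $b_2^+(X)>k+2$ for generic transversality (via the argument behind Proposition~\ref{P:generic}) together with the cylindrical-end compactness package alluded to in Remark~\ref{R:sobolev}.
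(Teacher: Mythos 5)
Your argument is essentially the paper's own proof: a good two-parameter family over $I\times I\times S^k$ whose $t=1$ face is the $\alpha$-pullback of the $t=0$ face, so that Lemma~\ref{L:pullback} cancels those two boundary contributions (orientation-preservingly for $\alpha$ in $\diffo(X)$, only mod $2$ otherwise), and the signed boundary count of the resulting $1$-dimensional moduli space gives independence of all choices. Your hemisphere/concatenation argument for the homomorphism property and the mod-$2$ reduction fill in details the paper declares straightforward, so there is no substantive difference in approach.
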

\begin{proof}
Contractibility of  $\Pi$ implies that there is a family of data $\{\data^s_{0,\theta}\}_{(s,0,\theta)\in I\times \{0\}\times S^k}$
extending $\{\data^1_\theta\}_{\theta\in S^k}$  and  $\{\data^0_\theta\}_{\theta\in S^k}$; the generic perturbations Proposition~\ref{P:generic} says that we can assume that this family is good. Write $\{\data^1_{0,\theta}\} = \alpha(\theta)^*\{\data^s_{0,\theta}\}$.  These families combine to give a family
$\{\data^s_{t,\theta}\}_{(s,t,\theta)\in \partial(I^2\times S^k)}$ with $\data^s_{0,\theta} = \data^s_{\theta}$ and $\data^s_{1,\theta} = \alpha_\theta^*\data^s_{\theta}$ that is good by Lemma~\ref{L:pullback}.

Using the fact that the space $\Pi$ is contractible together with Proposition~\ref{P:generic} once again, we construct a good family of data $\{\data^s_{t,\theta}\}_{(s,t,\theta)\in I^2\times S^k}$ with $\data^s_{0,\theta} = \data^s_{\theta}$ and $\data^s_{1,\theta} = \alpha_\theta^*\data^s_{\theta}$.

In the first and last cases, $\alpha_\theta$ preserves orientation, homology orientation and $\spincs$, so Lemma~\ref{L:pullback} gives an orientation-preserving isomorphism between  $\M(\{\data^s_{0,\theta}\}_{(s,\theta)\in I\times S^k})$ and $\M(\{\data^s_{1,\theta}\}_{(s,\theta)\in I\times S^k})$. These have opposite orientations induced from the family $\{\data^s_{t,\theta}\}_{(s,t,\theta)\in I^2\times S^k}$ giving the following trivial signed count.
\[
\#\M(\{\data^s_{t,\theta}\}_{(s,t,\theta)\in I\times \{0,1\}\times S^k}) = 0.
\]
The difference between $\sw$ computed with the families $\data^0$ and $\data^1$ is then
\[
\begin{aligned}
\#\M(\{\data^1_{t,\theta}\}_{(s,t,\theta)\in \{1\}\times I \times S^k}) &- \#\M(\{\data^0_{t,\theta}\}_{(s,t,\theta)\in \{0\}\times I \times S^k}) \\
&= \#\M(\{\data^1_{t,\theta}\}_{(s,t,\theta)\in \{1\}\times I \times S^k}) - \#\M(\{\data^0_{t,\theta}\}_{(s,t,\theta)\in \{0\}\times I \times S^k}) \\
&+  \#\M(\{\data^s_{0,\theta}\}_{(s,t,\theta)\in I\times \{0\} \times S^k}) - \#\M(\{\data^s_{1,\theta}\}_{(s,t,\theta)\in I\times \{1\} \times S^k}) \\
&= \#\M(\{\data^s_{t,\theta}\}_{(s,t,\theta)\in \partial(I\times I \times S^k)}) = 0.
\end{aligned}
\]
The proof described by this calculation is schematically illustrated in Figure~\ref{F:independence}. In the case that $k=0$ but the orientation is not preserved, everything goes through identically except that the isomorphism between the moduli spaces $\M(\{\data^s_{0,\theta}\}_{(s,\theta)\in I\times S^k})$ and $\M(\{\data^s_{1,\theta}\}_{(s,\theta)\in I\times S^k})$ may not be orientation preserving, so we only get equality mod $2$.

\begin{figure}[h]
\labellist
\small\hair 2pt
\pinlabel {$\alpha$} [ ] at 35 99
\pinlabel {$t$} [ ] at 122 27
\pinlabel {$t=0$} [ ] at 80 14
\pinlabel {$t=1$} [ ] at 151 10
\pinlabel {$s$} [ ] at 155 83
\pinlabel {$\M(\{\data^1_{t,\theta}\})$} [ ] at -24 161
\pinlabel {$\M(\{\data^0_{t,\theta}\})$} [ ] at -24 2
\endlabellist
\centering
    \includegraphics[scale=1.1]{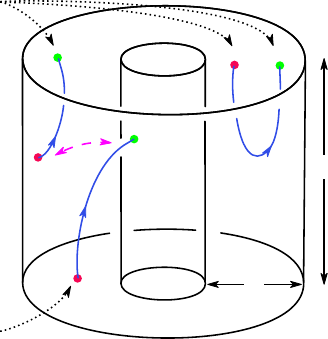}
  \caption{Independence from metric and perturbation}\label{F:independence}
\end{figure}
The proof that $\sw^{\pi_k}$ is a homomorphism is straightforward,  given the independence from all choices.  It can also be deduced from Proposition~\ref{P:eval} below. The last sentence in the lemma is also straightforward.
\end{proof}
\begin{remark}\label{R:ball}
    A very useful consequence of the independence of $\sw_{X}^{\pi_k}(\alpha, \spincs)$ from all choices is that it can often be calculated using \emph{constant} initial (good) data $\data^0_{\theta}$, which we abbreviate to $\data_*$. In that case, the moduli space $\M(\{\data^0_{\theta}\})$ is empty, and one can view the family moduli space $\M(\{\data^0_{t,\theta}\})$ as being defined over a ball.   This is used in the proof of Proposition~\ref{P:eval} below and in the proof of the gluing formula, Theorem~\ref{anti-hol}, in Section~\ref{S:calculation}.
\end{remark}



\subsubsection{Relation to Konno's characteristic classes \texorpdfstring{$\swc$}{SW}}\label{s:konno}
For any $\spincs \in \Shat^k_X$, Konno's paper~\cite{konno:classes} defines Seiberg-Witten cohomology classes
\begin{align*}\label{E:konno}
\swc^{\Z_2}_X(\spincs) &\in H^{k+1}(\bdiff(X,\spincs);\Z_2),\\
\swc_X(\spincs,\calo) &\in H^{k+1}(\bdiff(X,\spincs,\calo);\Z)
\end{align*}
that give invariants for families of $4$-manifolds when $b^2_+(X) > k+2$. In particular, a family parameterized by a space $B$ with structure group preserving a $\spincs$ is classified by a map $B \to \bdiff(X,\spincs)$, yielding classes in $H^{k+1}(B;\Z_2)$. If the structure group also preserves the homology orientation $\calo$, then we get a classifying map $B \to \bdiff(X,\spincs,\calo)$) and hence classes in $H^{k+1}(B;\Z)$. There are several closely related ways to package these invariants. If
$\Xi\in H_{k+1}(\bdiff(X,\spincs);\Z_2)$ (respectively,  $H_{k+1}(\bdiff(X,\spincs,\calo);\Z)$) we write $\fsw^{\Z_2}(\Xi,\spincs) = \swc^{\Z_2}_X(\spincs)\cap\Xi$ (respectively, $\fsw^{\Z}(\Xi,\spincs,\calo) = \swc^{\Z}_X(\spincs,\calo)\cap\Xi$) for the \emph{family Seiberg-Witten invariants.}

Starting with a map $\alpha\colon S^k \to \diff(X)$
one builds a spherical family $X \to S^{k+1}\times_\alpha X \to S^{k+1}$, via the clutching construction 


\begin{equation}\label{E:clutch}
\left(D^{k+1}_+ \times X \amalg D^{k+1}_- \times X\right)/(z,x) \sim (z,\alpha(z)x) \ \text{for}\ z \in S^k.
\end{equation}

\begin{proposition}\label{P:eval}
Let $E = S^{k+1} \times_\alpha X$ have classifying map $f\colon S^{k+1} \to \bdiff(X,\spincs,\calo)$. Then 
$\sw_X^{\pi_k}(\alpha) = \langle f^*\swc_X(\spincs),S^{k+1}\rangle$.
\end{proposition}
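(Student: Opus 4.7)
The plan is to compute both sides of the identity using a single good family of Seiberg--Witten data on the bundle $E = S^{k+1}\times_\alpha X \to S^{k+1}$, engineered so that the resulting family moduli space is literally the one used in Definition~\ref{D:SW invariant}. First I would decompose $S^{k+1} = D^{k+1}_+ \cup_{S^k} D^{k+1}_-$ along the equator; by the clutching formula \eqref{E:clutch}, $E$ restricts to a trivial bundle over each hemisphere, and the two trivializations differ by $\alpha(\theta)$ on $S^k$.

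On $D^{k+1}_+$ I would install the constant data $\data_*$ as in Remark~\ref{R:ball}, chosen generic enough that $\calm_{X,\spincs}(\data_*)=\emptyset$ (possible since the virtual dimension is $-(k+1)<0$); the associated fiberwise moduli space over $D^{k+1}_+$ is then empty. On $D^{k+1}_-$ I would identify the disk with the cone $(I\times S^k)/(\{0\}\times S^k)$, sending the cone point to the south pole and the boundary $\{1\}\times S^k$ to the equator. Because $\data_*$ is constant it descends from $\{0\}\times S^k$ to the cone point, so the homotopy $\{\data_{t,\theta}\}$ used to define $\sw_X^{\pi_k}(\alpha)$ (starting at $\data_*$ and ending at $\alpha(\theta)^*\data_*$) gives smooth data on $D^{k+1}_-$ whose boundary value matches the clutching by $\alpha$. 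These two pieces thus assemble into a well-defined family of data on $E$, and by Proposition~\ref{P:generic} (applied relative to the already-good boundary data) the global family may be taken good.

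The pairing $\langle f^*\swc_X(\spincs,\calo), [S^{k+1}]\rangle$ is by definition \cite{konno:classes} the signed count, oriented as in Remark~\ref{R:family-orient}, of a generic zero-dimensional family moduli space for $E \to S^{k+1}$. By construction this moduli space is empty over $D^{k+1}_+$, and Lemma~\ref{L:pullback} canonically identifies its restriction to the interior of $D^{k+1}_-$ with $\calm_{X,\spincs}(\{\data_{t,\theta}\}_{(t,\theta)\in I\times S^k})$; since $\alpha$ preserves $\spincs$ and $\calo$, this identification is orientation-preserving. Taking the cone parametrization to be orientation-compatible with $S^{k+1}$ then yields $\langle f^*\swc_X(\spincs,\calo),[S^{k+1}]\rangle = \sw_X^{\pi_k}(\alpha)$. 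The same argument with $\swc^{\Z_2}_X$ delivers the mod~$2$ version when $k=0$ and $\alpha$ does not preserve the homology orientation.

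The main obstacle, as in any comparison of two independently defined invariants, is a careful bookkeeping of orientation conventions: I would verify that the fiber orientation convention of Remark~\ref{R:family-orient} on $E \to S^{k+1}$ translates, via the cone identification and the pullback of Lemma~\ref{L:pullback}, to the orientation implicit in Definition~\ref{D:SW invariant}. Everything else in the argument is formal, being essentially the assertion that Konno's class evaluated on a spherical homology class reduces to a count which, by construction of $\sw^{\pi_k}$, is exactly $\sw_X^{\pi_k}(\alpha)$.
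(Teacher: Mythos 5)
Your proposal is correct and follows essentially the same route as the paper: decompose $S^{k+1}$ into hemispheres, use constant good initial data so the moduli space over one hemisphere is empty, and identify the moduli space over the other hemisphere (the cone on $S^k$, as in Remark~\ref{R:ball}) with the one defining $\sw_X^{\pi_k}(\alpha)$. The only point the paper flags that you elide is the comparison between Konno's virtual-neighborhood definition of $\swc_X(\spincs)$ and the generic-data count used here, which for good data reduces to the same signed count.
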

\begin{proof}
By definition, $\langle f^*\swc_X(\spincs),S^{k+1}\rangle$ is computed cell-by cell, so in this case is the sum of the counts of solutions over the two hemispheres $D^{k+1}_\pm$.
Because the initial data $\data^0$ is good, the moduli space corresponding to points on the upper hemisphere is empty. Hence the family moduli space associated to $\spincs$ that defines $\swc^{k+1}_{X}(f_*[S^{k+1}],\spincs)$ is the same as the moduli space associated to the lower hemisphere that computes $\sw_{X}^{\pi_k}(\alpha,\spincs)$.
\end{proof}
There is one additional difference in the way that $\sw_X^{\pi_k}$ and $\swc_X(\spincs)$ are defined. The invariants of the diffeomorphisms defined in this paper are based on suitable generic data, while Konno's invariants are based on the virtual neighborhood technique. For suitably generic data the virtual fundamental class is just the class generated by the moduli space. Some additional details about the equality of our invariant of diffeomorphisms and Konno's invariant are given in Drouin's thesis \cite{drouin:embeddings}.
\begin{remark}\label{R:BHurewicz}
    Another way to phrase Proposition \ref{P:eval} is via the isomorphism
    \[\partial\colon \pi_{k+1}(\bdiff(X)) \to \pi_k(\diff(X))\] 
    from the long exact sequence of the universal bundle. We have a commutative diagram
\begin{center}
\begin{tikzcd}
    \pi_k(\diff(X,\spincs,\calo)) \arrow{r}{\sw_{X}^{\pi_k}(\text{--}{,}\spincs)}  &\Z \\
    \pi_{k+1}(\bdiff(X,\spincs,\calo)) \arrow{u}[anchor=center,rotate=90,yshift=1ex]{\cong}[swap]{\partial}
    \arrow[r,"\calh" ]& H_{k+1}(\bdiff(X,\spincs,\calo))\arrow[u, "\swc^{k+1}_{X}(\text{--}{,}\spincs)"']
\end{tikzcd}
\end{center}
where $\calh$ is the Hurewicz map. The composition $\calh \circ (\partial)^{-1}(\alpha)$ is the homology class of the family representing the clutching construction \eqref{E:clutch} on $\alpha$.
\end{remark}

\subsection{Homology invariants of \texorpdfstring{$\diff(X)$}{Diff(X)}}\label{s:hom}
The constructions of the previous section give us invariants that will detect homotopy groups of $\diff(X)$ or equivalently, of $\bdiff(X)$, while Konno's invariants can detect homology groups of $\bdiff(X)$. Detecting elements of the homology groups of $\diff(X)$ requires some new constructions, which we provide in this section.  
\subsubsection{Moduli spaces parameterized by chains}\label{s:chains}
The invariants $\sw^{H_k}(\text{--}{,}\spincs)$ for $\spincs \in \Shat_{X}^k$ will defined in terms of a $(k+1)$-parameter moduli space, where the parameterizing space $\Xi$ is a $(k+1)$-chain in the space $\Pi(X)$ of data on $X$.  We discussed smoothness for moduli spaces parameterized by a manifold in Section~\ref{S:regularity}.
In this section, we explain how to extend those results to the setting of chains.  A reader who is interested only in invariants of $\pi_k(\diffo(X))$ can take $\Xi$ to be a $(k+1)$-dimensional ball as in  Definition~\ref{D:SW invariant} and Remark~\ref{R:ball}. The same setup would apply to bordism invariants defined on $\widetilde{\Omega}_k(\diffo(X))$, where now $\Xi$ would be a manifold with boundary.

Our approach to smoothness and transversality for chains is to work on one (singular) simplex at a time.  Recall that a singular simplex on a space $A$ is a continuous map $\sigma:\Delta^k \to A$, and a singular $k$-chain is a formal sum 
\[
\alpha = \sum a_\sigma \sigma\ \text{with}\ a_\sigma \in \Z
\]
and $a_\sigma \neq 0$ for finitely many $\sigma$. If $A$ is a smooth manifold, then singular homology based on such chains can be computed in terms of smooth maps $\sigma$. We recall that $\sigma$ is smooth if it has a smooth extension to a neighborhood of $\Delta^k$ (thought of as a subspace of $\R^k$).  The restriction of a smooth $k$-simplex to any face $\Delta^k_{(j)}$ is smooth, so the smooth simplices form a subcomplex giving the same homology as the full singular complex~\cite[Chapter 18]{lee:manifolds}. The proof is based on the Whitney approximation theorem and standard arguments in homology theory. As such, it applies directly when $X$ is instead a Banach manifold. 

Similarly, one can consider homology based on (now smooth) singular simplices that satisfy a transversality condition.  
\begin{definition}\label{D:transverse}
Let $p\colon \calp \to \calq$ be a Fredholm map between Banach manifolds, and let $\sigma\colon \Delta^k \to \calq$ be smooth. We say that $\sigma $ is transverse to $p$ if the restriction of $\sigma$ to the interior of each face of $\Delta^k$ is transverse to $p$ in the usual sense, i.e., the image of $d(\sigma)_x\colon T_x \Delta^k_{(j)}\to T_{\sigma(x)}\calq$ and the image of $dp_y$ span $T_{\sigma(x)}\calq$ whenever $\sigma(x) = p(y)$.
\end{definition}

From the definition, it is immediate that the span of the simplices transverse to $p$ forms a subcomplex $C_*^{\pitchfork p}(\calq)$ of the (smooth) singular chains of $\calq$.  
The transversality theorem for Fredholm maps~\cite[Proposition 4.3.10]{donaldson-kronheimer} (extending~\cite{smale:sard} from submanifold to map transversality) combines with the proof that smooth singular homology is the same as ordinary homology to prove the following result.
\begin{proposition}\label{P:transverse-hom}
For a Fredholm map $p\colon \calp \to \calq$, the inclusion $C_*^{\pitchfork p}(\calq) \subset C_*(\calq)$ induces an isomorphism on homology.
\end{proposition}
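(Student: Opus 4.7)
The plan is to mimic the classical proof that smooth singular homology agrees with singular homology, with the Sard–Smale theorem for Fredholm maps replacing Whitney approximation for the transverse condition. Concretely, I would show that the quotient complex $Q_* = C_*(\calq)/C_*^{\pitchfork p}(\calq)$ is acyclic, which is equivalent to the statement that every chain $\beta \in C_k(\calq)$ with $\partial\beta \in C_{k-1}^{\pitchfork p}(\calq)$ can be written $\beta = \gamma + \partial\delta$ with $\gamma \in C_k^{\pitchfork p}(\calq)$ and $\delta \in C_{k+1}(\calq)$. Applied to cycles this gives surjectivity on homology, and applied to a transverse cycle that bounds in $C_*(\calq)$ it gives injectivity.

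The technical heart is a single relative approximation lemma: given a smooth simplex $\sigma\colon \Delta^k \to \calq$ and a closed neighborhood $F$ of $\partial\Delta^k$ on which $\sigma|_F$ is transverse to $p$ on every face, there exist a smooth $\sigma'\colon \Delta^k \to \calq$ transverse to $p$ on every face of $\Delta^k$ and a smooth homotopy $H\colon \Delta^k \times [0,1] \to \calq$ from $\sigma$ to $\sigma'$ with $H_t$ constantly equal to $\sigma$ on a smaller neighborhood of $\partial\Delta^k$; moreover $H$, after a standard triangulation of the prism into $(k+1)$-simplices, can be chosen so that every face of every simplex of the resulting chain is transverse to $p$. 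To produce $H$, embed $\sigma$ into a finite-dimensional family $\sigma_u$, $u \in U \subset \R^N$, obtained by postcomposing $\sigma$ with flows of local coordinate vector fields on $\calq$ supported away from a collar of $\partial\Delta^k$; apply the Sard–Smale transversality theorem \cite[Prop.\ 4.3.10]{donaldson-kronheimer} to the induced evaluation map and pick a generic $u$ close to $0$. Iterating this in one higher dimension on the prism and its triangulation (arranging the perturbations to be relative to the already–transverse portion of the boundary) gives the desired $H$ and its transverse decomposition.

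The assembly into the acyclicity statement proceeds by induction over the faces of $\beta$, ordered by dimension. At each step a face $\tau$ either already appears in $\partial\beta$ (so is transverse by hypothesis) or is an interior face shared by simplices of $\beta$; in the latter case we apply the relative approximation lemma to one representative and propagate the perturbation to all simplices that contain $\tau$ by the same homotopy, using that the lemma keeps the $F$-portion fixed. Summing the resulting prism chains yields the chain $\delta$ with $\beta - \partial\delta = \gamma \in C_k^{\pitchfork p}(\calq)$, as required.

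The main obstacle is bookkeeping: ensuring that the successive face-by-face perturbations remain compatible across simplices that meet along lower-dimensional faces, and that the $(k+1)$-chain realizing the chain homotopy itself lies in $C_{k+1}^{\pitchfork p}$. Both are handled by always working relative to the already-transverse skeleton (so perturbations vanish there) and by invoking the Sard–Smale theorem a second time on the prism to guarantee transversality of the homotopy chain. Once the lemma is in place, the acyclicity of $Q_*$, and hence the proposition, follows formally.
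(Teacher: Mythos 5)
Your proposal is correct in outline and follows essentially the same route as the paper, which disposes of this proposition in a single sentence: combine the Fredholm-map transversality (Sard--Smale) theorem of \cite[Proposition 4.3.10]{donaldson-kronheimer} with the standard argument showing that smooth singular homology agrees with singular homology. Your relative approximation lemma, prism subdivision, and face-by-face induction are simply a fleshed-out version of that sketch, so there is nothing methodologically different to compare.
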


We will apply this proposition to the following situation; see Section~\ref{S:regularity} for more details.
Fix a $\Spinc$ structure $\spincs$ on a $4$-manifold $X$.  The space $\calq$ will be the space of data, denoted $\Pi$ above.  Moreover, $\calp$ will be the `universal' parameterized \SW moduli space consisting of pairs $([A,\psi], \data)$ where $[A,\psi]$ is a gauge-equivalence class of irreducible configurations satisfying the \SW equations with data $\data$.  (The structure of these spaces as Banach manifolds comes from the choice of suitable Sobolev norms as fixed in Remark~\ref{R:sobolev}.) The essence of the genericity theorem~\cite{kronheimer-mrowka:monopole,morgan:swbook,nicolaescu:swbook} is that $\calp$ is a Banach manifold, and $p([A,\psi], \data) = \data$ is a Fredholm map. The (usual) map transversality theorem then gives smoothness for moduli spaces parameterized by a smooth manifold. We will make a similar argument when the parameter space is a smooth chain. A related approach to homological family invariants was given in Section 3.4.1 of~\cite{munoz-echaniz:monopole} of Mu\~{n}oz-Ech\'aniz.

The genericity theorem does not work quite as well for the full space of connections, i.e. when we include the reducible points. However, we can avoid reducibles in any family that comes up in the definition and calculation of our invariants.  

%
If $\Xi$ is a manifold with boundary, then a {\em good family} of data $\data_{\theta\in \Xi}$ is one for which there is no reducible solution point in any moduli space $\M(\data_\theta)$, and   the map $\theta \to \data_\theta$ as well as its restriction to $\partial\hspace{.07em}\Xi$ are transverse to $p$. 
The parameterized moduli space $\M(\{\data_\theta\}_{\theta\in \Xi})$ is then an oriented manifold with boundary, using the convention in Remark~\ref{R:family-orient}.

A {\em good simplex} $\sigma\colon \Delta^k \to \Pi(X)$ is one for which the restriction of $\sigma$ to each face is good.  For a good simplex, the part of the parameterized moduli space over the interior of $\Delta$ and the interior of the faces of $\Delta$ is then an oriented manifold by the discussion above.  A chain $\sum a_\sigma \sigma$ is good if each of its simplices is good.
\subsubsection{Definition of the invariants}\label{S:invariants}
Making use of Proposition~\ref{P:transverse-hom} we can give the definition of $\sw^{H_k}(\text{--}{,}\spincs)$. 
\begin{definition}\label{Hinvdef}
Let $Z$ be a smooth oriented manifold with homology orientation and $b^2_+(X) > k+2$.   Let $\spincs\in\Shat_Z^k$, and 
let $\alpha = \sum a_\sigma \sigma$ be a smooth cycle, representing a homology class in $H_k(\diff(X))$. Let $\data_0$ be  
good data on $Z$; by the definition of $\Shat_Z^k$ and the condition on $b^2_+(X)$, the moduli space $\M(\data_0)$ is in fact empty. 

For each $\sigma$ in $\alpha$ with $a_\sigma \neq 0$, we get a smooth simplex in $\Pi(X)$ given by
\[
\begin{aligned}
\sigma^*\data_0 \colon& \Delta^k \to \Pi(X)\\
& x \to \sigma(x)^*\data_0
\end{aligned}
\]
This gives rise to a chain $\alpha^*\data_0 =  \sum a_\sigma \sigma^*\data_0$, which is readily seen to be a cycle. On the other hand, since $\Htilde_k(\Pi(X)) = 0$, this cycle is a boundary, i.e.,
\begin{equation*}
\alpha^*\data_0 = \partial \beta,\ \text{where}\ \beta = \sum b_\tau \tau.
\end{equation*}
By Proposition~\ref{P:transverse-hom}, we may assume that each $(k+1)$-simplex $\tau$ appearing in $\beta$ is good, so that we can count the points in the compact oriented $0$-manifold $\M(\tau)$. Thus we define
\begin{equation}\label{E:hdef}
\sw^{H_k}(\alpha,\spincs) =  \sum b_\tau\,  \#\M(\tau).
\end{equation}
%
%
\end{definition}

\begin{remark}
Under the assumption that $b^2_+(X) > k+2$, we can avoid reducibles in both the moduli spaces associated to each simplex $\tau$ in the definition of the invariant and in homologies between cycles used to prove that the invariants are well defined. From here forward when we speak of these invariants for a manifold $Z$ we will assume that $b^2_+(X) > k+2$.
In analogy with the usual Donaldson invariants one expects to have versions of these invariants when $b^2_+(X) = k+1$ that are well-defined up to a choice of chamber.
\end{remark}

\begin{definition}\label{D:Mchain}
For $\beta =   \sum b_\tau \tau$ a good chain in $\Pi(X)$, write  $\M(\beta)$ for the disjoint union of the moduli spaces $\M(\tau)$, with appropriate orientations and multiplicities given by the coefficients $b_\tau$. If $\M(\beta)$ is $0$-dimensional (i.e. $\dim(\M(\tau)) = 0$ for all $\tau$) and there is no bubbling, then this is a finite set of points that we may count with sign to get $\# \M(\beta)$; this is the quantity on the right-hand side of \eqref{E:hdef}.  If  $\M(\beta)$ is $1$-dimensional, then it is an oriented $1$-manifold with boundary $\M(\partial \beta)$.
\end{definition}
By a standard argument, if $\dim(\M(\beta)) = 1$, then $\# \M(\partial \beta)= 0$.

\begin{remark} We comment briefly on an alternate approach to defining the invariant $\sw^{H_k}(\text{--},\spincs)$ using Kreck's concept of {\em stratifolds}~\cite{kreck:stratifolds}. A stratifold $\bT$ is a stratified space with strata $\bT^j$ for $0 \leq j \leq n = \dim(\bT)$, where $\bT^j$ has the structure of a smooth $j$-manifold. Part of the data is a subsheaf of the sheaf of continuous functions on $\bT$, whose sections restrict to smooth functions on the strata. There is a notion of a smooth map from a stratifold to a manifold (satisfying a version of Sard's theorem), and Kreck shows that a natural bordism group defined in terms of stratifolds with boundary is isomorphic to ordinary singular homology. Given the bordism-style definition of $\sw^{H_k}(\text{--},\spincs)$, it seems reasonable to expect that one can  define it in terms of maps of stratifolds. A key point would be to establish a suitable Sard-Smale type theorem (and a version of Proposition~\ref{P:transverse-hom}) for maps of a stratifold to a Hilbert manifold such as the space of metrics; compare~\cite{kreck-tene:hilbert}.
\end{remark}

We now discuss the basic properties of the invariants $\sw^{H_k}(\text{--},\spincs)$: they are well-defined and give rise to $\Z$-valued homomorphisms from the homology groups of $\diffo(X)$.  By composition with the Hurewicz map, we then get homomorphisms $\sw^{\pi_k}(\text{--},\spincs)\colon \pi_k(\diffo(X)) \to \Z$ and in fact our main calculation (Theorem~\ref{anti-hol}) evaluates $\sw^{\pi_k}(\text{--},\spincs)$ for the spherical families of diffeomorphisms constructed in Section~\ref{fam}. These results generalize the properties of the invariants from the $k=0$ case defined in~\cite{ruberman:isotopy}. When $k = 0$, there are some modest complications in the formulation of the results,  because an arbitrary diffeomorphism might not preserve a given $\Spinc$ structure or homology orientation. Hence we restrict ourselves to $k > 0$.

\begin{proposition}\label{basic}
Let $k$ be greater than $0$. For any $\spincs\in\Shat_Z^k$ the map $\sw^{H_k}(\text{--},\spincs)$ on good chains descends to a well-defined group homomorphism
\[
\sw^{H_k}(\text{--},\spincs)\colon H_k(\diffo(X))\to\Z.
\]
In particular, it is independent from the choices of good cycle $\alpha$ representing a given homology class, the choice of bounding chain $\beta$, as well as the initial good data $\data_0$.  It is natural in the following sense. Let $\varphi\colon Y\to Z$ be a diffeomorphism that preserves the orientation and homology orientation and let $\spincs\in\Shat_Z^k$. Then
\[
\sw^{H_k}(\alpha^\varphi,\varphi^*\spincs) = \sw^{H_k}(\alpha,).
\]
\end{proposition}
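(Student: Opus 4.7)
Plan: the proof follows the bordism-style template used for \lemref{spheredef}, but now the parameter space is a chain rather than a sphere. Each independence claim reduces to exhibiting a good $(k+2)$-chain in $\Pi(X)$ whose parameterized moduli space is a compact oriented $1$-manifold, whose signed boundary count therefore vanishes. The key technical ingredient is \propref{transverse-hom}, which lets me replace any singular chain in $\Pi(X)$ with a homologous good one, and contractibility of $\Pi(X)$ supplies the null-homologies needed at each stage.

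First I would establish independence from the bounding chain $\beta$. Given two good $(k+1)$-chains $\beta,\beta'$ with $\partial\beta=\partial\beta'=\alpha^*\data_0$, the difference $\beta-\beta'$ is a $(k+1)$-cycle in $\Pi(X)$, and contractibility gives a $(k+2)$-chain $\eta$ with $\partial\eta=\beta-\beta'$. Since $b^2_+(X)>k+2$ puts the reducible locus in codimension greater than $k+2$, \propref{transverse-hom} lets me take $\eta$ good and irreducible. Then $\M(\eta)$ is a compact oriented $1$-manifold with $\partial\M(\eta)=\M(\beta)-\M(\beta')$, giving $\#\M(\beta)=\#\M(\beta')$.

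Next I would handle independence from the representative cycle and from $\data_0$. For the cycle, if $\alpha-\alpha'=\partial\gamma$ for a smooth $(k+1)$-chain $\gamma$ in $\diffo(X)$, then $\gamma^*\data_0$ is automatically good because \lemref{pullback} identifies the moduli space of each simplex with a copy of $\calm(\data_0)=\emptyset$, making the transversality condition of \defref{transverse} vacuous. Thus $\beta-\gamma^*\data_0$ is a good bounding chain for $\alpha'^*\data_0$ with the same moduli space count as $\beta$, and the previous step identifies this with the count from any good bounding chain for $\alpha'^*\data_0$. For independence from $\data_0$, I would choose a path $\data_t$ connecting two good data and form the prism chain $P$ obtained by triangulating $\sigma\times I\to\Pi(X)$, $(x,t)\mapsto\sigma(x)^*\data_t$, over each simplex $\sigma$ of $\alpha$. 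This gives $\partial P=\alpha^*\data_0'-\alpha^*\data_0$, and by \lemref{pullback} each prism simplex has moduli space pulled back from $\calm(\data_t)$, which is empty for generic $\data_t$ since the associated virtual dimension $-k$ is negative. Hence $\beta+P$ is a good bounding chain for $\alpha^*\data_0'$ with $\#\M(\beta+P)=\#\M(\beta)$.

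The homomorphism property then follows because the construction is linear on chains: representing $[\alpha_1]+[\alpha_2]$ by a sum of cycles and taking bounding chains additively gives $\#\M(\beta_1+\beta_2)=\#\M(\beta_1)+\#\M(\beta_2)$. Naturality under $\varphi\colon Y\to Z$ follows directly from \lemref{pullback}: conjugating each simplex by $\varphi$ and pulling back data produces a canonical orientation-preserving identification of the corresponding parameterized moduli spaces, since $\varphi$ preserves the orientation and homology orientation. The main technical obstacle I anticipate is ensuring that transversality is achieved compatibly along shared faces, so that the bounding chain $\eta$ in the first step is good relative to its already-good boundary; this requires a relative version of \propref{transverse-hom} in which perturbations are localized to the interior of each top-dimensional simplex, which follows from the standard Fredholm transversality argument applied one simplex at a time.
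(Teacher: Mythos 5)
Your proposal is correct and takes essentially the same route as the paper: a bordism-style argument using contractibility of $\Pi(X)$ together with \propref{transverse-hom} to produce good bounding/capping chains, the observation that moduli spaces over pulled-back constant data and over prisms built from a good path are empty (which is where $k>0$ enters), and \lemref{pullback} for naturality. The only difference is organizational—the paper proves data-independence first and gets bounding-chain independence as the special case $\alpha_1=\alpha_0$ of representative-independence, while you prove bounding-chain independence directly and first—and your remark that the prism chain is automatically good because its moduli spaces are empty is a slight (valid) streamlining of the paper's "perturb to make it good."
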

By considering all of the possible $\Spinc$ structures with the appropriate dimensional moduli space, we may also regard $\sw^{H_k}$ as a homomorphism from $H_k(\diff(X))$ to $\Z^{\Shat_Z}$.

\begin{proof} First we show the independence from the choice of initial data, with a fixed cycle $\sum a_\sigma \sigma$ representing the class $[\alpha]$. Let $\data_0$ and $\data_1$ be good data, and let  $\data_s$ be a good path between them. The assumption that $k>0$ implies that $\M(\data_s)$ is empty for all $s \in [0,1]$. For each simplex $\sigma$ in $\alpha$, we get a map $\tilde{\sigma}^*\colon \Delta \times I  \to \Pi(X)$ that sends $(x,s)$ to $\sigma(x)^*(\data_s)$. Since the moduli space $\M(\data_s)$ is empty, the same is true for $\M(\sigma(x)^*(\data_s))$.  Apply
 the standard prism operator~\cite{hatcher} to get a chain $\tilde{\alpha}^*\{\data_s\}$ in $\Pi(X)$ with boundary $\alpha^*\data_1 - \alpha^*\data_0$.  It is possible that $\tilde{\alpha}^*\{\data_s\}$ as initially constructed is not good, but after a small perturbation of the faces in the interior of the prism followed by a perturbation of the interior it will be good.

Following the definition of $\sw^{H_k}$, choose good chains $\beta_i$ in $\Pi(X)$ with $\partial \beta_i = \alpha_i^* \data_i$. Then 
\begin{equation}\label{E:cobound}
\tilde{\beta} = \beta_1 - \beta_0 - \tilde{\alpha}^*\{\data_s\}
\end{equation}
is a good cycle in $\Pi(X)$. Again using the contractibility of $\Pi(X)$ and Proposition~\ref{P:transverse-hom}, there is a good chain $\gamma\in C_{k+2}^{\pitchfork p}(\calq)(\Pi(Z))$ with $\partial \gamma = \tilde{\beta}$.  As remarked after Definition~\ref{D:Mchain}, the boundary of the moduli space $\M(\gamma)$ is the oriented $0$-manifold
\[
\M( \beta_1) - \M(\beta_0) - \M(\tilde{\alpha}^*\{\data_s\}).
 \]
 Since this last moduli space is empty, we conclude that $\# \M(\beta_1) = \#\M(\beta_0)$. 

The proof that $\sw^{H_k}$ is independent of the choice of representative for $[\alpha] \in H_k(\diffo(Z))$ is similar.
Pick  initial good data $\data_0$. Given two smooth representatives $\alpha_0$ and $\alpha_1$, there is a smooth $(k+1)$-chain $\beta$ with $\partial \beta = \alpha_1 - \alpha_0$. Choosing good chains $\beta_i$ in $\Pi(Z)$ with $\partial \beta_i = \alpha_i^*\data_0$, one gets a good $(k+1)$-cycle $\tilde\beta = \beta_1 -\beta_0 -\beta^*\data_0$. As above,
\[
\#\M(\tilde\beta) = \#\M(\beta_1) -\#\M(\beta_0) -\#\M(\beta^*\data_0) =  \#\M(\beta_1) -\#\M(\beta_0)
\]
since $\#\M(\beta^*\data_0)$ is empty. On the other hand, $\tilde\beta$ bounds a good $(k+2)$-chain, so $\#\M(\tilde\beta) = 0$. As a special case of this argument, we could take $\alpha_1 = \alpha_0$, and conclude that the value of $\sw^{H_k}$ is independent of the choice of bounding chain $\beta$.

Given the independence from all choices, the fact that $\sw^{H_k}$ is a homomorphism is a straightforward consequence of the definition.
\end{proof}
If $\Xi$ is a manifold (an oriented manifold), and $\data$ is generic, then $\swc^{\Z_2}_X(\spincs)\cap\Xi = \# \calm_{X,\spincs}(\data)$ is the parity of the number of elements in the parameterized moduli space. More generally, Konno argues that it suffices to understand the evaluation of the Seiberg-Witten characteristic classes on cycles represented by CW complexes which via restriction to open cells essentially reduces to the case of cycles represented by smooth manifolds.

The smooth manifold  $\calm_{X,\spincs}(\data)$ inherits an orientation from an orientation on $\Xi$ together with an orientation $\calo$ for the vector space $H^2_+(X)$.  We use the convention that at a regular point $\data$ of the projection $\calm_{X,\spincs}(\data) \to \Xi$, the orientation is determined by the orientation of $\Xi$ followed by the orientation of the moduli space $\calm_{X,\spincs}(\data)$. In this case, Konno defines a $\Z$-valued invariant  $\swc^{\Z}_X(\spincs)\cap\Xi$ as the signed count of points.

\subsection{Basic properties of the invariants}\label{s:kis0}
In this short section, we note some basic properties of invariants that will be useful later on. One of the most important features of the \SW equations is the following \emph{a priori} bound for solutions \cite{nicolaescu:swbook}.
\begin{proposition}
    If $(A,\psi)$ is a solution to the Seiberg-Witten equations, then
    \[
\|\psi\|^2_\infty \le 2 \,\text{\rm max}\left(0,4\|\eta\|_\infty - \text{\rm min}(s(x))\right),
    \] where $s$ is the scalar curvature on $X$.
\end{proposition}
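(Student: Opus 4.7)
The plan is to derive the bound by combining the Weitzenb\"ock formula for the spin-c Dirac operator with the pointwise maximum principle applied to $|\psi|^2$.

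First I would recall the Weitzenb\"ock identity
\[
D_A^* D_A \psi \;=\; \nabla_A^*\nabla_A \psi \,+\, \tfrac{s}{4}\psi \,+\, \tfrac{1}{2}\cc(F_A^+)\psi ,
\]
which, combined with the Dirac equation \eqref{E:SWdirac}, immediately gives $\nabla_A^*\nabla_A\psi = -\tfrac{s}{4}\psi - \tfrac{1}{2}\cc(F_A^+)\psi$. Next I would substitute the curvature equation \eqref{E:SWcurv} rewritten as $F_A^+ = \tfrac{1}{2}\cc^{-1}(q(\psi)) - i\eta^+$, so that
\[
\nabla_A^*\nabla_A\psi \;=\; -\tfrac{s}{4}\psi \,-\, \tfrac{1}{4} q(\psi)\psi \,+\, \tfrac{i}{2}\cc(\eta^+)\psi .
\]

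Next, taking the pointwise Hermitian inner product with $\psi$ and using the standard Bochner identity $\tfrac{1}{2}\Delta|\psi|^2 = \langle \nabla_A^*\nabla_A\psi,\psi\rangle - |\nabla_A\psi|^2$ (with $\Delta = \nabla^*\nabla$ the geometer's sign convention), together with the pointwise identity $\langle q(\psi)\psi,\psi\rangle = \tfrac{1}{2}|\psi|^4$ from the Nicolaescu conventions in use in~\cite{nicolaescu:swbook}, yields
\[
\tfrac{1}{2}\Delta|\psi|^2 \,+\, |\nabla_A\psi|^2 \;=\; -\tfrac{s}{4}|\psi|^2 \,-\, \tfrac{1}{8}|\psi|^4 \,+\, \tfrac{i}{2}\langle\cc(\eta^+)\psi,\psi\rangle .
\]
The last term is bounded in absolute value by $C|\eta^+|\,|\psi|^2$ for the operator-norm constant $C$ of Clifford multiplication by self-dual forms; working through the conventions of~\cite{nicolaescu:swbook} yields precisely the coefficient needed to produce the constants in the statement.

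Finally I would apply the maximum principle. If $\psi \equiv 0$ the bound is trivial; otherwise let $x_0$ be a point where $|\psi|^2$ attains its supremum (in the closed case $x_0$ is interior; in the $S^3$-cylindrical end case of Remark~\ref{R:sobolev}, finite-energy solutions have $\psi \to 0$ at infinity, so $x_0$ still exists on $\hat X$, with $\Delta|\psi|^2(x_0) \geq 0$). Dropping the nonnegative terms $\tfrac{1}{2}\Delta|\psi|^2(x_0)$ and $|\nabla_A\psi(x_0)|^2$ and substituting $\|\eta\|_\infty$ and $\min s$ produces the inequality
\[
\tfrac{1}{8}\|\psi\|_\infty^4 \;\leq\; \bigl( C\|\eta\|_\infty - \tfrac{1}{4}\min s\bigr)\,\|\psi\|_\infty^2 ,
\]
and dividing through by $\|\psi\|_\infty^2$ and tracking the numerical constants gives the claimed bound $\|\psi\|_\infty^2 \leq 2\max(0,\,4\|\eta\|_\infty - \min s)$. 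The only real subtlety is the cylindrical-end case, where existence of a genuine maximum rather than merely a supremum must be argued from the asymptotic decay of $\psi$ in the Sobolev setup of Remark~\ref{R:sobolev}; this is standard and can be handled either by taking an exhaustion of $\hat X$ by compact subsets together with the decay, or by directly appealing to the strong maximum principle for $\Delta$-subsolutions on complete manifolds.
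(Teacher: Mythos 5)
Your argument is correct and is exactly the standard a priori estimate — Weitzenb\"ock formula plus the maximum principle for $|\psi|^2$, with the cylindrical-end case handled via decay of $\psi$ — which is precisely the proof the paper implicitly invokes by citing \cite{nicolaescu:swbook} (it gives no independent proof). The only point left implicit in your write-up is the exact operator-norm constant for Clifford multiplication by $\eta^+$ in Nicolaescu's conventions, but tracking it does yield the stated coefficients.
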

Of the many consequences of this proposition is the fact that any manifold has a finite collection of basic classes. In the case of families, one has the following.
\begin{lemma}\label{L:finite}
For any $\alpha \in \pi_k(\diff(X))$, the set of $\Spinc$ structures $\spincs$ for which $\sw_X^{\pi_k}(\alpha,\spincs) \neq 0$ is finite.
\end{lemma}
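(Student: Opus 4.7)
The plan is to emulate the standard argument showing that a single $4$-manifold has only finitely many basic classes, but making the estimates uniform over the compact parameter space of the family.

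By Remark~\ref{R:ball}, the invariant $\sw_X^{\pi_k}(\alpha,\spincs)$ may be computed using constant good initial data $\data_*$, so that the relevant family moduli space is $\calm_{X,\spincs}(\{\data_{(t,\theta)}\}_{(t,\theta)\in I\times S^k})$ for a good interpolation between $\data_*$ and $\alpha^*\data_*$. The parameter space $\Xi = I\times S^k$ is compact, and the map $(t,\theta)\mapsto \data_{(t,\theta)} = (g_{(t,\theta)},\eta_{(t,\theta)})$ into $\Pi(X)$ has compact image. It suffices to show that the set of $\spincs$ for which $\calm_{X,\spincs}(\{\data\})$ is nonempty is finite.

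Compactness of the image in $\Pi(X)$ gives uniform constants $E$ and $S$ such that $\|\eta^+_{(t,\theta)}\|_\infty \leq E$ and $\min_{x\in X}s_{g_{(t,\theta)}}(x) \geq -S$ for all $(t,\theta)$. Applying the a~priori estimate (the proposition preceding the lemma) to any solution $(A,\psi)$ over a point of $\Xi$ yields a uniform $C^0$ bound on $\psi$, and then the curvature equation \eqref{E:SWcurv} gives a uniform $C^0$ (hence $L^2$) bound on $F_A^+$, where the self-dual projection is taken with respect to $g_{(t,\theta)}$. Since $[F_A]=-2\pi i\, c_1(\spincs)$, this translates into a uniform bound $\|c_1(\spincs)^+_{(t,\theta)}\|\leq C_1$, where the splitting $H^2(X;\R)=H^2_+(g_{(t,\theta)})\oplus H^2_-(g_{(t,\theta)})$ depends on the metric.

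On the other hand, because $\spincs$ must yield a $(-k{-}1)$-dimensional unparameterized moduli space, the dimension formula \eqref{E:dim} fixes $c_1(\spincs)^2$, say equal to $D$. Combined with the identity $c_1(\spincs)^2 = \|c_1(\spincs)^+_{(t,\theta)}\|^2 - \|c_1(\spincs)^-_{(t,\theta)}\|^2$, we obtain a uniform bound $\|c_1(\spincs)^-_{(t,\theta)}\|^2 \leq C_1^2 - D$, and therefore a uniform bound on the full norm $\|c_1(\spincs)\|$ taken with respect to, say, the $g_{(t_0,\theta_0)}$-harmonic metric at a chosen basepoint (the splittings vary continuously with the metric, and on the compact set of metrics under consideration these norms are mutually comparable). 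Since $c_1(\spincs)$ is a characteristic lattice vector in $H^2(X;\Z)$, and a bounded ball in $H^2(X;\R)$ contains only finitely many lattice vectors, only finitely many $\Spinc$ structures can contribute; when $H^2(X;\Z)$ has no $2$-torsion (as in the applications here) this also bounds $\spincs$ itself. The only real content to check is the uniformity of bounds across $\Xi$, and this is immediate from compactness.
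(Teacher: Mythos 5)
Your argument is correct and is exactly the route the paper intends: the lemma is stated there as a direct consequence of the \emph{a priori} bound, i.e.\ the standard finiteness-of-basic-classes argument with the pointwise estimates made uniform over the compact parameter space $I\times S^k$, which is precisely what you carry out. The only cosmetic remark is that your parenthetical about $2$-torsion is unnecessary, since $H^2(X;\Z)$ is finitely generated and hence only finitely many $\Spinc$ structures share a given $c_1$ in any case.
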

\begin{remark}\label{R:finite}
It is worth noting that this lemma does not extend to say that for fixed $X$ and $k$, the set of $\Spinc$ structures $\spincs$ for which the homomorphism $\sw_X^{\pi_k}(\cdot,\spincs)$ is non-zero is finite. Indeed, the proof of Theorem~\ref{T:infgen} shows that this set can be infinite.
\end{remark}

\section{Non-isotopic diffeomorphisms}\label{S:pi0}
We start with the proof of Theorem~\ref{T:infgen} for $p=0$, which exhibits the basic idea and will serve as the basis for an inductive proof of the theorem for $p>0$. 
The first step is the construction of non-isotopic diffeomorphisms that will eventually be detected by a $1$-parameter Seiberg-Witten invariant.  This is analogous to the construction in~\cite{ruberman:isotopy}, and is a different way to describe diffeomorphisms studied in  \cite{baraglia-konno:gluing}. The basic idea is to transport a basic model diffeomorphism on $\cptwo\cs 2\cptwobar$ (in \cite{ruberman:isotopy}) or  $\sss$ (in the present paper and in~\cite{baraglia-konno:gluing}) to a manifold that has two distinct decompositions of the form $X\cs\cptwo\cs 2\cptwobar$ or $X\cs (\sss)$.

To follow this idea and allow room to modify the construction to obtain higher-dimensional families, notice that the elliptic surface $E(2)$ contains three different Gompf nuclei, $N(2)_a$, $a = 1, 2, 3$. Each of these nuclei contains a fiber denoted $T_a$ and a section denoted $\sigma_a$. Construct the collection $\{\bX_q=E(2;2q+1)\}$ of log-transformed elliptic surfaces via log-transform in the $N(2)_1$ nucleus.  These are all homeomorphic, and we fix homeomorphisms $\psi_q \colon \bX_q \to \bX_0 = E(2)$ via homeomorphisms that restrict to the identity on $N(2)_2$. The manifolds $\bX_q$ are distinguished by their Seiberg-Witten invariants, as follows.
\begin{equation}\label{E:logsw}
\sw_{\bX_q}(2\ell \Tc_1)=
\begin{cases}
    1 \ \text{if} \ |\ell | \leq q\\
    0\ \text{otherwise}
\end{cases}
\end{equation}
where $\Tc_1$ denotes the Poincar\'e dual of $T_1$.

It is known~\cite{gompf:elliptic} that $\bX_q$ and $\bX_0$ become diffeomorphic   after a single stabilization, and we choose a diffeomorphism 
\[
\vphi_q\colon \bX_q \cs (\sss) \to \bX_0 \cs (\sss)
\]
taking the copy of $N(2)_2$ in $\bX_0$ to the copy $N(2)_2$ in $\bX_q$.  We can assume, by composing with a fiber-preserving self-diffeomorphism, that $\vphi_q \simeq \psi_q \cs 1_{\sss}$.

Now we make use of a pair of simple diffeomorphisms of $\sss$. Let $S$ be a sphere of self-intersection $\pm 2$ in a $4$-manifold. Wall introduced a diffeomorphism that is the identity outside of a neighborhood of this sphere and acts as
$x\mapsto x\mp\langle x, [S]\rangle [S]$ on the homology \cite{wall:diffeomorphisms}. Denote this diffeomorphism by $R_S$. Let $A$ denote the sphere $S^2\times\{\text{pt}\}$, $B$ denote the sphere $\{\text{pt}\}\times S^2$, and $A\pm B$ denote the result of smoothing the intersection point in the union of the spheres using the indicated orientations. The diffeomorphisms on $\sss$ are just $R_{A\pm B}$. The composition $R_{A+B}R_{A-B}$ is isotopic to the diffeomorphism  $(z,w) \mapsto (\bar{z},\bar{w})$ (with $S^2$ viewed as $\mathbb{C}\cup\{\infty\}$) used in \cite{baraglia-konno:gluing}. This replaces the reflections in $2$-spheres of self-intersection $\pm 1$ used in~\cite{ruberman:isotopy}. 

\begin{remark}\label{explicitRR}
    Evaluating the parameterized deformation complex will be simplified by using the following explicit models of these diffeomorphisms. Write $\sss = \{(v,w)\in\R^2\times\R^3\,|\,|v| = |w| = 1\}$, so that $A\pm B = \{(v,w)\in \sss\,|\, v =\pm w\}$. Set $R'_{A\pm B}(v,w) = \mp(w,v)$. Thus $R'_{A+B}R'_{A-B}(v,w) = -(v,w)$. Composing this with an order two rotation yields the isotopic map
    $RR'((v_1,v_2,v_3),(w_1,w_2,w_3)) = ((v_1,v_2,-v_3),(w_1,w_2,-w_3))$. The maps $R_{A\pm B}$ and $RR$ are obtained from these via isotopy to be fixed on a disk. 
\end{remark}

One advantage of writing the diffeomorphisms as a composition of reflections is that this allows one to understand the behavior of diffeomorphisms under stabilization by understanding how spheres behave under stabilization. This idea of creating isotopies of diffeomorphisms by finding isotopies of spheres was the initial motivation for the stable isotopy results of \cite{auckly-kim-melvin-ruberman:isotopy} and was also implemented for higher-parameter families in \cite{auckly-ruberman:diffym}; the reflections $R_{A\pm B}$ were used in this way in \cite{auckly:internal}.

The manifold $\bZ^0$ that appears in Theorem~\ref{T:infgen} for $p=0$ is by definition  $\bX_0 \cs (\sss)$. Let $\spincs_0$ be the trivial $\Spinc$ structure on $\sss$. Conjugating $R_{A+B}R_{A-B}$ by $\vphi_q$, we get a collection of self-diffeomorphisms of $\bZ^0$.
\begin{multline}
\balpha[q] = \vphi_q(1_{\bX_q} \cs R_{A+B}R_{A-B}) \vphi_q^{-1}(1_{\bX_0} \cs R_{A+B}R_{A-B})^{-1}\\ =  ~^{\vphi_q}(1_{\bX_q} \cs R_{A+B}R_{A-B}) (1_{\bX_0} \cs R_{A+B}R_{A-B})^{-1}.
\end{multline}

As we would usually do in a group, we write $~^hg = hgh^{-1}$. It is useful to summarize this definition as $\balpha[q] = [\vphi_q,1_{\bX_q} \cs R_{A+B}R_{A-B}]$.
By construction, the diffeomorphisms $\balpha[q]$ are in the Torelli group $\tdiff(\bZ^0)$, and in particular lie in $\diff(\bZ^0,\ell\check{T},\calo)$ for all $\ell \in \Z$. Note that these latter groups are the same for all non-zero $\ell$.
\begin{proposition}\label{P:pi0}
The collection of homomorphisms $\sw^{\pi_0}_{\bZ^0}(-,2\ell \Tc)$ for $\ell \in \Z_{\geq 0}$ define a homomorphism  $\vphi\colon \pi_0(\diff(\bZ^0,\check{T},\calo)) \to  \Z^\infty$ with infinitely generated image. In particular $\pi_0(\diff(\bZ^0,\check{T},\calo))$ is infinitely generated, and contains an infinitely generated subgroup lying in the kernel of the natural map $\pi_0(\diff(\bZ^0,\check{T},\calo)) \to\pi_0(\home(\bZ^0))$.
\end{proposition}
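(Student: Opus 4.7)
The plan is to evaluate the homomorphisms $\sw^{\pi_0}_{\bZ^0}(-,2\ell\Tc)$ on the classes $\balpha[q]$ so as to produce a lower-triangular integer matrix with nonzero diagonal, and to read off infinite generation from it. As already observed, $\balpha[q]\in\tdiff(\bZ^0)$, so each $\balpha[q]$ automatically preserves every $\Spinc$ structure $\spincs_\ell$ with $c_1(\spincs_\ell)=2\ell\Tc$ and the homology orientation $\calo$. Hence by Lemma~\ref{L:spheredef} the integer $\sw^{\pi_0}_{\bZ^0}(\balpha[q],2\ell\Tc)$ is well defined for every $q,\ell$, and by Lemma~\ref{L:finite} assembling over $\ell$ produces a homomorphism
\[
\vphi\colon \pi_0(\diff(\bZ^0,\Tc,\calo)) \lto \bigoplus_{\ell\in\Z_{\geq 0}}\Z \cong \Z^\infty.
\]

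The technical core of the argument, and the step I expect to be the main obstacle, is a gluing computation via the Parameterized Irreducible-Reducible Gluing Theorem~\ref{pIRglue} applied to the decomposition $\bZ^0=\bX_0\cs(\sss)$. The target identity is
\[
\sw^{\pi_0}_{\bZ^0}(\balpha[q],2\ell\Tc) \;=\; \pm\bigl(\sw_{\bX_q}(2\ell\Tc_1)-\sw_{\bX_0}(2\ell\Tc_1)\bigr).
\]
The geometric content is that the commutator form $\balpha[q]=[\vphi_q,\,1_{\bX_q}\cs R_{A+B}R_{A-B}]$ lets the one-parameter arc of metrics that computes $\sw^{\pi_0}$ be split along the neck of the connected sum: on the $\sss$ side the one-parameter family passes through a reducible monopole and contributes $\pm 1$ via the standard wall-crossing/blow-up mechanism of~\cite{ruberman:isotopy,ruberman:swpos,nicolaescu:swbook}, while on the elliptic side the family is essentially constant with endpoint moduli spaces that, via $\vphi_q$, are identified with the ordinary Seiberg-Witten moduli spaces of $\bX_q$ and $\bX_0$. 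This is exactly the ``isolated irreducibles on one side, isolated reducibles on the other'' configuration that Theorem~\ref{pIRglue} is tailored to handle, so beyond careful orientation and sign bookkeeping I do not expect surprises.

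Granting the displayed formula, the conclusion is immediate from~\eqref{E:logsw}: for $\ell=0$ both summands equal $1$ and the difference vanishes, while for $\ell>0$ one has $\sw_{\bX_0}(2\ell\Tc_1)=0$ and $\sw_{\bX_q}(2\ell\Tc_1)=1$ exactly when $\ell\leq q$. Thus the matrix $M_{q,\ell}=\sw^{\pi_0}_{\bZ^0}(\balpha[q],2\ell\Tc)$ indexed by $q,\ell\geq 1$ is lower triangular with entries $\pm 1$ on the diagonal, so the successive differences $\vphi(\balpha[q])-\vphi(\balpha[q-1])$ span $\bigoplus_{\ell\geq 1}\Z$. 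This will exhibit $\vphi$ with infinitely generated image and in particular show that $\pi_0(\diff(\bZ^0,\Tc,\calo))$ is infinitely generated. For the topological statement, since $\balpha[q]\in\tdiff(\bZ^0)$ and $\bZ^0$ is simply connected, Quinn's theorem~\cite{quinn:isotopy}, as corrected in~\cite{gabai-etal:pseudoisotopies}, implies each $\balpha[q]$ is topologically isotopic to the identity, placing the whole subgroup they generate in $\ker[\pi_0(\diff(\bZ^0,\Tc,\calo))\to\pi_0(\home(\bZ^0))]$ and completing the proof.
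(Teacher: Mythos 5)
Your overall strategy is the same as the paper's (evaluate $\sw^{\pi_0}_{\bZ^0}(\balpha[q],2\ell\Tc)$, feed in the log-transform computation \eqref{E:logsw}, assemble over $\ell$ using Lemma~\ref{L:finite}, and get topological triviality from the Torelli condition via Quinn), but the computational core as you state it has a genuine gap. Your target identity is an \emph{integer} equality, and neither of the available routes delivers it. Theorem~\ref{pIRglue} is proved and used only as a mod $2$ statement: it identifies the parameterized moduli space up to isomorphism without orientations, and the paper explicitly declines to track signs (the remark after Theorem~\ref{pIRglue} notes that an orientation-sensitive version is expected but not established). The other natural route, splitting $\balpha[q]=\,^{\vphi_q}(1_{\bX_q}\cs R_{A+B}R_{A-B})\,(1_{\bX_0}\cs R_{A+B}R_{A-B})^{-1}$ and applying a composition law over $\Z$, fails because the two factors reverse the homology orientation $\calo$; they lie only in $\diff(\bZ^0,\Tc)$, so their invariants exist only in $\Z_2$, and one cannot subtract integer invariants of the factors. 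Consequently your matrix $M_{q,\ell}$ is not known to be lower triangular with $\pm1$ diagonal: all one gets is that $M_{q,\ell}$ is \emph{odd} for $1\le\ell\le q$ and \emph{even} (not zero) for $\ell=0$ and $\ell>q$, so the "successive differences span" step collapses.

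The paper's fix, which you can graft onto your argument with little change, is the "fortunate coincidence": work mod $2$ for the decomposition (the $\Z_2$ composition law is valid, and the factor invariants are computed, e.g.\ by Baraglia--Konno or by the mod $2$ gluing of Theorem~\ref{pIRglue}, to be $\sw_{\bX_q}(2\ell\Tc)$ and $\sw_{\bX_0}(2\ell\Tc)$), then observe that since $\balpha[q]\in\tdiff(\bZ^0)$ its integer invariant \emph{is} defined and reduces mod $2$ to this value, hence is nonzero for $1\le\ell\le q$. Infinite generation of the image then follows not from triangularity but from Lemma~\ref{L:finite} plus a subsequence/induction argument (choose $q_{i+1}$ larger than the last $\ell$ with nonzero integer invariant for $\balpha[q_i]$ and evaluate any relation at the top surviving coordinate), exactly as in the proof of Theorem~\ref{T:Zinfty}. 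Your well-definedness of $\vphi$, the use of \eqref{E:logsw}, and the Quinn/pseudoisotopy argument for triviality in $\pi_0(\home(\bZ^0))$ (cf.\ Proposition~\ref{P:Rstab}) all match the paper and are fine.
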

As will be evident from the proof, the same holds for $\pi_0(\tdiff(\bZ^0))$.
\begin{proof}

We note first that the self-diffeomorphisms
\[
(1_{\bX_0} \cs R_{A+B}R_{A-B})^{-1}\ \ \text{and}\ \ 
~^{\vphi_q}(1_{\bX_q} \cs R_{A+B}R_{A-B})
\]
of $\bZ^0$ each preserve the $\Spinc$ structure $2\ell \Tc \cs \spincs_0$. They reverse the homology orientation, and so live in the group $\diff(\bZ^0,\Tc)$, which implies that the invariants $\sw_{\bZ^0}^{\pi_0,\Z_2}((1_{\bX_0} \cs R_{A+B}R_{A-B})^{-1},2\ell \Tc \cs \spincs_0)$ and $\sw_{\bZ^0}^{\pi_0,\Z_2}(~^{\vphi_q}(1_{\bX_q} \cs R_{A+B}R_{A-B},2\ell \Tc \cs \spincs_0))$ are well-defined. 
We have $\sw_{\bZ^0}^{\pi_0}(\balpha[q],2\ell \Tc \cs \spincs_0) \equiv \sw_{\bZ^0}^{\pi_0,\Z_2}(\balpha[q],2\ell \Tc \cs \spincs_0) \ (\text{mod} \ 2)$ and
\begin{align*}
\sw_{\bZ^0}^{\pi_0,\Z_2}(&\balpha[q],2\ell \Tc \cs \spincs_0) \\ &=  \sw_{\bZ^0}^{\pi_0,\Z_2}(~^{\vphi_q}(1_{\bX_q} \cs R_{A+B}R_{A-B}),2\ell \Tc \cs \spincs_0)) - \sw_{\bZ^0}^{\pi_0,\Z_2}((1_{\bX_0,2\ell \Tc \cs \spincs_0} \cs R_{A+B}R_{A-B}))\\
& = \sw_{\bZ^0}^{\pi_0,\Z_2}(1_{\bX_q} \cs R_{A+B}R_{A-B},2\ell \Tc \cs \spincs_0) - \sw_{\bZ^0}^{\pi_0,\Z_2}(1_{\bX_0} \cs R_{A+B}R_{A-B},2\ell \Tc \cs \spincs_0).
\end{align*} 
 
Now Baraglia and Konno~\cite{baraglia-konno:gluing} have computed the latter invariants to be (respectively) $\sw_{\bX_q}(2\ell \Tc)$ and $\sw_{\bX_0}(2\ell \Tc)$. The $\sw_{\bZ^0}^{\pi_0}$ invariants may be combined into a single invariant $\Phi$ taking values in the free abelian group generated by $\Spinc$ structures (a $\Z^\infty$-valued invariant).  Combined with the computation~\eqref{E:logsw} of $\sw_{\bX_q}$ this shows that (again, mod $2$)
\[
\Phi(\balpha[q]) \equiv (0,\overbrace{1,\ldots,1}^{q-1\rm\ times},0,\ldots) \ (\text{mod} \ 2).
\]
Furthermore, by Lemma~\ref{L:finite} only a finite number of components of $\Phi(\balpha[q])$ may be non-zero.
It follows readily that 
image of $\Phi$ in $\Z^\infty$ is infinitely generated. The subgroup of $\pi_0(\diff(\bZ^0,\check{T},\calo))$ generated by the $\balpha[q]$ is infinitely generated, and since each $\balpha[q]$ acts trivially on homology, are trivial in $\pi_0(\home(\bZ^0))$.
\end{proof}
\begin{remark}\label{R:nonabelian}
    Because $\zz^0$ is the stabilization of an indefinite manifold, it follows from Wall's theorem~\cite{wall:diffeomorphisms} that the natural map from $\pi_0(\diff(\zz^0))$ to $\Aut(Q_{\zz^0})$ (where $Q_X$ denotes the intersection form of $X$) is surjective. It follows that $\pi_0(\diff(\zz^0))$ is not abelian. The proof of Proposition~\ref{P:pi0} shows a different failure of commutativity that is not detected in this fashion. Indeed, one checks that up to isotopy $(~^{\vphi_q}(1_{\bX_q} \cs R_{A+B}R_{A-B}))^2$ and $(1_{\bX_0} \cs R_{A+B}R_{A-B})^2$ are trivial, but $\balpha[q]$ has infinite order in the mapping class group.  Thus it seems unlikely that  there is a splitting of $\Phi$ over its (free abelian) image; compare~\cite[Theorem 1.4]{baraglia:mcg}.

\end{remark}

Proposition~\ref{P:pi0} is one part of the base case of our recursive construction. The other elements necessary for the base case are contained in the following. The idea of using isotopies of spheres to create isotopies of diffeomorphisms is the motivation for using the diffeomorphism $R_{A+B}R_{A-B}$. Note that the diffeomorphism $(z,w) \mapsto (\bar{z},\bar{w})$ used  is isotopic to $R_{A+B}R_{A-B}$ but  we found it easier to visualize the stable isotopy  described below using the $R_{A+B}R_{A-B}$ model.

\begin{proposition}\label{P:Rstab}
    The diffeomorphisms $\balpha[q]$ can be assumed to be the identity on a ball $B^4$, and become isotopic to the identity after one external stabilization, i.e., $\balpha[q]\cs 1_{\sss}$. They are also  smoothly pseudoisotopic and topologically isotopic to the identity. The stable isotopy, pseudoisotopy, and topological isotopy can all be assumed to be the identity on a ball. 
\end{proposition}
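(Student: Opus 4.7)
The plan is to verify each assertion in turn by invoking theorems already cited in the paragraph preceding the proposition. First, I would arrange $\balpha[q]$ to be the identity on a ball $B^4 \subset \bZ^0$. By Remark~\ref{explicitRR}, $R_{A+B}R_{A-B}$ is isotopic to a diffeomorphism of $\sss$ supported away from a disk, so $1_{\bX_q}\cs R_{A+B}R_{A-B}$ can be taken to fix a ball lying entirely in the $\bX_q$ summand. Choosing such a ball inside $\bX_q \setminus N(2)_1$, one may isotope $\vphi_q$ so that it takes the ball to a corresponding ball in $\bX_0$ via a fixed local identification. The commutator $\balpha[q] = [\vphi_q, 1_{\bX_q}\cs R_{A+B}R_{A-B}]$ then restricts to the identity on that ball.

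Since $\balpha[q] \in \tdiff(\bZ^0)$ (as recorded immediately before Proposition~\ref{P:pi0}) and $\bZ^0$ is closed and simply connected, Kreck's theorem~\cite{kreck:isotopy}, as corrected in~\cite{gabai-etal:pseudoisotopies}, provides a smooth pseudoisotopy from $\balpha[q]$ to the identity, and Quinn's theorem~\cite{quinn:isotopy}, again corrected in~\cite{gabai-etal:pseudoisotopies}, then upgrades the pseudoisotopy to a topological isotopy to the identity. For the smooth stable isotopy, the plan is to invoke the $1$-stable isotopy theorem of~\cite{auckly-kim-melvin-ruberman:isotopy, auckly-kim-melvin-ruberman-schwartz:1-stable}: two pseudoisotopic diffeomorphisms of a simply connected $4$-manifold become smoothly isotopic after a single connected sum with $\sss$. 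Applying this to the pair $(\balpha[q], 1_{\bZ^0})$ delivers the desired stable isotopy on $\bZ^0 \cs (\sss)$.

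The main obstacle, and the final step, is to ensure that each of these isotopies can be taken to be the identity on a ball. Since the input diffeomorphisms already coincide on a ball by the first paragraph, the natural approach is to work relative to the boundary of a slightly smaller ball: remove an open ball from $\bZ^0$, view $\balpha[q]$ and the identity as diffeomorphisms of the resulting compact manifold with $S^3$ boundary equal to the identity near the boundary, and then re-run the arguments above in the relative setting. The hard part will be confirming that each of the cited constructions admits a relative version with a fixed boundary sphere; for the Kreck/Quinn/Gabai--Gay--Hartman--Krushkal--Powell pseudoisotopy and topological isotopy this is a standard adaptation, while for the $1$-stable isotopy theorem it is built into the construction of~\cite{auckly-kim-melvin-ruberman:isotopy, auckly-kim-melvin-ruberman-schwartz:1-stable}. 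Any residual discrepancy supported on the ball, arising as a loop in $\diffo(B^4, \partial B^4)$ during the isotopy, can be absorbed into a correction in the complement using the usual isotopy-extension machinery.
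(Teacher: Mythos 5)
There is a genuine gap at the heart of your stable-isotopy step: the theorem you invoke does not exist in the form you state it. The results of~\cite{auckly-kim-melvin-ruberman:isotopy,auckly-kim-melvin-ruberman-schwartz:1-stable} are $1$-stable isotopy theorems for \emph{embedded $2$-spheres} (homologous spheres that are primitive, ordinary, and have simply connected complement, or that share a geometric dual, become smoothly isotopic after a single external stabilization); they do not say that two pseudoisotopic diffeomorphisms of a simply connected $4$-manifold become isotopic after one connected sum with $\sss$. The only general statement of that flavor is Quinn's theorem, which gives isotopy after \emph{some unspecified} number of stabilizations, and indeed the whole point of \propref{Rstab} is that for these particular diffeomorphisms one stabilization suffices --- something that has to be proved, not quoted. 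The paper's proof does this by exploiting the specific form of $\balpha[q]$: using $\vphi_q R_S \vphi_q^{-1} = R_{\vphi_q(S)}$ one rewrites
\[
\balpha[q] = (1_{\bX_0}\cs R_{\vphi_q(A+B)})(1_{\bX_0}\cs R_{\vphi_q(A-B)})(1_{\bX_0}\cs R_{A-B})^{-1}(1_{\bX_0}\cs R_{A+B})^{-1},
\]
a product of Wall reflections in the spheres $A\pm B$ and $\vphi_q(A\pm B)$. These spheres have geometric duals ($B$ and $\vphi_q(B)$), hence are primitive and ordinary with simply connected complement, so the sphere theorem of~\cite{auckly-kim-melvin-ruberman-schwartz:1-stable} makes $\vphi_q(A\pm B)$ isotopic to $A\pm B$ after one stabilization; the corresponding reflections are then isotopic and the product is isotopic to the identity. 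Your plan, as written, skips exactly this reduction from diffeomorphisms to spheres, which is the key idea.

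The remaining parts of your proposal are essentially sound but can be streamlined. Since $\balpha[q]$ is trivial on $H_2(\bZ^0)$, pseudoisotopy follows from~\cite{kreck:isotopy,quinn:isotopy} and topological isotopy from Quinn, as you say. For the ball, you do not need to adjust $\vphi_q$: the commutator $\balpha[q]$ is by construction supported in neighborhoods of the spheres $A\pm B$ and $\vphi_q(A\pm B)$, so it is already the identity on any ball disjoint from those spheres. The stable isotopy is produced via the isotopy extension theorem applied to the sphere isotopies, so it can be taken to be the identity on that same ball, and the pseudoisotopy and topological isotopy theorems admit versions rel an $S^3$ boundary, which handles the relative statement without the ``absorb a loop in $\diffo(B^4,\partial B^4)$'' maneuver you propose (whose justification would itself require an argument).
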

\begin{proof}
    We have 
    \begin{align*}
        \balpha[q] &= ~^{\vphi_q}(1_{\bX_q} \cs R_{A+B}R_{A-B}) (1_{\bX_0} \cs R_{A+B}R_{A-B})^{-1} \\
        &= (1_{\bX_0} \cs R_{\vphi_q(A+B)})(1_{\bX_0} \cs R_{\vphi_q(A-B)}) (1_{\bX_0} \cs R_{A-B})^{-1}(1_{\bX_0} \cs R_{A+B})^{-1}.  
    \end{align*}
Now $A\pm B$ and $\vphi_q(A\pm B)$ each have a geometric dual, given by $B$ and $\vphi_q(B)$, respectively. This implies that these are primitive, ordinary with simply connected complement. Thus the main theorem of \cite{auckly-kim-melvin-ruberman-schwartz:1-stable} implies that they become isotopic after one external stabilization so that $\balpha[q]$ becomes isotopic to the identity after one stabilization. Since all of $\balpha[q]$ induce the identity homomorphism on $H_2(\bZ^0)$, they are smoothly pseudoisotopic~\cite{kreck:isotopy,quinn:isotopy} and hence topologically
to the identity. 

By construction, the diffeomorphisms $\balpha[q]$ are supported in neighborhoods of spheres representing $A \pm B$, so $\balpha[q]$ are supported away from a fixed ball $B^4$ disjoint from the union of those spheres. The stable isotopy of $\balpha[q]$ to the identity is constructed by the isotopy extension theorem, and hence can be assumed to be the identity on that same ball. It is not hard to check that the pseudoisotopy and topological isotopy theorems work on manifolds with boundary $S^3$, and so they can be assumed to be the identity on that same ball. 
\end{proof}
This proposition also appears as Lemma~2.2 of the first author's paper~\cite{auckly:internal}. The statements about pseudoisotopy and topological isotopy can also be deduced from more general theorems in~\cite{orson-powell:mcg4}. 
\begin{remark}\label{R:FGK-1}
    The stable isotopy, topological isotopy, and pseudoisotopy are important ingredients in the recursive construction of interesting spherical families of diffeomorphisms. We use the letters $F[q]$, $G[q]$, and $K[q]$ respectively to denote particular choices for these (with some specifications below). When we create spherical families, we will add superscripts to denote the dimension of the associated sphere; in particular $F$ would be denoted by $F^0$.
  
     For example, we have
    \begin{align*}
   F^0[q] =    F[q]\colon I\times \bZ^0 \cs (\sss) &\overset{\cong}{\rightarrow} I\times \bZ^0\cs(\sss), \\
       G^0[q] = G[q]\colon I\times \bZ^0  &\overset{\approx}{\rightarrow} I\times \bZ^0, \\
       K^0[q] = K[q]\colon I\times \bZ^0  &\overset{\cong}{\rightarrow} I\times \bZ^0, \\
    \end{align*}
with each restricting to the identity when $t=0$ and to $\balpha[q]\cs 1_{\sss}$ or $\balpha[q]$ when $t=1$.
%
%
%

We may further take each of these with support away from $I\times N(2)_2$ so that each restricts to the identity on this subspace. For $F$, this follows from the explicit construction in~\cite{auckly-kim-melvin-ruberman:isotopy}. For $G$ and $K$ we can use the extension of~\cite{kreck:isotopy,quinn:isotopy, gabai-etal:pseudoisotopies,perron:isotopy2} to the case of manifolds with boundary, which is relatively straightforward in this case because the boundary of $N(2)_2$ is a homology sphere. Since their supports are away from $I\times N(2)_2$, each of these also extend to maps defined on related manifolds constructed via connected sum or submanifold sum away from the support of the map. We will generally denote such an extension with the same letter. For example, because $\bZ^0\cs_{T_2}\bZ^0 \cong (\bZ^0\cs(\sss))\cs_{T_2}\bX_0$, we have an isotopy
\[
\bZ^0\cs_{T_2}\bZ^0 \to I\times \bZ^0\cs_{T_2} \bZ^0
\]
that we would also denote by $F$.   In principle, there may be more than one $\sss$ summand so we will need to specify which one is being used to create an isotopy of $2$-spheres and hence of our diffeomorphism.  
\end{remark}

\section{Families of diffeomorphisms and collections of manifolds}\label{fam}

In this section, we give our basic construction of families of diffeomorphisms parameterized by spheres. The construction is inductive, with the starting point being a variant of the diffeomorphisms defined in the previous section. The induction makes use of the \emph{commutator construction} described in subsection \ref{S:comm}. The inductive step takes a $k$-dimensional family of diffeomorphisms on a given manifold $\bZ^k$ that becomes trivial after a single stabilization and produces a $(k+1)$-dimensional family on a new manifold $\bZ^{k+1}$. Morally, $\bZ^{k+1}$ is just $\bZ^{k}$ stabilized once, but in practice will in fact be somewhat bigger. Subsection~\ref{S:blocks} gives a suitable family of manifolds, based on constructions of Jongil Park~\cite{park:spin} that will be used to prove Theorem~\ref{T:infgen}. The Betti number of these manifolds gets large rapidly, and we have not tried to optimize the construction to make the manifolds smaller.

\subsection{Commutators and spheres in \texorpdfstring{$\diff(Z)$}{Diff(Z)}}\label{S:comm}
Even when one is just interested in maps $X\to Y$, it is useful to study maps $H\colon \Xi\times X\to  \Xi\times Y$ for a parameter space $\Xi$, generally taken to be a sphere or disk or a product of such. For example, a pseudoisotopy is a diffeomorphism $K\colon I\times Z \to I\times Z$ taking $\{i\} \times Z$ to $\{i\} \times Z$ for $i=0,1$, and we say that 
$f_0 = K\circ i_0$ and $f_1 = K\circ i_1$ are pseudoisotopic. The same definition for homeomorphisms gives the equivalence relation of topological pseudoisotopy. A \emph{family of maps} is a map $H\colon \Xi\times X\to  \Xi\times Y$ that preserves the parameter direction, or in other words is level-preserving. This means there are maps
$H_t\colon  X \to Y$, so that $H(t,x) = (t,H_t(x))$. A topological isotopy is the special case (denoted with $G$) where $\Xi = I$, $X= Y = Z$ and each $G_t$ is a homeomorphism. An isotopy (denoted with $F$) is the special case where in addition $Z$ is a smooth manifold, $F$ is smooth and $F_t$ is a diffeomorphism for all $t$. A \emph{pseudocontraction}, \emph{topological contraction}, or \emph{contraction} will be the analogous types of maps replacing $\Xi = I$ by $\Xi = I\times S^k$.

The \emph{support} of a family of self-maps ($X = Y = Z$) is the closure of the set 
\[
\{z \,|\, \ \text{there is a} \ t \ \text{so that} \ H_t(z) \neq z \}.
\]
When we speak of an isotopy or contraction relative to $A$ we will mean one with support disjoint from $A$. The notion of pseudoisotopy $K$ relative to $A$ also makes sense; one requires that $K$ be the identity on $I \times A$. We will mostly use this notion when $K$ is a ball, a neighborhood of a torus, or a nucleus.

Given two families $g_t,\ h_t$ of self-maps parameterized by the same space $\Xi$ with with each $g_t,\ h_t$ a homeomorphism or diffeomorphism, we form the commutator as the composition $[g,h]_t = g_th_t(g_t)^{-1}(h_t)^{-1}$ for $t \in \Xi$. We will regard a diffeomorphism or homeomorphism $h\colon Z\to Z$ as a constant family $h\colon \Xi\times Z\to  Z$ with $h_t=h$ for all $t$.
This construction gives rise to maps of spheres into $\diff(Z)$, as follows. 
Let $\beta$ be  a diffeomorphism of $Z$ and $F\colon  I \times S^k \to \diff(Z)$ be a family of maps with the support of $F_0$ and the support of $F_1$ both disjoint from the support of $\beta$. In this case, view $S^{k+1}$ as the unreduced suspension of $S^k$ and consider the commutator $[F,\beta]$. On points of the form $(0,\theta)$ or $(1,\theta)$, $F$ and $\beta$ commute so the commutator is trivial and therefore gives a well-defined family of maps parameterized by the unreduced suspension.  If in addition, each of the maps $F_t$ in the family takes the basepoint to the identity, then the commutator will descend to a well defined map on the reduced  
 suspension $I\times S^k/(I\times\text{pt}\cup\{0,1\}\times S^k)$, with the basepoint in each dimension denoted pt. 

This commutator construction is especially useful when a family of maps is stably trivial under submanifold sum. Let $\Sigma$ be a submanifold of $Z'$ and of $M$,  so that the normal bundles $N^{Z'}(\Sigma)$ and $N^M(\Sigma)$ are fiber-orientation-reversing isomorphic. One then defines the submanifold sum \begin{equation}\label{E:submanifoldsum}
    Z = Z'\cs_\Sigma M = (Z' - \interior N^{Z'}(\Sigma)) \cup ( M - \interior N^M (\Sigma)).
\end{equation}

If
$\alpha\cs_\Sigma 1_M$ is trivial in $\pi_k(\diff(Z))$, then the null-homotopy is a based family of maps $F$ so that the support of $F_0$ and $F_1$ is disjoint from $M\setminus\Sigma$. Given any $\beta$,  a diffeomorphism on $M$ with support in $M\setminus\Sigma$, the commutator $[F,\beta]$ then gives an element of $\pi_{k+1}(Z)$.

By using commutators to raise the dimension $k$ of the parameterizing sphere in this manner, we prove Theorem~\ref{T:infgen} inductively. The starting point is the collection of diffeomorphisms $\balpha[q]$ introduced in section~\ref{S:pi0}. To define spherical families, we let $\balpha^0[q]\colon S^0\times \bZ^0 \to S^0\times \bZ^0$, be given by 
\[
\balpha^0[q](\epsilon,z) = \begin{cases} (\epsilon,\balpha[q](z)), \ \epsilon = -1, \\
(\epsilon,z), \ \epsilon = 1.
\end{cases}
\]
Extend the stable isotopy, topological isotopy, and pseudoisotopy from Remark~\ref{R:FGK-1} to maps
   \begin{gather}\label{E:FGK0}
        F^0\colon I\times S^0\times \bZ^0 \cs (\sss) \to I\times S^0\times \bZ^0\cs(\sss), \\
        G^0\colon I\times S^0\times \bZ^0  \to I\times S^0\times \bZ^0, \\
        K^0\colon I\times S^0\times \bZ^0 \to I\times S^0\times \bZ^0, \\
    \end{gather}
that take $(t,1,z)$ to $(t,1,z)$ in order to be consistent with the basepoint $1\in S^0$. If we wish to keep track of the map at the end of the isotopy we will append the index to the notation in square brackets as in $F^0[q]$. We will eventually proceed from the base case to families $\balpha^p$, stable isotopies $F^p$, topological isotopies $G^p$ and concordances $K^p$. But first, we take a detour to produce the manifolds on which these families live. 

\subsection{Building blocks}\label{S:blocks}
We rely on the following observation about the relationship between torus sums and decompositions to understand the behavior of our spaces and maps under stabilization. The basic fact, sometimes called the Mandelbaum-Moishezon trick~\cite{mandelbaum:irrational,mandelbaum-moishezon:algebraic,gompf:elliptic}, is summarized in the following lemma. 
\begin{lemma}\label{L:MM}
If $X$ and $Y$ are simply connected manifolds each each containing a nucleus $N(2)$ with its distinguished torus $T$, then
\[
    (X \cs_T Y) \cs \sss \cong (X \cs Y) \cs 2(\sss).
\]
\end{lemma}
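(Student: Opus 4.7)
The plan is to deduce this from the classical Mandelbaum--Moishezon trick in the handle-calculus form established by Gompf~\cite{gompf:elliptic}. Recall that the nucleus $N(2)$ is by definition a regular neighborhood of a cusp fiber together with a section of self-intersection $-2$ in the elliptic surface $E(2)$. Consequently the distinguished torus $T \subset N(2)$ sits inside a \emph{cusp neighborhood}: a neighborhood in which the two standard generators of $H_1(T)$ each bound embedded vanishing disks meeting $T$ only along their boundaries. By hypothesis, both $X$ and $Y$ are so equipped around $T$.

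The Mandelbaum--Moishezon trick asserts that whenever $T$ admits a cusp neighborhood on at least one side of the fiber sum, fiber summing along $T$ agrees with ordinary connected summing up to one additional $\sss$ summand, after a single stabilization. That is,
\[
(X \cs_T Y) \cs \sss \;\cong\; (X \cs Y) \cs 2(\sss),
\]
which is precisely the claim. The idea of the proof is by handle calculus. One draws a Kirby diagram for a neighborhood of the fiber-sum region: the fiber-sum gluing across $T$ appears as $2$-handles attached along curves lying on $T$. The two vanishing disks provided by the cusp neighborhood, together with the $2$-sphere contributed by the stabilizing $\sss$, make it possible to handle-slide those gluing $2$-handles off of $T$. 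What remains is the handle diagram of an ordinary connected sum, and the bookkeeping of dual $2$-handles produces exactly one additional $\sss$ summand on the right-hand side.

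The main obstacle is the handle-slide calculation itself, but it is entirely classical and I would simply invoke the argument written out in \cite{gompf:elliptic}; essentially the same computation also appears in \cite{mandelbaum:irrational, mandelbaum-moishezon:algebraic}. The simple-connectivity hypothesis on $X$ and $Y$ is used at two points: to invoke van Kampen's theorem when identifying the result after the slides, and to ensure that the gluing maps of the fiber sum and of the connected sum are uniquely determined up to isotopy (so that the diffeomorphism type of $X \cs_T Y$ and of $X \cs Y$ is well defined). The fact that both $X$ and $Y$ contain an $N(2)$ is in fact stronger than needed for this trick, since only one cusp neighborhood enters the handle calculation; the symmetric hypothesis is natural for the inductive constructions in the rest of the section.
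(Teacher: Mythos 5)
Your proposal is correct and matches the paper's treatment: the paper offers no independent proof of this lemma, stating it as the classical Mandelbaum--Moishezon trick and citing \cite{mandelbaum:irrational,mandelbaum-moishezon:algebraic,gompf:elliptic}, which is exactly the handle-calculus argument of Gompf that you invoke. Your added sketch (vanishing disks in the cusp neighborhood of $N(2)$ permitting the gluing handles to be slid off $T$ after one stabilization, with simple connectivity used only for the standard gluing/van Kampen bookkeeping, and only one nucleus actually needed) is an accurate summary of that reference rather than a different route.
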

A similar argument~\cite{gompf:elliptic} implies that for the log-transformed nucleus, 
\[
N(2;2q+1)\cs(\sss) \cong N(2)\cs(\sss).
\]

A collection of manifolds and spherical families of diffeomorphisms is now constructed based on two ingredients. 
\begin{itemize}
    \item A collection of distinct, 1-stably diffeomorphic smooth structures $V_q$ on an initial manifold $V$, detected by mod $2$ Seiberg-Witten invariants, such that each $V_q$ contains a copy of $N(2)$;
    \item a symplectic manifold  $U_0$ containing two symplectic nuclei $N(2)_a$, $a = 1, 2$. Set $U_1$ to be the result of the $3$-log transform in $N(2)_1$. By Freedman, \cite{freedman} there is a homeomorphism, $\psi_1:U_1 \to U_0$ intertwining the embeddings of $N(2) = N(2)_2$ and by Gompf, \cite{gompf:elliptic} a diffeomorphism $\varphi_1:U_1\cs(\sss) \to U_0\cs(\sss)$ intertwining the embeddings of $N(2)$ so that $\varphi_1$ is homotopic to $\psi_1\cs 1_{\sss}$.
\end{itemize}
 Set $\bu = U_0\cs(\sss)$ and 
    \[
    \bbeta = \vphi_1(1_{U_1} \cs R_{A+B}R_{A-B}) \vphi_1^{-1}(1_{U_0} \cs R_{A+B}R_{A-B})^{-1}.
    \]
    Notice that Proposition~\ref{P:Rstab} establishes that $\beta$ is topologically isotopic to the identity and becomes smoothly isotopic to the identity after one stabilization. Recall that $T$ denotes the torus in $N(2)$, and let $T_1$ be the torus in $N(2)_1$.

We will specify some particular choices for $V$ and $U_0$ in the paragraphs after Theorem~\ref{T:PA}
below, but the construction does not depend on more than the above properties.  
In Section~\ref{S:pi0}, we used $V = V_0 = E(2;1)\cong E(2)$, with further log transforms $V_q$ giving such a collection of smooth structures on $V$. As described there, this leads to an infinite collection of distinct isotopy classes of diffeomorphisms $\balpha^0[q]$ on $V\cs(\sss)$. 
 
 Let $F$, $G$, and $K$ denote particular choices of stable isotopy, topological isotopy, and pseudoisotopy respectively. 

\begin{definition}\label{D:families}
    Set $\bZ^0_{V,\bu} = V\cs(\sss)$ and $\bZ^{p+1}_{V,\bu} = \bZ^p_{V,\bu}\cs_T\bu$ where the sum is taken along the tori in the distinguished $N(2)$ nuclei in the summands.  Define maps
    
       \begin{align*}
        \balpha^{p+1}[q]&\colon  S^{p+1} \times\bZ^{p+1}_{V,\bu}  \to I\times S^{p+1}\times \bZ^{p+1}_{V,\bu}, \\
        F^{p+1}[q]&\colon I\times S^{p+1}\times \bZ^{p+1}_{V,\bu} \cs (\sss) \to I\times S^{p+1}\times \bZ^{p+1}_{V,\bu}\cs(\sss), \\
        G^{p+1}[q]&\colon I\times S^{p+1}\times \bZ^{p+1}_{V,\bu}  \to I\times S^{p+1}\times \bZ^{p+1}_{V,\bu},\ \text{and} \\
        K^{p+1}[q]&\colon I\times S^{p+1}\times \bZ^{p+1}_{V,\bu} \to I\times S^{p+1}\times \bZ^{p+1}_{V,\bu}, \\
    \end{align*}
by the formulas $\balpha^{p+1}[q] = [F^p[q],\bbeta]$, $F^{p+1}[q] = [F^p[q],F]$, $G^{p+1}[q] = [F^p[q],G]$, and $K^{p+1}[q] = [K^p[q],F]$. 
\end{definition}
\begin{remark}\label{R:def-comments}
\leavevmode
\begin{enumerate}
\item  \label{i:uv} There are many other choices for $V$, $\bu$, and $\bbeta$.  The main requirements are for $V$ and $\bu$ are to have suitable surfaces for submanifold sum, and $V$ to have non-vanishing Seiberg-Witten invariant. The requirements for $\bbeta$ are its $1$-stable triviality, and non-triviality of $\sw_{\bZ^0}^{\pi_0}$ as in the proof of Proposition~\ref{P:pi0}.
\item  \label{i:decomp} Lemma~\ref{L:MM} implies that 
    \[
    \bZ^p_{V,\bu} \cong V\cs p\bu\cs(p+1)(\sss).
    \] 
    Note that this implies that $\bZ^p_{V,\bu}$ has (at least) $2p+1$ summands diffeomorphic to $\sss$.
\item If $V$ and $\bu$ are spin, then every manifold in the $\bZ^p_{V,\bu}$ collection will be spin. If $V$ is non-spin, then every manifold in the collection will be non-spin.
\item \label{multi} The statement of Theorem~\ref{T:infgen} says that there are manifolds $\bZ^p$ admitting not only an interesting $p$-dimensional family of diffeomorphisms but also interesting $(p-2i)$-dimensional families. The starting point for this is a symplectic manifold $\bW$ containing a copy of $N(2)$ so that $\bW\cs(\sss)$ splits as $ \bu\cs_T\bu\cs(\sss)$ for an appropriate choice of $\bu$. An example of such a manifold is obtained in Proposition~\ref{P:Wconstruct} below, by modifying a construction of Park~\cite{park:spin}. Then item~\ref{i:decomp} above implies that
$\bZ^{p-2i}_{V\cs_T\bW^{\cs^i_T},\bu} \cong \bZ^p_{V,\bu}$. 
\item 
Varying the initial collection of $4$-manifolds was used in \cite{auckly:internal} to show that after enough stabilizations, any simply connected $4$-manifold would contain exotic surfaces separated by many internal stabilizations in any non-trivial homology class. The same argument will show that after enough stabilizations any simply connected $4$-manifold will support spherical families of diffeomorphisms generating an infinite rank summand in the kernel of the map between the homology group of homeomorphisms to the homology group of diffeomorphisms in a range of degrees just as the manifolds $\bZ^p$ do. A similar construction was used by Konno and Lin in their investigation of homology stability~\cite{konno-lin:instability}.
\item \label{i:variations}
Even though we have stated our main results (Theorems~\ref{T:infgen}, \ref{T:emb2}, and~\ref{T:emb3}, and Corollaries~\ref{C:Hinf} and~\ref{C:HQinf}) for closed-simply connected manifolds, the same construction allows immediate generalization to some manifolds with arbitrary fundamental group and/or non-empty boundary. Indeed, given any finitely presented group $\Gamma$, let $W$ be a closed symplectic manifold with $b^2_+>1$, fundamental group $\Gamma$, and containing a copy of $N(2)$; see~\cite{gompf:symplectic}. The construction  of families of diffeomorphisms on $\bZ^p_{V\cs_TW,\bu}$ proceeds as above, yielding results analogous to those theorems. This is because the supports of $F$, $G$, and $K$ are disjoint from the nucleus where one would sum with $W$. Furthermore, the definition of the invariants that we use does not require the manifolds to be simply connected, and will not change with the addition of a symplectic manifold along a torus contained in a standard nucleus. If $C$ is a compact, codimension zero submanifold of $W$ disjoint from the nucleus, then the same results will hold (rel boundary) for $\bZ^p_{V\cs_T(W \setminus\text{int}C),\bu}$. Since the families of diffeomorphisms are all trivial on the boundary, one can just take the union with $C$ and extend across $C$ as the identity. 
\end{enumerate}
    
\end{remark}

According to Taubes~\cite{taubes:sw-symplectic}, the Seiberg-Witten invariant of the canonical class on a symplectic manifold $V$ with $b^2_+>1$ satisfies $|\text{SW}_V(K)| = 1$. Let us assume that $V$ contains a nucleus $N(2)$,
which in turn contains a torus $T$ of self-intersection zero. Note that the adjunction inequality applied to $T$ and $T+\sigma$ implies that the canonical class satisfies $K\cdot T = K\cdot\sigma = 0$, so that the log transform formulas from~\cite[Theorem 8.5]{Fintushel-Stern:Rat-Blow} and~\cite[Cor 1.4]{morgan-mrowka-szabo:torus} apply. Let $V_{q}$ denote the result of $(2q+1)$-log-transform applied to the torus in the nucleus. Then  
$SW^{\Z_2}_{V_{q} }(K+2\ell\Tc_1) = 1$ precisely when $|\ell| \le q$, where $\Tc_1$ represents
the cohomology class Poincar\'e dual to $[T_1] = [F_{2q+1}]$ and $F_{2q+1}$ is the multiple fiber in $V_1$. The fact that the torus is in $N(2)$ allows one to easily conclude that the log transforms do not change the topological type of the manifold.

When $V$ contains a nucleus such that $T$ is symplectically embedded, then we can do a submanifold sum of $V$ with any other symplectic manifold containing a symplectic torus. 
This means we are interested in having a rich collection of symplectic manifolds to use as a summand in this fashion.

Now we give some specific choices for the manifolds $V$, $\bu$, and $\bW$ satisfying the conditions above. Our building blocks  are simply-connected symplectic manifolds constructed by Park~\cite{park:spin}. His manifolds are spin and fill out a portion of the integer points in the plane parameterized by $(\chi = \frac{\sigma + e}{4},{\bf c} = 3\sigma + 2e)$ where $\sigma$ and $e$ are the signature and Euler characteristic, respectively. We find it convenient to make some small modifications and do a small change of coordinates and to state the result in terms of the rank and signature, as in the first author's paper~\cite{auckly:internal}.
\begin{theorem}[Theorem 4.2 of \cite{auckly:internal}]\label{T:PA} 
Let $Q$ be a unimodular form over the integers with rank $r(Q)$, signature $\sigma(Q)$.  If $Q$ is even, then assume that $\sigma(Q) \equiv 0 \pmod{16})$. 
There is a constant $r_*$ so that if $r(Q) > r_*$,
$|\sigma(Q)| \le \frac{3}{13}r(Q)$, and $b^+(Q)$ is odd, then $Q$ is represented by the intersection form of a simply-connected, closed, symplectic, 
$4$-manifold $X_Q$ containing a self-intersection zero symplectic torus in an $N(2)$ nucleus. Moreover, if $Q$ is odd, then $X_Q \cs \sss$ is diffeomorphic to $a\cptwo\cs b\cptwobar$ for appropriate $a,b$. If $Q$ is even, then $X_Q \cs \sss$ is diffeomorphic to $aE(2)\cs b(\sss)$ for appropriate $a,b$ where negative values of $a$ refer to $|a|$ summands of $E(2)$ with the anti-holomorphic orientation. 
\end{theorem}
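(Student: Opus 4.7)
The plan is to adapt Park's construction \cite{park:spin}, which realizes a large portion of the geography plane by iterated symplectic fiber sums of elliptic surfaces $E(n)$ and blow-ups, with two refinements tailored to the present use. First I would translate the hypotheses on $r(Q)$ and $\sigma(Q)$ into the $(\chi,\mathbf{c})$ coordinates used by Park, absorbing a finite number of boundary lattice points into the threshold $r_*$. The bound $|\sigma(Q)| \le \tfrac{3}{13}r(Q)$ is exactly what is needed to land inside the region Park handles; parity of $b^+(Q)$ and, in the spin case, the divisibility $\sigma(Q) \equiv 0 \pmod{16}$ (Rokhlin), together with Freedman's theorem, then identify the resulting symplectic manifold's intersection form with $Q$.

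The first refinement is to ensure a specified $N(2)$ nucleus containing a self-intersection zero symplectic torus. I would arrange the inductive construction so that its last step is a symplectic sum with a fresh $E(2)$, whose second Gompf nucleus $N(2)$ remains untouched and whose torus fiber is symplectic of self-intersection zero. Gompf's symplectic sum theorem preserves the symplectic structure, and the Euler characteristic and signature of the extra $E(2)$ can be compensated by adjusting the earlier stages of Park's inductive scheme (this is where room in the signature bound is used).

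The second refinement is the one-stabilization claim. I would apply Lemma~\ref{L:MM} one fiber sum at a time: each symplectic sum along a torus in an $N(2)$ nucleus is converted, after a single stabilization by $\sss$, into an ordinary connected sum at the cost of one extra $\sss$ summand. Iterating through all the symplectic sums used to build $X_Q$ reduces $X_Q \cs (\sss)$ to an ordinary connected sum of the elementary building blocks: copies of $\cptwo$, $\cptwobar$, and $\sss$ in the odd case, or of $E(2)$ and $\sss$ in the even case. In the odd case, $\sss \cs \cptwo \cong 2\cptwo \cs \cptwobar$ then absorbs every $\sss$ summand into the desired $a\cptwo \cs b\cptwobar$; in the even case the decomposition is already of the stated form (allowing $a<0$ to accommodate the two orientations of $E(2)$).

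The main obstacle is bookkeeping: one has to carry an $N(2)$ nucleus with a chosen symplectic torus through every stage of Park's inductive construction so that each subsequent symplectic sum genuinely lies in such a nucleus, and so that the final $X_Q$ still contains the nucleus used to define $\varphi$ and $\beta$ in Section~\ref{S:pi0}. The signature restriction $|\sigma(Q)|\le \tfrac{3}{13}r(Q)$ and the lower bound $r(Q)>r_*$ are precisely what guarantee enough ``slack'' for this tracking to succeed inside Park's explicitly verified region; outside that region additional surgeries would be required and the one-stable standardness is not expected to hold in the clean form stated.
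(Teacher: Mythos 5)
The paper does not actually reprove this statement: it is imported verbatim as Theorem 4.2 of \cite{auckly:internal}, described as a reformulation and modest extension of Park's Theorem 1 in \cite{park:spin}, with the odd case, the $N(2)$ nucleus, and the one-stabilization decomposition being exactly the content added in \cite{auckly:internal}. Your sketch follows the right general strategy (adapt Park, reserve a nucleus by ending with a torus sum with $E(2)$, then decompose after one stabilization), but it has a concrete gap. In the region of the theorem one has $\mathbf{c} = 3\sigma + 2e = 4 + 2r(Q) + 3\sigma(Q) \ge 4 + \tfrac{17}{13}r(Q) > 0$, growing linearly in $r(Q)$. Fiber sums along square-zero tori are additive in $\mathbf{c}$, so iterated torus sums of elliptic surfaces all have $\mathbf{c}=0$, and blow-ups only decrease $\mathbf{c}$ (and destroy spin-ness); hence ``iterated symplectic fiber sums of elliptic surfaces $E(n)$ and blow-ups'' cannot realize any of the forms $Q$ covered by the theorem. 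Park's construction necessarily uses positive-$\mathbf{c}$ building blocks (e.g.\ symplectic sums along higher-genus surfaces), and these blocks are not themselves connected sums of $\cptwo$, $\cptwobar$, $\sss$, and $E(2)$.

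This breaks the second half of your argument, which is the part the theorem is actually cited for. Lemma~\ref{L:MM} only splits sums along tori lying in $N(2)$ nuclei; it says nothing about the higher-genus sums or about the Park blocks themselves, so iterating it leaves you with a connected sum whose summands you cannot identify. The actual proof must invoke almost-complete-decomposability statements for those blocks — they become standard only after adding some definite number of $\sss$ summands — and then pay for those summands with the copies of $\sss$ produced by Lemma~\ref{L:MM} at the torus-sum stages. This is precisely the origin of the constant $r_*$: as the paper notes immediately after Theorem~\ref{T:PA}, $r_*$ depends on the number of $\sss$ summands needed to make certain manifolds from \cite{park:spin} completely decompose, not (as in your sketch) on slack for tracking the nucleus or absorbing boundary lattice points. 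Without this input the final identifications $X_Q \cs \sss \cong a\cptwo \cs b\cptwobar$ and $X_Q \cs \sss \cong aE(2)\cs b(\sss)$ do not follow.
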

This is basically a reformulation and modest extension of the main result, Theorem 1, in~\cite{park:spin}. The additional information from~\cite{auckly:internal} is the odd case, the fact that the manifolds constructed contain an $N(2)$ nucleus with a symplectically embedded torus, and that the manifolds decompose as stated after summing with $\sss$. This will be important to us in the inductive construction of families of diffeomorphisms. It is also notable that the manifolds constructed in this way include manifolds homotopy equivalent to $\cs^{2n-1} (\sss)$ for all sufficiently large $n$. This will allow us to construct the first examples of non-isotopic PSC metrics (and exotic families of such metrics) on simply connected spin 4-manifolds. (For certain non-simply connected $4$-manifolds, this was done in~\cite{mrowka-ruberman-saveliev:periodic-index}.) 
The constant $r_*$ appearing in Theorem~\ref{T:PA} depends on the number of $\sss$ summands needed to make certain manifolds from~\cite{park:spin} completely decompose.

\begin{proposition}\label{P:Wconstruct}
    There are manifolds $V$ and $\bu$ satisfying conditions \eqref{i:uv} and \eqref{i:decomp} of Remark~\ref{R:def-comments}, and a spin symplectic manifold $\bW$ containing a copy of $N(2)$ so that $\bW\cs(\sss) \cong \bu\cs_T\bu\cs(\sss)$.
\end{proposition}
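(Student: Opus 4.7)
The plan is to apply Theorem~\ref{T:PA} twice: once to construct a spin symplectic $U_0$ whose single stabilization $\bu = U_0\cs(\sss)$ has enough $\sss$ summands to bring the doubled manifold $\bu\cs_T\bu\cs(\sss)$ into the range of Park's construction, and once more to produce $\bW$ directly with matching intersection form. Throughout, take $V = E(2)$, which is spin, symplectic, contains a standard nucleus $N(2)$ with a square-zero symplectic torus, and has non-vanishing Seiberg-Witten invariants, so $V$ satisfies condition \eqref{i:uv}. Condition \eqref{i:decomp} is immediate from Lemma~\ref{L:MM} once $\bu$ contains a symplectic torus inside an $N(2)$ nucleus.

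First, apply Theorem~\ref{T:PA} to the even unimodular form $Q_U = -2E_8\oplus 27H$, which has $\sigma = -16\equiv 0\pmod{16}$, $b^+ = 27$ odd, and $|\sigma|/r = 16/70 < 3/13$; this produces a closed simply-connected spin symplectic $U_0 = X_{Q_U}$ containing a symplectic $N(2)$ nucleus, with $U_0\cs(\sss)\cong E(2)\cs 25(\sss)$ by matching intersection forms in Park's $aE(2)\cs b(\sss)$ decomposition. Set $\bu = U_0\cs(\sss)$, which inherits a symplectic $N(2)$ nucleus from $U_0$, so $\bu$ satisfies \eqref{i:uv}. Taking the torus sum of two copies of $\bu$ along tori in their respective nuclei, and noting that the three $\sss$ summands are disjoint from these tori, two applications of Lemma~\ref{L:MM} yield
\[
\bu\cs_T\bu\cs(\sss) \;\cong\; (U_0\cs_T U_0)\cs 3(\sss) \;\cong\; (U_0\cs U_0)\cs 4(\sss) \;\cong\; 2E(2)\cs 52(\sss),
\]
where the last step comes from iteratively replacing $U_0\cs(\sss)$ by $E(2)\cs 25(\sss)$.

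Finally, apply Theorem~\ref{T:PA} to the even form $Q_{\bW} = -4E_8\oplus 57H$. One checks that $\sigma(Q_{\bW}) = -32\equiv 0\pmod{16}$, $b^+(Q_{\bW}) = 57$ is odd, and $|\sigma|/r = 32/146 < 3/13$, so the theorem produces a closed simply-connected spin symplectic $\bW = X_{Q_{\bW}}$ containing a symplectic $N(2)$ nucleus, together with a diffeomorphism $\bW\cs(\sss)\cong 2E(2)\cs 52(\sss)$. Comparing with the previous computation gives $\bW\cs(\sss) \cong \bu\cs_T\bu\cs(\sss)$, as required. The main obstacle is the simultaneous bookkeeping of Park's numerical hypotheses on the two forms $Q_U$ and $Q_{\bW}$, and in particular ensuring that $|\sigma|/r < 3/13$ holds on both sides; this is what forces the choice of rank parameter ($n = 27$ above) in the first application of the theorem, and once that is fixed the verifications for $Q_{\bW}$ are automatic.
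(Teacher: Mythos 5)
Your overall strategy---realizing both $\bu\cs_T\bu\cs(\sss)$ and $\bW\cs(\sss)$ as the same completely decomposed manifold $2E(2)\cs 52(\sss)$ by combining Theorem~\ref{T:PA} with Lemma~\ref{L:MM}---is sound, and your arithmetic (evenness, $\sigma\equiv 0\pmod{16}$, odd $b^+$, $|\sigma|/r<3/13$, and the rank/signature count giving $U_0\cs(\sss)\cong E(2)\cs 25(\sss)$ and $\bW\cs(\sss)\cong 2E(2)\cs 52(\sss)$) checks out. This packaging is genuinely different from the paper's, which instead sets $V=E(2)\cs_T P_{2n+1}$, $\bu=V\cs(\sss)$, and $\bW=E(4)\cs_T P_{4n+5}$, where $P_{2n+1}$ is a signature-zero torus sum of $E(2)$ with a signature $16$ Park manifold, and then identifies $\bu\cs_T\bu\cs(\sss)$ with $\bW\cs(\sss)$ directly.

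There is, however, a gap in your verification of condition \eqref{i:uv}. That condition is not merely ``$\bu$ contains a nucleus somewhere'': it refers to the data of Section~\ref{S:blocks}, which requires $U_0$ to contain \emph{two disjoint} symplectic $N(2)$ nuclei---one, $N(2)_1$, in which the $3$-log transform producing $U_1$ (and hence $\bbeta$) is performed, and a second, $N(2)_2$, preserved by $\psi_1$ and $\vphi_1$, containing the distinguished torus along which all subsequent sums $\bZ^{p+1}_{V,\bu}=\bZ^p_{V,\bu}\cs_T\bu$ are taken; this second nucleus must stay disjoint from the support of $\bbeta$ and from the log-transform tori $T^a$ that enter the adjunction computation in Theorem~\ref{T:Zinfty}. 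Theorem~\ref{T:PA} guarantees only one $N(2)$ nucleus in $X_Q$, so taking $\bu=X_{Q_U}\cs(\sss)$ does not supply this; the paper's proof secures it by building everything from $E(2)$, which has three disjoint $N(2)$ nuclei~\cite{gompf-mrowka}, allocating one to the sum with the Park manifold, one to the log transforms, and reserving one as the distinguished nucleus. Your construction can be repaired in the same spirit (for instance, take $U_0=E(2)\cs_T X_{Q}$ for a suitable Park manifold and redo the count), but as written the two-nuclei requirement is unaddressed. A second, more minor, point: Theorem~\ref{T:PA} also demands $r(Q)>r_*$ for an unspecified constant $r_*$, so your fixed forms $-2E_8\oplus 27H$ and $-4E_8\oplus 57H$ may be too small; one should instead take $-2E_8\oplus(2k+1)H$ and $-4E_8\oplus(4k+5)H$ with $k$ sufficiently large, for which the same arithmetic goes through verbatim, both sides now being $2E(2)\cs 4k(\sss)$.
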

\begin{proof}
    By taking the torus sum of $E(2)$ with one of the Park manifolds with signature $16$, one will arrive at a manifold of signature zero that contains a pair of disjoint nuclei. Denote the manifolds of this form that are homeomorphic to $(2n+1)\sss$ by $P_{2n+1}$. These exist provided $n$ is sufficiently large. Set $V = E(2)\cs_TP_{2n+1}$, $\bu = V\cs(\sss)$, and $\bW = E(4)\cs_TP_{4n+5}$. Notice~\cite{gompf-mrowka} that $E(2)$ has three disjoint $N(2)$ nuclei. Use one to perform the submanifold sum with $P_{2n+1}$, use a different one to perform the $(2q+1)$-log transforms to obtain the exotic smooth structures $V_q$, and reserve the last one as the distinguished nucleus. 
 \end{proof}
 These choices lead to exotic diffeomorphisms $\balpha^0[q]$ defined on $\bu$ as in section~\ref{S:pi0}. Finally,  Lemma~\ref{L:MM} implies that
\[
\bu\cs_T\bu \cong V\cs_TV\cs 2(\sss) \cong E(4)\cs_T2P_{2n+1}\cs 2(\sss) \cong \bW\cs (\sss)
\]
so that item \eqref{multi} in Remark~\ref{R:def-comments} can be applied.  

\section{Calculation of the invariants}\label{S:calculation}

In this section we present the fundamental calculation
 in this paper, which allows us to pass from $SW_Z^{\pi_k,\Z_2}$ applied to an $S^k$ family in the diffeomorphism group of a manifold $Z$ to $SW_Z^{\pi_{k+1},\Z_2}$ applied to an $S^{k+1}$ family of diffeomorphisms on $\diff(Z \cs (\sss))$ created by the commutator construction as in Definition~\ref{D:families}. In principle, this would work for the invariants  $SW_Z^{\pi_*,\Z}$ but this would require some additional care with orientations.

Note that $\sss$ has a distinguished $\Spinc$ structure $\spincs_0$ corresponding to its unique spin structure. For a $\Spinc$ structure $\spincs$ on $Z$, we write $\spincs \cs \spincs_0$ for the unique $\Spinc$ structure  on $Z\cs(\sss)$ that restricts to the spin structure $\spincs_0$ on the $\sss$ summand and to $\spincs$ on $Z$. When $H_1 = 0$, so that the first Chern class gives a bijection between $\Spinc$ structures and characteristic cohomology classes in $H^2$, this corresponds to the natural splitting $H^2(Z\cs(\sss)) \cong H^2(Z)\oplus \Z^2$.

Theorem~\ref{pIRglue} below leads to the new computational formula that allows for the computation of the Seiberg-Witten invariants for certain families of diffeomorphisms. The families in Definition~\ref{D:families} are compositions of families that live on a connected sum $N_1 \cs N_2$, and are built out of families on $N_1$ and $N_2$.  We need some preliminaries in order to state the theorem. First, we make the convention that a hat accent is used to denote a cylindrical end manifold obtained by removing a point from a closed manifold and adjusting the metric appropriately in the (punctured) neighborhood of that point. The \SW equations make sense in the cylindrical end setting, and will be discussed in Section~\ref{an-frame}. In particular, we will denote the resulting family moduli space on the manifold $\hat{N}$ by ${\mathcal{M}}_{\hat N,\spincs}(\{\data\})$.

The proof of the gluing theorem is a bit easier if the metric portion of the data is constant near a data point where gluing occurs.  Recall from Definition~\ref{D:exceptional} that a point  $\theta \in \Xi$ is exceptional if the moduli space $\M_\spincs(\data_\theta)$ is non-empty. 
\begin{definition}\label{D:local-g}
Data $\{\data_\theta\}_{\theta\in\Xi}$ is \emph{locally metric independent} if every exceptional $\theta\in\Xi$ has a neighborhood $U$ such that so that $g_\vartheta$ is independent of $\vartheta\in U$. 
\end{definition}
In Section~\ref{s:constant}, we will show how, in the setting we need, to take given data and deform it to be locally metric independent.

It is useful to recall the classification of reducible solutions and the definition of walls. A reducible solution is one for which $\psi = 0$, which readily implies that $F^+_A + i\eta^+ = 0$. Since the first Chern class of the $\Spinc$-structure is represented by $c = \frac{i}{2\pi}F^+_A$, we see that reducibles only exist when $2\pi\eta^+ = c^+$. Notice that this condition depends on both the metric and perturbation. The corresponding wall is given by
\[
\mathcal{W}_c = \{\data\,|\, 2\pi\eta^+ = c^+\}.
\]

When the data is locally metric independent, we may restrict the parameter neighborhoods and assume that the metrics on $\hat{N}_i$ are constant. Recall that a slice for the gauge group can be defined by the gauge-fixing condition $d^*A = 0$, and that the $*$-operator depends on the metric.  Combined with the observation that the action of the gauge group does not change the parameters, we see that in a metric independent neighborhood, this gauge-fixing condition is independent of the parameter.
Furthermore, once we assume the metrics are constant, we can take the domain and codomain of the gauge-fixed Seiberg-Witten map to be fixed linear spaces.

\begin{definition}\label{irr-red-good}
    We say that a pair of family data $\data^i:\Xi_i\to \widehat{\Pi}(\hat N_i)$, $i=1,2$ is \emph{irreducible-reducible good} if the following conditions are met.
    \begin{enumerate}
        \item $\data^1$ is locally metric independent data. In particular,
        all $[\theta,A,\psi]\in {\mathcal{M}}_{\hat N_1,\spincs_1}(\{\data^1\})$ are irreducible, isolated, and parameterized regular,
        \item $\data^2$ is locally metric independent data, and $\data^2 \pitchfork \mathcal{W}_{c_1(\spincs_2)}$,    
        \item $b^2_+(\hat N_1) > \text{dim}(\Xi_1)$,
        \item $\vdim(\mathcal{M}_{\hat N_2,\spincs_2}(\{\data^2\})) = -1$,
        \item ${\mathcal{M}}_{\hat N_2,\spincs_2}(\{\data^2\})$ finite,
        \item  \label{i:non-degenerate} $[A,\psi,\theta]\in {\mathcal{M}}_{\hat N_2,\spincs_2}(\{\data^2\})$ are reducible with $H^1_{\theta,A,\psi} = 0$.
    \end{enumerate}
\end{definition}

\smallskip 
\begin{remark}
    ~\smallskip
    \begin{enumerate}
       \item The virtual dimension of the parameterized moduli space $\vdim(\mathcal{M}_{\hat N,\spincs}(\{\data\}))$
        is by definition the index of the deformation complex. It is related to the virtual dimension of the moduli space over a fixed data point by
        \[
        \vdim(\mathcal{M}_{\hat N,\spincs}(\{\data\})) = \vdim(\mathcal{M}_{\hat N,\spincs}(\data_0)) + \dim \Xi.
        \]
        \item We will see that the assumption on $\vdim(\mathcal{M}_{\hat N_2,\spincs_2}(\{\data^2\}))$
        implies that the gluing problem is unobstructed. If the assumption was weakened to 
        $\vdim(\mathcal{M}_{\hat N_2,\spincs_2}(\{\data^2\})) \le -1$, the standard argument would, as outlined in \cite[Section 6]{baraglia-konno:gluing}, would imply that the moduli space was isomorphic to the zeros of the Kuranishi map. 
        \item The existence of an irreducible-reducible good pair of family data in case $\Xi_1$ is a point \cite{baraglia-konno:gluing} is established in \cite[Proposition~7.2]{baraglia-konno:gluing}. This same result establishes the conditions we need for $\hat{N}_2$.
    \end{enumerate}
\end{remark}

\begin{theorem}[Parameterized irreducible-reducible gluing]\label{pIRglue}
    Let $\data^i:\Xi_i\to \widehat{\Pi}(\hat N_i)$, $i=1,2$ be irreducible-reducible good, then
    there is an $r_0$ so that for every $r>r_0$ 
    \[{\mathcal{M}}_{\hat N_1\cs_r\hat N_2,\spincs_1\cs_r\spincs_2}(\{\data^1\cs_r\data^2\})
    \cong \widehat{\mathcal{M}}_{\hat N_1,\spincs_1}(\{\data^1\})\times_{S^1}\widehat{\mathcal{M}}_{\hat N_2,\spincs_2}(\{\data^2\}),
    \]
    consists of parameterized regular, irreducible configurations.
\end{theorem}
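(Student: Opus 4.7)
The plan is to follow the standard irreducible-reducible gluing strategy for the Seiberg-Witten equations (as in \cite{nicolaescu:swbook,baraglia-konno:gluing}), carried out fiberwise over the parameter space $\Xi_1\times\Xi_2$ with enough care to keep the family data under control. The crucial observation is that the assumption of local metric independence (together with the use of constant cutoff functions along the neck) means that all analytic objects on the neck region $[-r,r]\times S^3$ can be compared to the fixed flat reducible $\Spinc$ configuration on the cylinder, so no parameter-dependent reparameterization of the neck is needed near the solutions. In particular, near any exceptional $\theta_1\in\Xi_1,\theta_2\in\Xi_2$, each solution in $\widehat{\mathcal M}_{\hat N_i,\spincs_i}(\{\data^i\})$ converges exponentially along the cylindrical end to a flat $\Spinc$ connection on $S^3$ with $\psi=0$, and the asymptotic limit is well-defined up to the residual $S^1$ stabilizer of the reducible.

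First I would construct the pre-glued configurations. Given $([A_1,\psi_1],\theta_1)\in\widehat{\mathcal M}_{\hat N_1,\spincs_1}(\{\data^1\})$ and $([A_2,0],\theta_2)\in\widehat{\mathcal M}_{\hat N_2,\spincs_2}(\{\data^2\})$ whose asymptotic limits differ by an element $\lambda\in S^1$, define approximate solutions $(A_1\cs_r (\lambda\cdot A_2),\psi_1\cs_r 0)$ on $\hat N_1\cs_r\hat N_2$ by cutting off and splicing using a standard partition of unity adapted to the neck $[-r,r]\times S^3$. This produces a smooth family, parameterized by $\widehat{\mathcal M}_{\hat N_1,\spincs_1}(\{\data^1\})\times_{S^1}\widehat{\mathcal M}_{\hat N_2,\spincs_2}(\{\data^2\})$, of configurations whose Seiberg-Witten error decays exponentially in $r$; the gluing data $\{\data^1\cs_r\data^2\}$ on the parameter space $\Xi_1\times\Xi_2$ is produced by splicing the perturbation $2$-forms and keeping the common metric near the neck.

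Next I would apply the implicit function theorem, in the Sobolev framework outlined in Remark~\ref{R:sobolev} and Section~\ref{an-frame}, to deform the pre-glued approximate solutions to honest solutions. The linearization at the approximate solutions is the restriction of the sum of the parameterized Seiberg-Witten deformation operators on $\hat N_1$ and $\hat N_2$, twisted by the gauge-fixing condition $d^*A=0$ (which is parameter-independent in the metric-constant neighborhood). Hypothesis (1) of Definition~\ref{irr-red-good} says that the operator on $\hat N_1$ is already surjective at each irreducible solution (parameterized regularity), while hypothesis \eqref{i:non-degenerate} together with transversality to the wall $\mathcal W_{c_1(\spincs_2)}$ says that the operator on $\hat N_2$ at each reducible has vanishing $H^1$; because $\vdim(\mathcal M_{\hat N_2,\spincs_2}(\{\data^2\}))=-1$, the obstruction $H^2$ is also $1$-dimensional but is accounted for exactly by the $S^1$ matching of asymptotic values, so the combined glued linearization is surjective with kernel of the expected dimension. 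The uniform invertibility estimate for the linearization (with bounds independent of $r$ for $r\geq r_0$) then yields a unique small correction giving a genuine solution, parameterized smoothly over the fiber product.

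Finally I would verify that every solution in ${\mathcal{M}}_{\hat N_1\cs_r\hat N_2,\spincs_1\cs_r\spincs_2}(\{\data^1\cs_r\data^2\})$ arises this way, for $r$ large. Here one uses a standard compactness-and-decoupling argument: any sequence of glued solutions with $r\to\infty$ subconverges, after reversing the splicing, to a pair of solutions on $\hat N_1$ and $\hat N_2$ in finite-energy moduli spaces with asymptotic limits on the same $S^1$-orbit; the a priori bound from Section~\ref{s:kis0} and the exponential decay toward reducible limits make this subconvergence and the resulting injectivity of the gluing map standard once $r_0$ is chosen large enough. Parameterized regularity of the resulting glued solutions, along with irreducibility (which follows because each solution has $\psi\not\equiv 0$ on the $\hat N_1$ side), then completes the identification claimed. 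The hardest step is the uniform invertibility of the linearized gluing operator with a $-1$-dimensional reducible side, where the cokernel on $\hat N_2$ must be shown to be absorbed exactly by variation of the $S^1$ matching parameter; this is where the transversality to the wall $\mathcal W_{c_1(\spincs_2)}$ and the vanishing of $H^1$ in \eqref{i:non-degenerate} are used decisively.
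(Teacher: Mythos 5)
Your overall architecture --- pre-gluing along the neck, linear analysis, a contraction mapping to correct the approximate solutions, and a neck-stretching compactness argument for bijectivity --- is the same as the paper's four-step proof. But the step you yourself flag as the hardest is wrong as you state it. You assert that at each reducible on $\hat N_2$ the obstruction space $H^2$ is one-dimensional and is absorbed by varying the $S^1$ matching of asymptotic values. Neither half of this is correct. The virtual dimension is the index of the full deformation complex, $\vdim = \dim H^1 - \dim H^0 - \dim H^2$, and at a reducible the stabilizer contributes $H^0 \cong \R$; with $H^1 = 0$ from condition \eqref{i:non-degenerate} of Definition~\ref{irr-red-good} and $\vdim(\mathcal{M}_{\hat N_2,\spincs_2}(\{\data^2\})) = -1$, this forces $H^2 = 0$. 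The gluing problem is therefore unobstructed --- this is precisely how the paper uses the hypothesis (see the remark following Definition~\ref{irr-red-good}, and the cohomology bookkeeping feeding the diagrams \eqref{tangent} and \eqref{obstruction}, which give $\mathcal{K}(r) = 0$ and hence invertibility of the glued linearization with $\|S\|_{op}\le r^2$). Moreover, even if a cokernel were present, your proposed mechanism could not kill it: since $\psi_2 = 0$, constant gauge rotations act trivially on $(A_2,0)$ --- the $S^1$ is the stabilizer of the reducible, which is exactly why the statement is a fiber product over $S^1$ --- so varying the matching parameter produces no variation at all on the $\hat N_2$ side and cannot surject onto an obstruction space.

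A secondary gap: the parameterized linearization is not a family of Dirac-type operators, because it contains the finite-rank term obtained by differentiating the perturbation $\eta(\theta)$ in the parameter directions, so the linear gluing theory of \cite{clm:I,Nicolaescu:CLM} that you implicitly invoke does not apply verbatim. The paper proves a separate parameterized linear gluing lemma for a cylindrical Dirac operator plus a compactly supported finite-rank term; this, together with local metric independence (Proposition~\ref{SPNMID}), which makes the domain and codomain of the gauge-fixed Seiberg--Witten map fixed Hilbert spaces near the exceptional parameters and keeps the perturbations supported off the neck, is what reduces the neck analysis to the unparameterized case and yields the uniform bound on the inverse needed to run the contraction mapping on the ball of radius $r^{-4}$. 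With these two points repaired, your pre-gluing, exponential error estimate, and bijectivity outline do match the paper's argument.
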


\begin{remark} 
~
\begin{enumerate}
    \item     One expects an even stronger irreducible-reducible gluing result generalizing Nicolaescu's degenerate gluing result \cite[Theorem 4.5.19]{nicolaescu:swbook}, and the Baraglia-Konno gluing theorem \cite{baraglia-konno:gluing}. It should not be necessary to assume that the solutions on the irreducible side are isolated, or that the first cohomology  of the deformation space on the reducible side vanishes. In this case, one should conclude that there is an $r_0$ so that for every $r>r_0$ ${\mathcal{M}}^o_{\hat N_1\cs_r\hat N_2,\spincs_1\cs_r\spincs_2}(\{\data^1\cs_r\data^2\})$
    consists of parameterized regular, irreducible configurations. Furthermore, it is orientation-preserving diffeomorphic to the set of zeros of a transverse section of the obstruction bundle 
    \[
    \Psi_r: \widehat{\mathcal{M}}^o_{\hat N_1,\spincs_1}(\{\data^1\})\times_{S^1}\widehat{\mathcal{M}}^o_{\hat N_2,\spincs_2}(\{\data^2\}) \to \mathcal{V}_r.
    \]
    \item Since all of the invariants we construct arise from zero-dimensional parameterized moduli spaces, we do not establish this more general gluing theorem. While we do need to know that there are $\Z$-valued invariants that do not vanish while others do, we do not need to consider orientations in our gluing theorem since the non-vanishing of the $\Z_2$ invariants implies the non-vanishing of the $\Z$-invariants. We expect that there is a version of the gluing theorem that takes orientations into account and computes the $\Z$-valued invariants.
\end{enumerate}
\end{remark}

This theorem includes the the following as special cases: the blow-up formula for manifolds of simple type, the formulas from~\cite{ruberman:isotopy,ruberman:swpos} relating $1$-parameter invariants of some special diffeomorphisms to the Seiberg-Witten invariants of the seed manifolds, and the following corollary.

\begin{corollary}\label{glue}
Let $\alpha\colon S^k\to\diff(\widehat{Z},D^4)$ and let $R_{A+B}R_{A-B}\colon \sss\to\sss$ be the standard reflection diffeomorphism described in Section~\ref{S:pi0}.
Let 
$\{\data^Z_{s,\theta}\}_{(s,\theta)\in I\times S^k}$ 
be a good family of locally metric independent data on $\widehat{Z}$, and let 
$\{\data^{\sss}_{t}\}_{t\in I}$ 
be the standard data of Lemma~\ref{L:standardsssdata}  with $(R_{A+B}R_{A-B})^*\data^{\sss}_{0} = \data^{\sss}_{1}$. 
\[
\#\calm_{Z\cs(\sss)}(\{\data^{Z\cs(\sss)}_{s,t,\theta}\},\spincs\cs\spincs_0) \equiv \#\calm_{Z}(\{\data^Z_{t,\theta}\},\spincs) \pmod{2}. 
\]
\end{corollary}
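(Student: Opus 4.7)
The plan is to deduce Corollary~\ref{glue} directly from the Parameterized Irreducible-Reducible Gluing Theorem~\ref{pIRglue}, taking $\hat{N}_1 = \hat{Z}$ to play the role of the irreducible side and $\hat{N}_2 = \widehat{\sss}$ (with its spin $\Spinc$ structure $\spincs_0$) to play the role of the reducible side. The parameter spaces are $\Xi_1 = I \times S^k$ (variables $s,\theta$) and $\Xi_2 = I$ (variable $t$), so the product parameter space $\Xi_1 \times \Xi_2 = I \times I \times S^k$ parameterizes the connected-sum family.

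First I would verify the hypotheses in Definition~\ref{irr-red-good}. Local metric independence is given for $\data^Z$ and will hold for the standard data by construction in Lemma~\ref{L:standardsssdata}. The family moduli space $\calm_{\hat Z,\spincs}(\{\data^Z_{s,\theta}\})$ consists of isolated, parameterized regular irreducibles, since $\data^Z$ is good and $\vdim = 0$ on the $(k{+}1)$-parameter family. For $\spincs_0$ on $\sss$ one has $c_1(\spincs_0) = 0$ and $\vdim = -2$, so over the $1$-parameter family $\Xi_2$ the parameterized virtual dimension is $-1$, as required. The wall is $\mathcal{W}_0 = \{\data : \eta^+ = 0\}$, and since $R_{A+B}R_{A-B}$ acts as $-\mathrm{id}$ on $H^2_+(\sss)$ and so reverses the homology orientation, any path $\data^{\sss}_t$ with $(R_{A+B}R_{A-B})^*\data^{\sss}_0 = \data^{\sss}_1$ must meet $\mathcal{W}_0$ an odd number of times. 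Transversality to the wall produces isolated reducible solutions on $\widehat{\sss}$, and $H^1 = 0$ at each such reducible is automatic because $b_1(\sss) = 0$ and the path is transverse to $\mathcal{W}_0$.

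Theorem~\ref{pIRglue} then yields, for sufficiently large neck length $r$,
\[
\calm_{Z \cs (\sss),\, \spincs \cs \spincs_0}\bigl(\{\data^{Z \cs (\sss)}_{s,t,\theta}\}\bigr) \;\cong\; \widehat{\calm}_{\hat Z,\spincs}\bigl(\{\data^Z_{s,\theta}\}\bigr) \times_{S^1} \widehat{\calm}_{\widehat{\sss},\spincs_0}\bigl(\{\data^{\sss}_t\}\bigr).
\]
The residual $S^1$-action on $\widehat{\calm}_{\hat Z}$ is free (irreducibles) while it is trivial on $\widehat{\calm}_{\widehat{\sss}}$ (reducibles have full $S^1$-stabilizer), so the fiber product is set-theoretically the Cartesian product of the underlying unbased moduli spaces. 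Counting mod $2$ gives
\[
\#\calm_{Z \cs (\sss)}\bigl(\{\data^{Z\cs(\sss)}\}, \spincs \cs \spincs_0\bigr) \;\equiv\; \#\calm_{\hat Z}\bigl(\{\data^Z\}, \spincs\bigr) \cdot \#\calm_{\widehat{\sss}}\bigl(\{\data^{\sss}\}, \spincs_0\bigr) \pmod{2}.
\]
The standard data of Lemma~\ref{L:standardsssdata} is arranged so that the path crosses $\mathcal{W}_0$ an odd number of times, giving $\#\calm_{\widehat{\sss}}(\{\data^{\sss}\}, \spincs_0) \equiv 1 \pmod 2$ by the classical $b^2_+ = 1$ wall-crossing count applied to the spin $\Spinc$ structure on $\sss$. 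Combining these yields the stated congruence.

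The main obstacle is not in this deduction itself but is packaged into the gluing theorem: one must control cylindrical-end families and, via the procedure of Section~\ref{s:constant}, deform the input data to be locally metric independent without disturbing the isolated-irreducible structure on $\hat Z$ or the isolated-reducible, non-degenerate structure on $\widehat{\sss}$ that Definition~\ref{irr-red-good} requires.
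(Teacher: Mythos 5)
Your proposal is correct and follows essentially the same route as the paper: apply Theorem~\ref{pIRglue} with $\hat{Z}$ as the irreducible side and $\widehat{\sss}$ (with the standard data of Lemma~\ref{L:standardsssdata}) as the reducible side, then count the $S^1$-fiber product mod $2$, the $\sss$ factor contributing a single point. The one caveat is that $H^1=0$ at the reducible on $\widehat{\sss}$ is not ``automatic'' from $b_1(\sss)=0$ and wall transversality alone --- it also uses the PSC metric via the Weitzenb\"ock formula and the APS index computation --- but since the corollary stipulates the standard data, this non-degeneracy is exactly what Lemma~\ref{L:standardsssdata} already supplies.
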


This corollary will be proved in Section~\ref{1st-apps} at the end of the paper; here we used it to to prove the following theorem, which we call the \emph{Suspension Theorem}, because it relates $\pi_k(\diff)$ for one manifold to $\pi_{k+1}(\diff)$ for a stabilized manifold. In the course of the proof, we will make use of some technical results from Section~\ref{glue-proof}.
\begin{theorem}\label{anti-hol} Let $\alpha\colon S^k\to\diff(Z,D)$ be a family such that $\alpha\cs 1_{(\sss)}$ smoothly contracts via a contraction $F$.
We then have
\[
SW_{Z\cs(\sss)}^{\pi_{k+1},\Z_2}(F(1_Z\cs (R_{A+B}R_{A-B}))(F)^{-1},\spincs\cs\spincs_0) =  SW_Z^{\pi_{k},\Z_2}(\alpha,\spincs).
\] 
\end{theorem}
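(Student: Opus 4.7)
The plan is to apply Corollary~\ref{glue} after setting up matching parameter-space structures on the two sides of the claimed equality, recognizing that the $\sss$ summand and the diffeomorphism $R := R_{A+B}R_{A-B}$ account for the ``extra'' suspension direction from $S^k$ to $S^{k+1}$.

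First, following Definition~\ref{D:SW invariant}, Remark~\ref{R:ball}, and Lemma~\ref{L:spheredef}, I would compute $\sw^{\pi_{k+1},\Z_2}_{Z\cs(\sss)}$ of the commutator $\gamma := F(1_Z\cs R)F^{-1}(1_Z\cs R)^{-1}$ by counting points in a parameterized moduli space over a $(k+2)$-dimensional parameter space: start with constant good initial data of product form $\data^{Z\cs(\sss)}_*=\data^Z_*\cs\data^{\sss}_*$, where $\data^{\sss}_*$ is the standard $R$-adapted data from Lemma~\ref{L:standardsssdata}, and choose a null-homotopy in $\Pi(Z\cs(\sss))$ from $\data^{Z\cs(\sss)}_*$ to $\gamma^*\data^{Z\cs(\sss)}_*$ over $I\times S^{k+1}$. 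A quick index computation (using $c_1(\spincs_0)=0$ and $b^+_2(\sss)=1$) confirms that $\spincs\cs\spincs_0\in\Shat^{k+1}_{Z\cs(\sss)}$, so the parameterized moduli space has virtual dimension $0$.

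Second, since $\Pi(Z\cs(\sss))$ is contractible and Lemma~\ref{L:spheredef} gives independence of the SW count from the choice of null-homotopy, I would replace the canonical null-homotopy by a product-structured one. Realizing $S^{k+1}$ as the unreduced suspension $I_v\times S^k/\sim$, I would choose the homotopy of data on $I_s\times I_v\times S^k$ to be
\[
\data^{Z\cs(\sss)}_{s,v,\theta} = \data^Z_{s,\theta}\cs\data^{\sss}_v,
\]
where $\{\data^{\sss}_v\}_{v\in I}$ is the standard $R$-data from Lemma~\ref{L:standardsssdata} and $\{\data^Z_{s,\theta}\}_{(s,\theta)\in I\times S^k}$ is a good, locally metric independent family on $Z$ that realizes the null-homotopy from $\data^Z_*$ to $\alpha^*\data^Z_*$ whose parameterized moduli space by definition counts $\sw^{\pi_k,\Z_2}_Z(\alpha,\spincs)$. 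Compatibility with $\gamma^*\data^{Z\cs(\sss)}_*$ at the face $\{1\}\times S^{k+1}$ is ensured by the fact that $\gamma$ is identically trivial at the two cone points $v=0,1$ (there $F$ equals $1$ or $\alpha\cs 1_{\sss}$, which commute with $1_Z\cs R$).

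Third, I would apply Corollary~\ref{glue} directly to the product-structured family, obtaining
\[
\#\calm_{Z\cs(\sss)}(\{\data^{Z\cs(\sss)}_{s,v,\theta}\},\spincs\cs\spincs_0) \equiv \#\calm_{Z}(\{\data^Z_{s,\theta}\},\spincs) \pmod{2}.
\]
By the setup, the left side equals $\sw^{\pi_{k+1},\Z_2}_{Z\cs(\sss)}(\gamma,\spincs\cs\spincs_0)$ and the right side equals $\sw^{\pi_k,\Z_2}_Z(\alpha,\spincs)$, completing the proof. The main obstacle is the second step: certifying that the deformation from the canonical null-homotopy $\gamma^*\data^{Z\cs(\sss)}_*$ to the product-structured null-homotopy is admissible, i.e., that a bordism between them contributes an even count to the parameterized moduli space. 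This follows from the independence-of-homotopy part of Lemma~\ref{L:spheredef} applied to the $(k+2)$-parameter family, together with the fact that at the cone points of $S^{k+1}$ the commutator is literally the identity, so the canonical and product-form data can be taken to coincide in neighborhoods of those faces, and the irreducible-reducible good hypotheses of Definition~\ref{irr-red-good} required by Corollary~\ref{glue} are maintained throughout the bordism by generic perturbation.
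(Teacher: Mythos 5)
There is a genuine gap in your second step, and it is exactly where the real content of the theorem lies. The product-structured data $\data_{s,v,\theta}=\data^Z_{s,\theta}\cs\data^{\sss}_v$ is not an admissible choice for computing $\sw^{\pi_{k+1},\Z_2}_{Z\cs(\sss)}$ of your family, for two reasons. First, it does not even descend to $I\times S^{k+1}$: at the would-be cone points $v=0,1$ the data $\data^Z_{s,\theta}\cs\data^{\sss}_0$ and $\data^Z_{s,\theta}\cs\data^{\sss}_1$ still depend on $\theta$ (for $s>0$), so you cannot collapse the $S^k$ factor there. Second, and more fundamentally, the slice at $s=1$ is $\alpha(\theta)^*\data^Z_*\cs\data^{\sss}_v$, which is the pullback of the $s=0$ slice by the family $(v,\theta)\mapsto\alpha(\theta)\cs 1_{\sss}$, \emph{not} by $\gamma(v,\theta)$ (the contraction $F_{v,\theta}$ does not respect the connected-sum decomposition for intermediate $v$, so $\gamma(v,\theta)^*(\data^Z_*\cs\data^{\sss}_v)$ has no reason to be of product form). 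The independence statement of Lemma~\ref{L:spheredef} only compares bounding families that satisfy the defining boundary condition $\data\vert_{1}=\gamma^*\data\vert_{0}$; it cannot be used to swap in a family whose end slice is the pullback by a different map. So your claimed ``admissible deformation'' between the canonical null-homotopy and the product one does not exist as stated.

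The missing idea is to twist the product data by the contraction before identifying it with the data defining the invariant, and to suspend in the contraction parameter rather than in the $\sss$-data parameter. Concretely, one considers $\{(F_{s,\theta}^{-1})^*(\data^Z_{s,\theta}\cs\data^{\sss}_t)\}$ on $I_s\times I_t\times S^k$: because $F_{0,\theta}=1$ and $F_{1,\theta}=\alpha(\theta)\cs 1_{\sss}$ while $\data^Z_{1,\theta}=\alpha(\theta)^*\data^Z_*$, the pulled-back data is $\theta$-independent at $s=0,1$, hence descends to the suspension $S^{k+1}$ in the $s$-direction with $t$ as the homotopy parameter; and since $\data^{\sss}_1=(R_{A+B}R_{A-B})^*\data^{\sss}_0$, inserting $F_{s,\theta}^*(F_{s,\theta}^{-1})^*$ shows the $t=1$ slice is exactly the pullback of the $t=0$ slice by $F_{s,\theta}(1_Z\cs R_{A+B}R_{A-B})F_{s,\theta}^{-1}$, which is the family in the statement. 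Naturality of the moduli spaces under pullback (Lemma~\ref{L:pullback}) then removes the $(F^{-1})^*$ from the count, and only at that point does Corollary~\ref{glue} apply to the untwisted product family, giving $\#\calm_{Z\cs(\sss)}\equiv\#\calm_Z \pmod 2$. Two smaller points: you compute the invariant of the commutator $F(1_Z\cs R)F^{-1}(1_Z\cs R)^{-1}$ rather than of $F(1_Z\cs R)F^{-1}$ as in the statement (these agree mod $2$ by the composition law, since they differ by a constant family, but this should be said); and the passage to cylindrical-end data together with Proposition~\ref{SPNMID} and Corollary~\ref{closed-deleted} is needed before Corollary~\ref{glue} can be invoked, since that corollary requires locally metric independent data and is proved by neck-stretching.
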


\begin{proof}
In the proof, all counts of points in moduli spaces should be taken mod $2$. Let $\data^Z_*$ be good on $Z$ and let $\{\data^Z_{s,\theta}\}_{I\times S^k}$ be good with $\data^Z_{0,\theta} =\data^Z_*$ and  $\data^Z_{1,\theta}=\alpha(\theta)^*\data^Z_*$.

Since $\alpha$ fixes $D$, there is an induced family, which we denote by the same letter, on $\hat{Z}$. Let $\data^{\widehat{Z}}_*$ be good on $\hat{Z}$, and
let $\data^{\widehat{Z}}_*$ be good on $\widehat{Z}$ with $\data^{\widehat{Z}}_{0,\theta} =\data^{\widehat{Z}}_*$ and  $\data^{\widehat{Z}}_{1,\theta}=\alpha(\theta)^*\data^{\widehat{Z}}_*$. In  Proposition~\ref{SPNMID}, we will show that there is locally metric independent data satisfying the same assumptions. In Corollary~\ref{closed-deleted}, we will relate the family moduli spaces on $Z$ and $\widehat{Z}$ and show that
\[
SW_Z^{\pi_{k},\Z_2}(\alpha) = \#\calm_{Z}(\{\data^Z_{s,\theta}\}_{I\times S^k},\spincs) = \#\calm_{\widehat{Z}}(\{\data^{\widehat{Z}}_{s,\theta}\}_{I\times S^k},\spincs).
\]
Let
$\{\data^{\widehat{\sss}}_{t}\}_{t\in I}$ be the standard data (constructed in Lemma~\ref{L:standardsssdata}) with $(R_{A+B}R_{A-B})^*\data^{\widehat{\sss}}_{0}= \data^{\widehat{\sss}}_{1}$.
Let $F$ be the smooth contraction and consider $\{(F_{s,\theta}^{-1})^*\data^{Z\cs(\sss)}_{s,t,\theta}\}_{I \times I \times S^k}$. We show that this represents a family of data 
on a spherical shell $I\times S^{k+1}$ of the form required to compute $SW_{Z\cs(\sss)}^{\pi_{k+1},\Z_2}(F(1_Z\cs (R_{A+B}R_{A-B}))(F)^{-1}, \spincs\cs\spincs_0)$. 

Since $F_{0,\theta} = 1_{Z\cs (\sss)}$ and $\data^Z_{0,\theta} = \data^Z_* = (g^Z_*,\eta^Z_*)$ we have
\[
(F_{0,\theta}^{-1})^*\data^{Z\cs(\sss)}_{0,t,\theta} = \data^Z_*\cs \data^{\sss}_t.
\]
Now, $F_{1,\theta} = \alpha(\theta)\cs 1_{\sss}$ and $\data^Z_{1,\theta} = \alpha(\theta)^*\data^Z_*$. Thus,
\begin{align*}
(F_{1,\theta}^{-1})^*\left(\data^Z_{1,\theta}\cs \data^{\sss}_t\right) &= 
\left((\alpha(\theta)\cs 1_{\sss})^{-1}\right)^*(\alpha(\theta)^*\data^Z_{0,\theta}\cs \data^{\sss}_t) \\
&= \data^Z_*\cs \data^{\sss}_t.
\end{align*}
Thus $(F_{s,\theta}^{-1})^*\data^{Z\cs(\sss)}_{0,t,\theta}$ is really defined on the spherical shell obtained by collapsing $\{0\} \times I \times S^k$ to $\{0\} \times I\times \text{point}$ and $\{1\} \times I\times S^k$ to $\{1\} \times I\times \text{point}$. Finally, we compute

\begin{align*}
(F_{s,\theta}^{-1})^*\data^Z_{s,\theta}\cs \data^{\sss}_1 
&= (F_{s,\theta}^{-1})^*(1_Z\cs (R_{A+B}R_{A-B}))^*\data^Z_{s,\theta}\cs \data^{\sss}_0 \\
&= (F_{s,\theta}^{-1})^*(1_Z\cs (R_{A+B}R_{A-B}))^*(F_{s,\theta})^*(F_{s,\theta}^{-1})^*\data^Z_{s,\theta}\cs \data^{\sss}_0 \\
&= (F_{s,\theta}(1_Z\cs (R_{A+B}R_{A-B}))F_{s,\theta}^{-1})^*(F_{s,\theta}^{-1})^*\data^Z_{s,\theta}\cs \data^{\sss}_0. 
\end{align*}

Thus naturality, Corollary~\ref{glue}, and the definition give
\[
\begin{aligned}
SW_{Z\cs(\sss)}^{\pi_{k+1},\Z_2}(F(1_Z\cs & (R_{A+B}R_{A-B}))(F)^{-1},\spincs\cs\spincs_0) \\ &= \#\left(\calm_{Z\cs(\sss),\spincs}(\{(F_{s,\theta}^{-1})^*\data^Z_{s,\theta}\# \data^{\sss}_t\}\right) \\
&= \#\left(\calm_{Z\cs(\sss),\spincs}(\{\data^Z_{s,\theta}\# \data^{\sss}_t\}\right) \\
&= \#\left(\calm_{\widehat{Z},\spincs}(\{\data^{\widehat{Z}}_{s,\theta}\})\right) \\
&=  SW_{Z}^{\pi_{k},\Z_2}(\alpha,\spincs).
\end{aligned}
\] 
\par\nopagebreak\vspace{-1.5\baselineskip}\mbox{}
\end{proof}

We now use the Suspension Theorem to compute the invariants for families of diffeomorphisms on $\bZ^{p+1}_{V,\bu}$. These are constructed recursively, but by a more elaborate scheme than just one stabilization at a time. Using the notation established in Section~\ref{S:blocks}, this means that we need to compute the invariants for families 
of diffeomorphisms on $U_0\cs_T\bZ^p_{V,\bu} = \bZ^p_{V\cs_TU_0,\bu}$ and $U_1\cs_T\bZ^p_{V,\bu} = \bZ^p_{V\cs_TU_1,\bu}$. With this in mind let 
\begin{equation}\label{E:Zrs}
    \bZ^p_{r,s} = \bZ^p_{V\cs_TrU_0\cs_TsU_1,\bu}.
\end{equation}
To start the recursion one needs the Seiberg-Witten invariants of $V\cs_TU_0^{r\cs_T}\cs_TU_1^{s\cs_T}$. Let $K$ denote the canonical class for the symplectic structure
of $V\cs_TU_0^{(r+s)\cs_T}$ constructed from the fiber sum, so that $|\text{SW}_{V\cs_TU_0^{(r+s)\cs_T}}(K)| = 1$. To generate  $V\cs_TU_0^{r\cs_T}\cs_TU_1^{s\cs_T}$ we perform $3$-log-transforms on tori in nuclei in $s$ of the summands of $V\cs_TU_0^{(r+s)\cs_T}$. Label the {symplectic} torus in an $N(2)$ nucleus in each summand of $U_0^{(r+s)\cs_T}$ by $T^a$, $a = 1, \cdots, r+s$, and let $\Tc^a$ denote both the corresponding cohomology class and $\Spinc$ structure.  The adjunction inequality implies that 
$\text{SW}_{V\cs_TU_0^{(r+s)\cs_T}}(K+2\Tc^a) = 0$, for $a = 1, \cdots, r+s$. Let $\Sigma\Tc = \sum_{a=r+1}^{r+s}\Tc^a$.
By the log-transform formula~\cite{Fintushel-Stern:Rat-Blow,morgan-mrowka-szabo:torus},  $SW^{\Z_2}_{V\cs_TU_0^{r\cs_T}\cs_TU_1^{s\cs_T} }(K+2\Sigma\Tc) = 1$, and it would be zero by the adjunction inequality if one performed fewer than the $s$ log-transforms leading to $V\cs_TU_0^{r\cs_T}\cs_TU_1^{s\cs_T}$.

\begin{theorem}\label{T:Zinfty}
The family Seiberg-Witten invariants of the spherical families $\balpha^p$ of diffeomorphisms satisfy 
\[
SW_{\bZ^{p+1}_{r,s}}^{\pi_{p},\Z_2}(\balpha^{p}[q],K+2\Sigma\Tc+2\ell \Tc_1) = 1,
\] 
if $0 < |\ell | \leq q$, and for each $q$ and $p$ there is a $Q_{q,p}$ so that $|\ell| > Q_{q,p}$ implies that 
\[
SW_{\bZ^{p+1}_{r,s}}^{\pi_{p},\Z_2}(\balpha^{p}[q],K+2\Sigma\Tc+2\ell \Tc_1) = 0.
\] 
It follows that 
\[
\text{ker}\left(H_p(\diffo(\zz^p)) \to H_p(\homeo(\zz^p)) \right)
\]
admits an infinite rank summand for $p>0$. 
\end{theorem}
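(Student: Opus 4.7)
The proof goes by induction on $p$, with the Suspension Theorem (Theorem~\ref{anti-hol}) as the main recursive tool. For the base case $p=0$, the family $\balpha^0[q]$ is the Torelli diffeomorphism from Section~\ref{S:pi0} built on the enlarged base $V \cs_T rU_0 \cs_T sU_1$. Expanding $\balpha^0[q] = \vphi_q(1\cs R)\vphi_q^{-1}(1\cs R)^{-1}$ and using homomorphism properties of $SW^{\pi_0,\Z_2}$, the invariant reduces to the difference of ordinary Seiberg-Witten invariants on $V_q\cs_T rU_0\cs_T sU_1\cs(\sss)\cs_T\bu$ and on the same manifold with $V$ in place of $V_q$. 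Taubes' theorem for the canonical class on the symplectic fiber sum, the adjunction inequality applied to the log-transformed tori, and the Fintushel--Stern/Morgan--Mrowka--Szab\'o log-transform formulas together give nonvanishing on the $V_q$-side exactly for $|\ell|\leq q$ and on the $V$-side only for $\ell=0$. The difference is $1\pmod 2$ iff $0 < |\ell| \leq q$.

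For the inductive step, write $\balpha^{p+1}[q] = [F^p[q],\bbeta]$ with $\bbeta = \vphi_1(1_{U_1}\cs R)\vphi_1^{-1}(1_{U_0}\cs R)^{-1}$. In the abelian group $\pi_{p+1}(\diffo)$, the identity $[F,gh^{-1}] = [F,g] - [F,h]$ yields
\[
[F^p[q],\bbeta] = [F^p[q], \vphi_1(1_{U_1}\cs R)\vphi_1^{-1}] - [F^p[q], 1_{U_0}\cs R].
\]
Each commutator on the right has the shape $[F, 1_Z\cs R]$ required by the Suspension Theorem, where the $\sss$ factor is the one inside the added $\bu$ summand. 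Applying Theorem~\ref{anti-hol} to each (using $\vphi_1$ as a naturality identification for the first) reduces the calculation to $SW^{\pi_p,\Z_2}$ invariants of $\balpha^p[q]$ on closely related base manifolds, one with an extra $U_1$ summand and one with an extra $U_0$ summand. The inductive hypothesis combined with the base-case pattern then propagates the claimed nonvanishing. The finiteness bound $|\ell| > Q_{q,p}$ is immediate from Lemma~\ref{L:finite}.

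For the $\Z^\infty$ summand in $\ker[H_p(\diffo(\zz^p))\to H_p(\homeo(\zz^p))]$, Proposition~\ref{P:eval} together with the naturality of Konno's invariants imply that the Hurewicz image of $\balpha^p[q]$ is detected by the family invariants $SW^{H_p,\Z_2}(\cdot,\spincs)$ with the same nonvanishing pattern $\{0<|\ell|\leq q\}$. Since each such pattern has a leading entry at position $\ell=q$ absent for smaller $q$, the Hurewicz images are $\Z$-linearly independent in $\bigoplus_\spincs \Z$, and a standard splitting argument (combined with Lemma~\ref{L:finite} to ensure only finitely many $\Spinc$ structures contribute for each $q$) yields the desired $\Z^\infty$ summand. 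Topological triviality of $\balpha^p[q]$, via the parallel inductive construction of the topological contraction $G^p[q]$, places these homology classes in the kernel of the map to $H_p(\homeo)$. The main technical obstacle is tracking the $\Spinc$ structures through the $\vphi_1$-conjugation in the first commutator and matching them with the components of $K + 2\Sigma\Tc + 2\ell\Tc_1$ so that the inductive recurrence closes correctly.
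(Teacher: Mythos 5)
Your proposal follows the paper's own route: induction driven by the Suspension Theorem~\ref{anti-hol}, splitting the commutator $[F^p[q],\bbeta]$ into a term carrying an extra $U_1$ summand and a term carrying an extra $U_0$ summand, naturality under $\vphi_1$, the Taubes/log-transform computation for the base case, Lemma~\ref{L:finite} for the bound $Q_{q,p}$, and the topological contraction $G^p$ for membership in the kernel. Two small repairs are needed in the bookkeeping of the splitting. The identity $[F,gh^{-1}]=[F,g]-[F,h]$ does not hold in $\pi_{p+1}(\diffo)$ as stated: pointwise $[F,gh^{-1}]_t=[F,g]_t\cdot\bigl(g\,[F,h^{-1}]_t\,g^{-1}\bigr)$, so what one actually gets is $[F,gh^{-1}]=[F,g]-c_{\bbeta*}[F,h]$ with $c_{\bbeta}$ conjugation by $\bbeta=gh^{-1}$, which need not act trivially on $\pi_{p+1}(\diffo)$ since $\bbeta$ is not known to lie in $\diffo$. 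This is harmless only because $\bbeta\in\tdiff$ preserves $\spincs$ and the homology orientation and the invariants are conjugation-natural; either say this, or argue directly at the level of invariants with the composition law as the paper does (this is also where the paper's reason for working mod $2$ lives: its factors are families based at diffeomorphisms reversing the orientation of $H^2_+$, whose integer invariants need not be defined). Likewise, Theorem~\ref{anti-hol} computes the invariant of $F(1_Z\cs R_{A+B}R_{A-B})F^{-1}$, not of the commutator; you must first discard the constant factor $(1_Z\cs R_{A+B}R_{A-B})^{-1}$ via the composition law.

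The genuine gaps are in the two places you gesture at without carrying out. First, the recursion does not close by ``the inductive hypothesis combined with the base-case pattern'': the $U_0$-side term is evaluated on the class $K+2\Sigma\Tc+2\ell\Tc_1$, which contains $2\Tc^a$ for a symplectic torus that has \emph{not} been log-transformed in that manifold, so it is not of the form covered by the induction; it must be shown to vanish separately, and this is exactly the paper's adjunction-inequality step. You name this $\Spinc$-tracking as ``the main technical obstacle'' but do not resolve it, and it is where the argument lives. Second, the independence argument as phrased is not justified: non-vanishing is only established for $0<|\ell|\le q$ and vanishing only for $|\ell|>Q_{q,p}$, with nothing known (even over $\Z$) in the range $q<|\ell|\le Q_{q,p}$, so a class $\balpha^p[q']$ with $q'<q$ may a priori have a non-zero invariant at $\ell=q$. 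One must pass to a subsequence $q_1<q_2<\cdots$ with $q_{i+1}>Q_{q_i,p}$ and evaluate at $\ell=q_N$ to kill the top coefficient; with that modification, the free image splits off the claimed $\Z^\infty$ summand as in the paper.
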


\begin{proof}
To start with, note that the topological contraction $G^{p}$ described in Definition~\ref{D:families} show that $\balpha^p$ is trivial in $\pi_p(\homeo(\zz^p))$ and hence in $H_p(\homeo(\zz^p))$ as well. 

The first statement follows by induction via the Suspension Theorem and the following calculation. Most of the steps are formal, but the equalities which require some explanation are marked $=^{(*)}$ and discussed below. 

\begin{small}
\begin{align*}
SW_{\bZ^{p+1}_{r,s}}^{\pi_{p+1},\Z_2}&(\balpha^{p+1}[q],K+\Sigma_{r,s}\Tc+2\ell \Tc_1) = SW_{\bZ^{p+1}_{r,s}}^{\pi_{p+1},\Z_2}([F^p[q],\bbeta],K+2\Sigma_{r,s}\Tc+2\ell \Tc_1) \\
&= SW_{\bZ^{p+1}_{r,s}}^{\pi_{p+1},\Z_2}([F^p[q],~^{\vphi_1}(1_{U_1} \cs R_{A+B}R_{A-B}) (1_{U_0} \cs R_{A+B}R_{A-B})^{-1}],K+2\Sigma_{r,s}\Tc+2\ell \Tc_1) \\
&=^{(1)} SW_{\bZ^{p+1}_{r,s}}^{\pi_{p+1},\Z_2}(F^p[q]~^{\vphi_1}(1_{U_1} \cs R_{A+B}R_{A-B}) (1_{U_0} \cs R_{A+B}R_{A-B})^{-1}(F^p[q])^{-1},K+2\Sigma_{r,s}\Tc+2\ell \Tc_1) \\
&=^{(2)} SW_{\bZ^{p+1}_{r,s}}^{\pi_{p+1},\Z_2}(F^p[q]~^{\vphi_1}(1_{U_1} \cs R_{A+B}R_{A-B})(F^p[q])^{-1},K+2\Sigma_{r,s}\Tc+2\ell \Tc_1) \\
&\quad - SW_{\bZ^{p+1}_{r,s}}^{\pi_{p+1},\Z_2}(F^p[q](1_{U_0} \cs R_{A+B}R_{A-B})(F^p[q])^{-1},K+2\Sigma_{r,s}\Tc+2\ell \Tc_1) \\
&=^{(3)} SW_{\bZ^{p}_{r,s+1}}^{\pi_{p},\Z_2}(\balpha^{p}[q],K+2\Sigma_{r,s}\Tc+2\ell \Tc_1)  - SW_{\bZ^{p}_{r+1,s}}^{\pi_{p},\Z_2}(\balpha^{p}[q],K+2\Sigma_{r,s}\Tc+2\ell \Tc_1) \\
&=^{(4)} SW_{\bZ^{p}_{r,s+1}}^{\pi_{p},\Z_2}(\balpha^{p}[q],K+2\Sigma_{r,s}\Tc+2\ell \Tc_1) . 
\end{align*}
\end{small}
\noindent
The last four steps are explained as follows. 
\begin{enumerate}
    \item  is simply the composition law for the SW invariants combined with the observation that $[a,b] = aba^{-1}\cdot b^{-1}$, 
    and $b^{-1} = (1_{U_0} \cs R_{A+B}R_{A-B})~^{\vphi_1}(1_{U_1} \cs R_{A+B}R_{A-B})^{-1}$ is the constant family, so has trivial invariants.
    \item  is also the composition law after writing  
    \begin{align*}
          &F^p[q]~^{\vphi_1}(1_{U_1} \cs R_{A+B}R_{A-B}) (1_{U_0} \cs R_{A+B}R_{A-B})^{-1}(F^p[q])^{-1} \quad \text{as}\\ 
  &F^p[q]~^{\vphi_1}(1_{U_1} \cs R_{A+B}R_{A-B})(F^p[q])^{-1} F^p[q] (1_{U_0} \cs R_{A+B}R_{A-B})^{-1}(F^p[q])^{-1} .
    \end{align*}
    It is crucial here that we are working with $\Z_2$ coefficients, because the SW invariants of the individual factors may not be defined over the integers. 
    \item comes from the Suspension Theorem~\ref{anti-hol} along with the following observations. 
\begin{align*}
    &\vphi_1^{-1}F^p[q]~^{\vphi_1}(1_{U_1} \cs R_{A+B}R_{A-B})(F^p[q])^{-1}\vphi_1  \\ &= \vphi_1^{-1}F^p[q]{\vphi_1}(1_{U_1} \cs R_{A+B}R_{A-B})\vphi_1^{-1}(F^p[q])^{-1}\vphi_1.
   \end{align*}
Furthermore, for any diffeomorphism $f:X\to Y$ and family of diffeomorphisms \hfill\newline $\beta:S^p\to\diff(Y)$ one has
$SW_X(f^{-1}\beta f,f^*\spincs) = SW_Y(\beta,\spincs)$ by naturality. This observation is applied in the second-to-last line with 
$f =\vphi_1\colon \bZ^{p}_{r,s+1}\cs(\sss) \to \bZ^{p}_{r+1,s}\cs(\sss)$.
\item   In the last line the manifold $\bZ^p_{r+1,s}$ has fewer $U_0$ summands than $\Sigma\Tc$ has torus summands, so the second term on line (3) vanishes by the adjunction inequality. 

\end{enumerate}

The families of diffeomorphisms that we consider are all in $\tdiff$ so the integer-valued invariants are defined and have the same parity as the $\Z_2$ invariants. It follows that we have many families with non-vanishing integral invariants. While the $\Z_2$ computation will not demonstrate that the integer invariant vanishes, we know that it will vanish in some range by Lemma~\ref{L:finite}. This gives the constants $Q_{q,p}$ from the theorem statement.

To conclude that there is an infinite rank summand of \[
\ker\left(H_p(\diffo(\zz^p)) \to H_p(\homeo(\zz^p)) \right),
\]
notice that the homological invariant agrees with the homotopy invariant when both are defined:
\[
SW_{\bZ^{p}}^{H_{p},\Z}(\balpha^{p}[q],K+2\Sigma\Tc+2\ell \Tc_1) = 
SW_{\bZ^{p}}^{\pi_{p},\Z}(\balpha^{p}[q],K+2\Sigma\Tc+2\ell \Tc_1).
\] 
In addition, by viewing the homological invariant as taking values in integer-valued maps, 
\[
\underline{SW}^{H_p,\Z}\colon \tdiff\to\text{Maps}(\Spinc(\bZ^p),\Z), 
\]
we see that the image of the invariant contains an infinite independent set. Define a sequence of $q$-values, starting with $q_1^p = 1$ and then $q_{i+1}^p = 1+ \text{min}\{q_1^p, Q_{q_i^p,p}\}$. It follows that $\underline{SW}^{H_p,\Z}(\balpha^p_{q_i^p})$ is an independent set. For if $\sum_{i=1}^N a_i\underline{SW}^{H_p,\Z}(\balpha^p_{q_i^p}) = 0$, then applying this to the basic class $K+2\Sigma\Tc+2q_N^p \Tc_1$ would imply that $a_N=0$. Thus the image of $\underline{SW}^{H_p,\Z}$ is an infinite rank subgroup of $\Z^\infty$. Since it is free, there is a splitting mapping from the image back to $\text{ker}\left(H_p(\diffo(\zz^p)) \to H_p(\homeo(\zz^p)) \right)$, proving that it has an infinite rank summand.
\end{proof}

With this calculation in hand, we can now establish the main result of the paper. It still depends on the gluing result Corollary~\ref{glue}, whose proof will be presented in Section~\ref{glue-proof}.
\begin{proof}[Proof of Theorem~\ref{T:infgen}]
The theorem for $p=0$ is proved in Section~\ref{S:pi0}, so we now assume that $p>0$. The proof of Theorem~\ref{T:Zinfty} gives a splitting from a free subgroup of $\text{Maps}(\Spinc(\bZ^p),\Z) \cong \Z^\infty$ back to $\ker\left(H_p(\diffo(\zz^p)) \to H_p(\homeo(\zz^p)) \right)$, and hence a summand in that group. Since the image of that splitting consists of spherical elements, we also get a free summand of $\ker\left(\pi_p(\diffo(\zz^p)) \to \pi_p(\home(\zz^p)) \right)$. Using the isomorphisms $\partial\colon \pi_{p+1}(BG) \overset{\cong}{\rightarrow}\pi_p(G)$ and $G =\diffo(\zz^p)$ or $\homeo(\zz^p)$ from the homotopy long exact sequence for $EG \to BG$, we get the same statement for 
\[
\ker\left[\pi_{p+1}(\bdiffo(\zz^p)) \to \pi_{p+1}(\bhomeo(\zz^p)) \right].
\]
The fact that these summands are detected by Konno's characteristic classes $\swc$ means that these classes also give a summand in 
\[\ker\left[H_{j+1}(\btdiff(\zz^p))\to H_{j+1}(\bhome(\zz^p))\right]. \]

This is the statement of Theorem~\ref{T:infgen} for homotopy or homology groups in the single dimension $j=p$. To obtain the full statement of the theorem with summands for all $j\leq p$ with $j \equiv p \pmod{2}$, write $j = p-2i$ and recall the manifold $\bW$ from item (\ref{multi}) of Remark~\ref{R:def-comments}. From that remark $\bZ^p_{V,\bu} \cong\bZ^{p-2i}_{V\cs_T\bW^{\cs^i_T},\bu}$, so by the argument above there is a $\Z^\infty$ summand in $\ker\left(H_j(\diffo(\zz^p)) \to H_j(\homeo(\zz^p)) \right)$. As above, this yields summands in the other groups as well.\end{proof}

\section{Consequences of the main theorem}\label{S:corollaries}
In this section, we recall the statements of Corollaries~\ref{C:stabilize}-\ref{C:bundle}  and Corollary~\ref{C:MMM} stated in the introduction and give their proofs.
\begin{proof}[Proof of Corollary~\ref{C:stabilize}]
We need to show that the spherical families in Theorem~\ref{T:infgen} are in the image of $\diff(\bZ^p,B^4)$ and a $\Z^\infty$ subgroup of the claimed summand consists of $1$-stably trivial elements.

    Both parts follow from the formulas given in Definition~\ref{D:families}. As shown in Proposition~\ref{P:Rstab}, the initial choice of diffeomorphism $\balpha^0$ can be assumed to be the identity on a ball $B^4$, and the stable isotopy $F^0$ can be assumed to be the identity on that same $B^4$. Note that a commutator as described in Section~\ref{S:comm} of maps that are the identity on $B^4$ will also be the identity on $B^4$.  By induction, it follows that for any choices of $V$ and $\bu$, the families $\balpha^{p}[q]$ and stable contractions $F^{p}[q]$ are the identity on $B^4$. In particular, the $F^{p}$s show  any combination of $\balpha^p[q]$ will be stably trivial. 
\end{proof}

Recall that any $\alpha \in \pi_p(\diff(Z))$ determines $\widehat{\alpha}\colon S^p \times Z \overset{\cong}{\rightarrow} S^p \times Z$, which is an element of the block diffeomorphism group $\pi_p(\diffb(Z))$, and that Corollary~\ref{C:pseudo} asserts that $\widehat{\alpha}$ is trivial.
\begin{proof}[Proof of Corollary~\ref{C:pseudo}]
    An extension of $\widehat{\alpha}$ is provided by the diffeomorphism $K^p$ from Definition~\ref{D:families}, where the case $p=0$ is the pseudoisotopy in Equation~\ref{E:FGK0}.
\end{proof}
Corollary~\ref{C:bundle} states that for each non-trivial $\alpha$ constructed in proving Theorem~\ref{T:infgen}, the associated bundle $S^{p+1} \times_\alpha Z$ satisfies
     \begin{enumerate}
        \item the bundle is smoothly non-trivial;
        \item the bundle is topologically trivial;
        \item the total space of the bundle is diffeomorphic to a product.  
     \end{enumerate}
\begin{proof}[Proof of Corollary~\ref{C:bundle}]
    The first item is the standard correspondence between isomorphism classes of bundles over a sphere and homotopy classes of their clutching functions. The fact that $\alpha$ is trivial in $\homeo(Z)$ implies the second item. 
    
    For the third, recall that $S^{p+1} \times_\alpha Z$ is obtained by 
    gluing $(\theta,z) \in \partial D_+^{p+1} \times Z$ to $(\theta, \alpha (\theta)(z))\in \partial D_-^{p+1} \times Z$.  Choose a collar neighborhood $S^p \times [0,1]$ of $\partial D_+^{p+1}$ with $S^p \times {1}$ corresponding to the boundary. Define a map 
    $S^{p+1} \times_\alpha Z \to  S^{p+1} \times Z$ as the identity on $D_-^{p+1} \times Z$, the identity on the complement of the collar in $D_+^{p+1} \times Z$, and $K^p$ on the collar. It is readily checked that this respects the gluing maps and is a diffeomorphism.
\end{proof}
Finally we show that for the manifolds $\bZ^p$ in Theorem~\ref{T:infgen}, Konno's classes $\swc_{\bZ^p}$ are not in the subring of $H^*(\btdiff(\bZ^p);\Q)$ generated by the Miller-Morita-Mumford classes in $H^*(\bdiff(\bZ^p);\Q)$, pulled back to $H^*(\btdiff(\bZ^p);\Q)$.
\begin{proof}[Proof of Corollary~\ref{C:MMM}]
     For any smooth manifold $Z$, there is a natural map $j: \bdiff(Z) \to \bhome(Z)$ corresponding to the passage from a smooth bundle with fiber $Z$. Ebert and Randal-Williams~\cite[Theorem 2]{ebert-randal-williams:MMM-TOP} show that for any characteristic class $c\in H^*(\BSO(n);\Q)$, there is a class $\kappa_c^{\Top} \in H^*(\bhome(Z);\Q)$ such that $j^*\kappa_c^{\Top}$ is the usual MMM class $\kappa_c$ corresponding to $c$. It follows that any class $\kappa$ in the ring generated by the MMM classes $\kappa_c$ is likewise equal to $j^*\kappa^{\Top}$ where $\kappa^{\Top}\in H^*(\bhome;\Q)$.

     Now suppose that $\balpha \in \pi_p(\tdiff(\bZ^p))$ is such that $\sw^{\pi_p}(\balpha)\neq 0$, and let $f\colon S^{p+1} \to \btdiff(\bZ^p)$ be the classifying map for the bundle $S^{p+1}\times_{\balpha} \bZ^p$.  By item \eqref{homotopy} of Theorem~\ref{T:infgen} the composition of the following maps is null-homotopic.
     \[
     \begin{tikzcd}
         S^{p+1} \arrow{r}{f} & \btdiff(\bZ^p) \arrow{r}{i} &\bdiff(\bZ^p) \arrow{r}{j} &\bhome(\bZ^p).
     \end{tikzcd}
     \]
    Hence for any MMM class $\kappa$, the evaluation $\langle f^*i^* \kappa,[S^{p+1}]\rangle =  \langle f^*i^*j^* \kappa^{\Top},[S^{p+1}] \rangle = 0$. Proposition~\ref{P:eval} says that $\langle f^* \swc_{\bZ^p}^p, S^{p+1}  \rangle = \sw^{\pi_p}(\balpha)\neq 0$, we see that $\swc_{\bZ^p}^p$ cannot be in the ring generated by MMM classes.
\end{proof}
This could also be deduced from Corollary~\ref{C:pseudo} using the construction~\cite[Theorem 1]{ebert-randal-williams:MMM-TOP} of MMM classes of block bundles that pull back to the usual MMM classes

\subsection{Algebraic topology of \texorpdfstring{$\diff$}{Diff}}\label{S:algtop}
The previous sections use gauge theoretic invariants to detect the non-triviality of certain homotopy groups of the diffeomorphism group.  This short section uses tools from homotopy theory to prove Corollaries~\ref{C:Hinf} and~\ref{C:HQinf}. These  show that information from Theorem~\ref{T:infgen} about a single homotopy group propagates to give information about the higher homology of the diffeomorphism group. 
\begin{proof}[Proof of Corollary~\ref{C:Hinf}]
   Browder proved~\cite{browder:torsion} that an H-space with finitely generated homology has trivial $\pi_2$, generalizing a classical theorem about Lie groups.  But Theorem~\ref{T:infgen} says that when $p$ is even, the homotopy group $\pi_2(\diffo(\zz^p))$ is non-zero, so the homology $H_*(\diffo(\zz^p))$ is infinitely generated. 
\end{proof}
As mentioned in the introduction, for some fixed prime $q$, Browder's argument gives rise to $q$-torsion classes  in $H_*(\diffo(\zz^p))$ in infinitely many degrees. It would be interesting to understand a direct gauge-theoretic way of detecting these classes, and to know if they vanish under the natural map to the homology of $\homeo(\zz^p)$.

The proof of Corollary~\ref{C:HQinf} depends on techniques of rational homotopy theory discussed in Chapter 16 of the book~\cite{felix-halperin-thomas:RHT} by F\'elix-Halperin-Thomas. The use of rational homotopy theory in this context was pointed out to us by Jianfeng Lin.

\begin{lemma}[J. Lin, personal communication]\label{L:jianfeng} 
Let $f:A\to B$ be a map between simply connected spaces. Let $V$ be the kernel of the map  
\[
\Omega f_{*}\colon \pi_{2k}(\Omega A)\otimes \mathbb{Q} \to \pi_{2k}(\Omega B)\otimes \mathbb{Q}
\]
and let $S(V)$ be the symmetric algebra generated by $V$. Then kernel of the map \[
\Omega f_{*}\colon H_{*}(\Omega A;\mathbb{Q})\to H_{*}(\Omega B;\mathbb{Q})
\]
contains $S(V)$. In particular, if $V\neq 0$, then the kernel of $\Omega f_{*}$ is infinite dimensional.
\end{lemma}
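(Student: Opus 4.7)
The plan is to use the structure of $H_*(\Omega A;\mathbb{Q})$ as a universal enveloping algebra, together with Poincar\'e-Birkhoff-Witt, to lift the kernel $V$ on primitives to a polynomial algebra of kernel elements.

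First I would recall the classical identification (Milnor-Moore together with Cartier's theorem in characteristic zero, as developed in~\cite{felix-halperin-thomas:RHT}): for any simply connected space $A$, $H_*(\Omega A;\mathbb{Q})$ is a connected graded cocommutative Hopf algebra, and the rational Hurewicz map gives an isomorphism of graded Lie algebras $\pi_*(\Omega A)\otimes\mathbb{Q} \xrightarrow{\cong} \mathrm{Prim}\, H_*(\Omega A;\mathbb{Q})$ (the bracket on the left being the Samelson bracket, the one on the right the graded commutator), which extends to a Hopf algebra isomorphism
\[
U\bigl(\pi_*(\Omega A)\otimes\mathbb{Q}\bigr) \;\xrightarrow{\cong}\; H_*(\Omega A;\mathbb{Q}).
\]
The same holds for $B$, and naturality of the Hurewicz map and of $U$ means that under these identifications the homology map $\Omega f_*$ agrees with $U(\varphi)$, where $\varphi\colon\pi_*(\Omega A)\otimes\mathbb{Q}\to\pi_*(\Omega B)\otimes\mathbb{Q}$ is the map induced by $\Omega f$ on homotopy groups. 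In particular $V\subset\ker\varphi$.

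Next I would invoke Poincar\'e-Birkhoff-Witt. Setting $L := \pi_*(\Omega A)\otimes\mathbb{Q}$, PBW provides a $\mathbb{Q}$-linear isomorphism of graded vector spaces $S(L^{\mathrm{even}})\otimes\Lambda(L^{\mathrm{odd}}) \xrightarrow{\cong} U(L)$ given by symmetrization. Since $V$ sits in the single even degree $2k$, the Koszul sign rule collapses and the restriction to $S(V)$ is the ordinary symmetrization map
\[
\sigma\colon S(V)\hookrightarrow U(L),\qquad \sigma(v_1\cdots v_n) = \frac{1}{n!}\sum_{\pi\in\Sigma_n} v_{\pi(1)}\cdots v_{\pi(n)},
\]
which is injective. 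Because $\Omega f_*=U(\varphi)$ is an algebra homomorphism and each $v_i\in V$ satisfies $\varphi(v_i)=0$, every term $v_{\pi(1)}\cdots v_{\pi(n)}$ in the symmetrization lies in $\ker\Omega f_*$, and hence so does $\sigma(v_1\cdots v_n)$. Thus $\sigma$ exhibits $S(V)$ as a graded subspace of $\ker\Omega f_*$. When $V\neq 0$, the polynomial algebra $S(V)$ on even-degree generators is manifestly infinite dimensional, yielding the final claim.

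The main obstacle is essentially bookkeeping: cleanly citing the Milnor-Moore/Hurewicz identification and the PBW symmetrization in the graded setting, with the correct sign conventions. Both are standard and recorded in~\cite{felix-halperin-thomas:RHT}. The only genuine subtlety is that $V$ must sit in an even degree, which is exactly the hypothesis of the lemma; this is precisely what makes the symmetrization map sign-free and makes $S(V)$ a polynomial (rather than exterior) algebra on the relevant generators.
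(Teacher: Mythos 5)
Your argument is correct, but it takes a genuinely different route from the paper's. The paper's proof is geometric: it passes to the homotopy fiber $F$ of $f$, lifts a basis of $V$ to classes in $\pi_{2k+1}(F)$, forms the weak product $Z=\widetilde{\prod}_i\Omega S^{2k+1}$ with $H_*(Z;\mathbb{Q})\cong S(V)$, multiplies the looped representatives to get $Z\to \Omega F\to \Omega A$, deduces injectivity on homology from injectivity on rational homotopy via the proof of \cite[Proposition 16.7]{felix-halperin-thomas:RHT}, and gets the kernel containment because the image factors through $H_*(\Omega F;\mathbb{Q})$ and the composite $\Omega F\to\Omega A\to\Omega B$ is null. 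You instead argue purely algebraically: Milnor--Moore identifies $\Omega f_*$ on rational homology with $U(\varphi)$ for $\varphi=\Omega f_*$ on rational homotopy, PBW symmetrization embeds $S(V)$ into $U\bigl(\pi_*(\Omega A)\otimes\mathbb{Q}\bigr)$, and since $U(\varphi)$ is an algebra map annihilating $V$, every symmetrized monomial dies. This buys a shorter, self-contained argument with no homotopy fiber or geometric model (and in fact you could skip symmetrization by choosing a PBW basis extending a basis of $V$); what the paper's construction buys is the extra geometric information that the kernel classes are spherical in origin, arising from explicit maps of a product of loop spaces of odd spheres through $\Omega F$. One small caveat to keep in mind, immaterial for the lemma as stated: your copy of $S(V)$ sits in the kernel only as a graded subspace, not necessarily as a subalgebra, since products of symmetrized monomials involve bracket correction terms; the infinite-dimensionality conclusion is of course unaffected.
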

\begin{proof}
Let $F$ be the homotopy fiber of $f$. Then we have a fibration $\Omega F\xrightarrow{h} \Omega A\xrightarrow {\Omega f} \Omega B$. Our assumption implies that the image of  $h_{*}:\pi_{2k}(\Omega F)\otimes \mathbb{Q}\to  \pi_{2k}(\Omega A)\otimes \mathbb{Q}$ equals $V$.  We pick a collection $\{\alpha_{i}\}\subset \pi_{2k}(\Omega F)=\pi_{2k+1}(F)$ whose image under $h_{*}$ gives a basis of $V$.  Represent $\alpha_{i}$ by a map $\rho_{i}\colon S^{2k+1}\to F$ and take the induced map $\Omega \rho_{i}\colon \Omega S^{2k+1}\to \Omega F.$  Let $Z$ be the weak product $\widetilde{\prod}_{i}\Omega S^{2k+1}$. Then the maps $\Omega \rho_{i}$ can be multiplied together to give the map $\tilde{\rho}:Z\to \Omega F$.  We consider the composition $h\circ \tilde{\rho}\colon {Z}\to \Omega A$. Since this map induces an injection map on rational homotopy groups, by the proof of~\cite[Proposition 16.7]{felix-halperin-thomas:RHT}, the map $(h\circ \tilde{\rho})_{*}\colon S(V)\cong H_{*}({Z};\mathbb{Q})\to H_{*}(\Omega A;\mathbb{Q})$  is also injective. Since this map factors through $H_{*}(\Omega F;\mathbb{Q})$, its image is contained in the kernel of $H_{*}(\Omega A;\mathbb{Q})\to H_{*}(\Omega B;\mathbb{Q})$. 
\end{proof}

Now we apply this lemma to show that when $p$ is even the (based) diffeomorphism groups of the manifolds  $\zz^p$ have infinitely generated rational homology in all even degrees. 
\begin{proof}[Proof of Corollary~\ref{C:HQinf}]
    Consider one of the manifolds $\zz^p$ with $p>0$ even, so that 
    \[
    V = \ker\left[\pi_2(\diffo(\zz^p))\otimes \bq \to \pi_2(\homeo(\zz^p))\otimes \bq \right]
    \]
    is infinitely generated.
    Let $A$ and $B$ be the classifying spaces of $\diffo(Z)$ and $\homeo(Z)$, respectively, so that $\Omega A \simeq \diffo(\zz^p)$ and $\Omega B \simeq \homeo(\zz^p)$. Then $A$ and $B$ are simply connected, so Lemma~\ref{L:jianfeng} shows that $S(V)$, which is infinitely generated in every degree, injects into 
    \[
    \ker\left[H_*(\diffo(\zz^p))\otimes \bq \to H_*(\homeo(\zz^p))\otimes \bq \right].\qedhere
    \]
\end{proof}

\section{Applications to embedding spaces}\label{S:consequences}
We now turn to the proof of Theorems~\ref{T:emb2} and~\ref{T:emb3}, which construct and detect smoothly non-trivial $k$-parameter (for $k \leq p$, $k \equiv p \pmod{2}$) families of embeddings of the $2$-sphere, the $3$-sphere, and $S^1 \times S^2$ in suitable stabilizations of the manifolds $\zz^p$ used to prove Theorem~\ref{T:infgen}. We discuss the case of embedded $2$-spheres first, as the $3$-manifolds in Theorem~\ref{T:emb3} arise as the boundary of tubular neighborhoods of those $2$-spheres. To to show that the families are non-trivial, we will use a family submanifold sum to create a family of manifolds to which we can apply Konno's cohomological invariant $\swc$. As in Proposition~\ref{P:eval}, this is computed by the invariant $\sw^{H_k}$ defined in Section~\ref{s:chains}, which in turn is computed via the gluing formula.
The families of surfaces are parameterized by spheres and so the non-trivial homology classes we find are in the image of the Hurewicz map. There are some interesting variations on our construction and detection results that we outline as well.
\begin{remark}\label{R:submanifold}
    For us, an embedding is a function rather than just the submanifold that is its image. One can study spaces of submanifolds, essentially by dividing by the action of the diffeomorphism group of the submanifold. Some results along these lines are presented in Section 7 of~\cite{auckly-ruberman:emb}. 
\end{remark}

For $2$-spheres of square $1$, the case $k=0$ is well-known; see for example~\cite{auckly-kim-melvin-ruberman:isotopy}. The key observation is that the diffeomorphism type of a manifold $X$ is encoded in the embedded $\cpone$ (henceforth $\Sigma$) in $X \cs \cptwo$; one recovers $X$ by an `anti-holomorphic' blowdown of the $\cpone$. Thus distinct smoothings of $X$ that become diffeomorphic after stabilization by $\cptwo$ will give rise to inequivalent $2$-spheres in  $X \cs \cptwo$. Similarly, one can recover $X$ from $X \cs (\sss)$ by surgery on one of the obvious spheres in $\sss$, giving rise to exotic square $0$ spheres.  We will show that this technique extends to higher-dimensional families, via family versions of blowing down and surgery. A key aspect of the argument is to make use of strong results about the diffeomorphism groups of $3$-manifolds. 

\begin{remark}\label{R:summands} We record here some notational conventions for this section.
    First, for a manifold $X$, we denote by $X^0$ its connected sum with $\sss$, and (following~\cite{auckly-kim-melvin-ruberman:isotopy}) by $X^\pm$ its connected sum with $\cptwo$ or $\cptwobar$. 
    
    Second, an important aspect of our constructions here is that the manifolds we work with decompose as connected sums and/or submanifold sums. Typically, they have more than one such decomposition and we need to keep track of which one is being discussed at various stages of the argument. To indicate that we are working with an $\sss$ summand of a piece of a bigger manifold, we will decorate $\sss$ with a subscript indicating that particular piece. So for instance, the manifold $Z$ mentioned below has several $\sss$ summands, one which will be denoted $(\sss)_Z$. The exotic collections of embeddings that we construct become smoothly isotopic after one stabilization. The isotopy depends on the particular $\sss$ summand used, so a smooth contraction arising from a summand labeled $(\sss)_Y$ will be denoted using the same subscript, as in $F_Y$. 
\end{remark}  

\subsection{Constructing families of embeddings}\label{s:con-fam-emb} Since the case $k=0$ has already been treated in the literature, we concentrate on the argument for higher values of $k$. As in the case of diffeomorphisms, we first construct a $k$-dimensional family of embeddings in a specific manifold $Z^k$ where the manifold $Z^k$ gets larger (as measured by $b_2$) as $k$ gets larger. We then use the freedom to make the target manifold larger by submanifold sum. 
In Remark~\ref{R:emb-many} below, we will describe the modifications needed to have one manifold that is the target of families of different dimensions. This is similar to argument for the analogous fact about families of diffeomorphisms from Theorem~\ref{T:infgen} presented in Section~\ref{S:calculation}.

There are two ingredients to our construction. The first one is a family of diffeomorphisms $\alpha^p\co S^p\to \diff(Z,N)$ with $SW^{H_p}(\alpha^p) \neq 0$. The right condition is that $Z$ be constructed via submanifold sums of stabilized symplectic manifolds as described in Remark~\ref{R:def-comments} with the simplest instance being
\[ 
\bZ^p = \bZ^{p-1}\cs_T\bX_0^0 \cong \bZ^{p-1}\cs_T(\bX_0\cs(\sss))\cong (\bZ^{p-1}\cs_T\bX_0)\cs(\sss)
\]
where $\bX_0$ is defined in Section~\ref{S:pi0}.
This last $\sss$ summand is disjoint from the nucleus $N$, and is denoted $(\sss)_{\bZ}$ as in Remark~\ref{R:summands}. It is useful to write $\bZ' = \bZ^{p-1}\cs_T\bX_0$ so that $\bZ^p = \bZ'\cs(\sss)_\bZ$. 

We further require $\alpha^p\cs 1_{\sss}$ to smoothly contract in $Z\cs(\sss)$, which holds for $Z = \bZ^p$. In this case, we use $(\sss)_*$ to denote the final summand and denote the contraction by $F_*$. Notice that if $X$ can be written as $X'\cs(\sss)$ for some $X'$, then 
\[
\alpha^p\cs_T1_X = \alpha^p\cs_T1_{X'\cs (\sss)} =(\alpha^p\cs 1_{\sss})\cs_T1_{X'}
\]
 also smoothly contracts. We will denote such a contraction by $F_*$ as well. The families constructed in section~\ref{fam} all have these properties.

The second ingredient is an interesting collection of embeddings of a surface into a $4$-manifold.  We start by describing a collection of self-intersection zero spheres in a spin manifold and then modify it to give a collection of self-intersection one spheres in a non-spin manifold. Recall the diffeomorphism 
\[
\vphi_q\colon \bX_q \cs (\sss) = E(2;2q+1) \cs (\sss) \to \bX_0^0 = \bX_0 \cs (\sss)_X = E(2;1) \cs (\sss)
\]
from Section~\ref{S:pi0}. 

The embedding $J:S^2\to (\sss)\setminus\text{pt}$ given by $J(v) = (v,\text{pt})$ generates embeddings
$J_q\co S^2\to \bX_0^0$ via inclusion of the $\sss$ summand followed by $\vphi_q$. Similarly, set $\widetilde{J}:S^2\times S^1\to (\sss)\setminus\text{pt}$ to be the embedding given by $\widetilde{J}(v,\phi) = (v,\phi)$ where we view $S^1$ in the second factor as the equator in the second $S^2$.  This generates embeddings
$\widetilde{J}_q\co S^2\times S^1\to \bX_0^0$ via inclusion of the $\sss$ summand followed by $\vphi_q$. Thus, $\widetilde{J}_q$ is the embedding generated as the boundary of the tubular neighborhood of the family of embeddings of the self-intersection zero spheres. Since these become isotopic after one stabilization, the boundary of the tubular neighborhood does as well. Similarly, this embeddings in this family are all topologically isotopic and smoothly concordant to one another.

Since the images of these embeddings are disjoint from the nucleus, one may submanifold sum with any manifold along a submanifold in the nucleus, and obtain analogous embeddings and equivalences that we will still denote $J_q, F_{J,*}, K_J$, and $G_J$. Here we append a $J$ to the subscript to emphasize that these maps are equivalences of embeddings. Since $\bZ^p = \bZ'\cs(\sss)_\bZ$ contains a $\sss$-summand there is smooth stable isotopy $F_{J,\bZ}$ defined on 
\[ 
\bZ^p\cs_T\bX_0^0 \cong (\bZ'\cs_T\bX_0^0)\cs(\sss)_\bZ.
\]
This means $J_q$ is smoothly isotopic to $J_0$ in $\bZ^p\cs_T\bX_0^0$, via $F_{J,\bZ}$. Thus, the domain of $F_{J,\bZ}$ is $\bZ^p\cs_T\bX_0^0$ and the domain of $F_{J,*}$ is $(\bZ^p\cs_T\bX_0^0)\cs(\sss)$. Note that $\bZ^p\cs_T\bX_0^0$ is just $\bZ^{p+1}$. This will imply that the same manifolds will support exotic families of diffeomorphisms, embedded surfaces, and embedded $3$-manifolds. 

To obtain examples of smoothly knotted spheres of self-intersection plus one, we just stabilize via $\cptwo$. Results of Mandelbaum~\cite{mandelbaum:decomp} and Moishezon~\cite{moishezon:sums} imply the existence of a diffeomorphism
\[
\vphi_q^+\colon \bX_q \cs \cptwo = E(2;2q+1) \cs \cptwo \to \bX_0^+ = \bX_0 \cs \cptwo = E(2;1) \cs \cptwo,
\]
and work of Wall~\cite{wall:diffeomorphisms} says that $\vphi_q^+$ may be chosen to be homotopic to $\psi_q\cs 1_{\cptwo}$. The linear embedding $J^+\co S^2 = \cpone \to \cptwo\setminus\text{pt}$ then induces embeddings $J_q^+$ and equivalences $F_{J,*}^+$, $K^+_J$, $G^+_J$ as in the self-intersection zero case. After fiber summing with $Z^p$, we also have the isotopy $F_{J,Z}^+$.  

The boundary of the tubular neighborhood of $\cpone\subset\cptwo$ is a $3$-sphere, so the family of embeddings $J_q^+$ generates a corresponding family of embeddings $\widetilde{J}_q^+$ of $S^3$. These embeddings of $S^3$ are topologically isotopic, smoothly concordant, and stably isotopic after one external stabilization.

Combining these ingredients gives the desired families of embeddings.
\begin{definition}\label{D:fam}
    The families of embeddings of self-intersection zero spheres are given by
    \[
    J_q^{p+1}\co S^{p+1} = I\times S^p/(\{0,1\}\times S^p \cup I\times \text{pt})\to \Emb(S^2,Z\cs_T\bX_0^0),
    \]
\[
J_q^{p+1}(t,\theta)(v) = F_{*}(t,\theta)(J_q(v)).
\]
The associated equivalences 
\begin{gather*}
    F_J^{p+1}\co I\times S^{p+1} \to \Emb(S^2,Z\cs_T\bX_0^0\cs\sss),\\
      G_J^{p+1}\co I\times S^{p+1} \to \Emb(S^2,Z\cs_T\bX_0^0),\\
      K_J^{p+1}\co I\times S^{p+1} \to \Emb(S^2,Z\cs_T\bX_0^0),
\end{gather*}
    are given by
\begin{gather*}
    F_J^{p+1}(s,t,\theta)(v) = F_{*}(t,\theta)(F_{J,*}(s,v)), \\
    G_J^{p+1}(s,t,\theta)(v) = F_{*}(t,\theta)(G_J(s,v)), \\   
    K_J^{p+1}(s,t,\theta)(v) = F_{*}(t,\theta)(K_J(s,v)). 
\end{gather*}
\begin{remark}\label{R:emb-basepoint}
    Note that for fixed $p$, as $q$ varies, the embeddings $J^{p+1}_q$ are based at different base points. We use the isotopy $F_{J,Z}$ as a path connecting the base point for $J^{p+1}_q$ to the base point for $J^{p+1}_0$. Using the path, the families may be modified to have a common basepoint. 
\end{remark}
The corresponding families of spheres of self-intersection one and associated equivalences come from stabilizing by $\cptwo$ a the last stage and are just denoted by appending a $+$ to the notation as an extra superscript. The families of embeddings of  $S^2\times S^1$ and $S^3$ obtained as the boundary of the tubular neighborhoods are denoted by appending a $\widetilde{~}$ to the notation.   
\end{definition}

\begin{remark}\label{R:emb-many}
    There are non-spin manifolds that support for a range of values of $k$ of the same parity, $(k+1)$-dimensional families of exotic diffeomorphisms and $k$-dimensional families of embeddings of all of the types that we have discussed (self-intersection zero sphere, self-intersection one sphere, $S^2\times S^1$, 3-sphere). The point is that 
    \[
    (\bZ^p\cs\cptwobar)\cs_T(\bX_0\cs(\sss)) \cong (\bZ^p\cs 2\cptwobar)\cs_T(\bX_0\cs\cptwo).
    \]
    The manifold $(\bZ^p\cs\cptwobar)\cs_T(\bX\cs(\sss))$ supports families of diffeomorphisms as it is obtained by blowing up a family of diffeomorphisms. It also supports families of embeddings of self-intersection zero spheres and families of embeddings of $S^2\times S^1$. The manifold $(\bZ^p\cs 2\cptwobar)\cs_T(\bX_0\cs\cptwo)$ supports  families of embeddings of self-intersection one spheres and families of embeddings of $S^3$. We refer to Remark~\ref{R:def-comments} \eqref{multi} for details.
\end{remark}

This completes the construction of many spherical families of embeddings of any given dimension. It further demonstrates that the families of embeddings of a fixed space type (self-intersection zero sphere, self-intersection one sphere, 3-sphere, $S^2\times S^1$) over the same dimensional parameter space are topologically isotopic, smoothly concordant, and stably equivalent after one sum with $\sss$. We  now proceed to show that the corresponding families are independent in the homology of the appropriate space of embeddings.

\subsection{Family blowdown}\label{s:family-blowdown} 
To distinguish the families of embeddings we use family blow-down, family surgery, and a family submanifold sum to convert families of embeddings into families of manifolds. Konno's characteristic class invariant of families of manifolds is the invariant that we associate to families of embeddings.

There are two equivalent ways of describing the smooth structure of the blowdown of a sphere of self-intersection $\pm 1$, $\Sigma$; they each lead in somewhat different fashion to a family version. The first, whose family version is discussed in Construction~\ref{embeddings-to-manifolds}, is to remove a tubular neighborhood $\nu(\Sigma)$ from $Z$ and then glue in a $4$-ball along the resulting boundary.  The second, together with the family version is described in Construction~\ref{c:blowdown}. It is the connected sum of $Z$ with $\cptwobar$ along a $2$-sphere. There is an analogous pair of ways to describe surgery on a self-intersection zero sphere. The first removes the tubular neighborhood of the sphere and glue in a $S^1\times S^2$. The second is to take submanifold sum with families of $S^4$.

In this paper, we will use the cut-and-glue approach, but comment on the submanifold sum approach. 

\begin{construction}\label{embeddings-to-manifolds} 
Let $\Xi$ be a finite $k$-dimensional cell complex, and let $J:\Xi\to \Emb(S^2,Z)$ be continuous. We obtain a family of spaces from $\Xi\times Z$ by removing a tubular neighborhood of $J$. Even though $\Xi\times Z$ is a trivial family of manifolds, the exterior
\[
\mathcal{Z}_J' = (\Xi\times Z)\setminus \interior\,\nu(J),
\]
could be a non-trivial family of manifolds with boundary. The boundary is a bundle over $\Xi$ with fiber $\partial\nu(J_{x_0})$.  If every sphere in the family has self-intersection zero, $\partial\nu(J_{x_0}) \cong S^2\times S^1$. If every sphere in the family has self-intersection $\pm 1$, $\partial\nu(J_{x_0}) \cong S^3$. 

Let $\widetilde{J}:\Xi\to \Emb(Y,Z)$ be a family of embeddings of a $3$-manifold. In the same way we consider the exterior of the total embedding:
\[
\mathcal{Z}_{\widetilde{J}}' = (\Xi\times Z)\setminus \interior\,\nu(\widetilde{J}).
\]
When $Y$ and $Z$ are orientable, the embedding will be two-sided so the boundary will be a bundle over $\Xi$ with fiber $Y\perp\!\!\!\perp - Y$. In the special case $Y = S^2 \times S^1$ the fiber of the bundle will be $2S^2\times S^1$, and when $Y = S^3$ the fiber of the bundle will be $2S^3$.

A special point here is that bundles with fibers consisting of copies of $S^2\times S^1$ and $S^3$ extend uniquely 
to bundles with fibers consisting of copies of $D^3\times S^1$ and $D^4$. (For this discussion, we temporarily suspend the convention that $\diff =\diffp$.)  For $S^2\times S^1$, the central point is Hatcher's calculation~\cite{hatcher:S1xS2} of the homotopy type of $\diff(\SSS)$, which follows from the Smale conjecture via a `disjunction' technique.  As part of the argument, Hatcher shows that the inclusion of $\diff_s(\SSS)$ into $\diff(\SSS)$ is a homotopy equivalence, where $\diff_s(\SSS)$ consists of diffeomorphisms that take slices $\{x\} \times S^2$ to slices $\{y\} \times S^2$. It follows from this and the homotopy equivalences $\diff(S^1) \simeq O(2)$ and $\diff(S^2) \simeq O(3)$ 
that any family of diffeomorphisms of $\SSS$ extends to a family of diffeomorphisms of $S^1 \times B^3$.

For $S^3$ notice that $\diff(S^3)$ is homotopy equivalent to ${SO}(4)$ by Hatcher's resolution of the three dimensional Smale conjecture, \cite{hatcher:smale}.

The extension property allows us to define $\calz_J = \calz_J'\cup_JE$ and $\calz_{\widetilde{J}} = \calz_{\widetilde{J}}'\cup_{\widetilde{J}}E$ where $E$ is a bundle over $\Xi$ with fiber consisting of the appropriate number of copies of $D^3\times S^1$ and $D^4$.
\end{construction}

\begin{remark}
    While defining the Seiberg-Witten characteristic classes for possibly disconnected $4$-manifolds is not a problem, for the examples that we consider, $\calz_{\widetilde{J}}$  decomposes into two components, and we discard the component with fibers having small second Betti number.
\end{remark}

The invariant for families of embeddings is defined via these families of manifolds.
\begin{definition}\label{emb-inv-def}
    If $J\colon\Xi\to\Emb(S^2,Z)$ is a family of self-intersection zero or one embeddings, set
    \[
    \sw^{\Emb}(J) = \fsw^{\Z}(\calz_J\to\Xi,-) = \swc^{\Z}_Z(-)\cap[\Xi].
    \]
    If $\widetilde{J}\colon \Xi\to\Emb(S^2,Z)$ is a family of embeddings of $S^3$ or $S^2\times S^1$, set
    \[
    \sw^{\Emb}(\widetilde{J}) = \fsw^{\Z}(\calz_{\widetilde{J}}\to\Xi,-) = \swc^{\Z}_Z(-)\cap[\Xi].
    \]    
\end{definition}
Since the first invariant is defined on the union of all self-intersection zero (or one) embeddings, it will allow one to conclude that homotopy and homology groups of these families of embeddings contain infinite rank summands. It also allows one to conclude that the kernels  of the maps to topological embeddings also contain infinite-rank summands. 
There are technical issues related to generalizing these results to  higher dimensional families. Namely, one would need to establish a natural extension of the boundary of the exterior of the family to a well-understood family of spaces.

Taking the fiber sum with a standard family of embeddings avoids extension questions but adds a requirement on the family. This approach is employed by Drouin in his thesis, \cite{drouin:embeddings}. Once examples are constructed for any non-negative self-intersection, the remaining self-intersections are obtained by orientation reversal. In particular, the sum of the exceptional spheres in $n\cptwobar$ is a standard model of a self-intersection $-n$ sphere. This gives rise to a standard family in $S^p\times n\cptwobar$. Drouin defines a family of self-intersection $n$ spheres to be \emph{parameter-untwisted} if the normal bundle of the family is fiber orientation-reversing isomorphic to the normal bundle to the normal bundle of the family in $S^p\times n\cptwobar$. He further constructs an invariant to detect parameter-untwisted families. Topologically trivial families are all parameter-untwisted.

\begin{construction}\label{c:blowdown}
Let
$J: \Xi \to \Emb_A(S^2,Z)$ be continuous, with
\[
\Gamma_J = \left\{(x,J_x(y)) \mid (x,y) \in \Xi \times S^2\right\} \subset \Xi \times  Z
\]
denoting its graph. Note that $\Gamma_J$ is diffeomorphic to $\Xi \times S^2$. 

In the parameter-untwisted case, Drouin creates a family $\calz_J$ as the fiberwise sum 
\begin{equation}\label{E:familysum}
(\Xi \times Z) \cs_{\Gamma_J = \Xi \times E} (\Xi \times \cptwobar).
\end{equation}
\end{construction}

\subsection{Computing the invariant of families}\label{compute-emb}
For spherical families of embeddings, the families of spaces used to define the invariants of families of embeddings are bundles over spheres. These bundles are determined by the transition function which is a map of the equator into the diffeomorphism group of the fiber. In this case, the invariant of the family of spaces agrees with the invariant of the family of diffeomorphisms. Indeed, the invariant of the family of spaces depends on a family of moduli spaces defined 
over the base sphere of the family of spaces via data on the sphere. The invariant of the diffeomorphism is defined via a collection of data defined over a disk. Since the family of spaces is obtained by gluing two trivial families of spaces defined over the hemispheres using the family of diffeomorphisms. One may take constant data over one of the hemispheres. There will be no solution to the parameterized equations over the hemisphere with constant data, and this leads to the equality of the two invariants. 

This leaves the problem of identifying the family of diffeomorphisms associated to $\calz_J$ and $\calz_{\widetilde{J}}$. The stable isotopy effectively defines a family isomorphism from the twisted family to a trivial family. To see this, it is useful to use the following model of a spherical family of spaces associated to a spherical family of diffeomorphisms.
\begin{definition}
    Let $\gamma\co S^p \to \diff(Z)$ and define
    \[
    S^{p+1}\times_\gamma Z = \left((O\times\{\pm\})\times D^{p+1}\times Z\right)/\sim
    \]
    where $(-,\theta,z) \sim (0,\theta,z)$ and $(+,\theta,z) \sim (1,\theta,\gamma(\theta)(z))$ when $|\theta| = 1$. The projection to $S^{p+1}$ just forgets the $Z$-factor.
\end{definition}

The smooth stable isotopy gives rise to the following family isomorphism.
\[
\Phi\co S^{p+1}\times_{\alpha^p\cs_T\,1_{E(2,2q+1)^0}}(\bZ^p\cs_TE(2,2q+1)^0) \to S^{p+1}\times (\bZ^p\cs_TE(2,2q+1)^0,
\]
given by 
\[
\Phi(t,\theta,z) = \begin{cases}
    (\pm,\theta,z), \ \text{if} \ t = \pm, \\
    (t,\theta,F_*(t,\theta)(z).
\end{cases}
\]
 The diffeomorphism $\vphi_q\co E(2,2q+1)$ induces the family isomorphism 
 \[
 1_{S^{p+1}}\times \vphi_q\co S^{p+1}\times (\bZ^p\cs_TE(2,2q+1)^0 \to  S^{p+1}\times (\bZ^p\cs_T\bX_0^0).
 \]

The composition $(1_{S^{p+1}}\times \vphi_q)\circ\Phi$ is an isomorphism between $S^{p+1}\times (\bZ^p\cs_T\bX_0^0)$ and  $S^{p+1}\times{\alpha^p\cs_T1_{E(2,2q+1)^0}}(\bZ^p\cs_TE(2,2q+1)^0)$. Performing parameterized surgery on each side yields a family isomorphism
\[
S^{p+1}\times_{\alpha^p\cs_T1_{E(2,2q+1)}}(\bZ^p\cs_TE(2,2q+1))\to \calz_J.
\]
This implies that $\sw^{\Emb}(J^{p+1}_q) = \sw^{H_p}(\alpha^p\cs_T1_{E(2,2q+1)})$. 
\begin{proof}[Proofs of Theorem~\ref{T:emb2}, Theorem~\ref{T:emb3} and Corollary~\ref{C:embK}]
Evaluating $\sw^{H_p}$ on the $\Spinc$-structures
corresponding to $K+2\Sigma\Tc+2\ell \Tc_1$ as in the proof of Theorem~\ref{T:Zinfty} establishes that $J^{p+1}_q$ are linearly independent in $H_{p+1}(\Emb(S^2,\bZ^p))$, and thus in $\pi_{p+1}(\Emb(S^2,\bZ^p))$. Since it was already discovered that $J^{p+1}_q$ topologically contract via $G_J$, this establishes that there is a $\Z^\infty$ summand in the kernel as well. The stable contraction $F_J$ demonstrates that the families $J^{p+1}_q$ are all equivalent after one external stabilization. This completes the proof in the self-intersection zero case. The proofs for families with self-intersection one, $S^1\times S^2$, and $S^3$ are all the same, just modifying the notation by appending $+$ and $\sim$ where appropriate. The family concordances $K_J$ establish the corollary.   
\end{proof}

\begin{remark}\label{R:surf-bound}
    The exotic families of surfaces that we construct all live in an extra $4$-manifold summand. The summation takes place in a neighborhood that avoids the support of the family of diffeomorphisms and is contained in a nucleus that avoids the support of the families of diffeomorphisms used in the construction. This means a finger of this surface can be dragged into this region and into any manifold attached via submanifold in the region, in particular, one can pick a symplectic manifold and sum along a symplectic torus. One can then excise any codimension zero submanifold that missed the symplectic surface together with any par of the finger intersecting this submanifold. The result will be an exotic family of surfaces (possibly with boundary) in a manifold with boundary. The proof that the resulting families are distinct relative to the boundary is to just glue back the excised part. 
    This establishes Theorem~\ref{T:stab-all}
\end{remark}

\section{Metrics of positive scalar curvature}\label{S:psc}
In this section, we prove Theorem~\ref{T:psc} concerning higher homology and homotopy groups of $\calrp(\bZ)$, the space of Riemannian metrics with positive scalar curvature (PSC) on certain $4$-manifolds $\bZ$.  The method extends 
Witten's proof~\cite{witten:monopole}  that for $4$-manifolds with $b_2^+ >1$, the non-vanishing of the Seiberg-Witten invariant obstructs the existence of a PSC metric. This is a much stronger restriction than in higher dimensions, where in many cases, existence is determined by index theoretic considerations~\cite{rosenberg:psc-progress}. For spin $4$-manifolds, admitting a PSC metric implies the vanishing of the $\widehat{A}$-genus, and hence the signature. Similarly, $1$-parameter Seiberg-Witten invariants were used in used~\cite{ruberman:swpos} to obstruct isotopy between PSC metrics. In that paper, the manifolds were connected sums of $\cptwo$ and $\cptwobar$ and hence were not spin. So the part of Theorem~\ref{T:psc} relating to $\pi_0(\calrp(\bZ))$ when $\bZ$ is spin, is new, as are all of the statements about higher homotopy groups.

For the remainder of this section, we assume that $X$ is a manifold admitting a PSC metric, $\spincs \in \Shat_X^k$, and that $b_2^+(X) >k+2$. In this situation, we define invariants of the homology of $\calrp(X)$ analogous to the diffeomorphism invariants $\sw_{\spincs}^{H_k}$. Suppose that $\Xi$ is compact, and note that any  map $g\colon \Xi \to \calrp(X)$ yields data $\data\colon \Xi \to \Pi(X)$ of the form $\data(\theta) = (g(\theta),0)$.   Witten's argument in the family setting shows that there will be no irreducible points in the parameterized moduli space $\calm(\data)$. Moreover, if $\Xi$ is a manifold with $\dim(\Xi) \leq k+2$, then there is a small perturbation $\data' =(g',0)$ for which there are no reducible solutions, or in other words will be good. This uses the observation~\cite[p. 142]{friedrich:dirac} \cite{donaldson:swsurvey}  \cite[Proposition 4.3.14]{donaldson-kronheimer} that the set of metrics for which there are reducible solutions to the unperturbed \SW has codimension $b^2_+(X)$ in $\Pi(X)$.  Since $\calrp(X)$ is open in $\met(X)$, that set also has codimension $b^2_+(X)$ in $\calrp(X)$.  Working simplex by simplex, we see that any homology class in $H_k(\calrp(X))$ can be represented by a chain $\alpha$ in $\calrp(X)$ that is good when viewed as a chain in $\Pi(X)$. 
\begin{definition}\label{D:psc}
With assumptions on $X$ and $\spincs$ as before, let $\alpha$ be a good chain representing a homology class in $\calrp(X)$. Choose a good chain $\beta$ in $\Pi(X)$ with $\partial \beta = \alpha$, and set $\sw_{\spincs}^{k,\, \psc}(\alpha) = \# \calm(\beta)$ as in Definition~\ref{Hinvdef}.
\end{definition}
Arguments similar to those in Section~\ref{s:hom} show that these invariants are well-defined and hence define homomorphisms $\sw_{\spincs}^{k,\, \psc}\colon H_k(\calrp(X)) \to \Z$.  Summing over $\spincs \in \Shat_k(X)$, we assemble these homomorphisms into a homomorphism 
\[
\swu^{k,\, \psc}\colon H_k(\calrp(X)) \to \Z^\infty.
\]

The main observation is that for any manifold $X$, the group $\diff(X)$ acts by pull-back on the space of Riemannian metrics on $X$, preserving the subspace $\calrp(X)$. So an initial choice $g$ of PSC metric defines a map $E^g\colon \diff(X)\to \calrp(X)$.  Directly from the definitions, we get the following lemma.
 \begin{lemma}\label{L:psc-diff}
     For any $\alpha \in H_k(\diff(X))$ we have $\swu^{H_k}(\alpha)= \swu^{k,\, \psc}(E^g_*(\alpha))$.
 \end{lemma}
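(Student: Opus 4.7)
The plan is to arrange compatible generic choices so that the two invariants end up counting points in the very same oriented zero-manifold inside the space of data $\Pi(X)$.

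First, I would take as initial good data $\data_0=(g,0)\in\Pi(X)$, where $g$ is the chosen PSC metric. Witten's vanishing argument shows that $\calm_{X,\spincs}(\data_0)$ contains no irreducible solution, and the fact (recalled in the paragraph preceding Definition~\ref{D:psc}) that the reducible locus has codimension $b_2^+(X)$ in $\met(X)$ allows a small perturbation of $g$ inside $\calrp(X)$, keeping the two-form component zero, after which $\data_0$ is good in the sense of Definition~\ref{Hinvdef}. Since $b_2^+(X) > k+2$, this can be done without leaving the PSC stratum.

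Next, I would identify the $\Pi(X)$-chains appearing in the two definitions. For any smooth simplex $\sigma\colon\Delta^k\to\diffo(X)$ in a good representative of $\alpha$, the simplex $\sigma^*\data_0$ featured in Definition~\ref{Hinvdef} is the map $\theta\mapsto(\sigma(\theta)^*g,0)$. Under the inclusion $\calrp(X)\hookrightarrow\Pi(X)$, the simplex $E^g\circ\sigma\colon\Delta^k\to\calrp(X)$ appearing in $E^g_*\alpha$ is given by $\theta\mapsto(\sigma(\theta)^*g,0)$ as well. Hence the cycle $\alpha^*\data_0\in\Pi(X)$ used to compute $\sw_\spincs^{H_k}(\alpha)$ and the image of $E^g_*\alpha$ used to compute $\sw_\spincs^{k,\,\psc}(E^g_*\alpha)$ coincide as chains in $\Pi(X)$.

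Both invariants are then defined by choosing a bounding chain $\beta$ of this common cycle in $\Pi(X)$, made good via Proposition~\ref{P:transverse-hom}, and counting $\#\calm(\beta)$ with signs. Using a single such $\beta$ in both constructions yields the equality $\sw_\spincs^{H_k}(\alpha)=\sw_\spincs^{k,\,\psc}(E^g_*\alpha)$ for each $\spincs\in\Shat_X^k$, and summing over $\spincs$ gives the claim. The only mild point to check will be that the notion of a ``good chain'' in $\calrp(X)$ used in Definition~\ref{D:psc} agrees, once pushed into $\Pi(X)$, with the notion of a good chain in $\Pi(X)$ used in Definition~\ref{Hinvdef}; both reduce to transversality with the universal moduli projection $p\colon\calp\to\Pi(X)$ of Section~\ref{s:chains}, so one common perturbation suffices.
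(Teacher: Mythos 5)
Your proposal is correct and is exactly the argument the paper has in mind: the paper dismisses the lemma with ``directly from the definitions,'' and your write-up simply makes explicit that with initial data $(g,0)$ (perturbed within $\calrp(X)$ to be good), the cycle $\alpha^*\data_0$ of Definition~\ref{Hinvdef} coincides in $\Pi(X)$ with the image of $E^g_*\alpha$, so a common good bounding chain $\beta$ computes both counts. No gaps; this matches the paper's intended proof.
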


\begin{proof}[Proof of Theorem~\ref{T:psc}]
    By construction, the manifolds $\bZ^p$ constructed in the proof of Theorem~\ref{T:infgen} are completely decomposable. In the non-spin case, we write $\zzn^p$ for the corresponding connected sum of copies of $\cptwo$ and $\cptwobar$. In the spin case, we write $\zzs^p$, being sure to choose the signature equal to $0$ so that $\zzs^p$ is diffeomorphic to a connected sum of copies of $\sss$. Note that all the summands have standard PSC metrics, which can be transported to the connected sums via the gluing construction of Schoen-Yau~\cite{schoen-yau:psc} and Gromov-Lawson~\cite{gromov-lawson:psc} (as amended in~\cite{rosenberg-stolz:psc}). 
    
    Pick an initial good PSC metric $g$ on each $\bZ^p$. Theorem~\ref{T:infgen} constructs, for all $0< j \leq p$ with $j \equiv p \pmod{2}$, a $\Z^\infty$ summand of $H_j(\diff(\bZ^p))$ detected by $\swu^{H_k}$. Lemma~\ref{L:psc-diff} implies that the image of $E^g_*$ is a $\Z^\infty$ summand of $H_j(\calrp(\bZ^p))$.  Since the summand in $H_j(\diff(\bZ^p))$ consists of spherical classes, the same is true for the corresponding summand of $H_j(\calrp(\bZ^p))$, and so we get a $\Z^\infty$ summand of $\pi_j(\calrp(\bZ^p))$.
\end{proof}

\section{Proof of the parameterized irreducible-reducible gluing theorem}\label{glue-proof}
In this section, we prove Theorem~\ref{pIRglue}, which describes how to glue families of solutions to the parameterized \SW equations on a connected sum. The term \emph{parameterized irreducible-reducible} indicates that the theorem is concerned with parameterized moduli spaces (with different parameter spaces for the two summands), where one moduli space consists of only good (and hence irreducible) solutions, and the other consists of only reducible solutions. 

The concept and structure of the proof are very similar to the proof of the usual (non-parameterized) blow-up formula~\cite[Theorem 4.6.7]{nicolaescu:swbook}. One first stretches a neck on $Z\cong Z\cs S^4$ to conclude that the moduli space for 
$Z$ is the product of the moduli space on $\hat Z$, a punctured copy of $Z$ with a cylindrical end, and the moduli space on $\R^4=\hat{S^4}$ with suitable cylindrical metric. (We remind the reader of our general convention that a `\, $\hat{}$\ '  over a manifold means that it has a cylindrical end, and more generally that a `\, $\hat{}$\ ' over a geometric object means that it is appropriately adapted to the cylindrical end.)  Due to the positive scalar curvature metric on $\hat{S^4}$ this last moduli space is just a point, so the moduli spaces associated to $Z$ and $\hat Z$
are isomorphic. Now $\hat{S^4}$ may be replaced with anything that has a moduli space consisting of exactly one suitably non-degenerate reducible configuration. Indeed one just ``stretches the neck" on such a configuration to arrive at the product of the moduli space of $\hat Z$ with the moduli space of the special family. This is the case for $\cptwobar$ as given in the traditional blow-up formula. It is also the case for any simply-connected negative definite manifold. It is also the case for a family where there is one data point with a suitably non-degenerate reducible configuration. The main example we have in mind is 
$\sss$ with the one-parameter family of data $\data_t^{\sss}$ as specified in Lemma~\ref{L:standardsssdata} below. 

The neck-stretching argument can get a bit technical, and leads to complicated notation. A very detailed description of these gluing arguments in the non-parameterized case is given in \cite{nicolaescu:swbook}. There have been various generalizations of this work into the parameterized setting. The wall-crossing theorem of \cite{li-liu:family} may be viewed as one, as can the calculation in~\cite{ruberman:isotopy,ruberman:swpos} and the families blow-up formula of \cite{liu}. The Baraglia-Konno gluing formula \cite{baraglia-konno:gluing} comes very close to what we need for this paper, but we could not see how to use that result exactly as stated. The Baraglia-Konno gluing formula considers the sum of a pair of families each defined over the same parameter space. The nature of the Suspension Theorem~\ref{anti-hol} requires a result that works for sums of families defined over different parameter spaces.

The gluing procedure we use has roots in work of Taubes~\cite{taubes:definite}, and  has four steps. The first step is to construct approximate solutions. The second step is to study the linearizations of the equations around the approximate solutions, and the third step is to solve the full non-linear equations via an application of the contraction mapping theorem. This provides a map from suitable 
pairs of configurations on the two sides into the moduli space of the closed manifold. The fourth and final step is to show that the map defined in this way is an isomorphism.

By using CW theory to define invariants for families as in \cite{konno:classes}, or transverse singular chains as done here in section~\ref{s:chains} one reduces to the case of topologically trivial families of manifolds over cells. This is the situation in which we describe our gluing theorem.

Even when the family of manifolds is trivial, the Seiberg-Witten map for the parameterized case is correctly viewed as a section of a Hilbert bundle over the space of parameterized configurations mod gauge. Our approach is to organize the parameterized gluing theorem so it resembles the unparameterized gluing results as closely as possible. By restricting to the case of zero dimensional moduli spaces, we show that we can locally work with metrics that do not vary as the parameter varies. This reduces the bundles to trivial bundles.
We further argue that we may select perturbations with support away from the neck. Since most of the analysis in the gluing theorem takes place in the neck, these two simplifications essentially reduce the proof of our gluing theorem to the unparameterized case. These simplifications are addressed in Section~\ref{S:spec-data}.

\subsection{Analytical framework}\label{an-frame}

The codomain of the Seiberg-Witten map includes the space of self-dual $2$-forms as a summand. This means that in the parameterized setting, the codomain should be viewed as a bundle over the parameter space. In the case of families of a closed manifold $X$ over a contractible base, $\Xi$, the correct geometry to describe this proceeds as follows.  {We have, implicitly, chosen a trivialization of the family; this does not affect any of the conclusions in this section.} Let $\mathcal{A}$ denote the space of $L^2_2$-connections on the determinant line of the $\Spinc$ structure, and let $\mathcal{C} = \Xi\times \mathcal{A}\times L^2_2(X,S^+)$ be the configuration space. 
Define a Hilbert bundle over the configuration space by
\[
\overline{\mathcal{E}} = \{((\theta,A,\psi),(\gamma,\vphi)) \in \mathcal{C}\times L^2_1(X,i\, \Bigwedge^2\oplus S^-)\,|\, P_+(g_\theta)\gamma = 0 \}
\]
where $P_+(g) = \frac12(1+*)$ is the projection to the self-dual forms.
One defines the gauge group to be $\mathcal{G} = L^2_3(X,S^1)$, and the based gauge group $\mathcal{G}^0$ to be the subgroup mapping a basepoint to $1\in S^1$. 
One then takes the quotient by the standard action of the gauge group \cite[Section 2.1.1]{nicolaescu:swbook} to get
\[
\mathcal{E} = \mathcal{G}\backslash\overline{\mathcal{E}} \to \mathcal{B} = \mathcal{G}\backslash\mathcal{C}.
\]

The Seiberg Witten equations may be viewed as a section of this bundle map given by 
\[
\widehat{SW}_\data(\theta,A,\psi) = (\sqrt{2}(F^+_{A} +i\eta^+(\theta) - \frac12{\bf c}^{-1}(q(\psi) )), D^+_{A}\psi).
\]
This is a well-defined section of $\mathcal{E}$, and the parameterized moduli space is just the zeros of this section. 

These equations generalize to manifolds with cylindrical ends. A detailed description of cylindrical structures may be found in the book by Nicolaescu \cite{nicolaescu:swbook}. 
The situation considered in \cite{nicolaescu:swbook} is that of a closed $4$-manifold $\hat N(0)$ containing a separating copy of $[0,1]\times N$. In particular manifolds with a `\, $\hat{}$\ ' accent are $4$-manifolds and manifolds with no accent are $3$-manifolds. The manifold $N(r)$ is obtained by adding a copy of $[0,r]\times N$ into the neck. Taking the limit as $r\to\infty$ leads to $\hat N(\infty) = \hat N_1 \coprod \hat N_2$. In fact, this process is turned around and the construction starts with the manifolds $\hat N_1$ and $\hat N_2$ to obtain the manifolds $N(r)$. 
\[
N(r) = \left(\hat N_1\setminus (r+1,\infty)\right)\cup_{\hat\vphi_r} \left(\hat N_2\setminus (r+1,\infty)\right),
\]
where $\hat\vphi_r(t,x) = (t - 2r - 1, \overline{x})$.
It is useful to consider that the cylindrical part extends even further to a copy of $[-1,\infty)\times S^3$ in $\hat N$.

Analogous constructions are performed using metrics, bundles, and $\Spinc$ structures.  \cite[Sec 4.1]{nicolaescu:swbook}. The symbol $\hat N$ may be used to represent either $\hat N_1$ or $\hat N_2$. At first, Nicolaescu considers a completely general situation in which a product neck is stretched. He later restricts to two special situations, both with $N = S^3$. The first is when the configurations on 
both sides are irreducible and strongly irreducible \cite[Theorem 4.5.17]{nicolaescu:swbook}. The second, which is the one relevant to us, is when all configurations on $\hat N_1$ are irreducible and strongly regular, and there is a unique configuration on $\hat N_2$ and this configuration is reducible and suitably non-degenerate \cite[Theorem 4.5.19]{nicolaescu:swbook}. Baraglia and Konno~\cite{baraglia-konno:gluing} have a generalization of this theorem so that both sides may be parameterized by the same parameter space, but our version allows different parameter spaces on each side. Generally, objects on the irreducible side will be indicated with a subscript of $1$, objects on the reducible side will be indicated by a subscript of $2$, and objects on the closed manifold with neck-length $r$ will be indicated by adding $(r)$ to the notation.

To state our gluing result we extend the definitions of data to the cylindrical case. Let $\widehat{\met}(\hat N_i)$ denote the set of cylindrical metrics limiting to the standard round metric on $S^3$ and  $\widehat{\Omega}^2(\hat N_i)_0$ denote the set of $2$-forms with support in $\hat N_i\setminus (0,\infty)\times S^3$. The set of cylindrical data is then given by 
\[
\widehat{\Pi}(\hat N_i) = \widehat{\met}(\hat N_i) \times \ker\left(d:\widehat{\Omega}^2(\hat N_i)_0\to\widehat{\Omega}^3(\hat N_i)_0\right).
\]


Notice that there is a natural notion of sums of data. In particular, given $\data^i:\Xi_i\to \widehat{\Pi}(\hat N_i)$, we get
\[
\data^1\cs_r\data^2:\Xi_1\times \Xi_2\to \widehat{\Pi}(N(r)).
\]
In the situations where we apply this, $\Xi_i$ will be a simplex. We use this notation because $\Delta$ is used to represent difference maps here as in \cite{nicolaescu:swbook}.

In general, one may consider several different variations of cylindrical end moduli spaces corresponding to different constraints on the value at infinity. This leads to many technical complications. Since the situation here always has an end corresponding to the standard round $S^3$, most of these complications may be avoided. In the cylindrical case, 
there are several different ways to generalize spaces of gauge transformations, connections and spinor fields. The first possibility is to take gauge transformations modeled on the weighted Sobolev space $L^2_{3,\delta}$, connections and the positive spinor fields modeled on $L^2_{2,\delta}$, and self-dual $2$-forms and negative spinor fields modeled on $L^2_{1,\delta}$. These are the fibers of the Hilbert bundle over the configuration space. Here $\delta$ is any positive parameter so that the only eigenvalue of the linearization of the gauge-fixed Seiberg-Witten equations over $S^3$ less or equal to $\delta$ is zero. In this model, the exponential decay forces the gauge transformations to the identity at infinity, so this is an analog of the based gauge group with the base point at infinity. The resulting moduli space corresponds to the framed moduli space over the closed manifold.

To define a moduli space on the cylindrical end manifold corresponding to the moduli space over a closed manifold, one must extend the function spaces to allow other values at infinity. The first approach extends the gauge group to allow any unit complex number to be the value at infinity.

Let $\tilde{h}\co [-1,\infty) \to [0,1]$ be a smooth function equal to zero in a neighborhood of $0$ and equal to one on $[0,\infty)$. Set $h\co \hat{N}\to [0,1]$ to be $\tilde{h}$ composed with projection on $[-1,\infty)\times S^3$ extended as zero elsewhere. 
\begin{definition}
    The \emph{restricted cylindrical end gauge group} is 
    \[
    \mathcal{G}' = \{g + hz \,|\, g\in L^2_{3,\delta}(\Bigwedge^0\hat N\otimes\bc), \ 
    z \in S^1, |g + hz| = 1\}.
    \]
    The map $\partial_\infty\co\mathcal{G}\to S^1$, given by $\partial_\infty(g+hz) = g$ is the value at infinity. We often write $g$ for $g+hz$, and $\partial_\infty(g) = g_\infty$.
    The \emph{infinity-based cylindrical end gauge group} is 
    $\mathcal{G}^0 = \text{ker}(\partial_\infty)$.
    The corresponding quotients are $\mathcal{B} = \mathcal{G}\backslash\mathcal{C}$ and $\mathcal{E} = \mathcal{G}\backslash\overline{\mathcal{E}}$. The \emph{framed} spaces are the corresponding quotients by the based gauge transformations: $\widehat{\mathcal{B}} = \mathcal{G}^0\backslash\mathcal{C}$ and $\widehat{\mathcal{E}} = \mathcal{G}^0\backslash\overline{\mathcal{E}}$.  
\end{definition}

The second approach adds all gauge transformations over $S^3$ to the gauge group. This requires one to also add Seiberg-Witten configurations over $S^3$ to the space of gauge and spinor fields. 

\begin{definition}
    The \emph{cylindrical end gauge group} is 
    \[
    \mathcal{G} = \{g + hu \,|\, (g,u)\in L^2_{3,\delta}(\Bigwedge^0\hat N\otimes\bc) \times L^2_{3}(\Bigwedge^0S^3\otimes\bc), \ 
    |u| = 1, |g + hu| = 1\}.
    \]
The \emph{cylindrical end configuration space} is
\begin{small}
    \[
\mathcal{C} = \{(A+ia+ihb,\psi,\theta) \,|\, (ia,\psi)\in L^2_{2,\delta}(i\Bigwedge^1\hat N)\oplus S^+), \ b \in \text{ker}(d^*:L^2_2(\Bigwedge^1S^3) \to L^2_1(\Bigwedge^0S^3), \ \theta\in\Xi\}.
\]
\end{small}
We will omit `cylindrical end' when there is no chance of confusion.
\end{definition}

In both approaches the Seiberg-Witten equations take values in $L^2_{1,\delta}$ spinor fields and imaginary-valued self-dual $2$-forms.
It is not difficult to see that the two approaches to the cylindrical end moduli spaces are equivalent. There is a natural map from the moduli space defined in the restricted setting to the cylindrical end moduli space taking a restricted equivalence class to a gauge equivalence class. To see that the map is surjective, one just needs to note that any configuration may be put into coulomb gauge at infinity. To see that the map is injective, just notice that two gauge equivalent configurations that are in coulomb gauge at infinity  
are related by a gauge transformation that is constant at infinity.

\begin{definition}\label{regular}
    A configuration or gauge  orbit  of a configuration is called \emph{parameterized regular} if the vertical component of the derivative of  $\widehat{SW}_\data$ is surjective at that point. A set of data is called \emph{good} if all solutions are parameterized regular and irreducible. The space of framed configurations where $\widehat{SW}_\data$ vanishes is the \emph{framed parameterized moduli space} $\widehat{\mathcal{M}}(\{\data\})$. The space of configurations where $\widehat{SW}_\data$ vanishes is the \emph{parameterized moduli space} ${\mathcal{M}}(\{\data\})$.
\end{definition}

\begin{remark}
~\smallskip
\begin{enumerate}
    \item Nicolaescu uses $\widehat{\mathcal{M}}$ for  moduli spaces over $4$-manifolds and ${\mathcal{M}}$ for moduli spaces on $3$-manifolds. He appends a star as in $\widehat{\mathcal{M}}(*)$ to represent framed moduli spaces \cite{nicolaescu:swbook}. Since we do not need to consider other $3$-manifolds, we return to a standard, older notation. 
    \item A configuration is \emph{regular} if the vertical component of the derivative restricted to variations that are trivial in the parameter direction is surjective. Regular implies parameterized regular. For family invariants the typical situation will be parameterized solutions that are parameterized regular but not regular.
    \item There is also a notion of strongly parameterized regular to describe the situation when the vertical component of the derivative of the Seiberg-Witten map is surjective when restricted to variations that do not change the limiting value. See the discussion following \cite[Lemma 4.3.18]{nicolaescu:swbook}. In all of the situations we consider the cylindrical end will be modeled on $(0,\infty)\times S^3$ with the standard metric, so the two notions coincide. 
    \item If the parameter space is not contractible the configuration spaces of forms and connections will be replaced by bundle analogues. See \cite{konno:classes}. We will minimize the use of bundles in the gluing argument.
    \item When $\vdim(\mathcal{M}_{X,\spincs}) + \text{dim}(\Xi) \le 0$ and $\dim(\Xi) < b^2_+(X)$ generic data is good.
\end{enumerate}
\end{remark}
For good data the parameterized moduli space is a manifold of the expected dimension, which in turn is $\vdim(\mathcal{M}_{X,\spincs}) + \dim(\Xi)$.

\subsection{Special Data}\label{S:spec-data}

In this section we specify the assumptions on the data that we use in the proof of our gluing theorem. We also prove that such data may always be chosen for the numerical invariants that we consider.

\subsubsection{Locally constant metrics}\label{s:constant}
In this short section, we sketch the proof that for a $0$-dimensional family moduli space, we may assume that, after a small deformation, we have locally metric independent data as in Definition~\ref{D:local-g}.

\begin{proposition}\label{SPNMID}
    Let $\data:\Xi \to \Pi(X)$ be good data, so that $\vdim\mathcal{M}_{X,\spincs}(\{\data_\theta\}) = 0$ and $\calm_{X,\spincs}(\{\data_\theta\})$ consists of finitely many irreducible solutions sitting over exceptional points in top simplices of $\Xi$. There exists another set of good locally metric independent data $\data':\Xi \to \Pi(X)$ so that
    \[
\mathcal{M}_{X,\spincs}(\{\data_\theta\}) = \mathcal{M}_{X,\spincs}(\{\data'_\theta\}).
    \]
\end{proposition}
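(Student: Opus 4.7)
The plan is to perturb $\data$ only in small neighborhoods of the finitely many exceptional points $\theta_1,\dots,\theta_n$, flattening the metric there while leaving the data untouched at each $\theta_j$ itself, so that no solution is moved, created, or destroyed. Since $\vdim\mathcal{M}_{X,\spincs}(\{\data_\theta\}) = 0$ and $\data$ is good, the moduli space is a finite set of parameterized regular, irreducible gauge orbits $[A_j,\psi_j,\theta_j]$, each $\theta_j$ sitting in the interior of a top simplex of $\Xi$. I will choose disjoint open neighborhoods $V_j$ and $U_j$ with $\overline{V_j}\subset U_j$ and $\overline{U_j}$ inside a single top simplex, and bump functions $\phi_j\colon\Xi\to[0,1]$ that are identically $1$ on $V_j$ and supported in $U_j$.

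Writing $\data_\theta=(g_\theta,\eta_\theta)$, I will set $\data'_\theta=(g'_\theta,\eta_\theta)$ where
\[
g'_\theta \,=\, (1-\phi_j(\theta))\,g_\theta \,+\, \phi_j(\theta)\,g_{\theta_j}\quad\text{for } \theta\in U_j,
\]
and $\data'_\theta=\data_\theta$ elsewhere. By construction $g'$ is $\theta$-independent on each $V_j$, so $\data'$ is locally metric independent in the sense of Definition~\ref{D:local-g}, and because $\data'_{\theta_j}=\data_{\theta_j}$ the original solution $[A_j,\psi_j,\theta_j]$ still lies in $\mathcal{M}_{X,\spincs}(\{\data'_\theta\})$.

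The content of the proof is then to verify, for $U_j$ small enough, that $\data'$ remains good and introduces no new solutions. Outside $\bigcup_j U_j$ the data is unchanged, so nothing happens there. Inside $\overline{U_j}$, I will apply Seiberg--Witten compactness for families together with upper semicontinuity of the moduli space under perturbation of the data: these imply that if $\data$ has $[A_j,\psi_j,\theta_j]$ as its only solution in $\overline{U_j}$, then any $\widetilde\data$ sufficiently $C^k$-close to $\data$ has its $\overline{U_j}$-moduli space confined to any prescribed neighborhood of $[A_j,\psi_j,\theta_j]$. Combining this with parameterized regularity at $[A_j,\psi_j,\theta_j]$ and the parameterized implicit function theorem, that neighborhood contains a unique gauge orbit for $\widetilde\data$, depending smoothly on $\widetilde\data$. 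Applied with $\widetilde\data=\data'$ and with $\data'_{\theta_j}=\data_{\theta_j}$ pinning the solution at $\theta_j$, the unique orbit is exactly $[A_j,\psi_j,\theta_j]$ and it remains parameterized regular. The required $C^k$-closeness of $\data'$ to $\data$ on $U_j$ follows from continuity of $\theta\mapsto g_\theta$ by shrinking $U_j$.

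The hard part will be the annular region $U_j\setminus V_j$, where the metric is genuinely being deformed and the ``no new solutions'' assertion is not a priori clear. This is handled entirely through the quantitative calibration in the previous paragraph: one selects $U_j$ small compared to the $C^k$-radius furnished by SW compactness and the parameterized implicit function theorem, so that the whole deformation is trapped inside the range where the implicit function theorem yields a bijective correspondence of moduli spaces. Once this calibration is made, equality $\mathcal{M}_{X,\spincs}(\{\data_\theta\})=\mathcal{M}_{X,\spincs}(\{\data'_\theta\})$ holds on the nose, as each side reduces on $\overline{U_j}$ to the single orbit $[A_j,\psi_j,\theta_j]$.
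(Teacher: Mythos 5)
Your strategy is the same as the paper's: perturb only the metric component, only inside a small ball about each exceptional point (which lies in a top cell), replace $g_\theta$ there by the frozen metric $g_{\theta_j}$ while interpolating back to $g_\theta$ near the boundary of the ball, leave $\eta$ and the data at $\theta_j$ itself untouched so the original solutions persist, and then argue by smallness/openness that goodness is preserved and no new solutions appear. The paper runs this openness argument in the Kronheimer--Mrowka universal-space framework (transversality of the data map to $\sw_{(1)}\colon\widetilde{\calz}\to\Omega_{\met(X)}$, following the appendix of Mrowka--Ruberman--Saveliev), where the key point is that only $C^0$-control on the metric is demanded: the self-dual projection depends continuously on $g$, so transversality survives any $g'$ with $|g'(\theta)-g(\theta)|<C$.

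The gap in your version is the calibration step: the assertion that ``the required $C^k$-closeness of $\data'$ to $\data$ on $U_j$ follows from continuity of $\theta\mapsto g_\theta$ by shrinking $U_j$'' is false for $k\ge 1$, and $k\ge 1$ is what your implicit-function-theorem step needs. Shrinking $U_j$ gives only $C^0$-closeness: on $V_j$ the $\theta$-derivative of the metric is changed from $\dot g_\theta$ to $0$, and on the annulus the term $\dot\phi_j(\theta)\,(g_{\theta_j}-g_\theta)$ is of size $O(1)$ because $|\dot\phi_j|\sim \mathrm{diam}(U_j)^{-1}$ while $|g_{\theta_j}-g_\theta|=O(\mathrm{diam}(U_j))$; neither discrepancy becomes small. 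Consequently the parameterized linearization at $(\theta_j,[A_j,\psi_j])$ for $\data'$ differs from the one for $\data$ by dropping the metric-derivative contribution entirely, an $O(1)$ change, so neither the uniqueness of the nearby solution nor the persistence of parameterized regularity follows from ``closeness of the data maps'' as you invoke it. (Your compactness/upper-semicontinuity step, which needs only $C^0$-closeness, is fine.) To close the gap you must show that surjectivity at each solution is witnessed in a way stable under $C^0$-small but $C^1$-large metric changes --- e.g., that the cokernel of the fiberwise linearization is spanned by the contributions coming from the two-form component (whose self-dual part varies continuously with $g$), which is how the paper's $C^0$ openness-of-transversality formulation, via $\sw_{(1)}$ and the cited appendix, is meant to operate --- rather than appealing to an IFT estimate in a topology your perturbation does not respect.
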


\begin{proof}[Proof of Proposition~\ref{SPNMID}] 
A version of this proposition is proved in~\cite[Appendix A]{mrowka-ruberman-saveliev:sw-index} for $1$-parameter families. The proof adapts readily to the setting where the parameter space $\Xi$ is a $k$-dimensional complex; compare also~\cite[Proposition 7.2]{baraglia-konno:gluing}. 

The argument in~\cite{mrowka-ruberman-saveliev:sw-index} is given in terms of the framework explained in~\cite{kronheimer-mrowka:monopole} for finding good perturbations as in Proposition~\ref{P:generic}. We briefly explain the setup and sketch the adaptations of the proof to the higher-parameter case. At the outset, we note one important difference;  whereas~\cite{mrowka-ruberman-saveliev:sw-index} is concerned with reducible configurations, we are dealing only with irreducible ones. Hence in~\cite{mrowka-ruberman-saveliev:sw-index} the proposition is proved for configurations in the boundary of the blown-up configuration space $\calb^\sigma$ and moduli space $\calm^\sigma$ from~\cite{kronheimer-mrowka:monopole} but for us there is no need for the blown-up configuration space.

Consider the space $\widetilde{\calz} \subset \met(X) \times \calb$ consisting of triples $(g,A,\phi)$ with $D^{g,+}_A(\phi) = 0$ where $D^{g,+}_A$ is the Dirac operator for the metric $g$. (This is an extension of the notation from~\cite{kronheimer-mrowka:monopole}, where $\calz$ denotes the fiber over a particular metric $g$.)
The proof of~\cite[Lemma 27.1.1]{kronheimer-mrowka:monopole} shows this to be a Hilbert submanifold of $\met(X) \times \calb$ for which the projection to $\met(X)$ is a submersion. 
The function 
\[
\sw_{(1)} = (g,\sqrt{2}(F^+_{A}  - \frac{1}{2}{\bf c}^{-1}(q(\psi) )))
\]
(as in Equation~\ref{E:SWcurv})
takes its values in $\Omega_{\met(X)}$, the subspace of $\met(X) \times \Omega^2(X;i \R)$ consisting of pairs $(g,\omega)$ with $\omega$ being $g$-self dual.  Projection onto the first factor makes $\Omega_{\met(X)}$ into a bundle over $\met(X)$, which can be trivialized by parameterizing $\met(X)$ in terms of automorphisms of the tangent bundle as in~\cite[Chapter 3]{freed-uhlenbeck}.

For a single metric, as in~\cite[Lemma 27.1.1]{kronheimer-mrowka:monopole}, the generic-data argument is completed by choosing $\eta$ so that $-i\sqrt{2}\eta^+$ is a regular value for the restriction of $\sw_{(1)}$ to $\calz$. In the family setting, one perturbs the data so that the map $\data:\Xi \to \Omega_{\met(X)}$ is transverse to $\sw_{(1)}\colon \widetilde{\calz}\to \Omega_{\met(X)}$; this establishes Proposition~\ref{P:generic}.  By our hypotheses that $\vdim\mathcal{M}_{X,\spincs}(\{\data_\theta\}) = 0$ and that $b_2^+(X) > k$ we can assume that and all of our metrics lie in $\met(X)$ and that there are no solutions over the $k-1$ skeleton.  Moreover, the parameterized moduli space consists of isolated points each lying over $\theta = 0$ for $\theta$ in a ball $U$ in the interior of a top-dimensional cell of $\Xi$.   We will change the metric component of  the map $\data$ on the interior of $U$, so that near $0$, the metric is independent of $\theta$. 

Write $\data_\theta = (g(\theta), \eta(\theta))$ and note that $\Omega_+^2$ and the projection from $\Omega^2 \to \Omega_+^2$ depend continuously on the metric $g$.  Hence there are $C, \epsilon >0$ such that for all $g'$ defined on a ball of radius $\epsilon$ with $|g'(\theta)-g(\theta)| <C$ the data $\data'(\theta) = (g'(\theta),\eta(\theta)$ is still transverse to $\sw_{(1)}\colon \widetilde{\calz}\to \Omega_{\met(X)}$.  Choose $\epsilon$ small enough so that $|g(\theta)-g(0)| <C$ for all $\theta\in B_\epsilon(0)$. Then it is easy to construct an extension $g'$ of $g|_{\partial B_\epsilon(0)}$ such that $g'(\theta) = g(0)$ for $|\theta| \leq \epsilon/2$ and $|g(\theta)-g(\theta)| <C$. Then $\data'$ is the desired good locally metric independent data.
\end{proof}

\subsection{Proof of the gluing theorem}\label{S:gluing} 
We now proceed through the approximation, linearization, and contraction mapping portions of the proof.

\subsubsection{Approximate gluing}
The structure of the gluing procedure can be illuminated by considering the disjoint union $\hat{N}_1\perp\!\!\perp\hat{N}_2$. The space of framed configurations
on the disjoint union is just the product of the spaces of framed configurations from the components. The gauge group on the disjoint union that corresponds to one on the connected sum is the following hybrid:
\[
\mathcal{G}_{\hat{N}_1\perp\!\!\perp\hat{N}_2} = \{(g,h) \in \mathcal{G}_{\hat{N}_1} \times \mathcal{G}_{\hat{N}_2}\,|\, \partial_\infty g = \partial_\infty h\}. 
\]
There is a well defined limiting value map $\partial_\infty\co \mathcal{G}_{\hat{N}_1\perp\!\!\perp\hat{N}_2}\to S^1$ with kernel $\mathcal{G}_{\hat{N}_1}^0\times\mathcal{G}_{\hat{N}_2}^0$.
It follows that
\begin{align*}
   \mathcal{G}_{\hat{N}_1\perp\!\!\perp\hat{N}_2}&\backslash \left(\widehat{SW}_{\data^1}^{-1}(0)\times \widehat{SW}_{\data^2}^{-1}(0)\right) \\ &\cong 
   \left( \left(\mathcal{G}_{\hat{N}_1}^0\times\mathcal{G}_{\hat{N}_2}^0\right)  \mathcal{G}_{\hat{N}_1\perp\!\!\perp\hat{N}_2}\right)\backslash\left(\mathcal{G}_{\hat{N}_1}^0\times\mathcal{G}_{\hat{N}_2}^0\backslash \left(\widehat{SW}_{\data^1}^{-1}(0)\times \widehat{SW}_{\data^2}^{-1}(0)\right)\right) \\
   &\cong 
S^1\backslash\left(\widehat{\mathcal{M}}_{\hat{N}_1,\spincs_1}(\data^1)\times \widehat{\mathcal{M}}_{\hat{N}_2,\spincs_2}(\data^2)\right) = \widehat{\mathcal{M}}_{\hat{N}_1,\spincs_1}(\data^1)\times_{S^1} \widehat{\mathcal{M}}_{\hat{N}_2,\spincs_2}(\data^2).
\end{align*}

To define the approximate gluing map, we use something called \emph{temporal gauge}. Recall that a connection, $A$, on a cylinder $I\times N$ is in temporal gauge when $\iota_{\partial_t}A = 0$. By definition of our configuration spaces $A \in L^2_{2,\delta}$, so 
$e^{\delta\tau}A\in L^2_2$. Here $\tau$ is a function equal to one on the cylindrical end. By a suitable trace-style embedding, \cite[Theorem 7.58]{Adams}, the restriction of $A$ to any ray $[0,\infty)\times\text{pt}$ is integrable. Define a based gauge transformation equal to $\text{exp}\left(-\frac12\int_t^\infty\iota_{\partial_t}A|_{(s,y)}\,ds \right)$ extended arbitrarily into the rest of $\hat{N}_1$. One checks that $g\cdot A = A - 2g^{-1}dg$ is in temporal gauge.  Restricted to $[0,\infty)\times S^3$ this is the unique based gauge transformation making $A$ temporal. Denote it by $g_A$.

Let $\chi_r\co [0,\infty) \to [0,1]$ be a smooth mollification of the characteristic function of $[0,r+\frac12]$ so that $\dot \chi_r$ has support in $[r,r+1]$. Use $\chi^1_r$ to denote the natural extension of this to $\hat{N}_1$. Define $\chi^2_r$ similarly. 
\begin{definition}
    The approximate sum of a pair of configurations is given by 
    \[
    (\theta_1,A_1,\psi_1)\cs_r (\theta_2,A_2,\psi_2) = 
    (\theta_1,\theta_2, g_{A_1}^{-1}\cdot(\chi_r^1 g_{A_1}\cdot A_1) + g_{A_2}^{-1}\cdot(\chi_r^1 g_{A_2}\cdot A_2), \chi_r^1\psi_1 + \chi_r^2\psi_2).
    \]
\end{definition}

The basic result about the approximate sum is that it is a well-defined embedding of the fibered product of the framed moduli spaces into the space of configurations on the connected sum mod the full gauge group.
\begin{proposition}\label{approx-sum}
The map $\cs_r\co \widehat{\mathcal{M}}_{\hat{N}_1,\spincs_1}(\data^1) \times_{S^1}\widehat{\mathcal{M}}_{\hat{N}_2,\spincs_2}(\data^2)\to \mathcal{B}_{N(r),\spincs_1\cs_r\spincs_2}$ is a well-defined embedding.     
\end{proposition}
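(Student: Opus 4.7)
The plan has three parts: verifying well-definedness, establishing injectivity, and promoting the resulting continuous injection to an embedding.

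First, for well-definedness. The map $\cs_r$ is built by patching via temporal gauge on the cylindrical ends, so I would check insensitivity to both the choice of based-gauge representatives of $[A_i,\psi_i]\in \widehat{\mathcal{M}}_{\hat N_i,\spincs_i}(\data^i)$ and to the diagonal $S^1$ action on the fibered product. The key facts are that $g_{A_i}$ is the unique based gauge transformation sending $A_i$ into temporal gauge on $[0,\infty)\times S^3$, and that replacing $A_i$ by $h\cdot A_i$ with $h\in \mathcal{G}^0_{\hat N_i}$ changes $g_{A_i}$ into $g_{A_i}h^{-1}$; direct substitution into the patching formula then shows the output is unchanged, since the additional $h^{-1}$ and $h$ cancel on the support of $\chi_r^i$. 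For the diagonal $S^1$ action, replacing $[A_i,\psi_i]$ simultaneously by $[A_i, e^{i\phi}\psi_i]$ on both sides leaves the $g_{A_i}$ invariant and alters the glued configuration on $N(r)$ by the constant gauge transformation $e^{i\phi}$, hence represents the same element of $\mathcal{B}_{N(r)}$.

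Next, I would establish injectivity. Suppose two pairs have the same image in $\mathcal{B}_{N(r)}$ via an element $u\in\mathcal{G}_{N(r)}$. On the middle portion of the neck where both cutoffs $\chi_r^i$ are locally constant, both approximate sums are already in temporal gauge, so $u\cdot A=A'$ forces $\iota_{\partial_t}(u^{-1}du)=0$; hence $u$ is $t$-independent on this slab. Comparing with the explicit form of the two approximate sums on the neck and using that they agree as connections there pins $u$ down to a global constant $e^{i\phi}\in S^1$ on the slab. Restricting $u$ to the respective sides and extending back to $\hat N_i$ via temporal gauge produces $u_i\in\mathcal{G}_{\hat N_i}$ with $\partial_\infty u_1=\partial_\infty u_2=e^{i\phi}$, which is precisely the data defining an equivalence of the two pairs in $\widehat{\mathcal{M}}_{\hat N_1,\spincs_1}\times_{S^1}\widehat{\mathcal{M}}_{\hat N_2,\spincs_2}$.

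The final step is to upgrade the continuous injection to an embedding. Continuity and smoothness are routine from the explicit cutoff formula together with standard Sobolev multiplication estimates. In the zero-dimensional setting where Proposition~\ref{approx-sum} is applied in the proof of Theorem~\ref{pIRglue}, both framed moduli spaces are finite (the reducible side is a finite set of points by assumption, and condition \ref{i:non-degenerate} of Definition~\ref{irr-red-good} ensures no hidden $S^1$-bundle structure), so the fibered product is a finite set and the embedding property is automatic. More generally, one would check injectivity of the differential of $\cs_r$ on tangent vectors using the disjoint supports of the cutoffs, and combine this with the $a\ priori$ compactness of \SW moduli spaces recorded in Section~\ref{s:kis0} to obtain properness. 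I expect the main obstacle to be the careful bookkeeping of gauge-fixing on the cylindrical end: the $\times_{S^1}$ in the fibered product exists exactly to absorb the one-dimensional residual freedom that appears when passing from framed configurations on $\hat N_i$ to full gauge orbits on $N(r)$, and correctly identifying this $S^1$ is what makes both the well-definedness and injectivity arguments go through cleanly.
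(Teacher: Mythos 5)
The paper states Proposition~\ref{approx-sum} without proof, deferring to the standard splicing theory (Nicolaescu, Chapter 4); the only bookkeeping it records is the displayed identification of the quotient by the hybrid gauge group with the fibered product $\widehat{\mathcal{M}}_1\times_{S^1}\widehat{\mathcal{M}}_2$. Your proposal has the right overall shape (descent to the fibered product, injectivity via analysis on the neck, embedding from discreteness), but the key step of your well-definedness argument fails as stated. With the action $g\cdot A = A - 2g^{-1}dg$, replacing $A_i$ by $h\cdot A_i$ with $h\in\mathcal{G}^0_{\hat N_i}$ gives $g_{h\cdot A_i}=g_{A_i}h^{-1}$ and hence
$g_{h\cdot A_i}^{-1}\cdot\bigl(\chi^i_r\, g_{h\cdot A_i}\cdot(h\cdot A_i)\bigr) \;=\; h\cdot\bigl(g_{A_i}^{-1}\cdot(\chi^i_r\, g_{A_i}\cdot A_i)\bigr)$:
the inner temporal representative is unchanged, but the outer conjugation produces an extra $h$, so the spliced configuration is \emph{not} unchanged --- it is moved by $h$ on the $\hat N_i$-portion only. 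To conclude that it represents the same point of $\mathcal{B}_{N(r)}$ you must produce a gauge transformation of $N(r)$ effecting this change, and the naive candidate ($h$ on one side, $1$ on the other) is discontinuous at the cut, since $h$ is only asymptotically $1$ along the end. The clean repair is the fact your argument is implicitly circling: a based gauge transformation preserving temporal gauge on the end is $t$-independent there and decays, hence is identically $1$ on the end; so the restriction of a framed class to the end, in temporal gauge, is literally canonical, and if one splices using representatives that are temporal on the end, the residual representative freedom consists of gauge transformations equal to $1$ on the whole end, which extend by $1$ over $N(r)$. (Your treatment of the diagonal $S^1$ action is fine.)

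The same exactness issue undermines your injectivity step: on the middle slab the two spliced configurations are only exponentially close to the reference configuration (and, with the paper's formula, not literally in temporal gauge because of the $g_{A}^{-1}$ conjugation), so the assertion that ``they agree as connections there'' does not hold, and $u$ is pinned to a constant only up to exponentially small error. A correct argument must either work for $r$ large and quantify this --- which is how the paper later argues, in the step showing the (exact) gluing map is a bijection, using exponential decay in the neck together with discreteness of the moduli spaces --- or appeal to local slices. Finally, a smaller misstatement: the framed moduli space on the irreducible side is not finite; it is a finite union of free $S^1$-orbits (circles). It is only after passing to the fibered product over the diagonal $S^1$, and using that the reducible side consists of points with trivial $S^1$-action, that one obtains a finite set; so ``embedding is automatic'' does hold in the paper's setting, but not for the reason you give.
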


The next task is to find all solutions close to an approximate configuration \hfill\newline $C_\approx = (\theta_\approx,\hat A_\approx, \hat \psi_\approx) = (\theta_1,A_1,\psi_1)\cs_r (\theta_2,A_2,\psi_2)$. This is split into two steps, capturing the linear behavior, and then the non-linear behavior.
Nearby configurations may be expressed as $C_\approx +(\vartheta, ia, \vphi)$. To address the gauge symmetry, introduce the gauge-fixing operator \cite[Section 2.2.2]{nicolaescu:swbook}:
\[
\mathcal{L}^*_{{C}_\approx}(\vartheta,ia,\vphi) = -2id^*a - i\text{Im}(\langle {\psi}_\approx,\vphi \rangle).
\]
Gauge-fixed solutions to the parameterized Seiberg-Witten equations close to ${C}_\approx$ are just the zeros of the non-linear bundle map given by 
\begin{equation}\label{eqN}
 \mathcal{N}(\vartheta,ia,\vphi) = \left(\widehat{SW}_{\data}({C}_\approx + (\vartheta,ia,\vphi)), \mathcal{L}^*_{{C}_\approx}(ia,\vphi)\right).   
\end{equation}

Call this the local, gauge-fixed, Seiberg Witten map.

When analyzing the equations on one fixed manifold one may pick a specific metric and thus fix the codomain of the local, gauge-fixed, Seiberg-Witten map. Even though this is not generally possible in families, Baraglia and Konno are able to reduce to this case on the reducible side by restricting to small neighborhoods of the parameter values where the perturbation crosses a wall, see \cite[Proposition~7.2]{baraglia-konno:gluing}. In fact, one may reduce to the case when the metric is independent of the parameter in sufficiently small neighborhoods of parameter values associated with elements of the parameterized moduli space whenever the parameterized moduli space is zero dimensional.  This result is established in Proposition~\ref{SPNMID} for irreducible configurations. 

In general, the domain of the gauge-fixed Seiberg-Witten map, $\mathcal{N}$ is the bundle over $\Xi$ with fiber ${\bf x}^+_2 = L^2_2(X,i\, \Bigwedge^1\oplus S^+)$, and the codomain is the bundle with fibers ${\bf x}^-_1 = L^2_1(X,i\, \Bigwedge^2_+\oplus S^-\oplus i\, \Bigwedge^0)$. The point of the local metric independent condition is that the domain and codomain will be trivial bundles. The domain is a trivial bundle so it is just $\mathbb{X}^+_2 = \Xi\times {\bf x}^+_2$. Similarly the codomain is just $\mathbb{X}^+_1 = \Xi\times {\bf x}^+_1$. 
When working on manifolds with cylindrical ends, we use weighted spaces and add a factor of $\R$ capturing the infinitesimal gauge transformations to $\mathbb{X}^+_1$. We suppress the weight from the notation.

\subsubsection{Linearized equations}\label{linearization}
Let $\mathcal{F}\co X\to Y$ be a $G$-equivariant map between linear spaces, and assume that $0\in Y$ is fixed by the action. Given a point $x_0\in \mathcal{F}^{-1}(0)$, one has a map $R\co G\to X$ given by multiplication. In this case, one may take the derivatives and obtain a short complex:
\[
\begin{tikzcd}\tag{{\textbf{def}}}
0\arrow{r} & T_1G \arrow{r}{D_{1}R} & T_{x_0}X \arrow{r}{D_{x_0}\mathcal{F}} &  T_0Y \arrow{r} &0.\\
\end{tikzcd}
\]
This complex is called the \emph{deformation complex} and is denoted $\textbf{def}$. The cohomology $H^0(\textbf{def})$ represents the tangent space to $\text{Stab}(x_0)$. The cohomology $H^2(\textbf{def})$ is called the \emph{obstruction space}. When the obstruction space vanishes the point $[x_0] \in G\backslash \mathcal{F}^{-1}(0)$ has a neighborhood isomorphic to $\text{Stab}(x_0)\backslash H^1(\textbf{def})$.

We now work in the setting of Theorem~\ref{pIRglue}, where the data is assumed to be irreducible-reducible good as in Definition~\ref{irr-red-good}. This means that we know the cohomology of the deformation complexes associated to the Seiberg-Witten moduli spaces over $\hat{N}_k$, for $k= 1, 2$. Our task is to compute the cohomology of the deformation complex associated to the Seiberg-Witten moduli space over $N(r)$. In the case without parameters \cite[Proposition 4.3.17]{nicolaescu:swbook} the deformation complex associated to the moduli space is denoted by $\widehat{\mathcal{K}}_{\widehat{C}_0}$. We will simplify notation and simply use $C$ for this deformation complex, even in the presence of parameters. The deformation complex, $F$, associated to the framed moduli space is defined just below \cite[Lemma 4.3.18]{nicolaescu:swbook}. Since the framed moduli space arises by restricting to the based gauge group, $F$ is a subcomplex of $C$. 

Notice that Nicolaescu defines various gauge groups in the setting where the $3$-manifold on the end has non-trivial $H^1$ \cite[Lemma 4.3.4]{nicolaescu:swbook}. In general, there are many cases to consider when analyzing the relationship between $F$ and $C$. However when the end is $[0,\infty)\times S^3$, the relation becomes particularly simple. The cohomology of the quotient complex is simply a copy of $\R$ concentrated in degree zero. This represents the tangent space to the stabilizer of the trivial connection on $S^3$ which is the limiting value of the connections considered in these moduli spaces. Note that even though the infinitesimal parameter space $T\Xi$ appears in both $F^1$ and $C^1$, it cancels in the quotient. The long exact sequence associated to  $0\to F \to C \to C/F \to 0$ implies that $H^2(F) = H^2(C)$. Furthermore, $H^0(F) = 0$ because the action of the based gauge group is always free, and the cohomology of $C/F$ is a copy of $\R$ concentrated in degree zero. We will show that for sufficiently large $r$, $H^0(C) = H^2(C) = 0$. The index computation and the long exact sequence will then imply that $H^1(C) = 0$ and $H^1(F) \cong \R$.  This corresponds to the fact that ${\mathcal{M}}_{{N}(r),\spincs}(\data)$  is a disjoint collection of points and   $\widehat{\mathcal{M}}_{{N}(r),\spincs}(\data)$  is a disjoint union of circles. 

We now continue the analysis for $\hat{N}_1$ and $\hat{N}_2$ separately. 
Any parameterized monopoles on $\hat{N}_1$ is irreducible, isolated, and non-degenerate. This implies that $H^0(C) = 0$, $H^1(C) = 0$, and $H^2(C) = 0$. It follows that $H^0(F) = 0$, $H^1(F) \cong \R$, and $H^2(F) = 0$. This corresponds perfectly with the fact that the moduli space is a finite set of points and the framed moduli space is the disjoint union of a finite collection of circles with a free $S^1$-action.

For $\hat{N}_2$ every monopole is reducible ($H^0(C) \cong \R$), and by assumption $H^1(C) = 0$. By the index condition of irreducible-reducible good data (Definition~\ref{irr-red-good}), we have $H^2(C) = 0$. This implies that $H^0(F) = 0$, $H^1(F) = 0$, and $H^2(F) = 0$.

The linear gluing theory from \cite{clm:I,Nicolaescu:CLM} describes how the linearized equations over  
$\hat{N}_1$, $\hat{N}_2$, and $N(r)$ relate in the non-parameterized case. The same paper states that it works for families of Dirac operators as well. However our situation adds the derivative of the perturbation $i\eta(\theta)^+$. The linearization of this is not a Dirac operator. Thus we need a slight modification of this theory.
We review this theory before providing a slight generalization.

Let $\hat{E} = \hat{E}^+\oplus\hat{E}^-$ be a cylindrical bundle over a cylindrical manifold $\hat{N}$ with limiting bundle $E\to N$.
Define the extended $L^2$ norm on 
$L^2(\hat{E})_{ex} = L^2(\hat{E})\oplus L^2(E)$ 
by
\[
\|(u,v)\|_\text{ex}^2 = \|u\|_{L^2(\hat{E})}^2 + \|v\|_{L^2(E)}^2.
\]
Given a positive smooth cut-off function $h:\hat{N} \to [0,1]$ with $h([0,\infty)\times N) = \{1\}$ and  $h(\hat{N}\setminus [-1,\infty)\times N) = \{0\}$, one may view elements of $L^2(\hat{E})_{ex}$ as sections of $\hat{E}$ via $(u,v)\mapsto u + hv$. In this context it is natural to write $u$ for the first component and $\partial_\infty u$ for the second component. 
Consider $\mathbb{D}:L^2(\hat{E}^+) \to L^2(\hat{E}^-)$ a generalized Dirac operator (viewed as a densely defined unbounded operator), and set 
\[
\hat{D} = \begin{bmatrix} 0 & \mathbb{D}^*\\ \mathbb{D} & 0
\end{bmatrix}. 
\]
On the neck use the symbol to define an almost complex structure by $J = \sigma_{\hat{D}}(dt)$ so that $\hat{D}$ has the form $J(\partial_t-D)$ on the neck. The main theorem of \cite{Nicolaescu:CLM} considers a pair of such operators defined over the cylindrical manifolds $\hat{N}_1$ and $\hat{N}_2$ that agree on the neck. These define an operator $D(r)$ over the associated closed manifolds $N(r)$. In \cite{Nicolaescu:CLM} the span of the eigenfunctions of $D(r)$ having small eigenvalues is denoted by $\mathcal{K}_r(c)$ and small means in the range $(-c(r),c(r))$ for suitable function $c$. In \cite{baraglia-konno:gluing}, $H_r$ is used to denote the span of eigenfunctions of $D(r)$ in the range $(-r^{-2},r^{-2})$. We use $\mathcal{K}(r)$ to denote this space. Since eigenfunctions are smooth, there are many ways to spit an eigenfunction $u$ defined over $N(r)$ into functions defined over $\hat{N}_1$ and $\hat{N}_2$. In \cite{Nicolaescu:CLM} this map $S$ is defined by setting $S^1_ru(t,y) = u(r,y)$ for $(t,y)$ in the neck with $t > r$ and equal to $u$ elsewhere with a similar formula for the component $S^2_ru$. In \cite{nicolaescu:swbook} an averaging procedure is employed. The main conclusion is that there is an $R$ so that for all $r>R$ the following is an exact sequence.
\[
0 \to \mathcal{K}(r) \stackrel{p_rS_r}{\longrightarrow}\text{ker}_{\text{ex}}(\hat{D}_1)\oplus\text{ker}_{\text{ex}}(\hat{D}_2) \stackrel{\Delta}{\longrightarrow}
\partial_\infty\text{ker}_{\text{ex}}(\hat{D}_1) + \partial_\infty\text{ker}_{\text{ex}}(\hat{D}_2) \to 0,
\]
where $p_r$ is orthogonal projection to the image, $\text{ker}_{\text{ex}}(\hat{D}_1) = \text{ker}(\hat{D}_1)\cap L^2_{\text{ex}}(\hat{E}^+\oplus \hat{E}^-)$, and $\Delta(u,v) = \partial_\infty u - \partial_\infty v$.
That this sequence is exact follows by applying \cite[Lemma 2.2]{Nicolaescu:CLM} to \cite[Remark 4.3]{Nicolaescu:CLM}.

We need a generalization of this to cover the case of the sum of a finite rank linear operator with a generalized Dirac operator. In particular, let $\eta_i$, $i=1,\dots,n$ be a finite collection of smooth compactly supported sections of $\hat{E}^-$, and for $\theta\in\R^n$ set ${\bf n}(\theta) = \theta^i\eta_i$. We now consider the operator $\widetilde{D}:\R^n\oplus L^2_{\text{ex}}(\hat{E}^+\oplus \hat{E}^-) \to \R^n\oplus L^2_{\text{ex}}(\hat{E}^+\oplus \hat{E}^-)$ given by $\widetilde{D}(\theta,u,v) = ({\bf n}^*(v),\mathbb{D}^*v,\mathbb{D}u + {\bf n}(\theta))$. 

\begin{lemma}[Parameterized linear gluing]
If $\widetilde{D}_1$ and $\widetilde{D}_2$ are parameterized, cylindrical, Dirac operators over $\hat{N}_i$, and $\widetilde{D}(r)$ is the corresponding operator defined over $N(r)$, there is an $R$ so that for all $r>R$ following is an exact sequence:
\[
0 \to \mathcal{K}(r) \stackrel{p_rS_r}{\longrightarrow}\text{ker}_{\text{ex}}(\widetilde{D}_1)\oplus\text{ker}_{\text{ex}}(\widetilde{D}_2) \stackrel{\Delta}{\longrightarrow}
\partial_\infty\text{ker}_{\text{ex}}(\widetilde{D}_1)+\partial_\infty\text{ker}_{\text{ex}}(\widetilde{D}_1) \to 0.
\]
\end{lemma}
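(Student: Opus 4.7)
The plan is to reduce the parameterized linear gluing statement to the non-parameterized CLM theorem by isolating the finite-rank parameter-dependent perturbation. Since each $\eta_i$ is smooth and compactly supported in $\hat{N}_i$, one can arrange its support to lie away from the neck $[0,\infty)\times S^3$ and write
\[
\widetilde{D}_i = \hat{D}_i^{\oplus} + T_i,
\]
where $\hat{D}_i^{\oplus}$ acts as $\hat{D}_i$ on $L^2_{ex}(\hat{E}_i^+\oplus\hat{E}_i^-)$ and as $0$ on the $\R^n$ factor, and $T_i(\theta,u,v)=({\bf n}_i^*(v),0,{\bf n}_i(\theta))$ is a bounded, self-adjoint, finite-rank perturbation supported in a fixed compact set disjoint from the neck. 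Consequently, on the neck $\widetilde{D}_i$ agrees with $\hat{D}_i\oplus 0_{\R^n}$, so the Atiyah--Patodi--Singer-style analysis of asymptotic behavior on the neck driving the CLM proof applies without change.

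First I would extend the splitting map $S_r$ and difference map $\Delta$ to the parameterized setting by letting them act trivially on the $\R^n$-factors, so that $S_r(\theta,u,v)=((\theta,S_r^1 u,S_r^1 v),(\theta,S_r^2 u,S_r^2 v))$ and $\Delta((\theta_1,u_1,v_1),(\theta_2,u_2,v_2))=(\theta_1-\theta_2,\partial_\infty u_1-\partial_\infty u_2,\partial_\infty v_1-\partial_\infty v_2)$. The key observation is that any $(\theta,u,v)\in\ker_{ex}(\widetilde{D}_i)$ satisfies $\mathbb{D}_i u=0$ and $\mathbb{D}_i^* v=0$ on the neck (because $\eta_i$ is compactly supported there), so $\partial_\infty(u,v)$ lies in the same Cauchy-data space used for $\hat{D}_i$ and Nicolaescu's CLM estimates for the averaging procedure apply verbatim. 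From this, $p_rS_r$ lands in the claimed target and the sequence is a complex. Exactness is then inherited from the unparameterized CLM lemma: surjectivity of $\Delta$ follows from the corresponding unparameterized surjectivity together with the fact that for each $\theta\in\R^n$ one can produce an element of $\ker_{ex}(\widetilde{D}_i)$ with that $\theta$-component by solving $\mathbb{D}_i u=-{\bf n}_i(\theta)$; for exactness in the middle, a pair with $\Delta=0$ has common $\theta$ and matching asymptotic $L^2_{ex}$-data, so the unparameterized CLM gluing produces a small eigenelement of $\hat{D}(r)^{\oplus}$ on $N(r)$ that is augmented by the common $\theta$ and corrected by $T(r)=T_1+T_2$ via a Kato-style perturbation argument; injectivity of $p_rS_r$ for $r$ large is inherited verbatim.

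The main difficulty will be controlling the spectral projection $p_r$ in the presence of the global perturbation $T(r)$: although $T(r)$ is supported compactly away from the neck, it couples the $\R^n$ summand to the $L^2_{ex}$ modes and could in principle introduce new small eigenvalues of $\widetilde{D}(r)$ not arising from gluing of $\ker_{ex}(\widetilde{D}_i)$. Since $T(r)$ is a fixed bounded operator independent of $r$, and the small-eigenvalue space of $\hat{D}(r)^{\oplus}$ is described by the CLM theorem (with the $\R^n$ summand contributing trivially), standard resolvent-perturbation estimates bounding the difference of the associated spectral projections should show that, for $r$ sufficiently large, $\dim\mathcal{K}(r)$ equals the expected value and the exact sequence holds; the index computation for $\widetilde{D}(r)$ (equal to that of $\hat{D}(r)^{\oplus}$ since $T_i$ is a compact perturbation) supplies the matching alternating-dimension count.
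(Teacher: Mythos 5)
Your overall plan---exploit the fact that the perturbation is compactly supported away from the neck, so the neck analysis of Cappell--Lee--Miller/Nicolaescu is undisturbed---is indeed the right starting point, and it is how the paper proceeds. But two steps in your execution fail. First, you have set up the parameter directions incorrectly: you give $\widetilde{D}_1$, $\widetilde{D}_2$ and the glued operator a \emph{single shared} $\R^n$-factor, define $S_r$ by copying the same $\theta$ to both sides, and add a term $\theta_1-\theta_2$ to $\Delta$. In the lemma the two sides carry independent parameter spaces (this is exactly the point of the paper, as opposed to the Baraglia--Konno setting), the glued operator over $N(r)$ has parameter $\R^{n_1}\oplus\R^{n_2}$, and $\Delta$ compares only the asymptotic values over $S^3$; there is no matching condition on the parameters. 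With your modified $\Delta$, surjectivity would force you to produce, for every $\theta$, an element of $\ker_{\text{ex}}(\widetilde{D}_i)$ with that $\theta$-component, i.e.\ to solve $\mathbb{D}_i u = -{\bf n}_i(\theta)$; this is exactly what fails in the intended application, where ${\bf n}(\theta)$ is chosen to hit the cokernel of $\mathbb{D}_i$ (transversality to the wall on the reducible side). For the $\Delta$ of the statement, surjectivity is immediate from linearity and needs no such solvability.

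Second, and more seriously, the proposed Kato/resolvent argument comparing the small-eigenvalue spaces of $\hat{D}(r)^{\oplus}$ and $\widetilde{D}(r)=\hat{D}(r)^{\oplus}+T(r)$ cannot work: $T(r)$ is a fixed bounded operator whose norm does not shrink with $r$, while $\mathcal{K}(r)$ is defined by the window $(-r^{-2},r^{-2})$. An $O(1)$ perturbation is not controlled relative to an $O(r^{-2})$ spectral band, and in fact the two small-eigenvalue spaces genuinely differ: the $\R^n$-summand lies in the honest kernel of $\hat{D}(r)^{\oplus}$, and the unperturbed operator retains the contributions that the parameter directions are designed to cancel (e.g.\ the obstruction space at the reducible), whereas in the application $\mathcal{K}(r)$ for $\widetilde{D}(r)$ is zero. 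So the dimension count you hope to ``inherit'' is simply wrong for $\hat{D}(r)^{\oplus}$. The paper avoids this by running the CLM proof directly for $\widetilde{D}$: one checks the uniform bounds $\|{\bf n}(\theta)\|\le C|\theta|$, $|{\bf n}^*v|\le C\|v\|_{L^2}$, and the $r$-independent elliptic estimates $\|\hat{D}^pV\|^2\le C\sum_{k\le p}\|\widetilde{D}^kV\|^2$, so that the bootstrapping/splitting step converges to an element of $\ker(\Delta)$, and the approximate-gluing step is unchanged because $\widetilde{D}(V_1\cs_rV_2)$ is supported in the neck where ${\bf n}(\theta)$ vanishes. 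You would need to replace your perturbation step by such a direct argument for $\widetilde{D}$ itself.
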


\begin{proof}
Following the proof from \cite{Nicolaescu:CLM} one starts with a bounded sequence in the image of the splitting map and assumes that it is bounded away from the sum of the kernels. Establishing the existence of a convergent subsequence gives a contradiction and allows one to conclude that the image of the splitting map approaches the sum of the kernels. The norm is given by $\|(\theta,V)\|^2 = \|\theta\|^2 + \|V\|_\text{ex}^2$. Thus the corresponding sequence of $\theta$s is bounded implying that there is a convergent subsequence. One notices that a bound on $\|S_r(\theta_r,V_r)\|$ gives a bound on $\|(\theta_r,V_r)\|$. It is easy to see that one has $\widetilde{D} = \hat{D}$ away from the neck (define the neck to start after the support of $\eta_i$), uniform bounds $\|{\bf n}(\theta)\|_{L^2_k} \le C|\theta|$, $|{\bf n}^*v| \le C\|v\|_{L^2}$, and the inequality
\[
\|[\hat{D},\widetilde{D}]V\|_{L^2} \le C\left(\|[\hat{D},\widetilde{D}]V\|_{L^2(N(r)\setminus\text{neck})} + \|[\hat{D},\widetilde{D}]V\|_{L^2(N(r)\setminus\text{neck})}\right).
\]
One quickly obtains the bounds
\[
\|\hat{D}^pV\|^2 \le C\sum_{k=0}^p \|\widetilde{D}^kV\|^2,
\]
with the constant independent of $r$.  The splitting part of the proof then follows the argument from \cite{Nicolaescu:CLM}, which is a bootstrapping argument to obtain a convergent subsequence which is show to converge to an element of $\text{ker}(\Delta)$. We remark that it is in this first step that the eigenvalue bound is required as one of the terms that is required to vanish in the limit is $r\lambda$ where $\lambda$ is one of the eigenvalues corresponding to an element of $\mathcal{K}(r)$. The second step is to define an approximate gluing $V_1\#_rV_2$ so that $\text{sup}\frac{\|\widetilde{D}(V_1\#_rV_2)\|}{\|V_1\#_rV_2\|} < r^{-2}$. Since $\widetilde{D}(V_1\#_rV_2)$ vanishes outside of the neck and ${\bf n}(\theta)$ vanishes inside the neck, the argument here is unchanged from \cite{Nicolaescu:CLM}.   \end{proof}

Since the operator $\widetilde{D}$ maps $\R^n\oplus L^2_\text{ex}(\hat{E}^+)$ to $L^2_\text{ex}(\hat{E}^-)$ and vice-versa, one may decompose $\mathcal{K}(r) = \mathcal{K}^+(r)\oplus \mathcal{K}^-(r)$. The abstract linear gluing result uses information about $\text{ker}_{\text{ex}}$ as input, but 
we only currently understand the cohomology of the $C$ and $F$ complexes associated to $\hat{N}_k$, $k=1,2$. It is useful to collapse a general Hilbert complex $(C^j,d^j)$ into a single map $d+d^*:C^* = \oplus\, C^j \to C^*$. One obtains $\text{ker}(d+d^*) = \oplus H^j(C)$.

Recall that 
\begin{gather*}
    F_k^* = T_{\theta_0}\Xi\oplus L^2_{\0,\delta}(i(\Bigwedge^0\oplus\Bigwedge^1\oplus\Bigwedge^2_+)\hat{N}_k\oplus S) \\
    C_k^* = T_{\theta_0}\Xi\oplus L^2_{\0,\delta}(i(\Bigwedge^0\oplus\Bigwedge^1\oplus\Bigwedge^2_+)\hat{N}_k\oplus S)\oplus i\R \\
    (C_{ex})_k^* = T_{\theta_0}\Xi\oplus L^2_{\0,\delta}(i(\Bigwedge^0\oplus\Bigwedge^1\oplus\Bigwedge^2_+)\hat{N}_k\oplus S) \oplus L^2(i(\Bigwedge^0\oplus\Bigwedge^1)S^3\oplus S).
\end{gather*}
Since we are collapsing the complexes, the notation here does not include the number of derivatives used in each summand.  The collapsed differential in each case is just $\widetilde{D} = D_0\mathcal{N}+D_0\mathcal{N}^*$ restricted to the relevant space.
These complexes are defined and studied in the unparameterized setting in detail in \cite[Chapter 4]{nicolaescu:swbook}. 
Notice that the last summand of the extended complex $C_{ex}^*$ is just the deformation complex of the configuration at infinity. When the configuration at infinity is non-degenerate, $\text{ker}(\partial_\infty\widetilde{D})$ projects to a subspace of $L^2(i\Bigwedge^0S^3)$ and a subspace of $L^2(i\Bigwedge^1S^3\oplus S)$ where $S$ is the spinor bundle on $S^3$.
The map taking values at infinity has a corresponding decomposition $\partial_\infty = \partial_\infty^0 + \partial_\infty^c$ described just before \cite[Remark 4.3.27]{nicolaescu:swbook}.

This leads to \cite[Proposition 4.3.28]{nicolaescu:swbook} which asserts the existence of the exact sequence
\[
0\to H^1(C) \to \text{ker}_{ex}(\mathbb{D}+{\bf n}) \to T_1\left(\text{Stab}(\partial_\infty C)/\partial_\infty\text{Stab}(C))\right) \to 0.
\]
The proof takes place on the neck. Since the perturbation vanishes on the neck, the same result holds in the parameterized case. The same decomposition leads to  
\[
0\to H^2(F)\to \text{ker}_{ex}(\mathbb{D}^*+{\bf n}^*) \to \text{im}(T_1\text{Stab}(C) \to T_1\text{Stab}(\partial_\infty C)) \to 0.
\]
The proof is based on an analysis of the relation between gauge fixing conditions arising from different weights. Since the gauge fixing conditions do not depend on the perturbation, the proof carries over to the parameterized case. It is useful to set 
\[
\mathfrak{G}^+ = T_1\left(\text{Stab}(\partial_\infty C)/\partial_\infty\text{Stab}(C))\right) \ \text{and} \ \mathfrak{G}^- =\text{im}(T_1\text{Stab}(C) \to T_1\text{Stab}(\partial_\infty C)).
\]

The conclusion is that in the obstructed case, the tangent space diagram \eqref{tangent} and obstruction space diagram  \eqref{obstruction} from before Theorem 4.5.7 in \cite{nicolaescu:swbook} both continue to commute and have exact rows and columns.
\begin{equation}\label{tangent}\tag{{\bf T}}
\begin{tikzcd}[column sep = small]
0\arrow{r} & \ker(\Delta_+^c) \arrow{r}\arrow{d} &  H^1(C_1)\oplus H^1(C_2) \arrow{r}\arrow{d} &  \partial_\infty^cH^1(C_1) \oplus \partial_\infty^cH^1(C_1) \arrow{r}\arrow{d} &  0 \\
0 \arrow{r} &  \mathcal{K}(r)^+ \arrow{r}{p_rS_r}\arrow{d} & \ker_{\text{ex}}^+(\widetilde{D}_1)\oplus\ker_{\text{ex}}^+(\widetilde{D}_2) 
\arrow{r}{\Delta}\arrow{d}
 &\partial_\infty\ker_{\text{ex}}^+(\widetilde{D}_1) + \partial_\infty\ker_{\text{ex}}^+(\widetilde{D}_2)\arrow{r}\arrow{d} & 0\\
0 \arrow{r}   &  \text{ker}(\Delta_+^0) \arrow{r} &   \mathfrak{G}^+_1 \oplus \mathfrak{G}^+_2 \arrow{r} &  \mathfrak{G}^+_1 + \mathfrak{G}^+_2 \arrow{r} &  0 
\end{tikzcd}
\end{equation}
\vspace*{1ex}

\begin{equation}\label{obstruction}\tag{{\bf O}}
\begin{tikzcd}[column sep = small]
0\arrow{r} & \ker(\Delta_-^c) \arrow{r}\arrow{d} &  H^1(F_1)\oplus H^1(F_2) \arrow{r}\arrow{d} &  \partial_\infty^cH^1(F_1) \oplus \partial_\infty^cH^1(F_1) \arrow{r}\arrow{d} &  0 \\
0 \arrow{r} &  \mathcal{K}(r)^- \arrow{r}{p_rS_r}\arrow{d} & \ker_{\text{ex}}^-(\widetilde{D}_1)\oplus\ker_{\text{ex}}^-(\widetilde{D}_2) 
\arrow{r}{\Delta}\arrow{d}
 &\partial_\infty\ker_{\text{ex}}^-(\widetilde{D}_1) + \partial_\infty\ker_{\text{ex}}^+(\widetilde{D}_2)\arrow{r}\arrow{d} & 0\\
0 \arrow{r}   &  \text{ker}(\Delta_-^0) \arrow{r} &   \mathfrak{G}^-_1 \oplus \mathfrak{G}^-_2 \arrow{r} &  \mathfrak{G}^-_1 + \mathfrak{G}^-_2 \arrow{r} &  0
\end{tikzcd}
\end{equation}

Since $H^1(C_k)$ and $H^2(F_k)$ are all zero, the first row of each diagram is zero. Furthermore, as $\partial_\infty C_k$ is reducible for each $k$, $C_1$ is irreducible and $C_2$ is reducible we have $\mathfrak{G}^+_1\cong \R$, $\mathfrak{G}^+_2 = 0$, $\mathfrak{G}^-_1 = 0$, and $\mathfrak{G}^-_2 \cong \R$. This implies that $\mathcal{K}(r) = 0$. Thus once $r$ is larger than some threshold, $R_L$, the operator $D_0\mathcal{N}$ is an isomorphism and $\widetilde{D} = D_0\mathcal{N}+D_0\mathcal{N}^*$ has no eigenvalues in $(-r^{-2},r^{-2})$. It follows that $D_0\mathcal{N}:L^2_2 \to L^2_1$ has a bounded inverse, $S:L^2_1\to L^2_2$, with operator norm satisfying $\|S\|_{op}\le r^2$.

\subsubsection{Contraction Mapping}\label{contraction}
Now that the linearized equations are understood, we can apply the standard contraction mapping argument to show that approximate solutions may be converted into actual solutions. This follows the argument from \cite{nicolaescu:swbook} as presented in \cite{baraglia-konno:gluing} exactly. Recall that the gauge-fixed Seiberg-Witten map from Equation~\ref{eqN} is a map
\[
\mathcal{N}\co \mathbb{X}^+ \to \mathbb{X}^-.
\]
 We will often write the argument $(\vartheta,ia,\varphi)$ as $x$ or $y$. In this notation, our task is to show that there is a unique solution to $\mathcal{N}(x) = 0$ in a sufficiently small neighborhood of $0$.
The Taylor expansion with error reads
\[
0 = \mathcal{N}(0) + D_0\mathcal{N}(x) + Q(x).
\]
Applying the operator $S$ transforms this to
\[
x = F(x) = - S\mathcal{N}(0) - SQ(x).  
\]
We will show that once $r$ is larger than a threshold $R_c$, the map $F$ is a contraction on $D_r = \{x \in L^2_2\,|\, \|x\| \le r^{-4} \}$.

Here is is useful to divide $\mathcal{N}(\vartheta,ia,\varphi) = \mathcal{N}_0(ia,\varphi) +i\eta(\theta+ \vartheta)^+$. The function $\mathcal{N}_0$ was analyzed in the unparameterized case. Since $\eta$ is a smooth function with a compact parameter space, it is not difficult to estimate terms associated to $\eta$. This decomposition leads to a corresponding decomposition of the quadratic error term
\[
Q(\theta,A,\psi) = Q_0(A,\psi) + q_\eta(\theta).
\]
We use the following standard calculus lemma.
\begin{lemma}\label{L:calculus}
 Let $f:\R^n\to H$, H a Hilbert space, have bounded second derivatives, and let $R(\theta) = f(\theta) - [f(0) +f'(0)\theta]$ be the error in the linear approximation. There is a constant $C$ to that
\[
\|R(\theta) - R(\phi)\| \le C(\|\theta\|+\|\phi\|)\|\theta - \phi\|.
\]   
\end{lemma}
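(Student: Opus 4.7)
The plan is to prove this by combining the fundamental theorem of calculus with the mean value inequality, applied twice. Observe first that, since $f'(0)$ is linear, the linear term cancels in the difference:
\[
R(\theta) - R(\phi) = \bigl[f(\theta) - f(\phi)\bigr] - f'(0)(\theta - \phi).
\]

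Next I would integrate $f'$ along the straight line segment from $\phi$ to $\theta$. Set $\gamma(t) = \phi + t(\theta - \phi)$ for $t \in [0,1]$. Then
\[
f(\theta) - f(\phi) = \int_0^1 f'(\gamma(t))(\theta - \phi)\, dt,
\]
so
\[
R(\theta) - R(\phi) = \int_0^1 \bigl[f'(\gamma(t)) - f'(0)\bigr](\theta - \phi)\, dt.
\]

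Now I would bound $\|f'(\gamma(t)) - f'(0)\|$ using the hypothesis that the second derivative is bounded, say $\|f''\| \le M$ uniformly. Integrating $f''$ along the segment from $0$ to $\gamma(t)$ gives
\[
\|f'(\gamma(t)) - f'(0)\| \le M \|\gamma(t)\| = M\|(1-t)\phi + t\theta\| \le M(\|\phi\| + \|\theta\|).
\]
Substituting this bound into the integral representation of $R(\theta) - R(\phi)$ and taking norms yields the desired estimate with $C = M$. There is no real obstacle; the only point to keep track of is that both terms in the linear approximation collapse cleanly when one forms the difference, which is what allows the second-derivative bound (rather than a first-derivative bound) to enter and produce the factor $(\|\theta\| + \|\phi\|)$.
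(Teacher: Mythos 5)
Your argument is correct: the linear term cancels in the difference, the fundamental theorem of calculus along the segment $\gamma(t)=\phi+t(\theta-\phi)$ gives $R(\theta)-R(\phi)=\int_0^1[f'(\gamma(t))-f'(0)](\theta-\phi)\,dt$, and the mean value inequality with $\|f''\|\le M$ bounds the bracket by $M(\|\theta\|+\|\phi\|)$, yielding the estimate with $C=M$. The paper states this as a ``standard calculus lemma'' and gives no proof, and your argument is precisely the standard one being invoked, so there is nothing to compare beyond noting full agreement.
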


In this case, the Hilbert space depends on the parameter $r$. However, this is really not an issue because the support of $\eta$ is disjoint from the neck which is the only part that depends upon $r$. It follows that 
\begin{align*}
  \|Sq_\eta(\theta_1) - Sq_\eta(\theta_2)\|_{L^2_2} &\le r^2\|q_\eta(\theta_1) - q_\eta(\theta_2)\|_{L^2_1} \\
&\le Cr^2(|\theta_1| + |\theta_2|)|\theta_1 - \theta_2|  \\
&\le Cr^2(\|(\theta_1,A_1,\psi_1)\|+\|(\theta_2,A_2,\psi_2)\|)|\theta_1 - \theta_2|.
\end{align*}

The estimate 
\[
\|Q_0(A_1,\psi_1) - Q_0(A_2,\psi_2)\|_{L^2_1} \le Cr^{3/2}(\|(A_1,\psi_1)\|_{L^2_2} + \|(A_2,\psi_2)\|_{L^2_2})\|(A_1,\psi_1) - (A_1,\psi_1)\|_{L^2_2}
\]
is established in \cite[Lemma 4.5.6]{nicolaescu:swbook}.
It follows that 
\[
\|SQ_0(x) - SQ_0(y)\|_{L^2_1} \le Cr^{7/2}(\|(A_1,\psi_1)\|_{L^2_2} + \|(A_2,\psi_2)\|_{L^2_2})\|(A_1,\psi_1) - (A_1,\psi_1)\|_{L^2_2}.
\]
Combining these gives
\[
\|F(x) - F(y)\|_{L^2_2} \le Cr^{7/2}(\|x\|_{L^2_2}+\|y\|_{L^2_2})\|x - y\|_{L^2_2}.
\]
Using $x, y\in D_r$ then implies the contraction property $\|F(x) - F(y)\|_{L^2_2} \le Cr^{-1/2}\|x - y\|_{L^2_2}$ with say $Cr^{-1/2}\le \frac12$ once $r$ is larger than a suitable threshold. Substituting $y=0$ gives $\|F(x) - F(0)\|_{L^2_2} \le Cr^{-1/2}\|x\|_{L^2_2}$

The expression $\mathcal{N}(0)$ just measures the failure of the approximate solution $C_1\cs_rC_2$ from being an actual solution to the gauge-fixed Seiberg-Witten equations. This failure only takes place in the neck where the perturbation vanishes. \cite[Theorem 4.2.33]{nicolaescu:swbook} states that for any $\lambda < \delta$ there is a constant $C$ so that any solution to the unperturbed perturbed gauge-fixed Seiberg-Witten equations satisfies $\|x(t)\|_{L^2_2} \le C e^{-\lambda t}$. This gives a bound.
\[
\|S\mathcal{N}(0)\|_{L^2_2}\le Cr^2e^{-\lambda r}.
\]
Thus for $x\in D_r$ we have
\begin{align*}
\|F(x)\| &\le \|F(0)\| + \|F(x) - F(0)\|\\
&\le  Cr^2e^{-\lambda r} +  Cr^{-1/2}\|x\|_{L^2_2} \le C(r^6 e^{-\lambda r} + r^{-1/2})r^{-4}.
\end{align*}
For $r$ larger than a suitable threshold this implies that $\|F(x)\| \le r^{-4}$ so that $F$ is a contraction mapping and we conclude that it has a unique fixed point.

  \subsubsection{Checking that the gluing map is an isomorphism}
\begin{definition}
    The gluing map 
    $\cs\co \widehat{\mathcal{M}}_{\hat{N}_1,\spincs_1}(\data^1) \times_{S^1}\widehat{\mathcal{M}}_{\hat{N}_2,\spincs_2}(\data^2)\to {\mathcal{M}}_{N(r),\spincs_1\cs\spincs_2}(\data^1\cs\data^2)$
    is given by
    \[
    (\theta_1,A_1,\psi_1)\cs (\theta_2,A_2,\psi_2) = (\theta_1,A_1,\psi_1)\cs_r (\theta_2,A_2,\psi_2) + x,
    \]
    where 
    \[
    F_{(\theta_1,A_1,\psi_1)\cs_r (\theta_2,A_2,\psi_2)}(x) = x.
    \]
\end{definition}
Our work here is simplified by the fact that the parameterized moduli spaces that we are considering are all finite, thus each solution to the family equations has a specific parameter. Furthermore, the $I\times S^3$ neck that we consider does not admit tunnel solutions; compare~\cite[\S 4.5.4]{nicolaescu:swbook}. The construction of a local slice to the gauge group so that each gauge orbit in a sufficiently small neighborhood of the solution hits the slice in exactly one point proceeds exactly as in the unparameterized case. 

The assumption that 
\[
(\theta_1,A_1,\psi_1)\cs (\theta_2,A_2,\psi_2) = (\theta_1',A_1',\psi_1')\cs (\theta_2',A_2',\psi_2')
\]
implies that $(\theta_1,A_1,\psi_1)\cs_r (\theta_2,A_2,\psi_2)$
and $(\theta_1',A_1',\psi_1')\cs (\theta_2',A_2',\psi_2')$ are close (within $2r^{-4}$). 
This in turn implies that $(\theta_k,A_k,\psi_k)$ and $(\theta_k',A_k',\psi_k')$ are close. 
This follows because the size of the solution on the end $[a,\infty)\times S^3$ is bounded by the size of the solution of the rest of $\hat{N}_k$ and decays exponentially in $a$. 
Once we conclude that $(\theta_k,A_k,\psi_k)$ and $(\theta_k',A_k',\psi_k')$ are sufficiently close, it follows that they must be equal since the parameterized moduli spaces are discrete. It follows that the gluing map is injective for all sufficiently large $r$.

To prove that the the gluing map is surjective is to make a cut-off to turn any solution on $N(r)$ into a pair of approximate solutions on $\hat{N}_k$. There are two main points. The size of a solution in the center of the neck is bounded  by a factor that decays exponentially in the length of the neck times the size on the of the solution on the ends of the neck. Up to gauge, there is a unique solution on $I\times S^3$. Thus once $r$ is sufficiently large, a solution, $C$ on $N(r)$ gives rise to an approximate solution $C^\approx_1$ on $\hat{N}_1$, which may be turned into an actual solution $C_1$. 
This is done so that $C$ is within a distance of $r^{-4}$ of $C_1\cs_r C_2$ and hence is in the image of the gluing map. Since the gluing map is a bijection, we have established Theorem~\ref{pIRglue}.

\subsection{Applications of the irreducible-reducible gluing theorem}\label{1st-apps}
With Theorem~\ref{pIRglue} proved, in hand, we deduce several applications that have been used earlier in the paper. 
When family invariants are well-defined for a family of manifolds $\hat{N}_1$ with cylindrical ends, Proposition~\ref{SPNMID} establishes that there is data on the family satisfying the conditions required for $\hat{N}_1$ in Theorem~\ref{pIRglue}. Thus the main items to establish before applying the theorem are the conditions associated with the manifold $N_2$. Generally, when the parameter space is compact, the usual arguments in Seiberg-Witten theory will imply that the parameterized moduli space is compact. If the virtual dimension of the parameterized moduli space is negative and the data is generic, the parameterized moduli space will contain only reducible solutions. In the negative virtual dimension case with reducibles, the natural restriction to impose is that the first cohomology of the deformation complex vanishes at a reducible point. Establishing this is the most delicate part of applying this theorem.  In many applications, one may take positive scalar curvature metrics on $N_2$ to establish many of the properties of an irreducible-reducible good pair associated with $N_2$.

There are various situations where one would like to apply Theorem~\ref{pIRglue}. One of the most basic is when $\hat N_2$ is $\R^4$ with scalar curvature bounded below by a positive constant, a cylindrical end modeled on $(0,\infty)\times S^3$, $\Xi_2$ a point, and $\eta = 0$. This allows one to pass between a manifold or family of manifolds with cylindrical end modeled on $S^3$ and a closed manifold or family of closed manifolds.  One can easily construct such a metric on $\R^4$ for example, working in spherical coordinates  take a metric of the form 
\[
g = dr^2 + (\rho d\alpha^1)^2 + (\rho \sin\alpha^1 d\alpha^2)^2 + (\rho \sin\alpha^1\sin\alpha^2 d\alpha^3)^2,
\]
where $\rho$ is a function of $r$. A direct calculation shows that the scalar curvature of this metric is given by
$s = 6(\rho^{-2} - \rho^{-2}(\rho')^{2} - \rho^{-1}\rho^{\prime\prime})$. The choices $\rho = r$, $\rho = \sin(r)$, and $\rho = 1$ correspond to $\R^4$, $S^4$, and $\R\times S^3$ respectively.  It is not difficult to pick a function that interpolates between $\sin(r)$ near $r=0$ and $1$ for large values of $r$ so that the scalar curvature is strictly positive. Nicolaescu \cite[Example 4.3.40]{nicolaescu:swbook} argues that $H^1 = 0$ in this situation. This metric (and vanishing $2$-form) will be called the \emph{standard data} on $\hat{S^4}$. In this setting, the closed/deleted correspondence is straightforward.
\begin{corollary}\label{closed-deleted} With respect to a sufficiently stretched metric on $N$, the moduli space $\mathcal{M}_{{N},\spincs}({\data})$ is isomorphic to the cylindrical end moduli space $\mathcal{M}_{\hat{N},\spincs}(\hat{\data})$.
\end{corollary}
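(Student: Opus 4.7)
The plan is to apply the Parameterized Irreducible--Reducible Gluing Theorem~\ref{pIRglue} with $\hat N_1 = \hat N$ (carrying the given data $\hat{\data}$, regarded as a family over $\Xi_1 = \Xi$) and $\hat N_2 = \hat{S^4}$ with the trivial one-point parameter space $\Xi_2 = \{\pt\}$ equipped with the standard PSC data $\data_{S^4} = (g_{S^4},0)$ constructed just above the corollary statement. Under these choices the connected sum $\hat N_1 \cs_r \hat N_2$ is diffeomorphic to $N$ with an inserted neck of length $r$, and the $\Spinc$ structure $\spincs \cs_r \spincs_0$ agrees with $\spincs$ (since $\hat{S^4}$ carries a unique $\Spinc$ structure $\spincs_0$ extending the spin structure at infinity).

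Next I would check that the pair $(\hat{\data},\data_{S^4})$ is irreducible--reducible good in the sense of Definition~\ref{irr-red-good}. For $\hat N_1$ we use Proposition~\ref{SPNMID} to arrange that $\hat{\data}$ is locally metric independent without changing the moduli space. For $\hat N_2$ we need the following three items: (i) the positive scalar curvature of $g_{S^4}$ rules out irreducible solutions by the standard Weitzenb\"ock argument, so every element of $\mathcal{M}_{\hat{S^4},\spincs_0}(\data_{S^4})$ is reducible; (ii) there is a unique reducible configuration on $\hat{S^4}$, namely the trivial $\Spinc$-connection (its virtual dimension is $-1$, matching the required index condition); (iii) $H^1$ of the deformation complex vanishes at this reducible, which is Nicolaescu's Example~4.3.40 cited in the paragraph preceding the corollary. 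Since $\Xi_2$ is a point, transversality of $\data^2$ to the wall $\mathcal{W}_{c_1(\spincs_0)}$ is automatic, and the remaining transversality hypothesis on $\hat{\data}$ is guaranteed by goodness.

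With these hypotheses verified, Theorem~\ref{pIRglue} supplies $r_0$ such that for all $r > r_0$
\[
\mathcal{M}_{N,\spincs}(\{\hat{\data}\cs_r\data_{S^4}\}) \;\cong\; \widehat{\mathcal{M}}_{\hat N,\spincs}(\{\hat{\data}\}) \times_{S^1} \widehat{\mathcal{M}}_{\hat{S^4},\spincs_0}(\data_{S^4}).
\]
The framed moduli space $\widehat{\mathcal{M}}_{\hat{S^4},\spincs_0}(\data_{S^4})$ is a single free $S^1$-orbit (the stabilizer of the reducible acts freely on the frame), so the fibered product collapses to a copy of $\widehat{\mathcal{M}}_{\hat N,\spincs}(\{\hat{\data}\})/S^1 = \mathcal{M}_{\hat N,\spincs}(\{\hat{\data}\})$, yielding the claimed isomorphism.

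The main obstacle is really the analytical bookkeeping for $\hat{S^4}$: verifying that the standard PSC data is parameter-regular and has vanishing $H^1$ at the reducible, since this is what makes the gluing unobstructed. Once those are in place (via Witten's bound and the cited Nicolaescu calculation), the remainder of the argument is a direct application of Theorem~\ref{pIRglue} combined with the fact that the $\hat{S^4}$ factor contributes only a single $S^1$ in the fibered product.
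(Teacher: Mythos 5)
Your proposal follows the paper's own route: the corollary is meant as an immediate application of Theorem~\ref{pIRglue} with $\hat N_1=\hat N$ carrying the given (family) data and $\hat N_2=\hat{S^4}$ carrying the standard PSC cylindrical-end metric with $\eta=0$ and $\Xi_2$ a point, the unique reducible and the vanishing of $H^1$ being exactly the Nicolaescu computation cited in the paragraph preceding the statement. Two corrections to your bookkeeping, one of which matters. The framed moduli space of the reducible on $\hat{S^4}$ is a single point on which the residual $S^1$ at infinity acts \emph{trivially}: the stabilizer of the reducible consists of the constant gauge transformations, and $\partial_\infty$ maps these isomorphically onto the asymptotic $S^1$, so $\mathcal{G}^0\cdot\text{Stab}=\mathcal{G}$ and the quotient by the based gauge group is a point; it is not a free $S^1$-orbit. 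If your parenthetical were correct, the fibered product $\widehat{\mathcal{M}}_{\hat N,\spincs}(\{\hat\data\})\times_{S^1}\widehat{\mathcal{M}}_{\hat{S^4},\spincs_0}$ would be $\widehat{\mathcal{M}}_{\hat N,\spincs}(\{\hat\data\})$ itself, a union of circles, rather than the $S^1$-quotient $\mathcal{M}_{\hat N,\spincs}(\{\hat\data\})$ you (correctly) end with; the collapse you want is forced precisely by the point-with-trivial-action picture, which also matches the paper's statement that $H^*(F)=0$ on the reducible side. Secondly, transversality to the wall is not ``automatic because $\Xi_2$ is a point'' --- a zero-dimensional family meeting a positive-codimension wall could never be transverse to it; the condition is vacuous here because $b_2^+(S^4)=0$, and its role is played by the vanishing of $H^2$ at the reducible, which follows from the PSC Weitzenb\"ock argument together with the index-zero Dirac operator, exactly as in the cited example. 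With these repairs the argument is the paper's.
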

A second situation arises when $\hat N_2$ is the Euler class $-1$ bundle over $S^2$ with a cylindrical end modeled on $(0,\infty)\times S^3$, a metric of scalar curvature bounded below by a positive constant, $\Xi_2$ is a point, and $\eta = 0$. This leads to the usual blow-up formula (for manifolds of simple type) in Seiberg-Witten theory. Nicolaescu \cite[Example 4.1.27]{nicolaescu:swbook} argues that any complex line bundle over $S^2$ admits a PSC metric with cylindrical end modeled on a ray times the standard metric lens space that is the boundary of the corresponding disk bundle. That $H^1 = 0$ exactly for the $\Spinc$ structures with $c_1(\spincs) = \pm[\cpone]$ is proved in \cite[Example 4.3.39]{nicolaescu:swbook} and this establishes the usual Seiberg-Witten blow-up formula. 

A third application arises when $\hat N_2$ is a punctured copy of $\cptwo\cs 2\cptwobar$. Since $\cptwo$ admits a PSC metric independent of orientation and a once punctured $\cptwo$ may be identified with the Euler class one bundle over $S^2$, we see that $\hat N_2$ admits a PSC metric with cylindrical end modeled on $\R\times S^3$, \cite{gromov-lawson:psc}. For a parameterized moduli space corresponding to a path of metrics, one evaluates $H^0$ using the fact that the solution is reducible and $H^2$ with the assistance of the PSC condition, and then applies the index theorem to conclude that $H^1 = 0$. This then gives the wall crossing formula from~\cite{ruberman:swpos} for the diffeomorphism invariant. The application that is relevant to this paper arises when $\hat N_2$ is a punctured copy of $\sss$ and it is very similar to the example in~\cite{ruberman:swpos} of the wall crossing formula. Before describing the case of a punctured $\sss$ we remark that the gluing result of Baraglia-Konno~\cite{baraglia-konno:gluing} also fits into the framework of Theorem~\ref{pIRglue}. After reducing to the situation of families defined over cells, their gluing theorem corresponds to the case with $\Xi_1$ just a point. 

Now we turn to the case of the punctured $\sss$ that is relevant to this paper. 
\begin{lemma}\label{L:standardsssdata}
    There is data $\data\colon [0,1] \to \Pi(\sss)$ for which $\data(1) = (RR')^*\data(0)$ that is locally metric independent and such that the first homology of the deformation complex of the unique Seiberg-Witten monopole associated with this data vanishes.
\end{lemma}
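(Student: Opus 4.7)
The plan is to produce $\data$ explicitly using a fixed $RR'$-invariant metric together with a linear path of perturbations that transversely crosses the reducible wall once. I would take $g_0$ to be the product of round metrics on $\sss = S^2 \times S^2$. From the formula in Remark~\ref{explicitRR}, $RR'$ is the product of the reflections $(v_1,v_2,v_3) \mapsto (v_1,v_2,-v_3)$ on each factor, so it is an isometry of $g_0$. Since each factor reflection reverses orientation on $S^2$, the induced map $(RR')^*$ acts by $-1$ on each generator $[A],[B] \in H^2(\sss;\R)$, and hence by $-1$ on the one-dimensional space $H^2_+(\sss;\R)$, which is spanned by the sum of the two factor volume forms with respect to $g_0$.

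Next, I would pick a closed 2-form $\eta_0$ with $[\eta_0^+]\neq 0$ in $H^2_+(\sss;\R)$, scaled so that $4\|\eta_0\|_\infty < \min s(g_0)$. Setting $\eta_1 = (RR')^*\eta_0$ and $\data_t = (g_0,(1-t)\eta_0 + t\eta_1)$, by construction $\data(1) = (RR')^*\data(0)$, and local metric independence is automatic since $g_t \equiv g_0$. Because $(RR')^*$ acts by $-1$ on $H^2_+$, the class $[\eta_t^+] = (1-2t)[\eta_0^+]$ vanishes transversely at $t_* = 1/2$. The a priori bound from the discussion preceding Lemma~\ref{L:finite}, combined with the PSC hypothesis, forces $\psi \equiv 0$ throughout the family, so every solution is reducible. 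Reducibles for $\spincs_0$ (whose first Chern class vanishes) can exist only where $[\eta_t^+] = 0$, which happens only at $t_*$, and simple-connectedness with $b_1(\sss) = 0$ shows that the reducible $(A,0)$ at $t_*$ is unique up to gauge.

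Finally, I would verify the $H^1$-vanishing of the parameterized deformation complex at this unique reducible. The middle cocycle condition amounts to $D_A\phi = 0$ and $d^+a + t'\dot\eta^+|_{t_*} = 0$. The first forces $\phi = 0$ by Lichnerowicz (the twisting curvature is zero since $A$ is flat, and PSC is in effect). Since $[\dot\eta^+|_{t_*}] = -2[\eta_0^+]$ is nonzero in $H^2_+$, the second forces $t'=0$ and $a\in \ker d^+$, and modding by $d\Omega^0$ leaves $H^1(\sss;\R) = 0$. The only genuinely delicate input is the identification $(RR')^*|_{H^2_+} = -1$, which is precisely what guarantees the transverse wall-crossing and hence the $H^1$-vanishing at the reducible; every other step is a standard consequence of PSC, simple-connectedness, and the a priori bound.
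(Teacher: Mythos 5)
Your route is genuinely different from the paper's and, read against the literal statement (data on the closed $\sss$, with the isometry $RR'$), it is essentially sound: fixing the product round metric and exploiting that $(RR')^*$ is an isometry acting by $-1$ on $H^2_+$ lets you take a constant metric path and a linear path of perturbations, so local metric independence is free, the wall is crossed transversely exactly once, and the direct cocycle computation at the unique reducible replaces the paper's bookkeeping (Weitzenb\"ock for $\ker D_A^+$, APS index for the cokernel, transversality for $H^2$, and the virtual-dimension count that then forces $H^1=0$). Two caveats. First, your justification that $\phi=0$ because ``$A$ is flat'' is wrong: the reducible at $t_*=1/2$ satisfies $F_A^+=-i\eta_{1/2}^+$, which need not vanish pointwise even though its harmonic part does. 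The conclusion $\ker D_A^+=0$ still holds, but via the perturbed Weitzenb\"ock estimate using the smallness $4\|\eta_0\|_\infty<\min s(g_0)$ you already imposed, not via flatness.

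Second, and more importantly, your construction does not produce what the lemma is actually consumed for. In the paper the ``standard data'' lives on the punctured manifold $\widehat{\sss}$ with a cylindrical end modeled on the round $S^3$, with compactly supported perturbations, because it must feed into the parameterized irreducible--reducible gluing Theorem~\ref{pIRglue} via Corollary~\ref{glue} and Theorem~\ref{anti-hol}; moreover the diffeomorphism that is actually glued is the model of Remark~\ref{explicitRR} isotoped to be the identity on a disk (equivalently, on the end), since it must extend by the identity over $Z$ in $Z\cs(\sss)$. That end-fixed map is no longer an isometry of any fixed metric, which is precisely why the paper performs the Gromov--Lawson end construction equivariantly, writes down an explicit nonconstant path of metrics $g_s$ that is constant in $s$ near the crossing (this is where local metric independence comes from there), and then argues $H^1=0$ through the Weitzenb\"ock/APS/index computation in the weighted cylindrical-end deformation complex. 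Your constant-metric shortcut is unavailable in that setting, so as written the proposal proves a closed-manifold variant that would still need to be transplanted to $\widehat{\sss}$ (with the end-fixed diffeomorphism and compactly supported $\eta$) before it could play the role the paper assigns to this lemma.
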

The explicit diffeomorphism  $RR'$ on $\sss$ is defined in Remark~\ref{explicitRR}. We call the data constructed in the proof \emph{standard data} on $\sss$.
\begin{proof}
The map $RR'$ is an isometry of $\sss$ with the standard metric. The steps used to introduce a spherical cylindrical end in the constructions of a connected sum of positive scalar curvature 
metrics from~\cite{gromov-lawson:psc,rosenberg-stolz:psc} are equivariant with respect to the $RR'$ symmetry when performed at a point on the fixed-point set. Denote the resulting metric by $g_0$. When restricted to the cylindrical end of the punctured $\sss$, $RR'$ is just a rotation of the $S^3$-factor. In spherical coordinates $(\vphi^1,\vphi^2,\vphi^3)$ it just adds $\pi$ to the $\vphi^3$ coordinate. Use $t$ as a coordinate on the $\R$-factor of $(0,\infty)\times S^3$. To define the connected sum, we perform a small isotopy 
to make it restrict to the identity on the end of the punctured $\sss$. This may be accomplished by making the rotation a function of $t$, for instance via smooth function $h(t)$ that ramps from a constant value of $\pi$ down to a constant value of zero.
This allows one to write an explicit family of metrics interpolating between $g_0$ and $g_1 = RR^*g_0$. Since $g_0$ has positive scalar curvature, $g_1$ also has positive scalar curvature. Restricted to the cylindrical end, the metric $g_s$ described below arises as the pull-back of a diffeomorphism that rotates one end of the cylinder by $s\pi$, while fixing the other.
Actually we use a smooth monotone increasing function, $\sigma\co[0,1]\to[0,1]$, with $\sigma(s) = s$ on a neighborhood of $\{0,1\}$ and $\sigma(s) = \frac12$ in a neighborhood of $\frac12$.
As a pull-back this is still PSC. Even though rotations that are not multiples of $\pi$ do not extend across $\sss\setminus\text{pt}$, the induced metrics do match since rotation is an isometry of $S^3$.
\[
 g_s = (dt)^2 + (d\vphi^1)^2 + (\sin\vphi^1 d\vphi^2)^2 + (\sin\vphi^1\sin\vphi^2d\vphi^3 + \sigma(s)\dot h(t)\, dt)^2.
\]
This is the \emph{standard family of metrics} on $\sss$.
To complete this into a family of data, we need to add suitable perturbations. 

Following \cite[Proposition 4.3.14]{donaldson-kronheimer} one may pick a $2$-form $\eta_0$ with non-zero $g_0$-self-dual part. It follows that the pullback $\eta_1 = RR^*\eta_0$ has non-zero $g_1$-self-dual part. Since $RR$ acts as $-1$ on the one-dimensional positive part of the second homology any path joining $\eta_0$ to $\eta_1$ must pass the wall consisting of ASD perturbations. The argument of \cite[Proposition 4.3.14]{donaldson-kronheimer} shows that this intersection may be taken to be transverse. Transversality to the wall is an open condition \cite[Lemma 7.3]{baraglia-konno:gluing} and the space of compactly supported sections is dense in the weighted Sobolev space, so we may assume that these perturbations have support disjoint from the end. 
We then take the \emph{standard data} on $\sss$ to be $\data_s = (g_s,\eta_s)$. Without loss of generality, we will assume that this only crosses the wall at $s=1/2$. Note that by construction, it is locally metric independent. 

For any Seiberg-Witten configuration $(A,\psi,s)$ in the parameterized moduli space the PSC condition combined with the Weitzenb\"ock formula implies that $\psi = 0$ and $\ker(D^+_A) = 0$. Thus the configuration is reducible so $s=\frac12$ and $H^0_{A,0,\frac12} = \R$.
Using the round metric on $S^3$ as in \cite[4.3.4]{nicolaescu:swbook} one sees that the Atiyah-Patodi-Singer theorem gives $\text{index}(D^+_A) = 0$, so that $D^+_A$ has trivial cokernel. It follows that $H^2_{A,0,\frac12}$ may be identified with the cokernel of 
$\frac{d}{ds}\eta^+_s|_{s=0}:T_{1/2}[0,1] \to \mathcal{H}^2_+(\widehat{\sss},g_{1/2})$ which vanishes since $\eta_s$ is transverse to the wall. Now the virtual dimension satisfies
\begin{align*}
 \vdim\mathcal{M}_{\widehat{\sss},\spincs_0}(\{\data_s\}_{s\in [0,1]}) &= \dim(H^1_{A,0,\frac12}) - \dim(H^0_{A,0,\frac12}) - \dim(H^2_{A,0,\frac12})    \\ &= 1 + \vdim\mathcal{M}_{\widehat{\sss},\spincs_0}(\data_0) \\
 &= 1 + \frac14\left(c_1(\spincs_0)^2[\sss] - 2\chi(\sss) - 3\sigma(\sss)\right) = -1.
\end{align*}
It follows that $H^1_{A,0,0} = 0$ as required to use Theorem~\ref{pIRglue}. 
\end{proof}

\begin{proof}[Proof of Corollary~\ref{glue}] Note that we are assuming that the data on $Z$ is irreducible-reducible good, and that by Proposition~\ref{SPNMID} we can assume further that it is locally metric independent. Then the lemma is a direct consequence of Theorem~\ref{pIRglue}, which shows that the parameterized moduli space for $Z \cs \sss$ for the given $\Spinc$ structure and data is the product of the moduli space on $Z$  and the $1$-parameter moduli space for $\sss$ with standard data. But we showed in Lemma~\ref{L:standardsssdata} that the moduli space for the standard data on $\sss$ has one point at which the data is locally metric independent. 
\end{proof}

\begin{remark}\label{R:final}
We close with a few comments on ideas related to the work in this paper. 
    \begin{enumerate}
        \item We have considered family invariants arising from $0$-dimensional family moduli spaces, but it would certainly be possible to consider numerical invariants arising from higher-dimensional. This would involve counting solutions on divisors corresponding to the so-called $\mu$-map. Much of the theory here would apply, including the proof of Theorem~\ref{pIRglue}, since the moduli spaces (intersected with those divisors) involved are $0$-dimensional. This was explicitly discussed in the paper of Baraglia-Konno~\cite{baraglia-konno:gluing}.

        The simple type~\cite[Definition 2.3.6]{nicolaescu:swbook} conjecture for the usual \SW invariants says that when $b^2_+ > 1$, only zero dimensional moduli spaces contribute to the Seiberg-Witten invariants. The analogous conjecture for family invariants would be that when $b^2_+ > 1+\text{dim}(\Xi)$, only the zero dimensional parameterized spaces contribute. 
        \item When $b^2_+ = 1$, the space of metrics splits into chambers, and the \SW invariant for a single manifold depends upon a choice of chamber. Furthermore, higher-dimensional moduli spaces do contribute to the numerical invariants. There should be a similar theory in the critical case when $b^2_+ = 1+\dim(\Xi)$ family case. The case when $\dim(\Xi) = 1$ applies to diffeomorphisms of manifolds with $b^2_+ =2$, and has been worked out by Haochen Qiu~\cite{qiu:b+=2}. An interesting feature is that the wall-crossing term that appears when crossing the codimension one walls when $b^2_+ =1$ is replaced by a winding number around a codimension two set. 
        \item Higher-dimensional moduli spaces also apppear in an essential way in the Bauer-Furuta invariant~\cite{bauer-furuta:cohomotopy} and its family refinements~\cite{baraglia-konno:bf}. To apply our methods, one would need to abandon the local metric independence assumption and refine the argument to take the bundle structure of the equations into account.
    \end{enumerate}
\end{remark}

\vfill\newpage

\def\cprime{$'$}
\providecommand{\bysame}{\leavevmode\hbox to3em{\hrulefill}\thinspace}

\bibliographystyle{alphaurl}
\bibliography{diff}

\newcommand{\etalchar}[1]{$^{#1}$}
\begin{thebibliography}{AKM{\etalchar{+}}19}

\bibitem[ABK71]{antonelli-burghelea-kahn}
Peter~L. Antonelli, Dan Burghelea, and Peter~J. Kahn.
\newblock {\em The concordance-homotopy groups of geometric automorphism
  groups}.
\newblock Lecture Notes in Mathematics, Vol. 215. Springer-Verlag, Berlin-New
  York, 1971.

\bibitem[AF03]{Adams}
Robert~A. Adams and John J.~F. Fournier.
\newblock {\em Sobolev spaces}, volume 140 of {\em Pure and Applied Mathematics
  (Amsterdam)}.
\newblock Elsevier/Academic Press, Amsterdam, second edition, 2003.

\bibitem[AKM{\etalchar{+}}19]{auckly-kim-melvin-ruberman-schwartz:1-stable}
Dave Auckly, Hee~Jung Kim, Paul Melvin, Daniel Ruberman, and Hannah Schwartz.
\newblock Isotopy of surfaces in 4-manifolds after a single stabilization.
\newblock {\em Adv. Math.}, 341:609--615, 2019.
\newblock \href {https://doi.org/10.1016/j.aim.2018.10.040}
  {\path{doi:10.1016/j.aim.2018.10.040}}.

\bibitem[AKMR15]{auckly-kim-melvin-ruberman:isotopy}
Dave Auckly, Hee~Jung Kim, Paul Melvin, and Daniel Ruberman.
\newblock Stable isotopy in four dimensions.
\newblock {\em J. Lond. Math. Soc. (2)}, 91(2):439--463, 2015.
\newblock \href {https://doi.org/10.1112/jlms/jdu075}
  {\path{doi:10.1112/jlms/jdu075}}.

\bibitem[AR23]{auckly-ruberman:diffym}
Dave Auckly and Daniel Ruberman.
\newblock Families of diffeomorphisms and embeddings via {D}onaldson theory.
\newblock manuscript in preparation, 2023.

\bibitem[AR24]{auckly-ruberman:emb}
Dave Auckly and Daniel Ruberman.
\newblock Exotic families of embeddings.
\newblock In {\em Frontiers in geometry and topology}, volume 109 of {\em Proc.
  Sympos. Pure Math.}, pages 71--98. Amer. Math. Soc., Providence, RI, [2024]
  \copyright 2024.

\bibitem[Auc23]{auckly:internal}
David Auckly.
\newblock Smoothly knotted surfaces that remain distinct after many internal
  stabilizations, 2023.
\newblock \href {http://arxiv.org/abs/2307.16266} {\path{arXiv:2307.16266}}.

\bibitem[Bar23a]{baraglia:k3}
David Baraglia.
\newblock Non-trivial smooth families of {$K3$} surfaces.
\newblock {\em Math. Ann.}, 387(3-4):1719--1744, 2023.
\newblock \href {https://doi.org/10.1007/s00208-022-02508-3}
  {\path{doi:10.1007/s00208-022-02508-3}}.

\bibitem[Bar23b]{baraglia:mcg}
David Baraglia.
\newblock On the mapping class groups of simply-connected smooth 4-manifolds,
  2023.
\newblock \href {http://arxiv.org/abs/2310.18819} {\path{arXiv:2310.18819}}.

\bibitem[Bar24]{baraglia:surfaces}
David Baraglia.
\newblock An adjunction inequality obstruction to isotopy of embedded surfaces
  in 4-manifolds.
\newblock {\em Math. Res. Lett.}, 31(2):329--352, 2024.
\newblock \href {https://doi.org/10.4310/mrl.241024232537}
  {\path{doi:10.4310/mrl.241024232537}}.

\bibitem[BERW17]{botvinnik-ebert-randal-williams:psc-loop}
Boris Botvinnik, Johannes Ebert, and Oscar Randal-Williams.
\newblock Infinite loop spaces and positive scalar curvature.
\newblock {\em Invent. Math.}, 209(3):749--835, 2017.
\newblock \href {https://doi.org/10.1007/s00222-017-0719-3}
  {\path{doi:10.1007/s00222-017-0719-3}}.

\bibitem[BF04]{bauer-furuta:cohomotopy}
Stefan Bauer and Mikio Furuta.
\newblock A stable cohomotopy refinement of {S}eiberg-{W}itten invariants. {I}.
\newblock {\em Invent. Math.}, 155(1):1--19, 2004.
\newblock \href {https://doi.org/10.1007/s00222-003-0288-5}
  {\path{doi:10.1007/s00222-003-0288-5}}.

\bibitem[BG19]{budney-gabai:3-balls}
Ryan Budney and David Gabai.
\newblock Knotted 3-balls in {$S^4$}, 2019.
\newblock \href {http://arxiv.org/abs/1912.09029} {\path{arXiv:1912.09029}}.

\bibitem[BK20]{baraglia-konno:gluing}
David Baraglia and Hokuto Konno.
\newblock A gluing formula for families {S}eiberg-{W}itten invariants.
\newblock {\em Geom. Topol.}, 24(3):1381--1456, 2020.
\newblock \href {https://doi.org/10.2140/gt.2020.24.1381}
  {\path{doi:10.2140/gt.2020.24.1381}}.

\bibitem[BK22]{baraglia-konno:bf}
David Baraglia and Hokuto Konno.
\newblock On the {B}auer-{F}uruta and {S}eiberg-{W}itten invariants of families
  of 4-manifolds.
\newblock {\em J. Topol.}, 15(2):505--586, 2022.
\newblock \href {https://doi.org/10.1112/topo.12229}
  {\path{doi:10.1112/topo.12229}}.

\bibitem[BKK24]{bustamante-krannich-kupers:finiteness}
Mauricio Bustamante, Manuel Krannich, and Alexander Kupers.
\newblock Finiteness properties of automorphism spaces of manifolds with finite
  fundamental group.
\newblock {\em Math. Ann.}, 388(4):3321--3371, 2024.
\newblock \href {https://doi.org/10.1007/s00208-023-02594-x}
  {\path{doi:10.1007/s00208-023-02594-x}}.

\bibitem[Bro61]{browder:torsion}
William Browder.
\newblock Torsion in {$H$}-spaces.
\newblock {\em Ann. of Math. (2)}, 74:24--51, 1961.
\newblock \href {https://doi.org/10.2307/1970305} {\path{doi:10.2307/1970305}}.

\bibitem[BW23]{botvinnik-watanabe:families}
Boris Botvinnik and Tadayuki Watanabe.
\newblock Families of diffeomorphisms and concordances detected by trivalent
  graphs.
\newblock {\em J. Topol.}, 16(1):207--233, 2023.
\newblock \href {https://doi.org/10.1112/topo.12283}
  {\path{doi:10.1112/topo.12283}}.

\bibitem[Cer70]{cerf:stratification}
Jean Cerf.
\newblock La stratification naturelle des espaces de fonctions
  diff\'erentiables r\'eelles et le th\'eor\`eme de la pseudo-isotopie.
\newblock {\em Publ.\ I.H.E.S.}, 39:5--173, 1970.

\bibitem[CF13]{church-farb:stability}
Thomas Church and Benson Farb.
\newblock Representation theory and homological stability.
\newblock {\em Adv. Math.}, 245:250--314, 2013.
\newblock \href {https://doi.org/10.1016/j.aim.2013.06.016}
  {\path{doi:10.1016/j.aim.2013.06.016}}.

\bibitem[CLM96]{clm:I}
S.E. Cappell, R.~Lee, and E.Y. Miller.
\newblock Self-adjoint elliptic operators and manifold decompositions. {Part
  I}: {Low} eigenmodes and stretching.
\newblock {\em Comm. Pure Appl. Math.}, 49:825--866, 1996.

\bibitem[CMRV23]{cushing-moore-rocek-sakena:diff}
Jay Cushing, Gregory~W. Moore, Martin Ro\v{c}ek, and Saxenam Vivek.
\newblock Superconformal gravity and the topology of diffeomorphism groups,
  2023.
\newblock \href {http://arxiv.org/abs/2311.08394} {\path{arXiv:2311.08394}}.

\bibitem[DHST21]{dai-hom-stoffregen-truong:more}
Irving Dai, Jennifer Hom, Matthew Stoffregen, and Linh Truong.
\newblock More concordance homomorphisms from knot {F}loer homology.
\newblock {\em Geom. Topol.}, 25(1):275--338, 2021.
\newblock \href {https://doi.org/10.2140/gt.2021.25.275}
  {\path{doi:10.2140/gt.2021.25.275}}.

\bibitem[DHST23]{dai-hom-stoffregen-truong:summand}
Irving Dai, Jennifer Hom, Matthew Stoffregen, and Linh Truong.
\newblock An infinite-rank summand of the homology cobordism group.
\newblock {\em Duke Math. J.}, 172(12):2365--2432, 2023.
\newblock \href {https://doi.org/10.1215/00127094-2022-0082}
  {\path{doi:10.1215/00127094-2022-0082}}.

\bibitem[DK90]{donaldson-kronheimer}
Simon~K. Donaldson and Peter~B. Kronheimer.
\newblock {\em {The Geometry of Four-Manifolds}}.
\newblock Clarendon Press, Oxford, 1990.

\bibitem[Don83]{donaldson}
S.~K. Donaldson.
\newblock An application of gauge theory to four-dimensional topology.
\newblock {\em J. Differential Geom.}, 18(2):279--315, 1983.

\bibitem[Don90]{donaldson:durham}
S.~K. Donaldson.
\newblock Yang-{M}ills invariants of four-manifolds.
\newblock In {\em Geometry of low-dimensional manifolds, 1 (Durham, 1989)},
  pages 5--40. Cambridge Univ. Press, Cambridge, 1990.

\bibitem[Don96]{donaldson:swsurvey}
S.~K. Donaldson.
\newblock The {S}eiberg-{W}itten equations and $4$-manifold topology.
\newblock {\em Bull. Amer. Math. Soc. (N.S.)}, 33(1):45--70, 1996.

\bibitem[Dro23]{drouin:embeddings}
Joshua Drouin.
\newblock Exotic families of embedded spheres, 2023.
\newblock to appear.

\bibitem[ERW14]{ebert-randal-williams:MMM-TOP}
Johannes Ebert and Oscar Randal-Williams.
\newblock Generalised {M}iller-{M}orita-{M}umford classes for block bundles and
  topological bundles.
\newblock {\em Algebr. Geom. Topol.}, 14(2):1181--1204, 2014.
\newblock \href {https://doi.org/10.2140/agt.2014.14.1181}
  {\path{doi:10.2140/agt.2014.14.1181}}.

\bibitem[ERW22]{ebert-randal-williams:psc-category}
Johannes Ebert and Oscar Randal-Williams.
\newblock The positive scalar curvature cobordism category.
\newblock {\em Duke Math. J.}, 171(11):2275--2406, 2022.
\newblock \href {https://doi.org/10.1215/00127094-2022-0023}
  {\path{doi:10.1215/00127094-2022-0023}}.

\bibitem[FHT01]{felix-halperin-thomas:RHT}
Yves F\'{e}lix, Stephen Halperin, and Jean-Claude Thomas.
\newblock {\em Rational homotopy theory}, volume 205 of {\em Graduate Texts in
  Mathematics}.
\newblock Springer-Verlag, New York, 2001.
\newblock \href {https://doi.org/10.1007/978-1-4613-0105-9}
  {\path{doi:10.1007/978-1-4613-0105-9}}.

\bibitem[Fre82]{freedman}
Michael~H. Freedman.
\newblock The topology of four--dimensional manifolds.
\newblock {\em J. Diff.\ Geo.}, 17:357--432, 1982.

\bibitem[Fri00]{friedrich:dirac}
Thomas Friedrich.
\newblock {\em Dirac operators in {R}iemannian geometry}, volume~25 of {\em
  Graduate Studies in Mathematics}.
\newblock American Mathematical Society, Providence, RI, 2000.
\newblock Translated from the 1997 German original by Andreas Nestke.
\newblock \href {https://doi.org/10.1090/gsm/025} {\path{doi:10.1090/gsm/025}}.

\bibitem[FS90]{fs:instanton}
Ronald Fintushel and Ronald~J. Stern.
\newblock Instanton homology of {Seifert--fibered} homology $3$--spheres.
\newblock {\em Proc.\ Lond.\ Math.\ Soc.}, 61:109--137, 1990.

\bibitem[FS97]{Fintushel-Stern:Rat-Blow}
Ronald Fintushel and Ronald~J. Stern.
\newblock Rational blowdowns of smooth {$4$}-manifolds.
\newblock {\em J. Differential Geom.}, 46(2):181--235, 1997.
\newblock URL: \url{http://projecteuclid.org/euclid.jdg/1214459932}.

\bibitem[FU84]{freed-uhlenbeck}
Daniel~S. Freed and Karen Uhlenbeck.
\newblock {\em Instantons and 4-Manifolds}.
\newblock MSRI Lecture notes. Springer-Verlag, 1984.

\bibitem[Fur90]{furuta:cobordism}
Mikio Furuta.
\newblock Homology cobordism group of homology $3$-spheres.
\newblock {\em Invent. Math.}, 100:339--355, 1990.

\bibitem[GGH{\etalchar{+}}23]{gabai-etal:pseudoisotopies}
David Gabai, David~T. Gay, Daniel Hartman, Vyacheslav Krushkal, and Mark
  Powell.
\newblock Pseudo-isotopies of simply connected 4-manifolds, 2023.
\newblock \href {http://arxiv.org/abs/2311.11196} {\path{arXiv:2311.11196}}.

\bibitem[GL80]{gromov-lawson:psc}
Mikhael Gromov and H.~Blaine Lawson, Jr.
\newblock The classification of simply connected manifolds of positive scalar
  curvature.
\newblock {\em Ann. of Math. (2)}, 111(3):423--434, 1980.

\bibitem[GM93]{gompf-mrowka}
R.~Gompf and T.~Mrowka.
\newblock Irreducible 4-manifolds need not be complex.
\newblock {\em Ann. of Math.}, 138:61--111, 1993.

\bibitem[Gom91]{gompf:elliptic}
Robert~E. Gompf.
\newblock Sums of elliptic surfaces.
\newblock {\em J. Differential Geom.}, 34(1):93--114, 1991.
\newblock URL:
  \url{http://projecteuclid.org/getRecord?id=euclid.jdg/1214446992}.

\bibitem[Gom95]{gompf:symplectic}
Robert~E. Gompf.
\newblock A new construction of symplectic manifolds.
\newblock {\em Ann. of Math. (2)}, 142(3):527--595, 1995.

\bibitem[GRW17]{galatius-randal-williams:II}
S{\o}ren Galatius and Oscar Randal-Williams.
\newblock Homological stability for moduli spaces of high dimensional
  manifolds. {II}.
\newblock {\em Ann. of Math. (2)}, 186(1):127--204, 2017.
\newblock \href {https://doi.org/10.4007/annals.2017.186.1.4}
  {\path{doi:10.4007/annals.2017.186.1.4}}.

\bibitem[GRW18]{galatius-randal-williams:I}
S{\o}ren Galatius and Oscar Randal-Williams.
\newblock Homological stability for moduli spaces of high dimensional
  manifolds. {I}.
\newblock {\em J. Amer. Math. Soc.}, 31(1):215--264, 2018.
\newblock \href {https://doi.org/10.1090/jams/884}
  {\path{doi:10.1090/jams/884}}.

\bibitem[Hat81]{hatcher:S1xS2}
A.~Hatcher.
\newblock On the diffeomorphism group of {$S\sp{1}\times S\sp{2}$}.
\newblock {\em Proc. Amer. Math. Soc.}, 83(2):427--430, 1981.
\newblock \href {https://doi.org/10.2307/2043543} {\path{doi:10.2307/2043543}}.

\bibitem[Hat83]{hatcher:smale}
Allen~E. Hatcher.
\newblock A proof of the {S}male conjecture, {${\rm Diff}(S^{3})\simeq {\rm
  O}(4)$}.
\newblock {\em Ann. of Math. (2)}, 117(3):553--607, 1983.
\newblock \href {https://doi.org/10.2307/2007035} {\path{doi:10.2307/2007035}}.

\bibitem[Hat02]{hatcher}
Allen Hatcher.
\newblock {\em {Algebraic Topology}}.
\newblock Cambridge University Press, Cambridge, 2002.

\bibitem[Hat12]{hatcher:50}
Allen Hatcher.
\newblock {A 50-Year View of Diffeomorphism Groups}, 2012.
\newblock Slides from {$50\textsuperscript{th}$} {C}ornell {T}opology
  {F}estival.
\newblock URL:
  \url{https://pi.math.cornell.edu/~hatcher/Papers/Diff\%28M\%292012.pdf}.

\bibitem[KKN23]{kato-konno-nakamura:families}
Tsuyoshi Kato, Hokuto Konno, and Nobuhiro Nakamura.
\newblock A note on exotic families of 4-manifolds.
\newblock {\em Proc. Amer. Math. Soc.}, 151(6):2695--2705, 2023.
\newblock \href {https://doi.org/10.1090/proc/16356}
  {\path{doi:10.1090/proc/16356}}.

\bibitem[KL22]{konno-lin:instability}
Hokuto Konno and Jianfeng Lin.
\newblock Homological instability for moduli spaces of smooth 4-manifolds,
  2022.
\newblock \href {http://arxiv.org/abs/2211.03043} {\path{arXiv:2211.03043}}.

\bibitem[KM08]{kronheimer-mrowka:monopole}
P.~B. Kronheimer and T.~S. Mrowka.
\newblock {\em {Monopoles and Three-Manifolds}}.
\newblock Cambridge University Press, Cambridge, UK, 2008.

\bibitem[KM20]{kronheimer-mrowka:dehntwist}
P.~B. Kronheimer and T.~S. Mrowka.
\newblock The {D}ehn twist on a sum of two {$K3$} surfaces.
\newblock {\em Math. Res. Lett.}, 27(6):1767--1783, 2020.
\newblock \href {https://doi.org/10.4310/MRL.2020.v27.n6.a8}
  {\path{doi:10.4310/MRL.2020.v27.n6.a8}}.

\bibitem[KMT22]{konno-mukherjee-taniguchi:codim1}
Hokuto Konno, Anubhav Mukherjee, and Masaki Taniguchi.
\newblock Exotic codimension-1 submanifolds in 4-manifolds and stabilizations,
  2022.
\newblock \href {http://arxiv.org/abs/2210.05029} {\path{arXiv:2210.05029}}.

\bibitem[KMT24]{konno-mallick-taniguchi:knotted-donaldson}
Hokuto Konno, Abhishek Mallick, and Masaki Taniguchi.
\newblock Exotically knotted closed surfaces from donaldson's diagonalization
  for families, 2024.
\newblock \href {http://arxiv.org/abs/2409.07287} {\path{arXiv:2409.07287}}.

\bibitem[Kon19]{konno:family}
Hokuto Konno.
\newblock Positive scalar curvature and higher-dimensional families of
  {Seiberg-Witten} equations.
\newblock {\em Journal of Topology}, 12(4):1246--1265, 2019.
\newblock \href {https://doi.org/10.1112/topo.12117}
  {\path{doi:10.1112/topo.12117}}.

\bibitem[Kon21]{konno:classes}
Hokuto Konno.
\newblock Characteristic classes via 4-dimensional gauge theory.
\newblock {\em Geom. Topol.}, 25(2):711--773, 2021.
\newblock \href {https://doi.org/10.2140/gt.2021.25.711}
  {\path{doi:10.2140/gt.2021.25.711}}.

\bibitem[Kon24]{konno:moduli}
Hokuto Konno.
\newblock The homology of moduli spaces of 4-manifolds may be infinitely
  generated.
\newblock {\em Forum Math. Pi}, 12:Paper No. e25, 2024.
\newblock \href {https://doi.org/10.1017/fmp.2024.26}
  {\path{doi:10.1017/fmp.2024.26}}.

\bibitem[Kre79]{kreck:isotopy}
M.~Kreck.
\newblock Isotopy classes of diffeomorphisms of $(k-1)$-connected
  almost-parallelizable $2k$-manifolds.
\newblock In {\em Algebraic topology, Aarhus 1978 (Proc. Sympos., Univ. Aarhus,
  Aarhus, 1978)}, pages 643--663. Springer, Berlin, 1979.

\bibitem[Kre10]{kreck:stratifolds}
Matthias Kreck.
\newblock {\em Differential algebraic topology}, volume 110 of {\em Graduate
  Studies in Mathematics}.
\newblock American Mathematical Society, Providence, RI, 2010.
\newblock From stratifolds to exotic spheres.
\newblock \href {https://doi.org/10.1090/gsm/110} {\path{doi:10.1090/gsm/110}}.

\bibitem[KT18]{kreck-tene:hilbert}
Matthias Kreck and Haggai Tene.
\newblock Hilbert stratifolds and a {Q}uillen type geometric description of
  cohomology for {H}ilbert manifolds.
\newblock {\em Forum Math. Sigma}, 6:Paper No. e4, 33, 2018.
\newblock \href {https://doi.org/10.1017/fms.2018.1}
  {\path{doi:10.1017/fms.2018.1}}.

\bibitem[KT22]{konno-taniguchi:boundary}
Hokuto Konno and Masaki Taniguchi.
\newblock The groups of diffeomorphisms and homeomorphisms of 4-manifolds with
  boundary.
\newblock {\em Adv. Math.}, 409:Paper No. 108627, 58, 2022.
\newblock \href {https://doi.org/10.1016/j.aim.2022.108627}
  {\path{doi:10.1016/j.aim.2022.108627}}.

\bibitem[Kup19]{kupers:diffbook}
Alexander Kupers.
\newblock Lectures on diffeomorphism groups, 2019.
\newblock
  {\url{https://people.math.harvard.edu/~kupers/teaching/272x/book.pdf}}.

\bibitem[Lee13]{lee:manifolds}
John~M. Lee.
\newblock {\em Introduction to smooth manifolds}, volume 218 of {\em Graduate
  Texts in Mathematics}.
\newblock Springer, New York, second edition, 2013.

\bibitem[Lin22]{lin:loops}
Jianfeng Lin.
\newblock The family {Seiberg-Witten} invariant and nonsymplectic loops of
  diffeomorphisms, 2022.
\newblock \href {http://arxiv.org/abs/2208.12082} {\path{arXiv:2208.12082}}.

\bibitem[Lin23]{lin:dehn}
Jianfeng Lin.
\newblock Isotopy of the {D}ehn twist on {$K3\, \#\, K3$} after a single
  stabilization.
\newblock {\em Geom. Topol.}, 27(5):1987--2012, 2023.
\newblock \href {https://doi.org/10.2140/gt.2023.27.1987}
  {\path{doi:10.2140/gt.2023.27.1987}}.

\bibitem[Liu00]{liu}
Ai-Ko Liu.
\newblock Family blowup formula, admissible graphs and the enumeration of
  singular curves. {I}.
\newblock {\em J. Differential Geom.}, 56(3):381--579, 2000.
\newblock URL: \url{http://projecteuclid.org/euclid.jdg/1090347696}.

\bibitem[LL01]{li-liu:family}
Tian-Jun Li and Ai-Ko Liu.
\newblock Family {S}eiberg-{W}itten invariants and wall crossing formulas.
\newblock {\em Comm. Anal. Geom.}, 9(4):777--823, 2001.

\bibitem[LM21]{lin-mukherjee:surfaces}
Jianfeng Lin and Anubhav Mukherjee.
\newblock {Family Bauer--Furuta invariant, Exotic Surfaces and Smale
  conjecture}, 2021.
\newblock \href {http://arxiv.org/abs/2110.09686} {\path{arXiv:2110.09686}}.

\bibitem[Man79a]{mandelbaum:decomp}
Richard Mandelbaum.
\newblock Decomposing analytic surfaces.
\newblock In {\em Geometric topology ({P}roc. {G}eorgia {T}opology {C}onf.,
  {A}thens, {G}a., 1977)}, pages 147--217. Academic Press, New York-London,
  1979.

\bibitem[Man79b]{mandelbaum:irrational}
Richard Mandelbaum.
\newblock Irrational connected sums and the topology of algebraic surfaces.
\newblock {\em Trans. Amer. Math. Soc.}, 247:137--156, 1979.
\newblock \href {https://doi.org/10.2307/1998778} {\path{doi:10.2307/1998778}}.

\bibitem[ME24]{munoz-echaniz:monopole}
Juan Muñoz-Echániz.
\newblock A monopole invariant for families of contact structures.
\newblock {\em Advances in Mathematics}, 439:109483, March 2024.
\newblock URL: \url{http://dx.doi.org/10.1016/j.aim.2023.109483}, \href
  {https://doi.org/10.1016/j.aim.2023.109483}
  {\path{doi:10.1016/j.aim.2023.109483}}.

\bibitem[Mil86]{miller:mcg}
E.~Y. Miller.
\newblock The homology of the mapping class group.
\newblock {\em J. Differential Geom.}, 24(1):1--14, 1986.
\newblock \href {https://doi.org/10.4310/jdg/1214440254}
  {\path{doi:10.4310/jdg/1214440254}}.

\bibitem[MM80]{mandelbaum-moishezon:algebraic}
Richard Mandelbaum and Boris Moishezon.
\newblock On the topology of simply connected algebraic surfaces.
\newblock {\em Trans. Amer. Math. Soc.}, 260(1):195--222, 1980.

\bibitem[MMS97]{morgan-mrowka-szabo:torus}
John~W. Morgan, Tomasz~S. Mrowka, and Zolt{\'a}n Szab{\'o}.
\newblock Product formulas along ${T}\sp 3$ for {S}eiberg-{W}itten invariants.
\newblock {\em Math. Res. Lett.}, 4:915--929, 1997.

\bibitem[Moi77]{moishezon:sums}
Boris Moishezon.
\newblock {\em Complex surfaces and connected sums of complex projective
  planes}.
\newblock Springer-Verlag, Berlin, 1977.
\newblock With an appendix by R. Livne, Lecture Notes in Mathematics, Vol. 603.

\bibitem[Mor87]{morita:characteristic}
S.~Morita.
\newblock Characteristic classes of surface bundles.
\newblock {\em Invent. Math.}, 90:551--578, 1987.
\newblock URL: \url{http://eudml.org/doc/143521}.

\bibitem[Mor96]{morgan:swbook}
John~W. Morgan.
\newblock {\em The {S}eiberg-{W}itten equations and applications to the
  topology of smooth four-manifolds}, volume~44 of {\em Mathematical Notes}.
\newblock Princeton University Press, Princeton, NJ, 1996.

\bibitem[MRS11]{mrowka-ruberman-saveliev:sw-index}
Tomasz Mrowka, Daniel Ruberman, and Nikolai Saveliev.
\newblock {Seiberg-Witten equations, end-periodic Dirac operators, and a lift
  of Rohlin's invariant}.
\newblock {\em J. Differential Geom.}, 88:333--377, 2011.
\newblock \href {https://doi.org/10.2140/agt.2014.14.3689}
  {\path{doi:10.2140/agt.2014.14.3689}}.

\bibitem[MRS16]{mrowka-ruberman-saveliev:periodic-index}
Tomasz Mrowka, Daniel Ruberman, and Nikolai Saveliev.
\newblock An index theorem for end-periodic operators.
\newblock {\em Compos. Math.}, 152(2):399--444, 2016.
\newblock \href {https://doi.org/10.1112/S0010437X15007502}
  {\path{doi:10.1112/S0010437X15007502}}.

\bibitem[Mum83]{mumford:classes}
David Mumford.
\newblock Towards an enumerative geometry of the moduli space of curves.
\newblock In {\em Arithmetic and geometry, {V}ol. {II}}, volume~36 of {\em
  Progr. Math.}, pages 271--328. Birkh\"{a}user Boston, Boston, 1983.

\bibitem[MW07]{madsen-weiss:mumford}
Ib~Madsen and Michael Weiss.
\newblock The stable moduli space of {R}iemann surfaces: {M}umford's
  conjecture.
\newblock {\em Ann.Math.}, 165(3):843--941, 2007.
\newblock \href {https://doi.org/10.4007/annals.2007.165.843}
  {\path{doi:10.4007/annals.2007.165.843}}.

\bibitem[Nic00]{nicolaescu:swbook}
Liviu~I. Nicolaescu.
\newblock {\em Notes on {S}eiberg-{W}itten theory}.
\newblock American Mathematical Society, Providence, RI, 2000.

\bibitem[Nic02]{Nicolaescu:CLM}
Liviu~I. Nicolaescu.
\newblock On the {C}appell-{L}ee-{M}iller gluing theorem.
\newblock {\em Pacific J. Math.}, 206(1):159--185, 2002.
\newblock \href {https://doi.org/10.2140/pjm.2002.206.159}
  {\path{doi:10.2140/pjm.2002.206.159}}.

\bibitem[OP22]{orson-powell:mcg4}
Patrick Orson and Mark Powell.
\newblock Mapping class groups for simply-connected 4-manifolds with boundary,
  2022.
\newblock {J}. {D}iff. {G}eom., to appear.
\newblock \href {http://arxiv.org/abs/2207.05986} {\path{arXiv:2207.05986}}.

\bibitem[OSS17]{ozs:upsilon}
Peter~S. Ozsv\'{a}th, Andr\'{a}s~I. Stipsicz, and Zolt\'{a}n Szab\'{o}.
\newblock Concordance homomorphisms from knot {F}loer homology.
\newblock {\em Adv. Math.}, 315:366--426, 2017.
\newblock \href {https://doi.org/10.1016/j.aim.2017.05.017}
  {\path{doi:10.1016/j.aim.2017.05.017}}.

\bibitem[Par02]{park:spin}
Jongil Park.
\newblock The geography of {S}pin symplectic 4-manifolds.
\newblock {\em Math. Zeitschrift}, 240(2):405--421, 2002.

\bibitem[Per86]{perron:isotopy2}
Bernard Perron.
\newblock Pseudo-isotopies et isotopies en dimension quatre dans la cat\'egorie
  topologique.
\newblock {\em Topology}, 25:381--397, 1986.

\bibitem[Per97]{perelman:positive-ricci}
G.~Perelman.
\newblock Construction of manifolds of positive {R}icci curvature with big
  volume and large {B}etti numbers.
\newblock In {\em Comparison geometry ({B}erkeley, {CA}, 1993--94)}, volume~30
  of {\em Math. Sci. Res. Inst. Publ.}, pages 157--163. Cambridge Univ. Press,
  Cambridge, 1997.

\bibitem[Qiu24]{qiu:b+=2}
Haochen Qiu.
\newblock Exotic diffeomorphisms on $4$-manifolds with $b_2^+ = 2$, 2024.
\newblock \href {http://arxiv.org/abs/2409.07009} {\path{arXiv:2409.07009}}.

\bibitem[Qui86]{quinn:isotopy}
Frank Quinn.
\newblock Isotopy of {$4$}-manifolds.
\newblock {\em J. Differential Geom.}, 24(3):343--372, 1986.
\newblock URL: \url{http://projecteuclid.org/euclid.jdg/1214440552}.

\bibitem[Ros07]{rosenberg:psc-progress}
Jonathan Rosenberg.
\newblock Manifolds of positive scalar curvature: a progress report.
\newblock In {\em Surveys in differential geometry. {V}ol. {XI}}, volume~11 of
  {\em Surv. Differ. Geom.}, pages 259--294. Int. Press, Somerville, MA, 2007.
\newblock \href {https://doi.org/10.4310/SDG.2006.v11.n1.a9}
  {\path{doi:10.4310/SDG.2006.v11.n1.a9}}.

\bibitem[RS01]{rosenberg-stolz:psc}
Jonathan Rosenberg and Stephan Stolz.
\newblock Metrics of positive scalar curvature and connections with surgery.
\newblock In {\em Surveys on surgery theory, Vol. 2}, pages 353--386. Princeton
  Univ. Press, Princeton, NJ, 2001.

\bibitem[Rub98]{ruberman:isotopy}
Daniel Ruberman.
\newblock An obstruction to smooth isotopy in dimension $4$.
\newblock {\em Math. Res. Lett.}, 5(6):743--758, 1998.
\newblock \href {https://doi.org/10.4310/MRL.1998.v5.n6.a5}
  {\path{doi:10.4310/MRL.1998.v5.n6.a5}}.

\bibitem[Rub99]{ruberman:polyisotopy}
Daniel Ruberman.
\newblock A polynomial invariant of diffeomorphisms of 4-manifolds.
\newblock In {\em Proceedings of the Kirbyfest (Berkeley, CA, 1998)}, pages
  473--488 (electronic). Geom. Topol., Coventry, 1999.
\newblock \href {https://doi.org/10.2140/gtm.1999.2.473}
  {\path{doi:10.2140/gtm.1999.2.473}}.

\bibitem[Rub01]{ruberman:swpos}
Daniel Ruberman.
\newblock Positive scalar curvature, diffeomorphisms and the {S}eiberg-{W}itten
  invariants.
\newblock {\em Geom. Topol.}, 5:895--924 (electronic), 2001.
\newblock \href {https://doi.org/10.2140/gt.2001.5.895}
  {\path{doi:10.2140/gt.2001.5.895}}.

\bibitem[Rub20]{ruberman:diffMO}
Danny Ruberman.
\newblock Homotopy groups of {Diff(X)} and {Homeo(X)}.
\newblock MathOverflow, 2020.
\newblock \href {http://arxiv.org/abs/https://mathoverflow.net/q/373503}
  {\path{arXiv:https://mathoverflow.net/q/373503}}.

\bibitem[RW20]{randal-williams:mit}
Oscar Randal-Williams.
\newblock {Diffeomorphisms of discs}, 2020.
\newblock Slides from lecture at {MIT}.
  \url{https://www.dpmms.cam.ac.uk/\%7Eor257/MIT2020.pdf}.

\bibitem[Sma65]{smale:sard}
S.~Smale.
\newblock An infinite dimensional version of {S}ard's theorem.
\newblock {\em Amer. J. Math.}, 87:861--866, 1965.
\newblock \href {https://doi.org/10.2307/2373250} {\path{doi:10.2307/2373250}}.

\bibitem[Smi22]{smirnov:loops}
Gleb Smirnov.
\newblock From flops to diffeomorphism groups.
\newblock {\em Geom. Topol.}, 26(2):875--898, 2022.
\newblock \href {https://doi.org/10.2140/gt.2022.26.875}
  {\path{doi:10.2140/gt.2022.26.875}}.

\bibitem[SY79]{schoen-yau:psc}
Richard Schoen and Shing~Tung Yau.
\newblock On the structure of manifolds with positive scalar curvature.
\newblock {\em Manuscripta Math.}, 28(1-3):159--183, 1979.

\bibitem[SY91]{sha-yang:ricci-spheres}
Ji-Ping Sha and DaGang Yang.
\newblock Positive {R}icci curvature on the connected sums of {$S^n\times
  S^m$}.
\newblock {\em J. Differential Geom.}, 33(1):127--137, 1991.
\newblock URL: \url{http://projecteuclid.org/euclid.jdg/1214446032}.

\bibitem[SY93]{sha-yang:ricci-4}
Ji-Ping Sha and DaGang Yang.
\newblock Positive {R}icci curvature on compact simply connected
  {$4$}-manifolds.
\newblock In {\em Differential geometry: {R}iemannian geometry ({L}os
  {A}ngeles, {CA}, 1990)}, volume 54, Part 3 of {\em Proc. Sympos. Pure Math.},
  pages 529--538. Amer. Math. Soc., Providence, RI, 1993.
\newblock \href {https://doi.org/10.1090/pspum/054.3/1216644}
  {\path{doi:10.1090/pspum/054.3/1216644}}.

\bibitem[Szy10]{szymik:cohomotopy-families}
Markus Szymik.
\newblock Characteristic cohomotopy classes for families of 4-manifolds.
\newblock {\em Forum Math.}, 22(3):509--523, 2010.
\newblock \href {https://doi.org/10.1515/FORUM.2010.027}
  {\path{doi:10.1515/FORUM.2010.027}}.

\bibitem[Tau82]{taubes:definite}
C.~Taubes.
\newblock Self-dual connections on non-self-dual manifolds.
\newblock {\em J. Differential Geom.}, 17:139--170, 1982.

\bibitem[Tau94a]{taubes:l2}
C.~Taubes.
\newblock {\em {$L^2$}--moduli spaces on $4$--manifolds with cylindrical ends},
  volume~1 of {\em Monographs in Geometry and Topology}.
\newblock International Press, 1994.

\bibitem[Tau94b]{taubes:sw-symplectic}
Clifford~Henry Taubes.
\newblock The {S}eiberg-{W}itten invariants and symplectic forms.
\newblock {\em Math. Res. Lett.}, 1(6):809--822, 1994.

\bibitem[Wal64]{wall:diffeomorphisms}
C.~T.~C. Wall.
\newblock Diffeomorphisms of {$4$}-manifolds.
\newblock {\em J. London Math. Soc.}, 39:131--140, 1964.

\bibitem[Wat20]{watanabe:theta}
Tadayuki Watanabe.
\newblock Theta-graph and diffeomorphisms of some 4-manifolds, 2020.
\newblock \href {http://arxiv.org/abs/2005.09545} {\path{arXiv:2005.09545}}.

\bibitem[Wit94]{witten:monopole}
E.~Witten.
\newblock Monopoles and four-manifolds.
\newblock {\em Math.\ Res.\ Lett.}, 1(6):769--796, 1994.

\bibitem[Xu04]{xu:thesis}
Ming Xu.
\newblock {\em The {B}auer-{F}uruta invariant and a cohomotopy refined
  {R}uberman invariant}.
\newblock ProQuest LLC, Ann Arbor, MI, 2004.
\newblock Thesis (Ph.D.)--State University of New York at Stony Brook.
\newblock URL:
  \url{http://gateway.proquest.com/openurl?url_ver=Z39.88-2004&rft_val_fmt=info:ofi/fmt:kev:mtx:dissertation&res_dat=xri:pqdiss&rft_dat=xri:pqdiss:3148540}.

\end{thebibliography}

\end{document}